\title{Entropy of group actions beyond uniform lattices}
\author{Till Hauser}
\address{T.H., Max Planck Institute for mathematics, 
53111 Bonn,
Germany} 
\email{hauser.math@mail.de}
\author{Friedrich Martin Schneider}
\address{F.M.S., Institute of Discrete Mathematics and Algebra, TU Bergakademie Freiberg, 09596 Freiberg, Germany}
\email{martin.schneider@math.tu-freiberg.de}
\let\epsilon=\varepsilon
\theoremstyle{definition}
\newtheorem{definition}{Definition}[section]
\theoremstyle{default}
\newtheorem{theorem}[definition]{Theorem}
\newtheorem{proposition}[definition]{Proposition}
\newtheorem{lemma}[definition]{Lemma}
\newtheorem{corollary}[definition]{Corollary}
\newtheorem*{introtheorem}{Theorem}
\newcommand{\Pcov}{\operatorname{P}}
\theoremstyle{remark}
\newtheorem{remark}[definition]{Remark}
\newtheorem{example}[definition]{Example}
\renewcommand{\epsilon}{\varepsilon}
\renewcommand{\phi}{\varphi}
\newcommand{\defeq}{\mathrel{\mathop:}=}
\DeclareMathOperator{\match}{match}
\DeclareMathOperator{\id}{id}
\DeclareMathOperator{\Sym}{Sym}
\DeclareMathOperator{\diam}{diam}
\def\moverlay{\mathpalette\mov@rlay}
\def\mov@rlay#1#2{\leavevmode\vtop{%
		\baselineskip\z@skip \lineskiplimit-\maxdimen
		\ialign{\hfil$\m@th#1##$\hfil\cr#2\crcr}}}
\newcommand{\charfusion}[3][\mathord]{
	#1{\ifx#1\mathop\vphantom{#2}\fi
		\mathpalette\mov@rlay{#2\cr#3}
	}
	\ifx#1\mathop\expandafter\displaylimits\fi}
\newcommand{\bigcupdot}{\charfusion[\mathop]{\bigcup}{\boldsymbol{\cdot}}}
\begin{document}
\maketitle

\begin{abstract}
Entropy of measure preserving or continuous actions of amenable discrete
groups allows for various equivalent approaches. Among them are the ones
given by the techniques developed by Ollagnier and Pinchon on the one hand and the
Ornstein-Weiss lemma on the other. We extend these two approaches to the
context of actions of amenable topological groups. In contrast to the
discrete setting, our results reveal a remarkable difference between the
two concepts of entropy in the realm of non-discrete groups: while the
first quantity collapses to 0 in the non-discrete case, the second
yields a well-behaved invariant for amenable unimodular groups.
Concerning the latter, we moreover study the corresponding notion of
topological pressure, prove a Goodwyn-type theorem, and establish the
equivalence with the uniform lattice approach (for locally compact
groups admitting a uniform lattice). Our study elaborates on a version
of the Ornstein-Weiss lemma due to Gromov.
 \end{abstract}

\textbf{Mathematics Subject Classification (2020):} 
43A07, 
22D40, 
37A35, 
37D35. 

\textbf{Keywords:} 
Entropy, 
topological group, 
uniform lattice, 
topological pressure,  
subadditivity,
measure-theoretic entropy, 
topological entropy, 
naive entropy, 
variational principle.


\section{Introduction}

The concept of entropy for dynamical systems was introduced by the works
of A.\ N.\ Kolmogorov~\cite{kolmogorov1958new} and J.\ G.\
Sinai~\cite{sinai1959notion} and has since facilitated numerous
applications. The wealth of insights supported by this notion has
motivated a variety of useful generalizations to large classes of group
actions. Most notably, a powerful entropy theory has been developed both
for continuous and for measure preserving actions of amenable discrete
groups (see,
e.g.,~\cite{stepin1980variational,ollagnier1982variational,ornstein1987entropy,
ollagnier1985ergodic, ward1992abramov, lindenstrauss2000mean,
huang2011local, ceccherini2014analogue, Yan, YanandZeng}). More recent, very influential advances beyond the realm of amenable groups include \emph{sofic
entropy} (see~\cite{bowen2018brief} for a comprehensive account) and
\emph{naive entropy}~\cite{downarowicz2015shearer,
burton2017naivenetropy}. 
   
In many concrete situations 
 the acting group of a dynamical
system comes along equipped with a natural group topology that is
compatible with the action. Therefore, it seems natural to build on
amenability of topological groups (which is weaker than amenability of
the underlying discrete group) in order to develop a robust,
well-behaved entropy theory. For instance, in the study of aperiodic
order, it is natural to consider actions of non-discrete, metrizable,
$\sigma$-compact, locally compact (abelian) groups, such as
$\mathbb{R}^d$. In such case, one often uses a uniform lattice $\Lambda$
(for example $\mathbb{Z}^d$ in $\mathbb{R}^d$) and computes the entropy
of the original action as the entropy of the restricted action of the
discrete (abelian) group $\Lambda$. However, the existence of a uniform
lattice in a locally compact group is a strong requirement, and it is
natural to ask for an intrinsic definition of entropy for actions of
general (amenable) topological groups. In
\cite{Tagi-Zade} such an intrinsic definition is presented for actions
of $\mathbb{R}^d$, and for amenable unimodular locally compact groups further ideas are discussed in
\cite{ornstein1987entropy}. 
   
The purpose of the present work is to put forward entropy theory for
actions of (non-discrete) amenable topological groups. To this end, we
pursue and compare two approaches based on different approximation
techniques for amenable topological groups. While the first one combines
a topological version of F\o lner's amenability
criterion~\cite{schneider2016folner} with ideas of
Ollagnier and Pinchon~\cite{ollagnier1978unenouvelle,ollagnier1982variational,ollagnier1985ergodic}, the second one builds on the
concept of \emph{van Hove} nets and the quasi-tiling theory developed by
Ornstein and Weiss~\cite{ornstein1987entropy}.

	One of the main difficulties in defining entropy of non-discrete topological groups is that one wishes to consider a finite partition $\alpha$ and compute a common refinement \begin{displaymath}
		\alpha_A \, \defeq \, \bigvee\nolimits_{g\in A} \left.\! \left\{g^{-1}(A) \, \right\vert A\in \alpha\right\}
	\end{displaymath} for certain 'averaging' subsets $A\subseteq G$. 
	In the definition of entropy one requires that $\alpha_A$ is also a finite partition for which it is crucial that $A$ is finite. 
	In the context of discrete amenable groups that causes no problem since F\o lner nets can be used for averaging and consist of finite sets. 
However, in the general context of amenable unimodular locally compact groups $G$ one often uses certain nets of compact subsets for averaging. To be more precise, let $\theta$ denote a Haar measure on a unimodular locally compact group $G$. For compact subsets $K,A\subseteq G$, the \emph{$K$-boundary of $A$} is defined as $\partial_K A \defeq K\overline{A}\cap K\overline{G\setminus A}$. 
		A net $(A_i)_{i\in I}$ of compact subsets of $G$ is called \emph{van Hove} if $\theta(\partial_KA_i)/\theta(A_i)$ converges to $0$ for every compact subset $K\subseteq G$. 	
	For discrete amenable groups, the concepts of F\o lner and van Hove nets agree \cite{tempelman1984specific}. 
	Since van Hove sets in non-discrete amenable locally compact groups are not necessarily finite, the classical definition of entropy cannot be applied directly. 
	
	In \cite{schneider2016folner} a generalization of F\o lner nets to the context of general amenable topological groups is introduced that allows to average over finite sets. 
	To introduce this concept, let $G$ be a topological group. For finite subsets $E,F\subseteq G$ and any neighbourhood $U$ of the neutral element in $G$, we define the \emph{$U$-matching number} $\match_{\Rsh}(E,F,U)$ as 
	the maximum cardinality of a subset $M\subseteq E$ for which  there exists an injection $\iota\colon M\to F$ with $x\in U\iota(x)$ for all $x\in M$. A net $(F_i)_{i\in I}$ of finite, non-empty subsets of $G$ is called a \emph{thin F\o lner net} if, for every $g \in G$ and every identity neighborhood $U$ in $G$, the ratio $\match_{\Rsh}(F_i,gF_i,U)/|F_i|$ converges to $1$. According to~\cite[Theorem~4.5]{schneider2016folner}, the topological group $G$ is amenable if and only if $G$ admits a thin F\o lner net. Now, let $(X,\mu)$ be any probability space. As usual, for a finite measurable partition $\alpha$ of $X$, we consider the \emph{Shannon entropy} defined as \begin{displaymath}
		H_\mu(\alpha) \, \defeq \, -\sum\nolimits_{A\in \alpha \mid \mu(A)>0}\mu(A)\log(\mu(A)) .
	\end{displaymath} 
	Based on the preceding discussion, if $G$ is amenable, then one may define, for any measure preserving action of $G$ on $(X,\mu)$, the corresponding \emph{Ollagnier entropy} as 
	\[ \left. \operatorname{E}_\mu^{(Oll)}(\pi)\,\defeq\,\sup \left\{ \lim_{i\in I}\frac{H_\mu(\alpha_{F_i})}{|F_i|} \, \right\vert \alpha \text{ finite measurable partition of } X \right\}, \]
	where $(F_i)_{i\in I}$ is a thin F\o lner net. 
	See Subsection \ref{sub:actionsoftopologicalgroups}
 for the notion of a measure preserving action of a topological group.
	In Subsection \ref{sub:Ollagnierentropy} we will show that all the limits in this formula exist and are independent of the choice of a thin F\o lner net. 
	This will be achieved by generalizing a technique of \cite{ollagnier1985ergodic}. 
	To present this generalization, we topologize the set $\mathcal{F}(G)$ of all finite subsets of $G$ in a natural way. For details on this topology see Subsection \ref{sub:continuityofsetfunctions}.  
%
	We will show that, for every measure preserving action of $G$ on a probability space $(X,\mu)$, the induced map \begin{displaymath}
		\mathcal{F}(G) \, \longrightarrow \, \mathbb{R}_{\geq 0}, \quad F \, \longmapsto \, H_\mu(\alpha_F)
	\end{displaymath} is continuous, right-invariant, and {satisfies Shearer's inequality} (see Subsection~\ref{subsection:combinatorics} for the relevant definitions). This observation will be combined with the following abstract convergence theorem, where $\mathcal{F}_{+}(G) \defeq \mathcal{F}(G)\setminus \{ \emptyset \}$. 
\begin{introtheorem}[Topological version of Ollagnier's lemma; Theorem \ref{the:ollagnierslemma}]
	Let $G$ be an amenable topological group, and let $f\colon \mathcal{F}(G)\to \mathbb{R}_{\geq 0}$ be a continuous, right-invariant function satisfying Shearer's inequality. Then, for any thin F\o lner net $(F_i)_{i\in I}$ in $G$, the following limit exists and satisfies
	\[\lim_{i\in I}\frac{f(F_i)}{|F_i|} \, = \, \inf_{F\in \mathcal{F}_{+}(G)}\frac{f(F)}{|F|}. \]
\end{introtheorem}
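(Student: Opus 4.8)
The plan is to adapt the classical Ollagnier--Pinchon argument, in which one covers an averaging set by right-translates of a fixed finite set and invokes Shearer's inequality; in the non-discrete setting the genuine translates contained in $F_i$ are unavailable, so they are replaced by ``matched copies'' of the fixed set that the thin F\o lner property produces inside $F_i$. The easy half is immediate: every $F_i$ lies in $\mathcal{F}_{+}(G)$, hence $f(F_i)/|F_i|\ge\inf_{F\in\mathcal{F}_{+}(G)}f(F)/|F|$, so $\liminf_i f(F_i)/|F_i|$ is at least this infimum. Thus it suffices to show that, for each fixed $F_0\in\mathcal{F}_{+}(G)$, one has $\limsup_i f(F_i)/|F_i|\le f(F_0)/|F_0|$; taking the infimum over $F_0$ and combining with the easy half then yields simultaneously the existence of the limit and its value.

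Fix $F_0=\{x_1,\dots,x_n\}$ with the $x_j$ pairwise distinct, and fix $\epsilon>0$. By continuity of $f$ at $F_0$ and the description of the topology on $\mathcal{F}(G)$, choose a symmetric identity neighbourhood $W$ such that $Wx_1,\dots,Wx_n$ are pairwise disjoint and such that every finite $F'$ with $F'\subseteq\bigcup_j Wx_j$ and $F'\cap Wx_j\ne\emptyset$ for all $j$ satisfies $|f(F')-f(F_0)|<\epsilon$. Right-invariance of $f$ upgrades this to a translated, uniform version: for every $y\in G$, any finite $F'$ with $F'\subseteq\bigcup_j Wx_jy$ meeting each $Wx_jy$ satisfies $|f(F')-f(F_0)|<\epsilon$, since $F'y^{-1}$ then lies in the neighbourhood above. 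For each $j$, apply the thin F\o lner property to the group element $x_j^{-1}$ (noting that the $U$-matching numbers behave controllably under conjugating $U$ and under left-translating the sets, so one may make the matching ``$W$-fine''): eventually along $I$ this gives a set $M_j\subseteq F_i$ with $|M_j|\ge(1-\epsilon)|F_i|$ and an injection $\Phi_j\colon M_j\to F_i$ with $\Phi_j(y)\in Wx_jy$ for all $y\in M_j$.

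Let $M:=\bigcap_j M_j$, so $|M|\ge(1-n\epsilon)|F_i|$, and for $y\in M$ put $S_y:=\{\Phi_1(y),\dots,\Phi_n(y)\}\subseteq F_i$. Disjointness of the $Wx_j$ forces $|S_y|=n$ and makes $S_y$ meet each $Wx_jy$, so the translated continuity bound gives $f(S_y)\le f(F_0)+\epsilon$. Put $F_i^{\circ}:=\bigcap_j\Phi_j(M)$; injectivity of the $\Phi_j$ gives $|F_i\setminus F_i^{\circ}|\le n^2\epsilon|F_i|$, and disjointness of the $Wx_j$ shows that each point of $F_i^{\circ}$ lies in exactly $n$ of the sets $S_y$ ($y\in M$). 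Shearer's inequality for the family $(S_y\cap F_i^{\circ})_{y\in M}$ over the ground set $F_i^{\circ}$, together with monotonicity of $f$ (which holds under the stated hypotheses), gives $f(F_i^{\circ})\le\tfrac1n\sum_{y\in M}f(S_y)\le\tfrac{|F_i|}{n}(f(F_0)+\epsilon)$. Finally, subadditivity of $f$ over partitions (the case $m=1$ of Shearer) yields $f(F_i)\le f(F_i^{\circ})+f(F_i\setminus F_i^{\circ})$, and covering $F_i\setminus F_i^{\circ}$ by singletons plus right-invariance gives $f(F_i\setminus F_i^{\circ})\le n^2\epsilon\,c_0\,|F_i|$ where $c_0$ is the common value of $f$ on one-point sets. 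Dividing by $|F_i|$, letting $i$ run along $I$ and then $\epsilon\to0$, we obtain $\limsup_i f(F_i)/|F_i|\le f(F_0)/|F_0|$, as required.

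The main obstacle is the passage, in the second and third paragraphs, from the thin F\o lner matchings to the sets $S_y$: in a discrete group one would use honest right-translates of $F_0$ lying inside $F_i$, but here such translates need be neither contained in nor even close to $F_i$, so $S_y$ must be assembled from the matched images $\Phi_j(y)$, and one has to combine the topology of $\mathcal{F}(G)$, the continuity of $f$, and its right-invariance to see that $f(S_y)$ is still within $\epsilon$ of $f(F_0)$ uniformly in $y$ and $i$. A secondary technical point is the control of the boundary term $f(F_i\setminus F_i^{\circ})$, which rests on the at most linear growth of $f$ — itself a consequence of Shearer's inequality and right-invariance.
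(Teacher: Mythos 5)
Your construction of the approximate right-translates $S_y$ of $F_0$ inside $F_i$ via the thin F\o lner matchings, the use of right-invariance and the translated neighbourhoods (Lemma~\ref{lemma:translated.neighborhood}) to get $f(S_y)\le f(F_0)+\epsilon$, and the application of Shearer's inequality to the resulting $n$-cover run closely parallel to the paper's proof of Theorem~\ref{the:ollagnierslemma} (there the copies are the sets $\gamma(K\times\{x\})$, built after perturbing $F_i$ so that $F\cap gF=\emptyset$). However, there is a genuine gap at the Shearer step: you replace $f(S_y\cap F_i^{\circ})$ by $f(S_y)$ by appealing to ``monotonicity of $f$ (which holds under the stated hypotheses)''. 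It does not. The theorem assumes only continuity, right-invariance and the weak form of Shearer's inequality from Subsection~\ref{subsection:combinatorics}, and the paper explicitly notes that this form is weaker than the usual one and equivalent to it only \emph{for monotone functions}. None of the hypotheses forces monotonicity: every neighbourhood $U[F]$ consists of sets of the same cardinality as $F$, so continuity imposes no relation between values on sets of different cardinalities, and the weak Shearer inequality bounds $f(F)$ from above only by subsets of $F$. For instance, on $G=\mathbb{R}$ the function with $f(F)=1$ if $|F|=1$ and $f(F)=0$ otherwise is continuous, right-invariant, nonnegative and satisfies Shearer's inequality, but is not monotone; for such an $f$ the inequality $f(S_y\cap F_i^{\circ})\le f(S_y)$ can fail, so your estimate $f(F_i^{\circ})\le\tfrac{|F_i|}{n}\,(f(F_0)+\epsilon)$ is unproven as stated. (Your closing remark that the linear bound on $f$ is ``a consequence of Shearer's inequality and right-invariance'' is fine; it is the monotonicity claim that is not.)

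The gap is repairable, and the repair is precisely what the paper does instead of monotonicity: choose $W$ by continuity so that $\diam f(W[K'])\le\epsilon$ for \emph{all} subsets $K'\subseteq F_0$ (not only for $F_0$ itself), set $C\defeq\max\{1,\sup\{f(K')\mid K'\subseteq F_0\}\}$, and split the centres $y\in M$ into those with $S_y\subseteq F_i^{\circ}$, where $S_y\cap F_i^{\circ}=S_y$ and your bound $f(F_0)+\epsilon$ applies, and the remaining ones, whose number is at most $n^{3}\epsilon|F_i|$ by your own counting and whose terms satisfy $f(S_y\cap F_i^{\circ})\le C+\epsilon$ because $(S_y\cap F_i^{\circ})y^{-1}\in W[K']$ for some $K'\subseteq F_0$. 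This is exactly the role of the sets $H_0$, $H_1$ and the constant $C$ in the paper's proof. A minor additional imprecision: your stated continuity consequence (``every finite $F'\subseteq\bigcup_j Wx_j$ meeting each $Wx_j$ satisfies $|f(F')-f(F_0)|<\epsilon$'') does not follow from continuity, since such an $F'$ may contain several points of one $Wx_j$ and then need not lie in $W[F_0]$; it should be phrased for sets meeting each $Wx_j$ in exactly one point, which is all you actually use for the sets $S_y$.
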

	Now recall that for a measure preserving action $\pi$ of a (not necessarily amenable) topological group $G$ on a probability space $(X,\mu)$ we define the \emph{naive entropy} entropy as 
	\[\operatorname{E}^{(nv)}_\mu(\pi) \defeq \sup_{\alpha}\inf_{F\in \mathcal{F}_{+}(G)}\frac{H_\mu(\alpha_F)}{|F|},\]
	where the supremum is taken over all finite measurable partitions $\alpha$ of $X$. 
	Similarly to the well known context of actions of discrete amenable groups, we also obtain from Ollagnier's lemma that for measure preserving actions of general amenable groups we have
	\begin{displaymath}
		\operatorname{E}^{(Oll)}_\mu(\pi) \, = \, \operatorname{E}^{(nv)}_\mu(\pi) .
	\end{displaymath} From the following theorem we thus observe that 
	$\operatorname{E}^{(Oll)}_\mu(\pi)=0$ for all measure preserving actions of non-discrete amenable groups that are metrizable or locally compact. 
	
	\begin{introtheorem}
	Let $G$ be a non-discrete topological group that is metrizable or locally compact. 
		If $\pi$ is a measure preserving action of $G$ on a probability space $(X,\mu)$, then
		$\operatorname{E}_\mu^{(nv)}(\pi)=0$.
	\end{introtheorem}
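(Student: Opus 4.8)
The plan is to reduce the statement to the single-partition estimate
\[
\inf_{F\in\mathcal{F}_{+}(G)}\frac{H_\mu(\alpha_F)}{|F|}\,=\,0
\qquad\text{for every finite measurable partition }\alpha\text{ of }X,
\]
and then to prove this by exploiting that a non-discrete topological group which is metrizable or locally compact --- and hence Hausdorff (automatic in the metrizable case, part of the standing conventions in the locally compact case) --- has only \emph{infinite} identity neighbourhoods. The guiding idea is that group elements close to the identity act almost trivially on any fixed finite partition, so that forming the common refinement $\alpha_F=\bigvee_{g\in F}g^{-1}\alpha$ over a large set $F$ of such elements costs a negligible amount of entropy per element.

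So fix a finite measurable partition $\alpha$ of $X$ and let $\epsilon>0$. First I would invoke the standard continuity estimate for Shannon entropy: there is $\delta>0$ such that any finite measurable partition $\beta=\{B_1,\dots,B_k\}$ with $k\le|\alpha|$, whose parts are matched to an enumeration $\{A_1,\dots,A_k\}$ of $\alpha$ with $\sum_{i}\mu(A_i\triangle B_i)<\delta$, satisfies $H_\mu(\beta\mid\alpha)<\epsilon$. Next, since $\pi$ is by definition a \emph{continuous} measure preserving action (Subsection~\ref{sub:actionsoftopologicalgroups}) and $\alpha$ is finite, the set $\{g\in G: \mu(g^{-1}A\,\triangle\,A)<\delta/|\alpha|\text{ for all }A\in\alpha\}$ is an identity neighbourhood $U$ in $G$; and for every $g\in U$ the partition $g^{-1}\alpha$, matched to $\alpha$ in the obvious way, then satisfies $H_\mu\big(g^{-1}\alpha\mid\alpha\big)<\epsilon$.

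Now let $F\in\mathcal{F}_{+}(G)$ with $F\subseteq U$ be arbitrary. Using subadditivity of conditional entropy together with the elementary bound $H_\mu(\xi)\le H_\mu(\alpha)+H_\mu(\xi\mid\alpha)$, one obtains
\[
H_\mu(\alpha_F)\;=\;H_\mu\Big(\bigvee\nolimits_{g\in F}g^{-1}\alpha\Big)\;\le\;H_\mu(\alpha)+\sum\nolimits_{g\in F}H_\mu\big(g^{-1}\alpha\mid\alpha\big)\;\le\;H_\mu(\alpha)+|F|\,\epsilon ,
\]
so that $H_\mu(\alpha_F)/|F|\le H_\mu(\alpha)/|F|+\epsilon$. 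Since $U$ is infinite, $F\subseteq U$ may be chosen with $|F|$ arbitrarily large; therefore $\inf_{F\in\mathcal{F}_{+}(G)}H_\mu(\alpha_F)/|F|\le\epsilon$, and letting $\epsilon\to0$ gives $\inf_{F}H_\mu(\alpha_F)/|F|=0$. Taking the supremum over all finite measurable partitions $\alpha$ yields $\operatorname{E}^{(nv)}_\mu(\pi)=0$.

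The argument is soft, and its only non-formal ingredients are the two inputs of the second paragraph: the continuity-of-entropy estimate, which is a routine computation with the function $t\mapsto -t\log t$, and the continuity built into the notion of a measure preserving action of a topological group, which is precisely what produces the neighbourhood $U$ and is therefore indispensable (without it the claim fails already for $G=\mathbb{R}$). The single point at which the hypothesis ``metrizable or locally compact'' enters is the elementary observation that a non-discrete Hausdorff topological group has no finite identity neighbourhood, since a finite Hausdorff space is discrete; I expect no genuine obstacle beyond verifying that the action's continuity hypothesis is indeed strong enough to supply $U$ in the form used above.
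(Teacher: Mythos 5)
Your argument is correct, but it is a genuinely different route from the paper's. The paper never works directly with partitions: it passes to a topological model (Proposition \ref{pro:JKtype}), proves the Goodwyn-type inequality $\operatorname{E}_\mu^{(nv)}(\pi)+\mu(f)\leq \operatorname{p}_f^{(nv)}(\pi)$ (Theorem \ref{the:Goodwynnaivenetropy}) via adapted partitions and overlap ratios, and then shows $\operatorname{p}_0^{(nv)}=0$ for continuous actions of groups containing an infinite precompact subset (Theorem \ref{the:naivetopologicalpressureiszero}), using Lemma \ref{lem:nv:Arefining} to get one finite open cover refining $\mathcal{U}_g$ simultaneously for all $g$ in a precompact set; the hypothesis \emph{metrizable or locally compact} serves only to produce an infinite compact subset. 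You instead argue purely measure-theoretically: condition (3) in the definition of a measure preserving action yields an identity neighbourhood $U$ with $H_\mu(g^{-1}\alpha\mid\alpha)<\epsilon$ for $g\in U$ (Rokhlin-metric continuity, \cite[Lemma 4.15]{walters1982introduction}), whence $H_\mu(\alpha_F)\leq H_\mu(\alpha)+|F|\epsilon$ for finite $F\subseteq U$, and $U$ is infinite because a non-discrete Hausdorff group is perfect (a fact the paper itself uses in Lemma \ref{lemma:approximation.by.moving.sets}). Note that, contrary to your closing remark, metrizability or local compactness is never used: your proof gives the stronger statement that $\operatorname{E}_\mu^{(nv)}(\pi)=0$ for \emph{every} non-discrete topological group, including groups without infinite precompact subsets such as Example \ref{example:maxime}, which the paper's method cannot reach. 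What the paper's longer route buys is the purely topological counterpart (vanishing of naive topological entropy/pressure, where closeness of covers does not control the size of joins and the uniform refinement over a precompact set is genuinely needed) and machinery reused later for the Ornstein--Weiss theory.
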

	We achieve this result by finding an appropriate topological model for $\pi$. We then prove a similar result for naive topological entropy and use this to show that naive topological entropy bounds naive entropy (see Corollary~\ref{cor:naiveentropyiszero} below for further details). We note that the phase space is not assumed to be a Lebesgue space and that our notion of topological model differs from the standard notion. 
	
	Unfortunately, this shows that the approach via thin F\o lner nets does not yield a useful invariant in the context beyond discrete groups and one needs to come back to the averaging along van Hove nets. A natural idea in this context is to reconsider uniform lattices and to note that a uniform lattice is a Delone subgroup. Here we call a subset $\omega\subseteq G$ \emph{Delone} if $\omega$ is \emph{uniformly discrete} (i.e., there exists a neighbourhood $V$ of the identity in $G$ such that $(gV)_{g \in \omega}$ is a disjoint family) and \emph{relatively dense} (i.e., there exists a compact subset $K\subseteq G$ such that $K\omega=G$).
	Since any locally compact topological group contains Delone sets, it is natural to ask whether one can avoid the group structure in $\omega$ while defining entropy.
	As a net of averaging subsets of $\omega$ we consider $(A_i\cap \omega)_{i\in I}$, where $(A_i)_{i\in I}$ is a van Hove net in $G$. 
	For a measure preserving action $\pi$ of an amenable unimodular locally compact group $G$ on a probability space $(X,\mu)$, we define the \emph{Ornstein-Weiss entropy} as
		\[\operatorname{E}_\mu^{(OW)}(\pi) \defeq \sup_{\alpha}\limsup_{i\in I}\frac{H_\mu(\alpha_{A_i\cap \omega})}{\theta(A_i)},\]
	where the supremum is taken over all finite measurable partitions $\alpha$ of $X$. 
	Note that, if $\omega$ is a uniform lattice in $G$ and $(A_i)_{i\in I}$ is a van Hove net, then $(A_i\cap \omega)_{i\in I}$ constitutes a F\o lner net in $\omega$, and $\operatorname{dens}(\omega) \defeq \lim_{i\in I}|A_i\cap \omega|/\theta(A_i)$ exists and is independent of the choice of a van Hove net (see Lemma \ref{lem:vanHovenetinlattice}). 
	We thus obtain directly from our definition 
	that the Ornstein-Weiss entropy coincides with the usual notion of entropy whenever $G$ contains a uniform lattice. 
	It is natural to ask whether our definition depends on the choice of a Delone set and the choice of a van Hove net. 
	\begin{introtheorem}
		The Ornstein-Weiss entropy of a measure preserving action of an amenable unimodular locally compact group on a probability space is independent of both the choice of a van Hove net and the choice of a Delone set. 
	\end{introtheorem}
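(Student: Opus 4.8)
The plan is to fix a finite measurable partition $\alpha$ of $X$ and prove that the net $\kla{H_\mu(\alpha_{A_i\cap\omega})/\theta(A_i)}_{i\in I}$ converges to a value independent of both the van Hove net $(A_i)_{i\in I}$ and the Delone set $\omega$; taking the supremum over all finite measurable partitions $\alpha$ then yields the claim (and, as a byproduct, shows that the $\limsup$ in the definition of $\operatorname{E}_\mu^{(OW)}$ is a genuine limit). The starting point is that the set function $f_\alpha\colon\mathcal{F}(G)\to\mathbb{R}_{\geq0}$, $F\mapsto H_\mu(\alpha_F)$, is continuous (established earlier in the paper), right-invariant (since $\alpha_{Fg}=g^{-1}(\alpha_F)$ and $\mu$ is $\pi$-invariant), monotone, and satisfies Shearer's inequality; in particular it is subadditive, $H_\mu(\beta\vee\gamma)\leq H_\mu(\beta)+H_\mu(\gamma)$.

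\emph{Step 1: independence of the van Hove net for a fixed Delone set.} Here I would invoke the Ornstein--Weiss-type lemma developed above, applied to $f_\alpha$: being right-invariant, monotone and subadditive, $f_\alpha$ falls under its scope, so $\lim_{i\in I} f_\alpha(A_i\cap\omega)/\theta(A_i)$ exists and does not depend on the choice of van Hove net $(A_i)_{i\in I}$. The one auxiliary estimate feeding into this (and needed again below) is the linear upper density bound $\limsup_{i} |A_i\cap\omega|/\theta(A_i)<\infty$: if $V$ witnesses the uniform discreteness of $\omega$, then $\bigsqcup_{g\in A_i\cap\omega}gV\subseteq A_iV$ gives $|A_i\cap\omega|\,\theta(V)\leq\theta(A_iV)$, while $\theta(A_iV)/\theta(A_i)\to1$ by the van Hove property.

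\emph{Step 2: independence of the Delone set.} Let $\omega,\omega'$ be Delone sets; by symmetry and by taking the supremum over $\alpha$ afterwards, it suffices to bound $\limsup_i H_\mu(\alpha_{A_i\cap\omega'})/\theta(A_i)$ by the Ornstein--Weiss entropy formed with $\omega$ and $(A_i)_{i\in I}$. Fix $\epsilon>0$ and a compact $K\subseteq G$ with $e\in K$ and $K\omega=G$. By continuity of the action choose an identity neighbourhood $U$ so small that $\mu(u^{-1}A\triangle A)<\delta$ for all $A\in\alpha$ and $u\in U$, where $\delta$ is small enough that the Rokhlin-type bound gives $H_\mu(u^{-1}\alpha\mid\alpha)\leq\psi(\delta)$ with $\psi(\delta)\to0$ as $\delta\to0$ (uniformly in the finitely many atoms of $\alpha$). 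Cover $K$ by finitely many translates $Uk_1,\dots,Uk_m$ and put $\beta\defeq\bigvee_{j=1}^{m}k_j^{-1}\alpha$, a finite partition. For $g'\in A_i\cap\omega'$ write $g'=uk_jh$ with $u\in U$, $1\leq j\leq m$ and $h\in\omega$; then $h\in\omega\cap K^{-1}A_i$, and $g'^{-1}(\alpha)=h^{-1}k_j^{-1}u^{-1}(\alpha)$ lies within $\delta$ of $h^{-1}k_j^{-1}(\alpha)$, which is coarser than $h^{-1}(\beta)$. Using $H_\mu(\gamma_1\vee\cdots\vee\gamma_n)\leq H_\mu(\gamma'_1\vee\cdots\vee\gamma'_n)+\sum_k H_\mu(\gamma_k\mid\gamma'_k)$ with $\gamma_k=g'^{-1}(\alpha)$ and $\gamma'_k=h^{-1}k_j^{-1}(\alpha)$, together with monotonicity, one gets
\[ H_\mu(\alpha_{A_i\cap\omega'})\;\leq\;H_\mu\kla{\beta_{(K^{-1}A_i)\cap\omega}}+|A_i\cap\omega'|\,\psi(\delta). \]
Since $(K^{-1}A_i)_{i\in I}$ is again a van Hove net with $\theta(K^{-1}A_i)/\theta(A_i)\to1$, and $\limsup_i|A_i\cap\omega'|/\theta(A_i)\leq1/\theta(V')<\infty$ by Step 1 applied to $\omega'$, dividing by $\theta(A_i)$ and passing to the $\limsup$, and then using Step 1 for the net $(K^{-1}A_i)_{i\in I}$ and the partition $\beta$, yields
\[ \limsup_i\frac{H_\mu(\alpha_{A_i\cap\omega'})}{\theta(A_i)}\;\leq\;\lim_i\frac{H_\mu(\beta_{A_i\cap\omega})}{\theta(A_i)}+C\,\psi(\delta)\;\leq\;\operatorname{E}_\mu^{(OW)}(\pi)+C\,\psi(\delta), \]
the last quantity being the Ornstein--Weiss entropy formed with $\omega$ and $(A_i)_{i\in I}$. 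Letting $\delta\to0$ and taking the supremum over $\alpha$ on the left finishes Step 2; combined with Step 1 this gives full independence.

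The main obstacle is the error accumulation in Step 2: perturbing each of the roughly $\theta(A_i)$ generators $g'^{-1}(\alpha)$ by only $\delta$ could in principle change the entropy of $\alpha_{A_i\cap\omega'}$ enormously, since this partition has exponentially many atoms. What rescues the estimate is that the conditional-entropy bound $H_\mu(u^{-1}\alpha\mid\alpha)\leq\psi(\delta)$ is uniform over the finitely many atoms of $\alpha$, so the accumulated error is at most $|A_i\cap\omega'|\cdot\psi(\delta)$, i.e. $O(\psi(\delta))$ per unit of Haar measure, which becomes negligible as $\delta\to0$ (while $\beta$ is allowed to grow). The remaining ingredients --- stability of van Hove nets under left multiplication by a compact set and the linear density bound for uniformly discrete sets --- are routine consequences of the van Hove property, and the reduction to a single partition together with Step 1 is where the Ornstein--Weiss lemma proved above does the real work.
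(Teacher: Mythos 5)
Your Step 1 contains the decisive gap. The Ornstein--Weiss lemma proved in the paper (Theorem \ref{the:Ornstein-Weisslemma}) applies to functions on $\mathcal{K}(G)$ that are monotone, subadditive and \emph{right-invariant}, and it yields convergence of $f(A_i)/\theta(A_i)$ along van Hove nets. To extract from it the existence and net-independence of $\lim_i H_\mu(\alpha_{A_i\cap\omega})/\theta(A_i)$ you would have to apply it to the composite set function $\tilde f_\alpha\colon\mathcal{K}(G)\to\mathbb{R}_{\geq0}$, $A\mapsto H_\mu(\alpha_{A\cap\omega})$. This $\tilde f_\alpha$ is monotone and subadditive, but it is \emph{not} right-invariant: $(Ag)\cap\omega\neq(A\cap\omega)g$ for a general Delone set $\omega$, since $\omega$ is not stable under right translations (it is not assumed to be a subgroup). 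The right-invariance of $F\mapsto H_\mu(\alpha_F)$ on $\mathcal{F}(G)$, which you cite, does not transfer to $\tilde f_\alpha$, and the quasi-tiling argument behind the Ornstein--Weiss lemma genuinely uses $f(A_ig)=f(A_i)$ when the tiles are translated, so the lemma simply does not cover $\tilde f_\alpha$. Consequently the convergence and van-Hove-independence claimed in Step 1 are unjustified (note that the paper never even asserts convergence for a fixed partition; it works with a $\limsup$), and Step 2 collapses with it, since it invokes Step 1 both to pass from the net $(K^{-1}A_i)_{i\in I}$ back to $(A_i)_{i\in I}$ and to identify the resulting limit with the entropy formed with $\omega$.

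This is exactly the obstruction the paper's proof is designed to circumvent: it first replaces $\pi$ by a topological model (Proposition \ref{pro:JKtype}), and then introduces the entourage-based quantity $H_\mu[\eta_A]$ for $\eta\in\mathbb{U}_X$ and $A\in\mathcal{K}(G)$, which makes no reference to any Delone set and \emph{is} right-invariant (by invariance of $\mu$), so that the Ornstein--Weiss lemma applies and produces $\operatorname{E}_\mu^{(OW)}[\pi,\eta]$ independently of the van Hove net. The Delone-based definition is then compared with $\sup_{\eta\in\mathbb{U}_X}\operatorname{E}_\mu^{(OW)}[\pi,\eta]$ in Theorem \ref{the:OW:entropyindependenceandrelation}, using the inclusion $\eta_{F_i}\supseteq(\eta_K)_{F_i}$, the geometric Lemma \ref{lem:vanHovenets} for $KF_i$, the density bound $\limsup_i|F_i|/\theta(A_i)\leq1/\theta(V)$ (which you do have), and the approximation Lemma \ref{lem:OW:entropylemma1}, whose role is analogous to your conditional-entropy perturbation in Step 2. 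Your Step 2 estimate $H_\mu(\alpha_{A_i\cap\omega'})\leq H_\mu(\beta_{(K^{-1}A_i)\cap\omega})+|A_i\cap\omega'|\psi(\delta)$ is a sound comparison of two Delone sets, but on its own it cannot deliver the theorem: some $\omega$-free, right-invariant auxiliary quantity (such as $H_\mu[\eta_A]$) is needed before the Ornstein--Weiss lemma can do the work you assign to it.
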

	We achieve this independence in Section \ref{sec:entropyandpressure} as follows. As in the context of naive entropy we first find a topological model for our action. With the additional topological structure we then present an alternative approach to Ornstein-Weiss entropy, which does not involve Delone sets.  	
	In order to show independence of the choice of a van Hove net we will apply the following topological version of the Ornstein-Weiss lemma to this alternative approach.
	To state our version of the Ornstein-Weiss lemma, let us denote by $\mathcal{K}(G)$ the set of all compact subsets of $G$ (see Subsection~\ref{subsection:combinatorics} for the relevant definitions concerning real-valued functions on $\mathcal{K}(G)$). 
	\begin{introtheorem}[Ornstein-Weiss lemma; Theorem \ref{the:Ornstein-Weisslemma}]
		Let $\theta$ be a Haar measure on an amenable unimodular locally compact group $G$, and let $f\colon \mathcal{K}(G)\to \mathbb{R}$ be monotone, right-invariant, and subadditive. Then, for every van Hove net $(A_i)_{i\in I}$ in $G$, the limit
		\begin{displaymath}
				\lim_{i\in I}\frac{f(A_i)}{\theta(A_i)}
		\end{displaymath}
		 exists, is finite, and does not depend on the particular choice of a van Hove net $(A_i)_{i\in I}$.
	\end{introtheorem}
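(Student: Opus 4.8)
The plan is to establish the cross-net inequality
\[ \limsup_{i\in I}\frac{f(A_i)}{\theta(A_i)}\leq\liminf_{j\in J}\frac{f(B_j)}{\theta(B_j)}\eqdef\beta \]
for arbitrary van Hove nets $(A_i)_{i\in I}$ and $(B_j)_{j\in J}$ in $G$; taking $(B_j)=(A_i)$ then yields existence of the limit, applying it to a pair of nets and swapping their roles yields independence of the net, and finiteness is immediate once one notes that $f\geq 0$ — subadditivity on $\emptyset=\emptyset\cup\emptyset$ gives $f(\emptyset)\geq 0$, and monotonicity gives $f(K)\geq f(\emptyset)\geq 0$, so (the van Hove property presupposing $\theta(A_i)>0$) every ratio lies in $[0,\infty)$. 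Observe that this route deliberately avoids the quantity $\inf_Kf(K)/\theta(K)$, which the limit need not equal (e.g.\ if $G$ is compact, every van Hove net is eventually $G$ and the limit is $f(G)/\theta(G)$).

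Fix once and for all a symmetric compact identity neighbourhood $W\subseteq G$ with $\theta(W)>0$. For a given $\epsilon>0$ I would first extract good tile shapes from the net $(B_j)$: since $(B_j)$ is van Hove it is eventually $(Q,\delta)$-invariant (in the sense $\theta(\partial_QB_j)<\delta\,\theta(B_j)$) for every compact $Q$ and $\delta>0$, and since $\liminf_jf(B_j)/\theta(B_j)=\beta$ the indices $j$ with $f(B_j)/\theta(B_j)<\beta+\epsilon$ are cofinal; hence for any prescribed $(Q,\delta)$ there is a member of the net that is $(Q,\delta)$-invariant and has $f$-ratio below $\beta+\epsilon$. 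Iterating this and, after a harmless right translation, arranging the identity to lie in each chosen set, one obtains compact sets $K_1,\dots,K_n$ with $f(K_l)/\theta(K_l)<\beta+\epsilon$, with each $K_{l+1}$ as invariant as required relative to $K_1,\dots,K_l$ and to $W$, so that $\{K_1,\dots,K_n\}$ is an $\epsilon$-quasi-tiling family in the Ornstein--Weiss quasi-tiling theory for amenable unimodular locally compact groups, in the form going back to Gromov.

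For $i$ large, $A_i$ is then $\epsilon$-quasi-tiled by right translates of the $K_l$: there are finite sets $C_l\subseteq G$ with $\bigcup_{l,c}K_lc\subseteq A_i$, and cores $K_l'\subseteq K_l$ — taken to be the $W$-interiors $\{x\in K_l:Wx\subseteq K_l\}$, so that $WK_l'\subseteq K_l$ and, by the invariance of $K_l$, $\theta(K_l\setminus K_l')\leq\epsilon\,\theta(K_l)$ — such that the $(K_l'c)_{l,c}$ are pairwise disjoint and $\theta\!\left(\bigsqcup_{l,c}K_l'c\right)\geq(1-\epsilon)\theta(A_i)$. With $R_i\defeq A_i\setminus\bigsqcup_{l,c}K_l'c$ we have $A_i=\left(\bigcup_{l,c}K_lc\right)\cup R_i$, so monotonicity, iterated subadditivity and right-invariance of $f$ give
\[ f(A_i)\leq\sum\nolimits_{l,c}f(K_lc)+f(R_i)=\sum\nolimits_l|C_l|\,f(K_l)+f(R_i), \]
and since $\sum_l|C_l|\theta(K_l)\leq\tfrac{1}{1-\epsilon}\sum_l|C_l|\theta(K_l')\leq\tfrac{1}{1-\epsilon}\theta(A_i)$, the first term is at most $\tfrac{\beta+\epsilon}{1-\epsilon}\theta(A_i)$.

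The remaining task is to bound $f(R_i)$, with $\theta(R_i)\leq\epsilon\,\theta(A_i)$; this is the step with no discrete analogue, where one would otherwise simply use that a set of $\leq\epsilon|F_i|$ points has $f$ at most $\epsilon|F_i|$ times the value of $f$ on a singleton. Instead I would fix a compact neighbourhood $V\supseteq WW$ and cover $R_i$ by right translates of $V$: a maximal family $\{y_1,\dots,y_M\}\subseteq R_i$ with $(Wy_k)_k$ disjoint gives $R_i\subseteq\bigcup_{k\leq M}Vy_k$ and $M\,\theta(W)\leq\theta(WR_i)$, hence $f(R_i)\leq Mf(V)\leq\tfrac{f(V)}{\theta(W)}\,\theta(WR_i)$. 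The crux is that $\theta(WR_i)$ be small relative to $\theta(A_i)$ — which would fail for a thin, fragmented $R_i$ — and this is exactly why the cores were taken to be $W$-interiors: then $W\bigsqcup_{l,c}K_l'c\subseteq\bigcup_{l,c}K_lc\subseteq A_i$, so that $WR_i$ is contained, up to the van Hove term $\partial_WA_i$ and up to the collar $\bigcup_{l,c}(\partial_WK_l')c$ — whose measure is $\lesssim\epsilon\,\theta(A_i)$ by the invariance of the $K_l$, using right-invariance of $\theta$ (i.e.\ unimodularity) to transport the estimate through the right translates — inside $R_i$ itself. Altogether
\[ \frac{f(A_i)}{\theta(A_i)}\leq\frac{\beta+\epsilon}{1-\epsilon}+\frac{f(V)}{\theta(W)}\!\left(c_0\,\epsilon+\frac{\theta(\partial_WA_i)}{\theta(A_i)}\right) \]
with $V,W,c_0$ fixed, so letting $i\to\infty$ along the van Hove net and then $\epsilon\to 0$ proves the cross-net inequality, hence the theorem. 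The main obstacle is the quasi-tiling input in the locally compact unimodular setting — with the extra requirement that the tile shapes be simultaneously near-optimal in $f$-ratio and invariant relative to the fixed neighbourhood $W$ — together with the remainder estimate. The latter is where the genuinely non-discrete difficulties concentrate: one must keep the uncovered set small in measure while preventing it from fragmenting, and one must reconcile the right-invariance of $f$ with the left thickening $\partial_K(\cdot)=K\overline{(\cdot)}\cap K\overline{G\setminus(\cdot)}$ of the van Hove condition — which is precisely where unimodularity is used.
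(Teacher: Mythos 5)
Your overall architecture matches the paper's proof of Theorem \ref{the:Ornstein-Weisslemma}: prove the cross-net inequality $\limsup_i f(A_i)/\theta(A_i)\leq\liminf_j f(B_j)/\theta(B_j)$ by extracting from the second net finitely many tiles that are simultaneously near-optimal in $f$-ratio and highly invariant, quasi-tile $A_i$ by right translates of these tiles, estimate the tiled part via subadditivity and right-invariance, and bound $f$ on the uncovered remainder by a maximal $W$-separated packing argument (your bound $f(R_i)\leq f(V)\,\theta(WR_i)/\theta(W)$ is exactly the paper's Lemma \ref{lem:controlf}). The genuine gap is the quasi-tiling input, which you invoke as a black box in a form that is not available. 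You require centres $C_l$ with all full tiles $K_lc\subseteq A_i$ such that the translated $W$-interiors $K_l'c$ are pairwise disjoint and cover $(1-\epsilon)\theta(A_i)$. The classical Ornstein--Weiss/Gromov quasi-tilings only give $\epsilon$-disjointness: each translate contains some compact subset of proportion $1-\epsilon$, these subsets being pairwise disjoint, but they need not be boundary collars --- two translates $K_lc$, $K_lc'$ can overlap in a small-measure set lying deep inside the tiles, so shrinking each tile to its $W$-interior does not produce disjointness, and the covering property you assert for $\bigsqcup_{l,c}K_l'c$ does not follow from the cited theory. Worse, for non-discrete unimodular locally compact groups even the basic quasi-tiling statement (with tiles from a prescribed cofinal subnet, contained in the target set) is precisely what the paper identifies as missing from the literature and has to prove: Proposition \ref{pro:quasitiling} is a strengthened quasi-tiling whose additional conclusion $\theta(R)+\theta(\partial_K R)\leq\epsilon\theta(A)$ supplies exactly the control of the thickened remainder that your argument needs. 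So the step you outsource is the actual content of the theorem in the non-discrete setting, and the strengthened form in which you assert it is unsupported.

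Two further points. Your finiteness claim is not valid as stated: nonnegativity of $f$ bounds the ratios only from below, and to know that the limit (and your $\beta$) is finite you need an a priori upper bound of the form $f(\overline{A})\leq c\,\theta(K\overline{A})$; this is Lemma \ref{lem:controlf}, and it is what your own packing argument yields when applied to $A_i$ itself, combined with $\theta(KA_i)/\theta(A_i)\to 1$ from Lemma \ref{lem:pre:vanHovenets} --- the paper needs this already to start ($\lambda\leq c<\infty$). Also, your collar idea can be salvaged without disjoint shrunk cores: with a standard $\epsilon$-disjoint quasi-tiling whose tiles lie in $A_i$ and $R_i\defeq A_i\setminus\bigcup_{l,c}K_lc$, one still gets $WR_i\subseteq R_i\cup\partial_W A_i\cup\bigcup_{l,c}W(\partial_W K_l)c$ with $W\partial_W K_l\subseteq\partial_{WW}K_l$ small by the invariance of the tiles; but this again presupposes the quasi-tiling proposition for amenable unimodular locally compact $G$, which the paper establishes via the filling machinery (Lemmas \ref{lem:constructionepsilondisjointfamilies}--\ref{lem:fillinglemma}) and an induction controlling the invariance of the successive remainders.
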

	The Ornstein-Weiss lemma is based on the quasi-tiling techniques developed by D.\ Ornstein and B.\ Weiss in \cite{ornstein1987entropy}. In \cite{gromov1999topological} M.\ Gromov gives a brief sketch of a proof, which is worked out in detail in  \cite{krieger2007lemme, krieger2010ornstein, huang2011local, ceccherini2014analogue}. A proof of the Ornstein-Weiss lemma is also contained in the appendix of \cite{lindenstrauss2000mean}.
	Here we do not intend to list all important references on this behalf and recommend  \cite{hauser2020relative} for further historical comments. 
	However, note that none of these proofs (aside from \cite{ornstein1987entropy} and the brief sketch in \cite{gromov1999topological}) considers non-discrete topological groups. 	
	In \cite{hauser2020relative} this result was then extended to certain non-discrete topological groups studied in the context of aperiodic order but it remained open, whether the result holds for all amenable unimodular locally compact groups. 
	For a proof of the full statement, we show and use a slight improvement (Proposition \ref{pro:quasitiling}) of the quasi-tiling results  of \cite{ornstein1987entropy} in Section \ref{sec:OWlemma}. We note that the claim in \cite{gromov1999topological} is even stronger than our version: we did not succeed in avoiding the assumption of monotonicity in our proof, and it remains open whether this is possible.  
	
	As another application of these ideas we will furthermore define and study the \emph{Ornstein-Weiss topological pressure} of a continuous action $\pi$ of an amenable unimodular locally compact group $G$ on a compact Hausdorff space $X$ and $f\in C(X)$ as follows. 
	For a finite open cover $\mathcal{U}$ and a compact subset $A\subseteq G$ we denote 
	\begin{displaymath}
		P_f(\mathcal{U}) \, \defeq \, \log\left(\sum\nolimits_{U\in \mathcal{U}}\sup\nolimits_{x\in U}e^{f(x)}\right)
	\end{displaymath} and
	$f_{A}(x) \defeq \int_{A}f(g.x)d\theta(g)$. 
	For a finite subset $F\subseteq G$ we furthermore consider the common refinement $\mathcal{U}_F \defeq \bigvee_{g\in F}\{g^{-1}(U) \mid U\in \mathcal{U}\}$. 
	We define the \emph{Ornstein-Weiss topological pressure} of $\pi$ with respect to $f$ as 
	\[\operatorname{p}_f^{(OW)}(\pi) \, \defeq \, \sup_{\mathcal{U}}\limsup_{i\in I}\frac{P_{f_{A_i}}(\mathcal{U}_{A_i\cap \omega})}{\theta(A_i)},\]
	where the supremum is taken over all finite open covers $\mathcal{U}$ of $X$.
	Again we will present a different approach to this notion that does not require the usage of Delone sets and apply the Ornstein-Weiss lemma to it to obtain the following. 
	\begin{introtheorem}
	The Ornstein-Weiss topological pressure of a continuous action of an amenable unimodular locally compact group on a compact Hausdorff space is independent of the choice of a van Hove net and the choice of a Delone set. 
	\end{introtheorem}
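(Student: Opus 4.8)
The plan is to follow the route already used for Ornstein--Weiss entropy: replace the Delone-dependent expression by an equivalent quantity governed by a set function on $\mathcal{K}(G)$ to which Theorem~\ref{the:Ornstein-Weisslemma} applies. Since taking the supremum over finite open covers commutes with everything below, it suffices to fix a finite open cover $\mathcal{U}$ of $X$ and prove that $\limsup_{i}P_{f_{A_i}}(\mathcal{U}_{A_i\cap\omega})/\theta(A_i)$ is in fact a limit whose value is independent of the van Hove net $(A_i)_{i\in I}$ and of the Delone set $\omega$; the theorem then follows by taking the supremum over $\mathcal{U}$. Replacing $f$ by $f-\min_X f$ (which alters $\operatorname{p}_f^{(OW)}(\pi)$ only by the additive constant $\min_X f$, because $(f+c)_A=f_A+c\,\theta(A)$ and $P_{f_A+c\,\theta(A)}=c\,\theta(A)+P_{f_A}$), we may and do assume $f\geq 0$.

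For a compact identity neighbourhood $V$ in $G$, call a finite set $F\subseteq G$ \emph{$V$-separated} if the translates $Vx$, $x\in F$, are pairwise disjoint, and put
\[ c_{\mathcal{U},f,V}(A)\,\defeq\,\sup\left\{P_{f_A}(\mathcal{U}_F)\,\middle\vert\,F\subseteq A\text{ finite and }V\text{-separated}\right\}\qquad(A\in\mathcal{K}(G)). \]
The heart of the matter is to verify that $c_{\mathcal{U},f,V}$ is admissible for the Ornstein--Weiss lemma. It is finite, since $V$-separation forces $|F|\,\theta(V)\leq\theta(VA)<\infty$ (using $\theta(Vx)=\theta(V)$, i.e.\ unimodularity), whence $P_{f_A}(\mathcal{U}_F)\leq|F|\log|\mathcal{U}|+\|f\|_\infty\,\theta(A)$ is bounded over admissible $F$. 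It is monotone, since enlarging $A$ enlarges the set of admissible $F$ and, as $f\geq 0$, only increases $f_A$ pointwise; both can only raise $P_{f_A}(\mathcal{U}_F)$. It is right-invariant: $V$-separation is preserved by right translation, $\mathcal{U}_{Fg}$ is the image of $\mathcal{U}_F$ under the homeomorphism $x\mapsto g^{-1}.x$, and $f_{Ag}(x)=f_A(g.x)$ by unimodularity, so $P_{f_{Ag}}(\mathcal{U}_{Fg})=P_{f_A}(\mathcal{U}_F)$. And it is subadditive: a $V$-separated $F\subseteq A\cup B$ splits as $(F\cap A)\sqcup(F\setminus A)$ into $V$-separated subsets of $A$ and $B$ with $\mathcal{U}_F=\mathcal{U}_{F\cap A}\vee\mathcal{U}_{F\setminus A}$ and $f_{A\cup B}\leq f_A+f_B$, so the elementary bound $P_{g+h}(\mathcal{V}\vee\mathcal{W})\leq P_g(\mathcal{V})+P_h(\mathcal{W})$ gives $P_{f_{A\cup B}}(\mathcal{U}_F)\leq c_{\mathcal{U},f,V}(A)+c_{\mathcal{U},f,V}(B)$. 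Theorem~\ref{the:Ornstein-Weisslemma} now yields that $\gamma_{\mathcal{U},V}\defeq\lim_i c_{\mathcal{U},f,V}(A_i)/\theta(A_i)$ exists, is finite, and does not depend on the van Hove net.

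It remains to squeeze the Delone expression between the numbers $\gamma_{\mathcal{U},V}$. For the \emph{upper estimate} one checks that $A_i\cap\omega$ is, up to $o(\theta(A_i))$ additional translates, an admissible $F$ for a $V$ coming from the uniform discreteness of $\omega$, so that $P_{f_{A_i}}(\mathcal{U}_{A_i\cap\omega})\leq c_{\mathcal{U},f,V}(A_i)+o(\theta(A_i))$ and hence $\limsup_i P_{f_{A_i}}(\mathcal{U}_{A_i\cap\omega})/\theta(A_i)\leq\gamma_{\mathcal{U},V}$. For the \emph{lower estimate}, fix $V$ and a $V$-separated $F\subseteq A_i$; choosing compact $K\ni e$ with $K\omega=G$, every $g\in F$ with $K^{-1}g\cap\omega\subseteq A_i$ satisfies $g\in K\lambda_g$ for some $\lambda_g\in A_i\cap\omega$, whence $g^{-1}(\mathcal{U})$ is refined by $\lambda_g^{-1}(\mathcal{U}_K)$; the remaining elements of $F$ lie in $\partial_K A_i$ and, by $V$-separation, number at most $\theta(V\partial_K A_i)/\theta(V)\leq\theta(\partial_{VK}A_i)/\theta(V)=o(\theta(A_i))$ by the van Hove property. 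Consequently $\mathcal{U}_F$ is refined by $(\mathcal{U}_K)_{A_i\cap\omega}$ joined with $o(\theta(A_i))$ translates of $\mathcal{U}$, so $P_{f_{A_i}}(\mathcal{U}_F)\leq P_{f_{A_i}}((\mathcal{U}_K)_{A_i\cap\omega})+o(\theta(A_i))$ (using $P_\phi(\mathcal{V}\vee\mathcal{W})\leq P_\phi(\mathcal{V})+\log|\mathcal{W}|$). Taking the supremum over $F$ and the $\limsup$ over $i$ gives $\gamma_{\mathcal{U},V}\leq\limsup_i P_{f_{A_i}}((\mathcal{U}_K)_{A_i\cap\omega})/\theta(A_i)\leq\operatorname{p}_f^{(OW)}(\pi)$, the last inequality because $\mathcal{U}_K$ is again a finite open cover of $X$. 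Combining the two estimates and taking suprema over $\mathcal{U}$ and over $V$ (the map $\mathcal{U}\mapsto\mathcal{U}_K$ being monotone, it does not change the supremum over covers) yields $\operatorname{p}_f^{(OW)}(\pi)=\sup_{\mathcal{U}}\sup_{V}\gamma_{\mathcal{U},V}$, an expression mentioning neither $\omega$ nor the van Hove net; by Theorem~\ref{the:Ornstein-Weisslemma} the latter drops out as well, proving the claim.

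I expect the genuine difficulty to be the reconciliation hidden in the upper estimate, together with the passage from the refinement $\mathcal{U}_K$ over the compact set $K$ to an honest finite open cover. The function $c_{\mathcal{U},f,V}$ is, because of the way $f\mapsto f_A$ transforms under translation, naturally \emph{right}-invariant, so the admissible sets $F$ must be right-uniformly spread; the uniform discreteness of a Delone set is, however, a priori a one-sided condition, and for a general (non-SIN) unimodular locally compact group the two sides need not coincide, so a Delone set $\omega$ need not meet the right-sided constraint with a constant uniform in $i$. Bridging this gap --- e.g.\ by carrying the argument out inside a suitable topological model of $\pi$ (as in the measure-entropy case), or by a covering argument trading one side of the condition for the other at the cost of $o(\theta(A_i))$ extra translates --- is the step I would expect to require the most care; the same model should also be what renders $\mathcal{U}_K$ (and the attendant finite-subcover choices) harmless.
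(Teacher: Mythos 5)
Your strategy---applying Theorem~\ref{the:Ornstein-Weisslemma} to the auxiliary set function $c_{\mathcal{U},f,V}$ built from $V$-separated finite subsets and then sandwiching the Delone-sampled pressure between the resulting limits---is genuinely different from the paper's argument, and the parts you actually carry out are sound: the monotonicity, right-invariance, subadditivity and finiteness of $c_{\mathcal{U},f,V}$ check out, and the lower estimate works once the informal cover $\mathcal{U}_K$ is replaced by a finite open cover that $K$-refines $\mathcal{U}$ (Lemma~\ref{lem:nv:Arefining}). The proof is nevertheless incomplete at exactly the point you flag yourself, namely the upper estimate. Right-invariance of $c_{\mathcal{U},f,V}$ forces the separation condition to be by left translates of $V$ (the family $(Vx)_{x\in F}$ disjoint), whereas a Delone set is only assumed $U$-discrete in the sense that $(gU)_{g\in\omega}$ is a disjoint family. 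In a unimodular locally compact group that is not SIN these two conditions are not interchangeable: as $g$ runs through the unbounded sets $A_i\cap\omega$, the conjugates $gUg^{-1}$ need not contain a common identity neighbourhood, so there is no $V$ independent of $i$ for which $A_i\cap\omega$ becomes $V$-separated after discarding only $o(\theta(A_i))$ points, and no argument is offered for the ``additional translates'' claim. Passing to a topological model cannot repair this, since the obstruction lives in $G$, not in $X$. Until this conversion is supplied, the inequality $\operatorname{p}^{(OW)}_{f,\mathcal{A},\omega}(\pi)\le\sup_{\mathcal{U}}\sup_{V}\gamma_{\mathcal{U},V}$, and with it the independence statement, is not established in the stated generality.

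For comparison, the paper's proof never uses the uniform discreteness of $\omega$ at all for the pressure statement: it applies the Ornstein--Weiss lemma to the entourage-scale quantities $A\mapsto P_{f_A}[\eta_A]$, and relates these to the $\omega$-sampled covers via $\eta_{A_i\cap\omega}\supseteq(\eta_K)_{A_i\cap\omega}=\eta_{K(A_i\cap\omega)}$, the asymptotics of Lemma~\ref{lem:vanHovenets} (namely $\theta(K(A_i\cap\omega))/\theta(A_i)\to 1$ and $\theta(K(A_i\cap\omega)\triangle A_i)/\theta(A_i)\to 0$, which absorb the replacement of $f_{K(A_i\cap\omega)}$ by $f_{A_i}$), and a Lebesgue entourage to return from entourages to a prescribed cover $\mathcal{U}$ (Proposition~\ref{pro:OW:pressureintermideate} and Theorem~\ref{the:OW:pressureindependence}). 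This is precisely why the paper's Proposition~\ref{pro:OW:pressureintermideate} is valid for any locally finite, relatively dense $\omega$, with no separation condition and no bound on $|A_i\cap\omega|/\theta(A_i)$ ever needed. If you wish to salvage your route, the cleanest fix is to replace the counting over $V$-separated subsets by such an entourage comparison; as written, your argument is only complete for groups in which the two sides of uniform discreteness can be matched uniformly (for instance SIN, in particular abelian, groups).
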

	We obtain the following version of Goodwyn's theorem. See Theorem \ref{the:OW:Goodwyns} for details. It remains open whether the variational principle holds in this context. 
	
	\begin{introtheorem}
		Let $\pi$ be a continuous action of an amenable unimodular locally compact group $G$ on a compact Hausdorff space $X$. If $\mu$ is a $\pi$-invariant regular Borel probability measure on $X$ and $f\in C(X)$, then \begin{displaymath}
				\operatorname{E}_\mu^{(OW)}(\pi)+\mu(f) \, \leq \, \operatorname{p}_f^{(OW)}(\pi) .
		\end{displaymath} 
	\end{introtheorem}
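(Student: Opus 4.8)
The plan is to adapt the classical proof of Goodwyn's inequality to the present setting: the Shannon entropy of a finite measurable partition together with $\mu(f)$ will be bounded, along the van Hove net, by the pressure of a suitably chosen finite open cover, the link between the two being the finite Gibbs variational inequality. In contrast with the independence results above, the argument here is comparatively elementary and does not seem to require the topological-model or quasi-tiling machinery. Throughout I would fix one Delone set $\omega$ and one van Hove net $(A_i)_{i\in I}$ and use them on both sides of the asserted inequality; this is harmless by the preceding independence theorems. By definition of $\operatorname{E}_\mu^{(OW)}$ it then suffices to prove, for every finite measurable partition $\alpha=\{A_1,\dots,A_k\}$ of $X$, that
\[ \limsup_{i\in I}\frac{H_\mu(\alpha_{A_i\cap\omega})}{\theta(A_i)}+\mu(f)\ \le\ \operatorname{p}_f^{(OW)}(\pi). \]

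Fix $\epsilon>0$. Using regularity of $\mu$ and normality of the compact Hausdorff space $X$, I would choose compact sets $C_j\subseteq A_j$ with $\sum_j\mu(A_j\setminus C_j)$ small, then pairwise disjoint open sets $D_j\supseteq C_j$ with $\sum_j\mu(D_j\setminus C_j)$ small, and set $B_0\defeq X\setminus\bigcup_jC_j$, an open set of small measure. Let $\mathcal{U}\defeq\{\,U_j\defeq D_j\cup B_0 : 1\le j\le k\,\}$; this is a finite open cover of $X$. Let $\beta=\{B_1,\dots,B_k\}$ be obtained from $\mathcal{U}$ by disjointification in this fixed order, $B_j\defeq U_j\setminus\bigcup_{l<j}U_l$; then $B_j\subseteq U_j$, one computes $B_j=C_j$ for $j\ge 2$ and $B_1=D_1\cup B_0$, so each $B_j$ differs from $A_j$ by a set of small measure, whence $H_\mu(\alpha\mid\beta)\le\delta(\epsilon)$ with $\delta(\epsilon)\to0$ as $\epsilon\to0$, by continuity of conditional Shannon entropy in the atoms (the number of atoms being fixed). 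Note that no fineness of $\mathcal{U}$ will be needed.

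Now fix a finite set $F\subseteq G$ and a compact set $A\subseteq G$. By $\pi$-invariance of $\mu$ and a Fubini-type computation one has $\mu(f_A)=\theta(A)\,\mu(f)$, and by subadditivity of conditional Shannon entropy together with $\pi$-invariance, $H_\mu(\alpha_F)\le H_\mu(\beta_F)+|F|\,H_\mu(\alpha\mid\beta)$. Apply the finite Gibbs inequality $-\sum_l p_l\log p_l+\sum_l p_la_l\le\log\sum_l e^{a_l}$ to the partition $\beta_F$, with weights $p_\sigma=\mu(B_\sigma)$ over the nonempty atoms $B_\sigma=\bigcap_{g\in F}g^{-1}B_{\sigma(g)}$ and exponents $a_\sigma=\sup_{x\in B_\sigma}f_A(x)$. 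Since $\sum_\sigma\mu(B_\sigma)\sup_{B_\sigma}f_A\ge\int f_A\,d\mu=\mu(f_A)$, and since the inclusion $B_\sigma\subseteq U_\sigma\defeq\bigcap_{g\in F}g^{-1}U_{\sigma(g)}$ into the corresponding member of $\mathcal{U}_F$ gives $\sup_{B_\sigma}f_A\le\sup_{U_\sigma}f_A$, this yields
\[ H_\mu(\beta_F)+\mu(f_A)\ \le\ \log\sum\nolimits_{\sigma:\,B_\sigma\neq\emptyset}\ \sup\nolimits_{x\in U_\sigma}e^{f_A(x)}. \]
The key point is that $\sigma\mapsto U_\sigma$ is injective on the set of $\sigma$ with $B_\sigma\neq\emptyset$: if $\sigma_1,\sigma_2$ differ at some $g_0$, say (after renaming) $\sigma_1(g_0)<\sigma_2(g_0)$, and $B_{\sigma_2}\neq\emptyset$, then for $y\in B_{\sigma_2}$ one has $g_0.y\in B_{\sigma_2(g_0)}$, hence $g_0.y\notin U_{\sigma_1(g_0)}$ by the disjointification order, whereas $U_{\sigma_1}=U_{\sigma_2}$ would force $g_0.y\in U_{\sigma_1(g_0)}$. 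Consequently the right-hand side above is a subsum of $\sum_{V\in\mathcal{U}_F}\sup_{x\in V}e^{f_A(x)}=e^{P_{f_A}(\mathcal{U}_F)}$, and therefore
\[ H_\mu(\alpha_F)+\mu(f_A)\ \le\ P_{f_A}(\mathcal{U}_F)+|F|\,H_\mu(\alpha\mid\beta). \]

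Finally, specialise to $F=A_i\cap\omega$ and $A=A_i$, divide by $\theta(A_i)$, use $\mu(f_{A_i})=\theta(A_i)\mu(f)$, and take $\limsup_{i\in I}$; invoking the bound $\limsup_{i\in I}|A_i\cap\omega|/\theta(A_i)=:D<\infty$ (a standard consequence of uniform discreteness of $\omega$ and the van Hove property, cf.\ Lemma~\ref{lem:vanHovenetinlattice} and the finiteness of $\operatorname{E}_\mu^{(OW)}$) together with $\limsup(x_i+y_i)\le\limsup x_i+\limsup y_i$, one obtains
\[ \limsup_{i\in I}\frac{H_\mu(\alpha_{A_i\cap\omega})}{\theta(A_i)}+\mu(f)\ \le\ \operatorname{p}_f^{(OW)}(\pi)+D\,\delta(\epsilon). \]
Letting $\epsilon\to0$ and then taking the supremum over $\alpha$ finishes the proof. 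The step I expect to be the main obstacle is the injectivity of $\sigma\mapsto U_\sigma$, i.e.\ arranging that the comparison of the Gibbs sum with $P_{f_A}(\mathcal{U}_F)$ is genuinely termwise; this is what forces one to build the subordinate partition by disjointifying $\mathcal{U}$ in a fixed linear order, so that disjointification commutes with the common refinement over $F$. The remaining ingredients — the regularity approximation, the Gibbs inequality, the identity $\mu(f_A)=\theta(A)\mu(f)$, and the bounded-density estimate — are routine.
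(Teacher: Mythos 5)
Your proof is correct, and its outer skeleton coincides with the paper's: both reduce the theorem to an inequality $H_\mu(\alpha_F)+\mu(f_A)\le \operatorname{P}_{f_A}(\mathcal{U}_F)+|F|\epsilon$ for finite $F\subseteq G$ and compact $A\subseteq G$, and then conclude along the van Hove net using $\mu(f_{A_i})=\theta(A_i)\mu(f)$ together with the density bound $\limsup_i|A_i\cap\omega|/\theta(A_i)\le 1/\theta(V)$ coming from $V$-discreteness (this is exactly how the paper finishes, via $|F_i|\theta(V)=\theta(VF_i)\le\theta(VA_i)$ and Lemma~\ref{lem:vanHovenets}; your pointer to Lemma~\ref{lem:vanHovenetinlattice}, which concerns uniform lattices, is not the right reference, but the estimate itself is fine). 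The genuine difference lies in how the finite-set inequality (the content of Lemma~\ref{lem:Goodwynsmainlemma}) is obtained. The paper controls the error term through Ollagnier's overlap-ratio machinery: it invokes Lemma~\ref{lem:overlapratiobound}, applies the Gibbs inequality (Lemma~\ref{lem:exprefpimadness}) to an arbitrary partition adapted to $\mathcal{U}_F$, and bounds $H_\mu(\alpha_F|\beta)\le|F|\operatorname{R}_\mu(\mathcal{U})$ via auxiliary partitions $\gamma^{(g)}$ adapted to $\mathcal{U}_g$ --- this being the repair of the flaw recorded in Remark~\ref{rem:Ollagnierserror}. You instead build one explicit cover $U_j=D_j\cup B_0$ from inner compact approximations of the atoms and one explicit subordinate partition $\beta$ by disjointifying $\mathcal{U}$ in a fixed linear order, and apply Gibbs to $\beta_F$ itself; the decisive step, the injectivity of $\sigma\mapsto U_\sigma$ on the nonempty atoms of $\beta_F$, is precisely the adaptedness of $\beta_F$ to $\mathcal{U}_F$ that fails for $(\alpha_F,\mathcal{U}_F)$ in general (Remark~\ref{rem:Ollagnierserror}) but is restored by the ordered disjointification, and your argument for it is sound. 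Combined with $H_\mu(\alpha_F|\beta_F)\le\sum_{g\in F}H_\mu(\alpha_g|\beta_g)=|F|\,H_\mu(\alpha|\beta)$ and the standard continuity of conditional entropy in the atoms, this yields the same per-$F$ estimate. As for what each route buys: yours is more self-contained, avoiding the overlap ratio and the appeal to \cite{ollagnier1985ergodic}, and the fact that $\beta$ does not depend on $F$ keeps the conditional-entropy bookkeeping elementary; the paper's formulation isolates a reusable lemma that serves both this theorem and the naive-pressure version (Theorem~\ref{the:Goodwynnaivenetropy}) without change.
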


%
%
%
%
%
%

\section{Preliminaries}

\subsection{General notation}

Let $X$ be a set. The symmetric difference of two subsets $A,B\subseteq X$ will be denoted by $A \triangle B \defeq (A\setminus B)\cup (B\setminus A)$. The cardinality of a finite subset $F\subseteq X$ is denoted by $|F|$. We denote by $Y^X$ the set of all maps from $X$ to another set $Y$. 
We denote by $\mathcal{F}(X)$ the set of all finite subsets of $X$. Furthermore, let $\mathcal{F}_{+}(X) \defeq \mathcal{F}(X)\setminus \{ \emptyset \}$. For finite sets $\mathcal{U}$ and $\mathcal{V}$ of subsets of $X$, we say that $\mathcal{U}$ is \emph{finer than} $\mathcal{V}$ if for each $U\in \mathcal{U}$ there exists $V\in \mathcal{V}$ such that $U\subseteq V$. 
We also write $\mathcal{V}\preceq \mathcal{U}$ in this situation. For two (indexed) families $(A_{i})_{i \in I}$ and $(B_{j})_{j \in J}$ of sets with $I \cap J = \emptyset$, we define $(A_{i})_{i \in I} \sqcup (B_{j})_{j \in J} \defeq (C_{k})_{k \in I \cup J}$ by \begin{displaymath}
	C_{k} \, \defeq \, \begin{cases}
							\, A_{k} & \text{if } k \in I, \\
							\, B_{k} & \text{if } k \in J
						\end{cases} \qquad (k \in I \cup J) .
\end{displaymath} Moreover, if $(A_{i})_{i \in I}$ is a family of sets and $A$ is a set, then we let \begin{displaymath}
	(A_{i})_{i \in I} \sqcup (A) \, \defeq \, (A_{i})_{i \in I} \sqcup (A)_{J} ,
\end{displaymath} where $J$ is a singleton set disjoint from $I$. Now, let $X$ be a topological space. Then we denote by $\mathcal{K}(X)$ the set of all compact subsets of $X$. 
The closure of a subset $A\subseteq X$ is denoted by $\overline{A}$ and we write $\partial A$ for the topological boundary of $A$ in $X$. Finally, let $G$ be a group. For any two subsets $A,B\subseteq G$, we define $AB \defeq \{ab \mid a\in A, \, b\in B\}$. For $A \subseteq G$ and $g \in G$, we let $gA \defeq \{ g \}A$ and $Ag \defeq A\{ g \}$.
	

\subsection{Compact Hausdorff spaces}

Let $X$ be a compact Hausdorff space. Let $C(X)$ denote the space of all
continuous functions on $X$, and let $\mathbb{U}_X$ denote the
\emph{uniformity} of $X$, i.e., the set of all neighbourhoods (in
$X\times X$) of the diagonal $\{(x,x) \mid x\in X\}$. The members of
$\mathbb{U}_X$ are called \emph{entourages} of $X$. Let $\eta \in
\mathbb{U}_X$. Then $\eta \in \mathbb{U}_X$ is said to be
\emph{symmetric} if $\eta=\{(y,x) \mid (x,y)\in \eta\}$, and $\eta$ is
called \emph{open} if $\eta$ is open with respect to the product
topology on $X\times X$. Furthermore, let \begin{displaymath}
        \eta \eta \, \defeq \, \{ (x,z) \in X \times X \mid \exists y
\in X \colon \, (x,y), (y,z) \in \eta \} .
\end{displaymath} For any $x\in X$, we define $\eta[x] \defeq \{y\in X
\mid (y,x)\in \eta\}$. A set $\mathcal{F}$ of subsets of $X$ is said to
be \emph{at scale $\eta$} if each $M\in \mathcal{F}$ satisfies
$M^2\subseteq \eta$. Given an open cover $\mathcal{U}$ of $X$, a
symmetric entourage $\kappa \in \mathbb{U}_X$ is said to be
\emph{Lebesgue} with respect to $\mathcal{U}$ if for each $x\in X$ there
exists $U\in \mathcal{U}$ such that $\kappa[x]\subseteq U$. A standard
argument shows that any open cover of a compact Hausdorff space admits
an open Lebesgue entourage. For further details on uniformities and, in
particular, the notion of \emph{uniform continuity}, the reader is
referred to~\cite{Kelley}.


\subsection{Topological groups}

Let $G$ be a topological group, i.e., a group $G$ equipped with a Hausdorff topology such that the mapping $G\times G \to G, \, (g,h)\mapsto gh^{-1}$ is continuous. The neighborhood filter of the neutral element in $G$ will be denoted by $\mathcal{U}(G)$. 
 As usual, a subset $A \subseteq G$ of a topological group $G$ will be called \emph{precompact} if for any $U \in \mathcal{U}(G)$ there exists a finite subset $F\subseteq G$ such that $A \, \subseteq \, UF$. Note that precompact subsets of a locally compact topological group are characterized by having a compact closure. 

\subsection{Actions of topological groups}
\label{sub:actionsoftopologicalgroups}

For a start, let $G$ be a group and let $X$ be a set. A map $\pi\colon G\to X\times X$ is called an \emph{action (of $G$ on $X$)} if \begin{itemize}
	\item[---\,] for every $g \in G$, the map $\pi^{g} \colon X \to X, \, x \mapsto \pi(g,x)$ is a bijection, and
	\item[---\,] the resulting map $G \to \Sym (X), \, g \mapsto \pi^{g}$ to the full symmetric group $\Sym (X)$, i.e., the group of all bijections from $X$ to $X$, is a group homomorphism. 
\end{itemize} As is customary, if the action $\pi$ is clear from the context, then we also write $g.x \defeq \pi(g,x)$ and $g.A \defeq\pi^{g}(A)$ for $g\in G$, $x\in X$ and $A\subseteq X$.

Now, let $G$ be a topological group. An action $\pi$ of $G$ on a topological space $X$ is called \emph{continuous} if $\pi\colon G\times X\to X$ is continuous with respect to the product topology on $G\times X$. Given a continuous action $\pi$ of $G$ on a compact Hausdorff space $X$, we call a regular Borel probability measure $\mu$ on $X$ \emph{$\pi$-invariant} (or simply \emph{invariant}) if $\mu(g.A) = \mu(A)$ for every $g \in G$ and every measurable subset $A \subseteq X$. Furthermore, an action $\pi$ of $G$ on a probability space $(X,\mu)$ is called \emph{measure preserving} if \begin{itemize}
	\item[$(1)$] $\pi^g \colon X \to X$ is measurable for every $g \in G$,
	\item[$(2)$] $\mu(A)=\mu(g.A)$ for every $g \in G$ and every measurable $A\subseteq X$, and	
	\item[$(3)$] for every measurable subset $A \subseteq X$, the map \begin{displaymath}
	G \, \longrightarrow \, \mathbb{R}_{\geq 0} , \quad g \, \longmapsto \, \mu(A\triangle g.A)
\end{displaymath} is continuous, which is equivalent, by $\pi$ being measure preserving, to requiring that \begin{displaymath}
	\mu(A\triangle g.A) \, \longrightarrow \ 0 \, \text{ as } \, g\to e_G .
\end{displaymath}
\end{itemize}
	Note that (3) is trivially satisfied whenever $G$ is discrete. 
	More generally, if $\pi$ is an action of a locally compact group $G$ that satisfies (1) and (2), then (3) is implied by $\pi\colon G\times X\rightarrow X$ being measurable with respect to the product $\sigma$-algebra on $G\times X$. For a proof of the latter see \cite[Chapter II.\S1, Lemma~1]{ornstein1987entropy}.



\begin{lemma}\label{lem:continuoslymeasurepres}
	Any continuous action $\pi$ on a compact Hausdorff space $X$ equipped with an invariant and regular Borel probability measure $\mu$ is measure preserving. 
\end{lemma}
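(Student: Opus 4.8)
The plan is to verify, one after another, the three defining conditions of a measure preserving action. Conditions $(1)$ and $(2)$ require no work: continuity of $\pi$ makes each $\pi^{g}$ a continuous, hence Borel measurable, self-bijection of $X$, which is $(1)$, while $(2)$ is literally the hypothesis that $\mu$ is $\pi$-invariant. So the whole matter is condition $(3)$. For a Borel set $A\subseteq X$ and $g_{0}\in G$, invariance gives
$|\mu(A\triangle g.A)-\mu(A\triangle g_{0}.A)|\le\mu((A\triangle g.A)\triangle(A\triangle g_{0}.A))=\mu(g.A\triangle g_{0}.A)=\mu((g_{0}^{-1}g).A\triangle A)$,
so it suffices to prove that $\mu(A\triangle g.A)\to 0$ as $g\to e_{G}$, for every Borel set $A$; this is also the reduction recorded in the definition.

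First I would recast the quantity analytically. Since each $\pi^{g}$ is a bijection, the indicator functions satisfy $\mathbf{1}_{g.A}=\mathbf{1}_{A}\circ\pi^{g^{-1}}$, whence $\mu(A\triangle g.A)=\|\mathbf{1}_{A}-\mathbf{1}_{A}\circ\pi^{g^{-1}}\|_{L^{1}(\mu)}$. Because $\mu$ is invariant, $(\pi^{h})_{*}\mu=\mu$ for all $h\in G$, so precomposition with $\pi^{h}$ is an $L^{1}(\mu)$-isometry. A standard $3\varepsilon$-argument then reduces the claim to showing $\|f-f\circ\pi^{g^{-1}}\|_{L^{1}(\mu)}\to 0$ as $g\to e_{G}$ for $f$ in a dense subset of $L^{1}(\mu)$, and I would take this dense subset to be $C(X)$: density of $C(X)$ in $L^{1}(\mu)$ follows from inner and outer regularity of $\mu$ together with Urysohn's lemma (approximate $\mathbf{1}_{A}$ by $\mathbf{1}_{K}$ with $K\subseteq A$ compact, then interpolate a continuous function between $\mathbf{1}_{K}$ and $\mathbf{1}_{U}$ for a suitable open $U\supseteq K$).

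The crucial step is the uniform estimate for continuous $f$: $\|f-f\circ\pi^{g^{-1}}\|_{\infty}\to 0$ as $g\to e_{G}$, which then dominates the $L^{1}$-norm since $\mu(X)=1$. Here I would invoke uniform continuity of $f$ on the compact space $X$: given $\varepsilon>0$, choose a symmetric open entourage $\eta\in\mathbb{U}_{X}$ with $|f(y)-f(x)|<\varepsilon$ whenever $(y,x)\in\eta$. The map $\Phi\colon G\times X\to X\times X$, $(g,x)\mapsto(g.x,x)$, is continuous because $\pi$ is, and $\Phi^{-1}(\eta)$ is open and contains $\{e_{G}\}\times X$; since $X$ is compact, the tube lemma supplies an identity neighbourhood $V$ with $V\times X\subseteq\Phi^{-1}(\eta)$. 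Then for all $g\in V$ and all $x\in X$ we have $g.x\in\eta[x]$, hence $|f(g.x)-f(x)|<\varepsilon$, i.e. $\|f\circ\pi^{g}-f\|_{\infty}\le\varepsilon$; applying this with $g^{-1}$ in place of $g$ over the identity neighbourhood $V^{-1}$ gives the assertion.

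Putting the pieces together: for Borel $A$ and $\varepsilon>0$, pick $f\in C(X)$ with $\|\mathbf{1}_{A}-f\|_{L^{1}(\mu)}<\varepsilon$; then, using the isometry property for the outer two terms, $\|\mathbf{1}_{A}-\mathbf{1}_{A}\circ\pi^{g^{-1}}\|_{L^{1}(\mu)}\le 2\varepsilon+\|f-f\circ\pi^{g^{-1}}\|_{L^{1}(\mu)}$, and the last term tends to $0$ by the previous paragraph, so $\limsup_{g\to e_{G}}\mu(A\triangle g.A)\le 2\varepsilon$; as $\varepsilon$ was arbitrary, $(3)$ holds and $\pi$ is measure preserving. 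I expect the only genuine obstacle to be the uniformity in the crucial step — upgrading pointwise continuity of the action near the identity to a single identity neighbourhood valid simultaneously for all $x\in X$ — which is precisely where compactness of $X$ enters, via the tube lemma.
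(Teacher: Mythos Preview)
Your proof is correct and follows essentially the same route as the paper: approximate $\chi_{A}$ in $L^{1}(\mu)$ by a continuous function via regularity and Urysohn, use compactness of $X$ to get $\|f-f\circ\pi^{g}\|_{\infty}\to 0$ near the identity, and conclude by a $3\varepsilon$ triangle-inequality estimate. The only cosmetic difference is that you spell out the tube-lemma argument for the uniform step, whereas the paper cites~\cite[Theorem~4.17]{eisner2015operator} for it.
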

\begin{proof}
	Consider a Borel measurable subset $A\subseteq X$ and let $\epsilon>0$. By regularity of $\mu$ and Urysohn's lemma, there exists $f\in C(X)$ such that 
	$\Vert\chi_A-f\Vert_1\leq \tfrac{\epsilon}{3}$.
	Thanks to a standard argument (see, e.g.,~\cite[Theorem 4.17]{eisner2015operator}), we find a neighborhood $U$ of the neutral element in $G$ such that 
	$\left\Vert f-f\circ \pi^g\right\Vert_\infty\leq \tfrac{\epsilon}{3}$ for every $g \in U$. 
	In particular, if $g \in U$, then
	\begin{align*}
		\mu(A\triangle (g^{-1}).A)
		&=\|\chi_A-\chi_{(g^{-1}).A}\|_1=\left\|\chi_A-\chi_A\circ \pi^g\right\|_1\\
		&\leq \left\|\chi_A-f\right\|_1
		+
		\left\|f-f\circ \pi^g\right\|_\infty
		+
		\left\|f \circ \pi^g-\chi_A \circ \pi^g\right\|_1
		\leq \epsilon. \qedhere
	\end{align*}
\end{proof}

\subsection{Amenability}\label{subsection:amenability}

We continue with a brief review of the concept of amenability for general topological groups. To this end, let $G$ be a topological group. Then $G$ is called \emph{amenable} if every continuous action of $G$ on a non-empty compact Hausdorff space admits an invariant regular Borel probability measure. Equivalently, $G$ is amenable if and only if every continuous action of $G$ by affine homeomorphisms on a non-void compact convex subset of a locally convex topological vector space has a fixed point. A well-known result by Rickert~\cite[Theorem~4.2]{rickert} asserts that $G$ is amenable if and only if there exists a left-invariant mean on the space of right-uniformly continuous bounded real-valued functions on $G$. For more details on amenability of general topological groups, we refer to~\cite{PestovBook,GrigorchukDeLaHarpe}.

For the purposes of the present work, a characterization of amenability of topological groups in terms of almost invariant finite non-empty subsets (Theorem~\ref{theorem:thin.folner} below) will be relevant. This result is due to~\cite{schneider2016folner} and constitutes a topological version of F\o lner's amenability criterion~\cite{folner1956groups}. To recall the precise statement, we will clarify some convenient notation concerning matchings in bipartite graphs, essentially following~\cite{schneider2016folner}. Let us consider a \emph{bipartite graph}, that is, a triple $\mathcal{B} = (E,F,R)$ consisting of two finite sets $E$ and $F$ and some subset $R \subseteq E \times F$. A \emph{matching} in $\mathcal{B}$ is an injective mapping $\phi \colon D \to F$ such that $D \subseteq E$ and $(x,\phi(x)) \in R$ for all $x \in D$. The \emph{matching number} of $\mathcal{B}$ is defined as \begin{displaymath}
	\match (\mathcal{B}) \defeq \sup \{ \vert D \vert \mid \phi \colon D \to F \textnormal{ matching in } \mathcal{B} \} .
\end{displaymath} With regard to amenability of topological groups, the following type of bipartite graphs and matching numbers has been revealed as relevant in~\cite{schneider2016folner}.

\begin{definition} Let $G$ be a topological group. For $U \in \mathcal{U}(G)$, let us define \begin{displaymath}
	U_{\Rsh} \, \defeq \, \left\{ (x,y) \in G \times G \left\vert \, xy^{-1} \in U \right\} \! . \right.
\end{displaymath} For $E,F \in \mathcal{F}(G)$ and $U \in \mathcal{U}(G)$, we define \begin{displaymath}
	\match_{\Rsh}(E,F,U) \, \defeq \, \match \! \left( E,F, U_{\Rsh} \cap (E \times F) \right) .
\end{displaymath} A net $(F_{i})_{i \in I}$ of non-empty finite subsets of $G$ will be called a \emph{thin F\o lner net} in $G$ if, for every $U \in \mathcal{U}(G)$ and for every $g \in G$, \begin{displaymath}
	\frac{\match_{\Rsh}(F_{i},gF_{i},U)}{\vert F_i \vert} \, \overset{i\in I}{\longrightarrow} \, 1.
\end{displaymath} \end{definition}

\begin{theorem}[\cite{schneider2016folner}, Theorem~4.5]\label{theorem:thin.folner} A topological group is amenable if and only if it admits a thin F\o lner net. \end{theorem}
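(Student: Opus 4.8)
The plan is to prove the two implications of Theorem~\ref{theorem:thin.folner} separately, using Rickert's criterion recalled above as the bridge to amenability: $G$ is amenable if and only if the space $\RUCB(G)$ of bounded right-uniformly continuous real-valued functions on $G$ carries a left-invariant mean. Throughout, to a non-empty finite set $F\subseteq G$ I associate the averaging functional $\mu_F\defeq\frac{1}{|F|}\sum_{x\in F}\delta_x$, regarded as a mean on $\RUCB(G)$.

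\emph{From a thin F\o lner net to amenability.} Let $(F_i)_{i\in I}$ be a thin F\o lner net. The set of means on $\RUCB(G)$ is weak$^\ast$-compact, so the net $(\mu_{F_i})_{i\in I}$ admits a cluster point $\mathfrak m$; I claim $\mathfrak m$ is left-invariant. Fix $f\in\RUCB(G)$ and $g\in G$; since the value of $\mu_{F_i}$ on the left translate $x\mapsto f(gx)$ equals $\mu_{gF_i}(f)$, it suffices to show $\mu_{F_i}(f)-\mu_{gF_i}(f)\to 0$. Given $\epsilon>0$, pick $U\in\mathcal U(G)$ witnessing the right-uniform continuity of $f$ at level $\epsilon$, and for each $i$ choose an injection $\iota_i\colon M_i\to gF_i$ with $M_i\subseteq F_i$, $x\,\iota_i(x)^{-1}\in U$ for all $x\in M_i$, and $|M_i|=\match_{\Rsh}(F_i,gF_i,U)$, which eventually exceeds $(1-\epsilon)|F_i|$ by the thin F\o lner property. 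Pairing $f(x)$ with $f(\iota_i(x))$ on the matched part and bounding the remaining at most $\epsilon|F_i|$ terms on each side by $\|f\|_\infty$ gives $|\mu_{F_i}(f)-\mu_{gF_i}(f)|\le\epsilon+2\epsilon\|f\|_\infty$ eventually; as $\epsilon$ was arbitrary, the difference converges to $0$. Hence $\mathfrak m$ is a left-invariant mean and $G$ is amenable.

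\emph{From amenability to a thin F\o lner net.} I would index the sought net by the directed set of triples $(U,K,\epsilon)$ with $U\in\mathcal U(G)$ symmetric, $K\in\mathcal F(G)$, $\epsilon>0$; it then suffices to construct, for each such triple, a non-empty finite $F\subseteq G$ with $\match_{\Rsh}(F,gF,U)\ge(1-\epsilon)|F|$ for all $g\in K$. Start from a left-invariant mean on $\RUCB(G)$ (Rickert) and weak$^\ast$-approximate it by finitely supported probability measures; a convexity/separation argument in the spirit of Day, applied to the convex set $\{(\,{}_g\nu-\nu\,)_{g\in K}\mid\nu\text{ finitely supported probability measure}\}$, yields a finitely supported probability measure $\nu$ on $G$ that is almost $K$-invariant in the coarse sense appropriate to $\RUCB(G)$ — quantitatively, that the transportation cost $\sup_{A\subseteq G}\bigl(\nu(A)-(g\nu)(UA)\bigr)$ is $<\epsilon$ for every $g\in K$. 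Finally I would discretize $\nu$ into a genuine finite set: clear denominators, and (when $G$ is non-discrete) split each atom into the requisite number of pairwise distinct points lying in a small identity neighbourhood to the left of it — chosen small enough that all conjugates by elements of $K$ remain inside $U$, using joint continuity of multiplication and inversion — so as to obtain a set $F$ with $\mu_F$ close to $\nu$; the classical defect form of Hall's marriage theorem (König's theorem) then identifies $|F|-\match_{\Rsh}(F,gF,U)$ with $|F|\cdot\sup_{A\subseteq F}\bigl(\mu_F(A)-\mu_{gF}(UA)\bigr)$, which is small, giving the required matching bound. When $G$ is discrete, $U=\{e\}$ is admissible, thin F\o lner nets coincide with ordinary F\o lner nets, and Day's classical layer-cake argument applies verbatim.

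The main obstacle is the converse direction, and within it the passage from an almost-invariant measure to an almost-invariant \emph{set}. Two features make this delicate. First, one genuinely cannot obtain $\ell^1$-almost-invariant finite measures in general — that would force the underlying discrete group $G_d$ to be amenable, which fails already for $G=\mathrm{SO}(3)$ — so the $U$-slack built into $\match_{\Rsh}$ is indispensable, and the usual coincidence of weak$^\ast$ and norm closures of convex sets in a dual Banach space is unavailable; one must carry the $U$-dependent (transportation) notion of closeness through the convexity argument. Second, converting an almost-invariant measure into an almost-invariant set requires the correct $U$-twisted marriage theorem for $\match_{\Rsh}$ together with a discretization that respects $U$-neighbourhoods under the $K$-translates, and I expect the bookkeeping relating $\match_{\Rsh}$, the transportation pseudometric on finitely supported measures, and this discretization to be the technical heart of the proof.
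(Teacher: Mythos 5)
First, a point of reference: the paper does not prove this statement at all — Theorem~\ref{theorem:thin.folner} is quoted from \cite{schneider2016folner}, so your attempt can only be compared with that reference, whose overall architecture (invariant mean $\to$ almost invariant finitely supported measures in a $U$-relaxed sense $\to$ Hall-type matching $\to$ F\o lner-type sets) your plan does mirror. Your first implication (thin F\o lner net $\Rightarrow$ amenable) is correct and essentially complete: the identity $\mu_{F_i}(f(g\,\cdot))=\mu_{gF_i}(f)$, the pairing of $f$-values along a maximum $U$-matching, the bound $\epsilon+2\epsilon\|f\|_\infty$ on the defect, and a weak$^*$ cluster point of the $\mu_{F_i}$ give a left-invariant mean on $\RUCB(G)$, which suffices by Rickert's criterion. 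Likewise, the combinatorial ingredients you cite for the converse are sound as stated: with $U$ symmetric, the defect form of Hall's theorem does identify $|F|-\match_{\Rsh}(F,gF,U)$ with $\max_{A\subseteq F}\left(|A|-|UA\cap gF|\right)$, and the atom-splitting discretization is available in any non-discrete Hausdorff group (every non-empty open set is infinite), with the $K$-conjugation bookkeeping you indicate.

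The genuine gap is in the converse, at exactly the step you defer to ``a convexity/separation argument in the spirit of Day.'' Day's argument separates the convex set $\{(g\nu-\nu)_{g\in K}\}$ from $0$ using functionals from the predual, i.e.\ elements of $\RUCB(G)$; but the quantity you need to make small, $\sup_{A\subseteq G}\left(\nu(A)-(g\nu)(UA)\right)$, is a supremum of affine maps indexed by \emph{arbitrary} subsets $A\subseteq G$, whose indicator-type test objects are not right-uniformly continuous and hence not weak$^*$-continuous for the $\RUCB$-pairing. So knowing that $0$ lies in the weak$^*$ closure of the convex set does not, by any formal closure argument, yield a finitely supported $\nu$ with small $U$-transport discrepancy — and, as your own $\mathrm{SO}(3)$ remark shows, the total-variation surrogate for which the classical closure argument would work is genuinely false. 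Bridging this requires an additional idea that your sketch does not supply: one must convert the set-indexed transport functional into a family of $\RUCB$ test functions, e.g.\ by replacing $U$ with a right-invariant continuous pseudometric (Birkhoff--Kakutani) and proving a Kantorovich--Rubinstein-type duality that controls the $U$-discrepancy, up to a controlled thickening of $U$, by suprema over bounded Lipschitz functions, before the marriage-theorem conversion to sets can be invoked. Since the existence of the transport-almost-invariant finitely supported measure is precisely the non-trivial content of the cited theorem, your proposal is a correct high-level plan for the hard direction, but not yet a proof.
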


For later use, let us include the following reformulation of the thin F\o lner property.

\begin{lemma}\label{lemma:folner.permutations} Let $G$ be a topological group. For $F \in \mathcal{F}(G)$, $U \in \mathcal{U}(G)$ and $g \in G$, \begin{displaymath}
		\match_{\Rsh} (F,gF,U) \, = \, \sup\nolimits_{\gamma \in \mathrm{Sym}(F)} \vert \{ x \in F \mid (\gamma(x),gx) \in U_{\Rsh} \} \vert  .
\end{displaymath} \end{lemma}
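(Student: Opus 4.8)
The plan is to prove the identity by showing the two quantities bound each other. Recall that $\match_{\Rsh}(F,gF,U)$ is the supremum of $|D|$ over injections $\phi\colon D\to gF$ with $D\subseteq F$ and $x(\phi(x))^{-1}\in U$ for all $x\in D$, while the right-hand side is a supremum over \emph{bijections} $\gamma$ of $F$ counting the fixed points of the condition $(\gamma(x),gx)\in U_{\Rsh}$, i.e.\ $\gamma(x)(gx)^{-1}\in U$.

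First I would reparametrize the matching side. Given an injection $\phi\colon D\to gF$ realizing $\match_{\Rsh}(F,gF,U)$, write every element of $gF$ as $gy$ for a unique $y\in F$ (using that left translation by $g$ is a bijection), so $\phi$ induces an injection $\psi\colon D\to F$ with $\phi(x)=g\psi(x)$ and $x(g\psi(x))^{-1}\in U$. Setting $\gamma_0\defeq \psi^{-1}$ on $\psi(D)$ (a partial injection $F\to F$), the condition becomes $\gamma_0(z)(gz)^{-1}\in U$ for $z\in\psi(D)$, i.e.\ $(\gamma_0(z),gz)\in U_{\Rsh}$. Since $\gamma_0$ is an injective partial self-map of the finite set $F$, it extends to a permutation $\gamma\in\Sym(F)$ (any partial injection of a finite set to itself is completed to a bijection, e.g.\ by matching up the leftover domain and range arbitrarily), and $\{x\in F\mid (\gamma(x),gx)\in U_{\Rsh}\}\supseteq \psi(D)$, which has cardinality $|D|$. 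This gives ``$\leq$''.

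For the reverse inequality, take any $\gamma\in\Sym(F)$ and let $D\defeq\{x\in F\mid (\gamma(x),gx)\in U_{\Rsh}\}$. Define $\phi\colon D\to gF$ by $\phi(x)\defeq gx$; this is injective because $x\mapsto gx$ is injective, its range lies in $gF$, and for $x\in D$ we have $\gamma(x)(gx)^{-1}\in U$. To match this against the definition of $\match_{\Rsh}$ I need a matching \emph{from a subset of $F$ into $gF$} via $U_{\Rsh}$; here the relevant injection is $x\mapsto \gamma(x)\in F$ composed appropriately — more cleanly, consider $D'\defeq\gamma(D)$ and the injection $\phi'\colon D'\to gF$, $\phi'(\gamma(x))\defeq gx$, which is well-defined and injective on $D'$, satisfies $\phi'(\gamma(x))\in gF$, and $(\gamma(x),\phi'(\gamma(x)))=(\gamma(x),gx)\in U_{\Rsh}$, so $\phi'$ is a matching in $(F,gF,U_{\Rsh}\cap(F\times gF))$ of size $|D'|=|D|$. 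Hence $\match_{\Rsh}(F,gF,U)\geq |D|$, and taking the supremum over $\gamma$ yields ``$\geq$''.

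The argument is essentially bookkeeping, so I do not expect a serious obstacle; the one point requiring a little care is the direction and domain of the bijections — keeping straight that a matching has its domain in $E=F$ and its image in $gF$, whereas the permutation lives entirely on $F$ — and the routine fact that a partial injection of a finite set into itself extends to a full permutation, which is where the hypothesis that both sides of the bipartite graph are the ``same'' set (up to the bijection $x\mapsto gx$) is used. I would state that extension fact in one line and otherwise present the two inclusions as above.
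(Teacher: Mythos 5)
Your proof is correct and follows essentially the same route as the paper: both directions rest on the correspondence $\phi(x)=g\gamma^{-1}(x)$ between (partial) matchings $F\to gF$ and permutations of $F$, with the only cosmetic difference that you extend the partial injection to a permutation on the $F$ side, whereas the paper works with bijections $F\to gF$ from the outset. No gaps; the extension-of-a-partial-injection step you flag is the same routine completion the paper uses implicitly.
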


\begin{proof} ($\leq$) Let $\phi \colon F \to gF$ be a bijection and consider the set $D \defeq \{ x \in F \mid (x,\phi (x)) \in U_{\Rsh} \}$. Of course, $\gamma \colon F \to F, \, x \mapsto \phi^{-1}(gx)$ is a bijection. Note that $\phi(x) = g\gamma^{-1}(x)$ for each $x \in F$. Let $C \defeq \{ x \in F \mid (\gamma (x), gx) \in U_{\Rsh} \}$. For every $x \in F$, we have \begin{align*}
		\gamma^{-1}(x) \in C \ &\Longleftrightarrow \ \left(\gamma \!\left(\gamma^{-1}(x)\right),g\gamma^{-1}(x)\right) \in U_{\Rsh} \ \Longleftrightarrow \ \left(x, g\gamma^{-1}(x)\right) \in U_{\Rsh} \\
		& \Longleftrightarrow \ (x,\phi (x)) \in U_{\Rsh} \ \Longleftrightarrow \ x \in D ,
	\end{align*} which means that $\gamma^{-1}(D) = C$, which in turn implies that $\vert C \vert = \vert D \vert$.
	
	($\geq$) Let $\gamma \in \mathrm{Sym}(F)$ and define $C \defeq \{ x \in F \mid (\gamma(x),gx) \in U_{\Rsh} \}$. We consider the bijection $\phi \colon F \to gF, \, x \mapsto g\gamma^{-1}(x)$ and let $D \defeq \{ x \in F \mid (x,\phi(x)) \in U_{\Rsh} \}$. For every $x \in F$, \begin{align*}
		\gamma(x) \in D \ &\Longleftrightarrow \ (\gamma (x),\phi(\gamma (x))) \in U_{\Rsh} \ \Longleftrightarrow \ (\gamma(x),gx) \in U_{\Rsh} \ \Longleftrightarrow \ x \in C ,
	\end{align*} which means that $\gamma(C) = D$. Hence, $\vert C \vert = \vert D \vert$. \end{proof}

\subsection{Geometric notions}
We continue with a few bits of geometric terminology. Again, let $G$ be a topological group.
A subset $\omega\subseteq G$ is called 
\emph{uniformly discrete} if there exists an identity neighborhood $U \in \mathcal{U}(G)$ such that $(gU)_{g \in \omega}$ is a disjoint family, in which case the set $\omega$ will also be called \emph{$U$-discrete}. 
A subset $\omega \subseteq G$ is called \begin{itemize}
	\item[---]	\emph{relatively dense} if there exists a compact subset $K\subseteq G$ such that $K\omega=G$,
	\item[---]  \emph{Delone} if $\omega$ is uniformly discrete and relatively dense,
	\item[---]	\emph{locally finite} if $\omega\cap K$ is finite for every compact $K\subseteq G$.
\end{itemize} Note that any Delone set in a locally compact topological group is locally finite.  

\subsection{van Hove nets}	 

	Let $G$ be a locally compact topological group. 
	Recall that $G$ admits a \emph{(left) Haar measure}, that is, a non-zero, regular, locally finite Borel measure $\theta$ on $G$ which satisfies $\theta(A)=\theta(gA)$ for all Borel measurable $A\subseteq G$ and all $g\in G$.
	Haar measure is unique up to scaling, i.e., for two Haar measures $\theta$ and $\theta'$ on $G$ there exists a constant $c$ such that $\theta'=c\theta$.	
	As usual, $G$ is called \emph{unimodular} if some (every) Haar measure $\theta$ on $G$ furthermore satisfies $\theta(Ag)=\theta(A)$ for all Borel measurable $A\subseteq G$ and all $g\in G$.
	
	Throughout the manuscript, whenever $G$ is assumed to be a locally compact group, we let $\theta$ denote a fixed Haar measure on $G$. Now, let $G$ be a unimodular locally compact group. For any measurable subset $A\subseteq G$ we have $\theta(A)=\theta(A^{-1})$. 
	We define 
	$\partial_K A \defeq K\overline{A}\cap K\overline{G\setminus A}$ 
	for subsets $K,A\subseteq G$.
	Note that $\partial_K A$ is compact and hence measurable, whenever $K$ is compact and $A$ is precompact. 
	For $\epsilon>0$ and a compact subset $K\subseteq G$ we say that a precompact and measurable subset $A\subseteq G$ of positive Haar measure is 
	\emph{$(\epsilon,K)$-invariant} if 
	\[\alpha(A,K)\defeq\frac{\theta(\partial_K A)}{\theta(A)}<\epsilon.\]	
	Recall from the introduction that a net $(A_i)_{i\in I}$ of compact subsets of $G$ is called \emph{van Hove} if $\lim_{i\in I}\alpha(A_i,K)=0$ for every compact $K\subseteq G$ (where we assume implicitly that $\theta(A_i)>0$ for sufficiently large $i\in I$). A unimodular locally compact group is amenable if and only if it admits a van Hove net \cite{tempelman1984specific, hauser2020relative}.
	With the arguments from \cite[Subsection 2.6]{hauser2020Anote} it is straightforward to obtain the following. We include some comments on the proof for the convenience of the reader.

		\begin{lemma}\label{lem:pre:vanHovenets}
	Let $G$ be a unimodular locally compact group and let $\theta$ be a Haar measure on~$G$. Let $(A_i)_{i\in I}$ be a van Hove net in $G$ and let $K\subseteq G$ be non-empty and compact.
	\begin{itemize}
	\item[(i)] $(KA_i)_{i\in I}$ is a van Hove net which satisfies 
	$\lim_{i\in I}\theta(KA_i)/\theta(A_i)=1$. 
	\item[(ii)] There exists a van Hove net $(B_i)_{i\in I}$ (with the same index set) for which we have $\lim_{i\in I}\theta(B_i)/\theta(A_i)=1$ and which satisfies $KB_i\subseteq A_i$ and $B_i^c\subseteq K^{-1}A_i^c$ for all $i\in I$. 
	\end{itemize}			 
		\end{lemma}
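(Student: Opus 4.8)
The plan is to prove both statements directly from the definition of a van Hove net, using only left-invariance of $\theta$ together with two elementary observations: (a) for $X\subseteq G$ and compact $C,L\subseteq G$ one has $C\,\partial_L X=C\bigl(L\overline X\cap L\overline{G\setminus X}\bigr)\subseteq CL\overline X\cap CL\overline{G\setminus X}=\partial_{CL}X$, so left-translating a thin boundary set by a compact set again lands inside a thin boundary set; and (b) $CD$ is closed whenever $C$ is compact and $D$ is closed in a topological group (a standard fact, proved by a net--subnet argument). Throughout, fix $k_0\in K$.

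For (i), from $k_0A_i\subseteq KA_i$ and left-invariance we get $\theta(KA_i)\ge\theta(k_0A_i)=\theta(A_i)$, so $KA_i$ is compact with $\theta(KA_i)>0$ for large $i$. Writing $K'\defeq K\cup\{e_G\}$, every point of $KA_i\setminus A_i$ lies in $K'\overline{A_i}$ and, being outside the closed set $A_i$, also in $\overline{G\setminus A_i}=e_G\cdot\overline{G\setminus A_i}\subseteq K'\overline{G\setminus A_i}$; hence $KA_i\setminus A_i\subseteq\partial_{K'}A_i$, so $\theta(A_i)\le\theta(KA_i)\le\theta(A_i)+\theta(\partial_{K'}A_i)$, which gives $\theta(KA_i)/\theta(A_i)\to1$. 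For the van Hove property, fix a compact $L\subseteq G$; since $K\overline{A_i}$ is compact, hence closed, we have $\overline{KA_i}\subseteq K\overline{A_i}$, and since $G\setminus KA_i\subseteq G\setminus k_0A_i=k_0(G\setminus A_i)$ we have $\overline{G\setminus KA_i}\subseteq k_0\overline{G\setminus A_i}$. Therefore $\partial_L(KA_i)\subseteq LK\overline{A_i}\cap LK\overline{G\setminus A_i}=\partial_{LK}A_i$, and with $LK$ compact this yields $\theta(\partial_L(KA_i))/\theta(KA_i)\le\theta(\partial_{LK}A_i)/\theta(A_i)\to0$.

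For (ii), I would take $B_i\defeq\{x\in G\mid Kx\subseteq A_i\}=\bigcap_{k\in K}k^{-1}A_i$. This set is closed and contained in the compact set $k_0^{-1}A_i$, hence compact; it satisfies $KB_i\subseteq A_i$ by construction, and $B_i^c=\bigcup_{k\in K}k^{-1}(G\setminus A_i)=K^{-1}A_i^c$, so both requested inclusions hold for every $i\in I$ (including the degenerate case $B_i=\emptyset$, where $B_i^c=G=K^{-1}A_i^c$). Left-invariance gives $\theta(B_i)\le\theta(k_0^{-1}A_i)=\theta(A_i)$. For the matching lower bound, the key point is that a point $x\in k_0^{-1}A_i\setminus B_i$ satisfies $kx\in\partial_L A_i$ for some $k\in K$, where $L\defeq Kk_0^{-1}\cup\{e_G\}$: indeed $kx=(kk_0^{-1})(k_0x)\in Kk_0^{-1}\overline{A_i}\subseteq L\overline{A_i}$, while $kx\notin A_i$ puts $kx\in\overline{G\setminus A_i}\subseteq L\overline{G\setminus A_i}$. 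Hence $k_0^{-1}A_i\setminus B_i\subseteq K^{-1}\partial_L A_i\subseteq\partial_{K^{-1}L}A_i$ by (a), and since $K^{-1}L$ is compact the van Hove property forces $\theta(\partial_{K^{-1}L}A_i)/\theta(A_i)\to 0$; thus $\theta(B_i)=\theta(k_0^{-1}A_i)-\theta(k_0^{-1}A_i\setminus B_i)\ge\theta(A_i)-\theta(\partial_{K^{-1}L}A_i)$, giving $\theta(B_i)/\theta(A_i)\to1$. Finally, $(B_i)_{i\in I}$ is van Hove: for compact $M\subseteq G$, $M\overline{B_i}=MB_i\subseteq Mk_0^{-1}\overline{A_i}$, while $\overline{G\setminus B_i}=\overline{K^{-1}A_i^c}\subseteq K^{-1}\overline{G\setminus A_i}$ (the right-hand side being closed by (b)), so $\partial_M B_i\subseteq M'\overline{A_i}\cap M'\overline{G\setminus A_i}=\partial_{M'}A_i$ with $M'\defeq Mk_0^{-1}\cup MK^{-1}$ compact; dividing by $\theta(B_i)$ and using $\theta(B_i)/\theta(A_i)\to1$ yields $\theta(\partial_M B_i)/\theta(B_i)\to0$.

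I expect the only genuinely delicate points to be the two ``boundary absorption'' facts (a) and (b): that left-translating $\partial_L A_i$ by a compact set stays inside some $\partial_{L'}A_i$, and that $K^{-1}\overline{G\setminus A_i}$ is closed. Everything else -- compactness of the sets $KA_i$ and $B_i$, the inclusions characterising $B_i$, and the final quotient estimates -- is then an immediate consequence together with left-invariance of $\theta$. (Note that only left-invariance of $\theta$ enters, so this particular lemma does not actually use unimodularity, although it is part of the standing hypotheses.)
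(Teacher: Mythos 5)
Your proof is correct and follows essentially the same route as the paper: the paper proves (ii) with the very same set $B_i=\overline{\{g\in G \mid Kg\subseteq A_i\}}$ (the closure being redundant, since, as you note, this set is already closed as $\bigcap_{k\in K}k^{-1}A_i$) and otherwise defers the boundary estimates to the cited references, which are exactly the inclusions $\partial_L(KA_i)\subseteq\partial_{LK}A_i$, $k_0^{-1}A_i\setminus B_i\subseteq\partial_{K^{-1}L}A_i$ and $\partial_M B_i\subseteq\partial_{M'}A_i$ that you verify in detail. Your side remarks (the compact-times-closed fact and that only left invariance of $\theta$ is used) are accurate.
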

\begin{proof}
	It is straightforward to show (i) \cite[Proposition 2.3]{hauser2020relative}. In order to see (ii) we set 
	$B_i:=\overline{\{g\in G;\, Kg\subseteq A_i\}}$ and follow well known arguments as in \cite[Proposition 2.2]{hauser2020Anote} in order to show $\lim_{i\in I}\theta(B_i)/\theta(A_i)=1$ and $KB_i\subseteq A_i$. Furthermore, for $g\in B_i^c$ we have $Kg\cap A_i^c\neq \emptyset$ and hence $g\in K^{-1}A_i^c$. 
\end{proof}

\subsection{Uniform lattices and van Hove nets}
	Let $G$ be a locally compact group. A discrete subgroup $\omega$ of $G$ is called a \emph{uniform lattice} if the quotient $G/ \omega$ is compact, i.e., $\omega$ is a Delone subgroup of $G$. A subset $C\subseteq G$ is said to be \emph{regular} if $\theta(\partial C)=0$ for some (any) left Haar measure $\theta$ on $G$.
	A fundamental domain of a uniform lattice $\omega$ is a Borel measurable set $C\subseteq G$ such that for any $g\in G$ there exist unique $c\in C$ and $v\in \omega$ such that $g=cv$.
	\begin{lemma}
		Any uniform lattice $\omega$ in a unimodular locally compact group $G$ admits a regular and precompact fundamental domain with non-empty interior.  
	\end{lemma}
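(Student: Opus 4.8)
The plan is to build an explicit fundamental domain by gluing finitely many local sections of the quotient map $p\colon G\to G/\omega$, choosing the charts to be $\theta$-continuity sets so that the glued set has $\theta$-negligible boundary.

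First I would fix good charts. Since $\omega$ is uniformly discrete, there is an open, symmetric, precompact $V\in\mathcal{U}(G)$ with $V^{2}\cap\omega=\{e\}$; then for every $g\in G$ the family $(gVv)_{v\in\omega}$ is pairwise disjoint and $p$ restricts to a homeomorphism of $gV$ onto the open set $p(gV)\subseteq G/\omega$ (as $p$ is open and, by $V^{2}\cap\omega=\{e\}$, injective on $gV$). Using Urysohn's lemma (available since $G$ is locally compact Hausdorff) together with the observation that a continuous $[0,1]$-valued function $f$ with compact support has $\theta(\{f=t\})=0$ for all but countably many $t$, I would moreover shrink $V$ so that $\theta(\partial V)=0$, retaining symmetry by first replacing the Urysohn function $f$ by $x\mapsto\min\{f(x),f(x^{-1})\}$. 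Since $\omega$ is relatively dense, $\{p(gV)\mid g\in G\}$ is an open cover of the compact space $G/\omega$; I would pick a finite subcover witnessed by $g_{1},\dots,g_{n}$ and set $\tilde{O}_{i}\defeq g_{i}V$ and $O_{i}\defeq p(\tilde{O}_{i})$.

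Next I would layer these: with $A_{1}\defeq O_{1}$ and $A_{i}\defeq O_{i}\setminus(O_{1}\cup\dots\cup O_{i-1})$ for $i\geq 2$ (a Borel partition of $G/\omega$), put $C\defeq\bigcup_{i=1}^{n}C_{i}$, where $C_{i}\defeq(p|_{\tilde{O}_{i}})^{-1}(A_{i})$. Routine checks then show that $C$ is Borel, that $p|_{C}$ is a bijection onto $G/\omega$ (hence $C$ is a fundamental domain), that $C\supseteq\tilde{O}_{1}$ has non-empty interior, and that $C\subseteq\bigcup_{i}g_{i}V$ is precompact, being a finite union of precompact sets in a locally compact group.

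The one genuinely delicate point is regularity, $\theta(\partial C)=0$. Since $\partial C\subseteq\bigcup_{i}\partial C_{i}$, it suffices to treat each $C_{i}$. As $C_{i}\subseteq\tilde{O}_{i}$ with $\tilde{O}_{i}$ open, the part of $\partial C_{i}$ outside $\tilde{O}_{i}$ lies in $\partial_{G}\tilde{O}_{i}=g_{i}\partial V$, which is $\theta$-null by the choice of $V$ and invariance of $\theta$; and the part of $\partial C_{i}$ inside $\tilde{O}_{i}$ is carried by the homeomorphism $p|_{\tilde{O}_{i}}$ onto $\partial_{O_{i}}A_{i}$, which is contained in $\bigcup_{j<i}\partial_{G/\omega}O_{j}$ (boundary of a Boolean combination of the $O_{j}$). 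Here I would use precompactness of $\tilde{O}_{j}$ to get $\overline{O_{j}}=p(\overline{\tilde{O}_{j}})$, hence $p^{-1}(\partial_{G/\omega}O_{j})\subseteq\bigcup_{v\in\omega}(\partial_{G}\tilde{O}_{j})v$; intersecting with the precompact set $\tilde{O}_{i}$ leaves only finitely many translates of $g_{j}\partial V$ (using that $\omega$ is locally finite), each $\theta$-null. Summing over the finitely many indices gives $\theta(\partial C_{i})=0$, hence $\theta(\partial C)=0$. I expect this last bookkeeping — tracking how boundaries transform under $p$ and under the finite layering, and reducing everything to two-sided translates of $\partial V$ — to be the part requiring the most care, while the fundamental-domain property, precompactness, and non-empty interior are essentially immediate from the construction.
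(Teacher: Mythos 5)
Your proposal is correct and takes essentially the same approach as the paper: your layered pieces $C_i=(p|_{g_iV})^{-1}(A_i)$ are exactly the paper's sets $g_iM\setminus\bigcup_{j<i}g_jM\omega$ (with an open regular $V$ in place of the paper's compact regular $M$), and your regularity argument likewise reduces, via local finiteness of $\omega$, to finitely many two-sided translates of the null set $\partial V$. The only cosmetic differences are the quotient-space formulation and the Urysohn level-set trick where the paper uses pseudometrics together with Froda's theorem to produce a regular neighbourhood of the identity.
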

	\begin{proof}
	We first show that $G$ admits a compact, regular neighbourhood of $e_G$ which satisfies $M^{-1}M\cap \omega=\{e_G\}$. Choose a compact neighbourhood $\hat{M}$ of $e_G$ which satisfies $\hat{M}^{-1}\hat{M}\cap \omega=\{e_G\}$. 
	From \cite[Chapter 6]{Kelley} we know that there exist finitely many continuous pseudometrics $d_1,\dots,d_J$ on $\hat{M}$ and $\epsilon>0$ such that \begin{displaymath}
		\eta \, \defeq \, \bigcap\nolimits_{j=1}^J \! \left. \left\{(x,y)\in \hat{M}^2 \, \right\vert d_j(x,y)<\epsilon \right\} \, \in \, \mathbb{U}_{\hat{M}}
	\end{displaymath} and such that $\eta[e_G]$ is contained in the interior of $\hat{M}$. 
	By a standard argument involving Froda's theorem, for each $j\in \{1,\dots, J\}$ there exists $r_j\in (0,\epsilon)$ such that the closed $d_j$-ball $\overline{B}_{r_j}^{d_j}(e_G)$ is regular. Then $M:=\bigcap_{j=1}^J\overline{B}_{r_j}^{d_j}(e_G)$ is a compact and regular neighbourhood of $e_G$ which satisfies $M^{-1}M\cap \omega=\{e_G\}$.
		
	Let $K\subseteq G$ be a compact subset such that $K\omega=G$. Then $\{gU \mid g\in K\}$ is an open cover of $K$ and thus there exists a finite sequence $(g_n)_{n=1}^N$ such that $\bigcup_{n=1}^N g_nU \supseteq K$. 
	From \cite{morris2015introduction, hauser2020relative} we know that 
	$C\defeq\bigcup_{n=1}^N g_nM \setminus \left(\bigcup_{i=1}^{n-1} g_iM\omega\right)$ is a precompact fundamental domain. Setting $F\defeq\omega\cap (\bigcup_{n=1}^N g_nM)^{-1}(\bigcup_{n=1}^N g_nM)$ we obtain a finite set with 
	$C\defeq\bigcup_{n=1}^N g_nM \setminus \left(\bigcup_{i=1}^{n-1} g_iMF\right)$ and 
	the regularity of $M$ implies $C$ to be regular. Since $M$ has non-empty interior, so does $C$. 
\end{proof}

		

\subsection{Combinatorial properties of set functions}\label{subsection:combinatorics}

\begin{definition} Let $X$ be a set. 
For $k\in \mathbb{N}_{>0}$, a \emph{$k$-cover} of $X$ is a finite family $(C_i)_{i\in I}$ of subsets of $X$ such that \begin{displaymath}
	\forall x \in X \colon \qquad \vert \{i\in I\mid x\in C_i\} \vert \, \geq \, k .
\end{displaymath}
Let $\mathcal{F} \subseteq \mathcal{P}(X)$ be closed under finite unions and binary intersections. A map $f \colon \mathcal{F} \to \mathbb{R}_{\geq 0}$ \begin{itemize}
		\item[---\,] is called \emph{monotone} if $f(E) \leq f(F)$ for all $E, F \in \mathcal{F}$ with $E \subseteq F$,
		\item[---\,] is called \emph{subadditive} if $f(\emptyset) = 0$ and $f(E \cup F) \leq f(E) + f(F)$ for all $E,F \in \mathcal{F}$,
		\item[---\,] \emph{satisfies Shearer's inequality} if, for every $k \in \mathbb{N}_{>0}$, every $F\in \mathcal{F}$ and every $k$-cover $(C_i)_{i\in I}$ of $F$ by members of $\mathcal{F}$, we have $f(F) \leq \frac{1}{k} \sum\nolimits_{i\in I} f(C_{i}),$
		\item[---\,] is called \emph{strongly subadditive} if $f(\emptyset) = 0$ and $f(E \cup F) \leq f(E) + f(F) - f(E \cap F)$ for all $E,F \in \mathcal{F}$.
\end{itemize} \end{definition}

The concept of covering multiplicity originates in the work of \cite{kelley1959measures}. 
The notion of satisfaction of Shearer's inequality stated above is a priori weaker that the one defined in~\cite{downarowicz2015shearer}, but is easily seen to be equivalent for monotone set functions. Also, it is shown in~\cite[Proposition~2.4]{downarowicz2015shearer} that, for monotone set functions, strong subadditivity implies Shearer's inequality, which in turn implies subadditivity.

\section{The Ollagnier lemma}	
\label{sec:Ollagnierlemma}

In this section we will be concerned with a topological version of Ollagnier's ergodic theorem \cite[Proposition 3.1.9]{ollagnier1985ergodic} and with topological and measure-theoretic entropy of continuous action of amenable topological groups. For a start, we briefly agree on some terminology concerning continuity of real-valued set functions.

\subsection{Continuity of set functions}
\label{sub:continuityofsetfunctions}

We are going to address some continuity matters for set functions. More
precisely, we will endow that set of all finite subsets of a topological
group with a suitable topology. For this purpose, let $G$ be a
topological group. For any finite subset $F \subseteq G$ and an identity
neighborhood $U \in \mathcal{U}(G)$, we define \begin{displaymath}
    U[F] \, \defeq \, \{ F' \in \mathcal{F}(G) \mid \exists \phi \colon
F' \to F \text{ bijective } \forall x \in F' \colon \, x\phi(x)^{-1} \in
U \} .
\end{displaymath} It is straightforward to verify that \begin{displaymath}
    \{ T \subseteq \mathcal{F}(G) \mid \forall F \in T \, \exists U \in
\mathcal{U}(G) \colon \, U[F] \subseteq T \} .
\end{displaymath} constitutes a topology on $\mathcal{F}(G)$. Moreover,
for each $F \in \mathcal{F}(G)$, the collection \begin{displaymath}
    \{ U[F] \mid U \in \mathcal{U}(G) \}
\end{displaymath} is a neighborhood basis of $F$ in the resulting
topological space $\mathcal{F}(G)$. Henceforth, whenever the set of all
finite subsets of a topological group is considered as a topological
space, any statement will be referring to this topology. In connection
with the concept of set functions discussed in
Subsection~\ref{subsection:combinatorics}, we note that an important
family of continuous set functions on topological groups is provided by
Lemma~\ref{lem:continuouslympandEntropy}.

For later use, we record some additional observations concerning the topology introduced above and the F\o lner-type matching conditions for amenability discussed in Subsection~\ref{subsection:amenability}.

\begin{lemma}\label{lemma:approximation.by.moving.sets} Let $G$ be any non-discrete topological group. If $U \in \mathcal{U}(G)$ and $E,F \in \mathcal{F}(G)$, then there exists $F' \in U[F]$ such that \begin{displaymath}
		\forall g \in E \setminus \{ e_G \} \colon \quad F' \cap gF' \, = \, \emptyset .
\end{displaymath} \end{lemma}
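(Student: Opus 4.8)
The goal is to approximate a given finite set $F$ by a finite set $F'$ that is "close" to $F$ (in the sense of $U[F]$) but whose translates $gF'$ for $g \in E\setminus\{e_G\}$ are pairwise disjoint from $F'$. The strategy is to replace each point $x\in F$ by a nearby point $x'$ chosen generically inside a small neighborhood of $x$, using non-discreteness of $G$ to guarantee that each such neighborhood is infinite, so that finitely many "bad" constraints can always be avoided.

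First I would fix a symmetric identity neighborhood $V \in \mathcal{U}(G)$ with $VV^{-1}\subseteq U$ (using continuity of the group operations), and shrink it so that the translates $xV$ for $x\in F$ are pairwise disjoint; this is possible since $F$ is finite and $G$ is Hausdorff. Since $G$ is non-discrete, each $xV$ is infinite. Then I would enumerate $F = \{x_1,\dots,x_n\}$ and choose the replacement points $x_1',\dots,x_n'$ one at a time, with $x_j' \in x_j V$ (so that the map $x_j \mapsto x_j'$ witnesses $F' := \{x_1',\dots,x_n'\} \in U[F]$, the disjointness of the $x_jV$ ensuring the map is a bijection). The constraint to impose at step $j$ is that $x_j'$ avoids the finite "forbidden" set consisting, for each $g\in E\setminus\{e_G\}$ and each already-chosen $x_i'$ ($i<j$) and each not-yet-but-constrained index, of the points $g^{-1}x_i'$, $g x_i'$, and — to handle the case $i=j$ within a single translate — the fixed points of left multiplication by $g$ intersected with $x_jV$. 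Concretely, I must ensure $x_j' \neq g x_i'$ and $g x_j' \neq x_i'$ for all $i\le j$ and all $g\in E\setminus\{e_G\}$: for $i<j$ this forbids finitely many explicit points, and for $i=j$ the condition $x_j' = g x_j'$ forces $g = e_G$, contradiction, so that case is automatic. Hence at each step only finitely many points of the infinite set $x_jV$ are forbidden, and a valid choice of $x_j'$ exists.

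After all $n$ choices are made, $F' = \{x_1',\dots,x_n'\}$ satisfies: $|F'| = n = |F|$ (the $x_j'$ are distinct since they lie in disjoint sets $x_jV$), the bijection $x_j'\mapsto x_j$ witnesses $F'\in U[F]$ because $x_j'(x_j)^{-1} \in VV^{-1}\subseteq U$, and for every $g\in E\setminus\{e_G\}$ and every pair $i,j$ we have $x_j' \neq g x_i'$, i.e.\ $F' \cap gF' = \emptyset$. This completes the argument.

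The main obstacle — really the only subtle point — is the diagonal case $i = j$: one must make sure that forbidding $x_j' = g x_i'$ for $i < j$ is not somehow insufficient, i.e.\ that a single point cannot simultaneously lie in $F'$ and $gF'$ via being a $g$-translate of itself. As noted, $x_j' = g x_j'$ implies $g = e_G$, so this never happens; alternatively one can preemptively choose $V$ small enough that $xV \cap g(xV) = \emptyset$ for all $x\in F$, $g\in E\setminus\{e_G\}$, which also kills this case but requires a uniformity/compactness argument to pick such a $V$ uniformly in $x$ (easy since $F$ is finite). Either way, no genuine difficulty arises; the proof is a routine finite greedy selection leveraging that non-discrete Hausdorff groups have no isolated points, hence all identity neighborhoods are infinite.
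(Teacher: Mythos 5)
Your overall strategy is the same as the paper's: a finite greedy selection exploiting that a non-discrete Hausdorff group is perfect, so every non-empty open subset is infinite and a finite forbidden set (the previously chosen points together with their $g$- and $g^{-1}$-translates for $g\in E$) can always be avoided. The paper packages the identical idea as ``take a pair $(F',\phi)$ whose partial matching has maximal domain and extend it''; your handling of the diagonal case ($x_j'=gx_j'$ forces $g=e_G$) and of the off-diagonal pairs is exactly what the paper's avoidance set $F'\cup\bigcup_{g\in E}\bigl(gF'\cup g^{-1}F'\bigr)$ accomplishes.

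However, one step fails as written: you choose $x_j'\in x_jV$ and assert $x_j'x_j^{-1}\in VV^{-1}\subseteq U$. If $x_j'=x_jv$ with $v\in V$, then $x_j'x_j^{-1}=x_jvx_j^{-1}$ is a \emph{conjugate} of $v$ and need not lie in $VV^{-1}$, nor in $U$, in a non-abelian group. Membership in $U[F]$ requires $x_j'x_j^{-1}\in U$, i.e.\ $x_j'\in Ux_j$, so your translate is on the wrong side. The repair is immediate and does not disturb the rest of the argument: either choose $x_j'\in Vx_j$ with $V\subseteq U$ (this is what the paper does, picking the new point inside $Uy$), adapting your Hausdorff separation so that the sets $Vx$, $x\in F$, are pairwise disjoint; or keep right translates but first shrink $V$, using continuity of the finitely many conjugations $v\mapsto xvx^{-1}$ with $x\in F$, so that $xVx^{-1}\subseteq U$ and hence $xV\subseteq Ux$. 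With that correction your proof is complete and essentially coincides with the paper's.
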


\begin{proof} Denote by $\mathcal{Z}$ the set of all pairs $(F',\phi)$ consisting of a finite subset $F' \subseteq G$ and an injective map $\phi \colon F' \to F$ such that $x\phi (x)^{-1} \in U$ for all $x \in F'$ and $F' \cap gF' = \emptyset$ for every $g \in E\setminus \{ e_G \}$. Let $(F',\phi) \in \mathcal{Z}$ such that $|F'| = \sup \{ |F''| \mid (F'',\psi) \in \mathcal{Z} \}$. We claim that $|F'| = |F|$. On the contrary, assume that $|F'| < |F|$. Then there exists $y \in F\setminus \phi(F')$. Since $G$ is a perfect Hausdorff space, every open non-empty subset of $G$ is infinite. Consequently, there exists some \begin{displaymath}
		x \, \in \, Uy \setminus \left( F' \cup \bigcup\nolimits_{g \in E} gF' \cup g^{-1}F' \right) .
	\end{displaymath} We define $F'' \defeq F' \cup \{ x \}$ and $\psi \colon F'' \to F$ such that $\psi|_{F'} = \phi$ and $\psi(x) = y$. We observe that $(F'',\psi)$ is a member of $\mathcal{Z}$. Since $|F'| < |F''|$, this clearly contradicts our hypothesis. Hence, $|F'| = |F|$ and therefore $\phi$ is a bijection. This finishes the proof. \end{proof}	 

\begin{lemma}\label{lemma:translated.neighborhood} Let $G$ be a topological group. If $F \in \mathcal{F}(G)$, $U \in \mathcal{U}(G)$ and $x \in G$, then \begin{displaymath}
		U[Fx] \, = \, \{ F'x \mid F' \in U[F] \} .
\end{displaymath} \end{lemma}	

\begin{proof} Let $F \in \mathcal{F}(G)$, $U \in \mathcal{U}(G)$ and $x \in G$. If $E \in U[Fx]$, then there exists a bijection $\phi \colon E \to Fx$ such that $y\phi(y)^{-1} \in U_{\Rsh}$ for all $y \in E$, and so $\psi \colon Ex^{-1} \to F, \, y \mapsto \phi(yx)x^{-1}$ is a well-defined bijection satisfying \begin{displaymath}
		y\psi(y)^{-1}\! \, = \, y\!\left( \phi(yx)x^{-1} \right)^{-1}\! \, = \, yx\phi(yx)^{-1}\! \, \in \, U
	\end{displaymath} for all $y \in Ex^{-1}$, whence $Ex^{-1} \in U[F]$ and thus \begin{displaymath}
		E \, = \, Ex^{-1}x \, \in \, \{ F'x \mid F' \in U[F] \} .
	\end{displaymath} Conversely, if $F' \in U[F]$, then there exists a bijective map $\phi \colon F' \to F$ with $y\phi(y)^{-1} \in U$ for all $y \in F'$, so that $\psi \colon F'x \to Fx, \, y \mapsto \phi(yx^{-1})x$ is a well-defined bijection satisfying \begin{displaymath}
		y\psi(y)^{-1}\! \, = \, y\!\left( \phi\!\left(yx^{-1}\right)x \right)^{-1}\! \, = \, yx^{-1}\phi\!\left(yx^{-1}\right)^{-1}\! \, \in \, U
	\end{displaymath} for all $y \in F'x$, which shows that $F'x \in U[Fx]$. This completes the proof. \end{proof}

\begin{lemma}\label{lemma:folner.perturbation} Let $G$ be a topological group, let $E, F_{0} \in \mathcal{F}(G)$, and let $U,V \in \mathcal{U}(G)$ such that $V^{-1}=V$ and $VVV \subseteq U$. Let $W \defeq \bigcap_{g \in E \cup \{ e_G \}} g^{-1}Vg$. Then for all $F \in W[F_{0}]$ and $g \in E$, \begin{displaymath}
		\match_{\Rsh} (F,gF,U) \, \geq \, \match_{\Rsh}(F_{0},gF_{0},W) .
\end{displaymath} \end{lemma}

\begin{proof} Let $F \in W[F_{0}]$ and $g \in E$. Fix any bijection $\phi \colon F \to F_{0}$ such that $(x,\phi(x)) \in W_{\Rsh}$ for all $x \in F$. By Lemma~\ref{lemma:folner.permutations}, there is a permutation $\gamma_{0}$ on $F_{0}$ with $\vert D_{0} \vert = \match_{\Rsh}(F_{0},gF_{0},W)$ for \begin{displaymath}
		D_{0} \, \defeq \, \{ x \in F_{0} \mid (\gamma_{0} (x),gx) \in W_{\Rsh} \} .
	\end{displaymath} Consider $\gamma \defeq \phi^{-1} \circ \gamma_{0} \circ \phi \in \Sym (F)$ and let $D \defeq \{ x \in F \mid (\gamma (x),gx) \in U_{\Rsh} \}$. We claim that $\phi^{-1}(D_{0}) \subseteq D$. To prove this, let $x \in \phi^{-1}(D_{0})$. Then $(\gamma_{0} (\phi(x)),g\phi(x)) \in W_{\Rsh}$. Moreover, by our choice of $\phi$, both \begin{displaymath}
		(\gamma(x),\gamma_{0} (\phi(x))) \, = \, (\gamma(x),\phi(\gamma(x))) \, \in \, W_{\Rsh}
	\end{displaymath} and $(x,\phi(x)) \in W_{\Rsh}$, thus $gx (g\phi(x))^{-1} = gx\phi(x)^{-1}g^{-1} \in gWg^{-1} \subseteq V$, i.e., $(g\phi(x),gx) \in V_{\Rsh}$. It follows that $(\gamma (x),gx) \in U_{\Rsh}$, that is, $x \in D$. This proves our claim. Hence, \begin{displaymath}
		\match_{\Rsh}(F,gF,U) \, \stackrel{\ref{lemma:folner.permutations}}{\geq} \, \vert D \vert \, \geq \, \left\lvert \phi^{-1}(D_{0}) \right\rvert \, = \, \vert D_{0} \vert \, = \, \match_{\Rsh}(F_{0},gF_{0},W) . \qedhere
\end{displaymath} \end{proof}	

\subsection{Proof of Ollagnier's lemma for topological groups} We have prepared everything to prove the announced topological version of Ollagnier's convergence theorem.

\begin{theorem}[Ollagnier lemma]
\label{the:ollagnierslemma}
	Let $G$ be an amenable topological group. Furthermore, let $f\colon \mathcal{F}(G)\to \mathbb{R}_{\geq 0}$ be a continuous and right-invariant function satisfying Shearer's inequality. Then, for any thin F\o lner net $(F_i)_{i\in I}$ in $G$, the following limit exists and satisfies
	\[\lim\nolimits_{i\in I}\frac{f(F_i)}{|F_i|} \, = \, \inf\nolimits_{F\in \mathcal{F}_{+}(G)}\frac{f(F)}{|F|}. \]
\end{theorem}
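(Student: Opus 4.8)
The plan is to mimic the classical Ollagnier argument (cf.\ \cite[Proposition~3.1.9]{ollagnier1985ergodic}), replacing the combinatorial manipulation of finite F\o lner sets by the topological matching machinery of Subsection~\ref{subsection:amenability} together with the continuity of $f$ on $\mathcal{F}(G)$. Write $L \defeq \inf_{F \in \mathcal{F}_{+}(G)} f(F)/|F|$; this is a well-defined non-negative real number since $f \geq 0$. The inequality $\liminf_{i} f(F_i)/|F_i| \geq L$ is immediate from the definition of $L$, so the content is to prove $\limsup_{i} f(F_i)/|F_i| \leq L$. Fix $\epsilon > 0$ and choose $F \in \mathcal{F}_{+}(G)$ with $f(F)/|F| < L + \epsilon$; it then suffices to show $\limsup_{i} f(F_i)/|F_i| \leq f(F)/|F| + O(\epsilon)$.

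\textbf{Key steps.}
First I would fix, for the chosen $F = \{g_1,\dots,g_n\}$, an identity neighbourhood $U \in \mathcal{U}(G)$ small enough that $f$ does not vary much under $U$-perturbations of the finite sets that will appear; here continuity of $f$ and the neighbourhood basis $\{U[\cdot]\}$ are used, and one also shrinks $U$ using Lemma~\ref{lemma:folner.perturbation} so that a $U$-matching of $F_i$ into $g F_i$ can be transported to a genuine combinatorial matching after perturbing $F_i$ inside $U[F_i]$. Next, given a thin F\o lner net $(F_i)_{i\in I}$, for large $i$ the matching numbers $\match_{\Rsh}(F_i, g_k F_i, U)/|F_i|$ are all close to $1$ simultaneously for $k = 1,\dots,n$. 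Using Lemma~\ref{lemma:folner.permutations} to view these matchings as permutations, one builds, for all but an $\epsilon$-fraction of the points $x \in F_i$, an injection from $F$ into $F_i$ (sending $g_k \mapsto$ the matched translate near $g_k x$) — equivalently, one produces a family of translates $\{ Fx : x \in S_i \}$, $S_i \subseteq F_i$ with $|F_i \setminus \bigcup_{x \in S_i} F' x|$ small, that covers $F_i$ (up to a small $U$-perturbation of $F_i$) with bounded overlap; pushing the constant of this covering to any fixed $k$ by thinning, one obtains, after a $U$-perturbation $F_i' \in U[F_i]$, a $k$-cover of $F_i'$ by (perturbations of) right-translates of $F$ together with singletons for the leftover points. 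Right-invariance of $f$ gives $f(F'x) = f(F)$ for each translate (and $f(\{pt\}) \leq f(F)$, or better a uniform bound on singletons), and Shearer's inequality applied to this $k$-cover yields
\[
f(F_i') \, \leq \, \frac{1}{k}\left( (\text{number of translates})\cdot f(F) + (\text{leftover})\cdot C \right),
\]
where the number of translates is $\approx |F_i|/|F|$ per layer, i.e.\ $\approx k|F_i|/|F|$ in total, and the leftover term is $o(|F_i|)$ by the F\o lner condition (and the $k$ in the denominator damps the leftover's contribution). Finally, continuity of $f$ converts $f(F_i')$ back to $f(F_i)$ up to $\epsilon|F_i|$, and dividing by $|F_i|$ and letting $i \to \infty$, then $k \to \infty$, then $\epsilon \to 0$, gives $\limsup_i f(F_i)/|F_i| \leq f(F)/|F| \leq L + \epsilon$, hence the claim. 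Independence of the thin F\o lner net is then automatic, since both sides equal $L$.

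\textbf{Main obstacle.}
The hard part will be the bookkeeping in the covering step: turning the approximate matchings $F_i \to g_k F_i$ (which only guarantee that \emph{most} points of $F_i$ are matched, and only up to a $U$-error) into an honest $k$-cover of a genuine finite set $F_i' \in U[F_i]$ by honest right-translates of $F$, with the overlap multiplicity controlled exactly by $k$ and the uncovered part quantitatively $o(|F_i|)$. This requires choosing the perturbation $U$ carefully (so that the $n$ matchings, the self-overlaps of the translates, and the continuity modulus of $f$ are all simultaneously accommodated), and invoking Lemma~\ref{lemma:approximation.by.moving.sets} / Lemma~\ref{lemma:folner.perturbation} / Lemma~\ref{lemma:translated.neighborhood} to make the perturbed translates land, without collision, inside a single perturbed copy $F_i'$ of $F_i$. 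Once this combinatorial geometry is set up, the estimate is just Shearer's inequality plus a limit; I do not expect the limiting arithmetic itself to be delicate.
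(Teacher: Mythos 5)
Your overall skeleton agrees with the paper's proof: reduce to $\limsup_i f(F_i)/|F_i|\leq f(F)/|F|$ for a fixed $F\in\mathcal{F}_+(G)$, choose $U$ by continuity of $f$, use the thin F\o lner property via Lemmas~\ref{lemma:folner.permutations}, \ref{lemma:folner.perturbation}, \ref{lemma:approximation.by.moving.sets}, \ref{lemma:translated.neighborhood} to replace $F_i$ by a nearby set and to upgrade the $U$-matchings into near-translates of $F$ based at the points of that set, then apply Shearer's inequality, right-invariance and continuity. However, the central covering step, which you yourself single out as the main obstacle, is set up incorrectly. The covering multiplicity in Shearer's inequality is \emph{not} a free parameter $k$ to be sent to infinity: the matching data produces, for (almost) every base point $x$ of the perturbed set, one near-copy of $Fx$, so there are roughly $|F_i|$ covering sets and every point is covered with multiplicity exactly $|F|$. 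This exactness is not automatic; it is arranged in the paper by choosing $U$ with $FF^{-1}\cap UU=\{e_G\}$ and by perturbing $F_i$ (Lemma~\ref{lemma:approximation.by.moving.sets}, where non-discreteness enters) so that $F'\cap gF'=\emptyset$ for $g\in F\setminus\{e_G\}$; these choices make the map $(g,x)\mapsto(\text{matched point near }gx)$ injective in $g$, whence each point has exactly $|F|$ preimages in the $x$-direction. Your alternative picture --- a bounded-overlap cover by disjoint ``layers'' of $\approx |F_i|/|F|$ translates each, with the multiplicity ``pushed to any fixed $k$ by thinning'' and $k\to\infty$ at the end --- does not follow from thin F\o lner sets: thinning can only decrease multiplicity, and extracting even one layer of (almost) disjoint translates covering most of $F_i$ is a quasi-tiling statement of Ornstein--Weiss type, which is exactly the heavier machinery the Ollagnier-style argument is designed to avoid.

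Two further bookkeeping errors would surface if one tried to run your version. First, the claimed $1/k$ damping of the leftover is illusory: in a $k$-cover each uncovered point must appear at least $k$ times (e.g.\ as $k$ singleton copies), so the $1/k$ cancels; what actually kills the leftover term is that its cardinality is $o(|F_i|)$ by the F\o lner condition (in the paper this is the estimate $|H_1|\leq \epsilon|F||K|/(3C)$, with the uniform bound $C$ on $f$ of subsets of translates of $F$ supplied by right-invariance). Second, with the correct construction the translate count in your Shearer display should be $\approx|F_i|$ sets of value $\approx f(F)$ divided by the fixed multiplicity $|F|$, yielding $\approx |F_i|\,f(F)/|F|$ directly --- no singletons, no iterated limit in $k$. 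So the idea and the auxiliary lemmas are the right ones, but the proof as proposed has a genuine gap at its key combinatorial step; repairing it leads essentially to the paper's argument with $k=|F|$ fixed.
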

\begin{remark}
	In \cite{ollagnier1985ergodic} a proof of the theorem in the context of discrete amenable groups is presented. We thus assume in our proof w.l.o.g.~that $G$ is a non-discrete amenable topological group. Furthermore, we abbreviate $\diam M:=\sup_{(x,y)\in M^2}|x-y|$ for $M\subseteq \mathbb{R}$. 
\end{remark}

\begin{proof} To prove the desired convergence, it suffices to show that \begin{equation}\tag{1}\label{claim}
	\forall K \in \mathcal{F}_{+}(G) \colon \quad \limsup\nolimits_{i \in I} \frac{f (F_{i})}{\vert F_{i} \vert} \, \leq \, \frac{f (K)}{\vert K \vert} .
\end{equation} For this purpose, let $K \in \mathcal{F}_+(G)$ and $\epsilon > 0$. By right invariance of $f$, we may and will assume that $K$ contains the neutral element of $G$. Since $K$ is finite and $f$ is continuous, there exists $U \in \mathcal{U}(G)$ such that $U^{-1} = U$, $KK^{-1} \cap UU = \{ e_G \}$ and \begin{equation}\tag{2}\label{approximation0}
	\forall K' \subseteq K \colon \quad \diam f (U[K']) \, \leq \, \frac{\epsilon}{3} .
\end{equation} For each $K' \subseteq K$ and every $x \in G$, right invariance of $f$ readily implies that \begin{equation}\tag{3}\label{approximation0.1}
	\diam f(U[K'x]) \, \stackrel{\eqref{lemma:translated.neighborhood}}{=} \, \diam \{ f(K''x) \mid K'' \in U[K'] \} \, = \, \diam f(U[K']) \, \stackrel{\eqref{approximation0}}{\leq} \, \frac{\epsilon}{3} .
\end{equation} Consider $C \defeq \max\{ 1, \sup \{ f (K') \mid K' \subseteq K \} \}$ and note that, by right invariance of $f$, \begin{equation}\tag{4}\label{supremum}
	\sup \{ f(K'x) \mid K' \subseteq K, \, x \in G \} \, \leq \, C .
\end{equation} Let $\tau \defeq 1- \frac{\epsilon}{3C\vert K \vert}$. Furthermore, let $V \in \mathcal{U}(G)$ such that $V^{-1} = V$ and $V^{3} \subseteq U$. Define \begin{displaymath}
	W \, \defeq \, \bigcap\nolimits_{g \in K \cup K^{-1}} g^{-1}Vg .
\end{displaymath} Since $(F_{i})_{i \in I}$ constitutes a thin F\o lner net in $G$, there exists $i_{0} \in I$ such that \begin{equation}\tag{5}\label{folner}
	\forall i \in I , \, i_{0} \leq i \ \forall g \in K \colon \quad \match_{\Rsh} (F_{i},gF_{i},W) \, \geq \, \tau\vert F_{i} \vert .
\end{equation} 
Let $i \in I$ with $i_{0} \leq i$. Since $f$ is continuous, there exists some $W_{0} \in \mathcal{U}(G)$ such that $W_{0} \subseteq W$ and $\diam f (W_{0}[F_{i}]) \leq \frac{\epsilon}{3}$. By Lemma~\ref{lemma:approximation.by.moving.sets}
and since we assume $G$ to be non-discrete, there exists $F \in W_{0}[F_{i}]$ with $F \cap gF = \emptyset$ for every $g \in K\setminus \{ e_G \}$. In particular, we deduce from $\diam f (W_{0}[F_{i}]) \leq \frac{\epsilon}{3}$ \begin{equation}\tag{6}\label{approximation1}
	\vert f(F_{i}) - f(F) \vert \, \leq \, \frac{\epsilon}{3} .
\end{equation} Furthermore, thanks to Lemma~\ref{lemma:folner.perturbation}, assertion~\eqref{folner}, and our choice of $W_{0} \subseteq W$, it follows that $\match_{\Rsh} (F,gF,U) \geq \match_{\Rsh}(F_{i},gF_{i},W)$ whenever $g \in K$. Hence, for each $g \in K\setminus \{ e_G \}$, there exists an injection $\psi_{g} \colon D_{g} \to gF$ such that $D_{g} \subseteq F$, $\vert D_{g} \vert \geq \tau \vert F \vert$, and \begin{equation}\tag{7}\label{matching}
	\forall x \in D_{g} \colon \quad (x,\psi_{g}(x)) \, \in \, U_{\Rsh} .
\end{equation} For each $g \in K\setminus \{ e_G \}$, consider the bijection $\phi_{g} \colon G \to G$ defined by \begin{displaymath}
	\phi_{g}(x) \, \defeq \, \begin{cases}
		\, \psi_{g}^{-1}(x) & \text{if } x \in \psi_{g}(D_{g}) , \\
		\, \psi_{g}(x) & \text{if } x \in D_{g} , \\
		\, x & \text{otherwise}
	\end{cases} \qquad (x \in G) .
\end{displaymath} Put $D_{e_G} \defeq F$, $\psi_{e_G} \defeq \id_{F}$, and $\phi_{e_G} \defeq \id_{G}$. Let \begin{displaymath}
	\gamma \, \colon \, K \times G \, \longrightarrow \, G, \quad (g,x) \, \longmapsto \, \phi_{g}(gx) .
\end{displaymath} 
We now claim that \begin{equation}\tag{8}\label{injective}
	\forall g_{0},g_{1} \in K \, \forall x \in G \colon \quad\gamma (g_{0},x) = \gamma(g_{1},x) \ \Longrightarrow \ g_{0} = g_{1}.
\end{equation} Indeed, if $x \in G$ and $g_{0},g_{1} \in K$ with $\gamma (g_{0},x) = \gamma(g_{1},x)$, then from $U^{-1} = U$ and \begin{displaymath}
	(g_{0}x,\gamma (g_{0},x)) \, = \, (g_{0}x,\phi_{g_{0}}(g_{0}x)) \, \stackrel{\eqref{matching}}{\in} \, U_{\Rsh} , \qquad (g_{1}x,\gamma (g_{1},x)) \, = \, (g_{1}x,\phi_{g_{1}}(g_{1}x)) \, \stackrel{\eqref{matching}}{\in} \, U_{\Rsh}
\end{displaymath} we infer that $(g_{0}x,g_{1}x) \in (UU)_{\Rsh}$ and therefore $g_{0}g_{1}^{-1} = (g_{0}x)(g_{1}x)^{-1} \in UU$, whence $g_{0} = g_{1}$ as $KK^{-1} \cap UU = \{ e_G \}$. Next, we prove that \begin{equation}\tag{9}\label{disjoint}
	\forall y \in G \colon \quad \vert \{ x \in G \mid \exists g \in K \colon \, \gamma (g,x) = y \} \vert \, = \, \vert K \vert .
\end{equation} To see this, let $y \in G$. For each $g \in K$, the map $G \to G, \, x \mapsto \gamma(g,x)$ is a bijection, whence \begin{displaymath}
	\vert \{ x \in G \mid \gamma (g,x) = y \} \vert \, = \, 1 .
\end{displaymath} Since \begin{displaymath}
	\{ x \in G \mid \exists g \in K \colon \, \gamma (g,x) = y \} \, \stackrel{\eqref{injective}}{=} \, \bigcupdot\nolimits_{g \in K} \{ x \in G \mid  \gamma (g,x) = y \} ,
\end{displaymath} it follows that \begin{displaymath}
	\vert \{ x \in G \mid \exists g \in K \colon \, \gamma (g,x) = y \} \vert \, = \, \sum\nolimits_{g \in K} \vert \{ x \in G \mid \gamma (g,x) = y \} \vert \, = \, \vert K \vert .
\end{displaymath} This proves~\eqref{disjoint}. Now, we consider the finite set \begin{displaymath}
	H \, \defeq \, \{ x \in G \mid \gamma (K \times \{x \}) \cap F \ne \emptyset \} \, = \, \bigcup\nolimits_{g \in K} g^{-1}\phi_{g}^{-1}(F) 
\end{displaymath} and note that \begin{equation}\tag{10}\label{cardinality}
	\vert H \vert \, \leq \, \sum\nolimits_{g \in K} \left\lvert g^{-1}\phi_{g}^{-1}(F) \right\rvert \, = \, \vert K \vert \vert F \vert .
\end{equation} We observe that, by~\eqref{disjoint}, \begin{displaymath}
	\forall y \in F \colon \quad \vert \{ x \in H \mid y \in \gamma(K \times \{ x \}) \cap F \} \vert \, = \, \vert K \vert ,
\end{displaymath} 
which entails that
$(\gamma(K\times \{x\})\cap F)_{x\in H}$ is a $|K|$-covering of $F$. 
Since $f$ satisfies Shearer's inequality we observe  \begin{equation}\tag{11}\label{shearer}
	f (F) \, \leq \, \frac{1}{\vert K \vert} \sum\nolimits_{x \in H} f (\gamma (K \times \{x \}) \cap F) .
\end{equation} Moreover, for each $x \in G$, we have $\vert \gamma (K \times \{ x \}) \vert = \vert K \vert$ due to~\eqref{injective}, and then \begin{equation}\tag{12}\label{neighborhood}
	\gamma (K \times \{x \}) \, \in \, U[Kx]
\end{equation} by~\eqref{matching} and $U=U^{-1}$, wherefore \begin{equation}\tag{13}\label{approximation2}
	\vert f(\gamma (K \times \{x \})) - f(Kx) \vert \, \stackrel{\eqref{approximation0.1}}{\leq} \, \frac{\epsilon}{3} .
\end{equation} Also, if $x \in G$, then~\eqref{neighborhood} implies that $\gamma (K \times \{ x\}) \cap F \in U[K'x]$ for some $K' \subseteq K$, so that \begin{displaymath}
	\vert f(\gamma (K \times \{x \}) \cap F) - f(K'x) \vert \, \stackrel{\eqref{approximation0.1}}{\leq} \, \frac{\epsilon}{3}
\end{displaymath} and thus \begin{equation}\tag{14}\label{approximation3}
	f(\gamma (K \times \{x \}) \cap F) \, \stackrel{\eqref{supremum}}{\leq} \, C + \frac{\epsilon}{3} .
\end{equation} Furthermore, since $e_G \in K$ and $\phi_{e_G} = \id_{G}$, \begin{displaymath}
	H_{0} \, \defeq \, \{ x \in G \mid \gamma (K \times \{x \}) \subseteq F \} \, = \, \bigcap\nolimits_{g \in K} g^{-1}\phi_{g}^{-1}(F) \, = \, \bigcap\nolimits_{g \in K} \{ x \in F \mid \phi_{g}(gx) \in F \} .
\end{displaymath} Now, if $g \in K$ and $x \in g^{-1}\psi_{g}(D_{g})$, then $\phi_{g}(gx) = \psi^{-1}_{g}(gx) \in F$. This entails that \begin{displaymath}
	\bigcap\nolimits_{g \in K} g^{-1}\psi_{g}(D_{g}) \, \subseteq \, \bigcap\nolimits_{g \in K} \{ x \in F \mid \phi_{g}(gx) \in F \} \, = \, H_{0} 
\end{displaymath} and, in turn, \begin{align*}
	\vert H_{0} \vert \, &\geq \, \left\lvert \bigcap\nolimits_{g \in K} g^{-1}\psi_{g}(D_{g}) \right\rvert \, = \, \vert F \vert - \left\lvert \bigcup\nolimits_{g \in K} F \setminus g^{-1}\psi_{g}(D_{g}) \right\rvert \\
		& \geq \, \vert F \vert - \sum\nolimits_{g \in K} \left\lvert F \setminus g^{-1}\psi_{g}(D_{g}) \right\rvert \, = \, \vert F \vert - \sum\nolimits_{g \in K} \vert F \setminus D_{g} \vert \\
		& \geq \, \vert F \vert - \vert K \vert (1-\tau)\vert F \vert \, = \, \left(1-\frac{\epsilon}{3C}\right) \vert F \vert .
\end{align*} Hence, considering the set \begin{displaymath}
	H_{1} \, \defeq \, H\setminus H_{0} \, = \, \bigcup\nolimits_{g \in K} \left(g^{-1}\phi_{g}^{-1}(F)\right)\!\setminus \, \bigcap\nolimits_{{g}' \in K} \left(g'^{-1}\phi_{g'}^{-1}(F)\right)\! ,
\end{displaymath} we conclude that \begin{equation}\tag{15}\label{estimate}
	\vert H_{1} \vert \, \leq \, \sum\nolimits_{g \in K} \left\lvert \left(g^{-1}\phi_{g}^{-1}(F)\right)\!\setminus H_{0} \right\rvert \, \leq \, \vert K \vert \left( \vert F\vert - \vert H_{0} \vert \right) \, \leq \, \frac{\epsilon \vert F \vert \vert K \vert}{3C} .
\end{equation} Consequently, it follows that \begin{align*}
	f (F_{i}) \, &\stackrel{\eqref{approximation1}}{\leq} \, f (F) + \frac{\epsilon}{3} \, \stackrel{\eqref{shearer}}{\leq} \, \frac{1}{\vert K \vert} \sum\nolimits_{x \in H} f (\gamma (K \times \{ x \}) \cap F) + \frac{\epsilon}{3} \\
		&= \, \frac{1}{\vert K \vert} \sum\nolimits_{x \in H_{0}} f (\gamma (K \times \{ x \}) \cap F) + \frac{1}{\vert K \vert} \sum\nolimits_{x \in H_{1}} f(\gamma (K \times \{ x \}) \cap F) + \frac{\epsilon}{3} \\
		&= \, \frac{1}{\vert K \vert} \sum\nolimits_{x \in H_{0}} f (\gamma (K \times \{ x \})) + \frac{1}{\vert K \vert} \sum\nolimits_{x \in H_{1}} f(\gamma (K \times \{ x \}) \cap F) + \frac{\epsilon}{3} \\
		&\stackrel{\eqref{approximation2}+\eqref{approximation3}}{\leq} \, \frac{1}{\vert K \vert} \sum\nolimits_{x \in H_{0}} f (Kx) + \frac{\epsilon \vert H_{0} \vert}{3\vert K\vert} + \frac{\left(C+\frac{\epsilon}{3}\right) \vert H_{1} \vert}{\vert K \vert} + \frac{\epsilon}{3} \\
		&= \, \frac{1}{\vert K \vert} \sum\nolimits_{x \in H_{0}} f (K) + \frac{\epsilon \vert H \vert}{3\vert K\vert} + \frac{C \vert H_{1} \vert}{\vert K \vert} + \frac{\epsilon}{3} \\
		&\stackrel{\eqref{cardinality}+\eqref{estimate}}{\leq} \, \frac{f (K)\vert H_{0} \vert}{\vert K \vert} + \frac{\epsilon \vert F \vert}{3} + \frac{\epsilon \vert F \vert}{3} + \frac{\epsilon}{3} \\
		&\leq \, \frac{f (K)\vert H_{0} \vert}{\vert K \vert} + \epsilon \vert F \vert \, \leq \, \frac{f (K)\vert F \vert}{\vert K \vert} + \epsilon \vert F \vert ,
\end{align*} and therefore \begin{displaymath}
	\frac{f (F_{i})}{\vert F_{i} \vert} \, = \, \frac{f (F_{i})}{\vert F \vert} \, \leq \, \frac{f (K)}{\vert K \vert} + \epsilon .
\end{displaymath} This proves~\eqref{claim} and hence the theorem. \end{proof}

\section{The Ornstein-Weiss lemma}
\label{sec:OWlemma}

	As described in the introduction we will next present a proof of the Ornstein-Weiss lemma for non-discrete locally compact groups using quasi-tiling techniques, which is based on  \cite{ornstein1987entropy, gromov1999topological, lindenstrauss2000mean, krieger2007lemme, ceccherini2014analogue}. Throughout this section, let $G$ be an amenable unimodular locally compact group and recall that $\theta$ denotes a Haar measure on $G$.
	
\subsection{On $\epsilon$-disjointness and invariance}
		
	Let $\epsilon>0$. A finite family $(A_i)_{i \in I}$ of  compact subsets of $G$ is called \emph{$\epsilon$-disjoint} if there exists a family of compact subsets $B_{i} \subseteq A_{i}$ $(i \in I)$ such that $(B_i)_{i \in I}$ are pairwise disjoint and $\theta(B_i)>(1-\epsilon)\theta(A_i)$ for each $i \in I$. For every $\epsilon$-disjoint finite family $(A_i)_{i \in I}$ of compact subsets of $G$, it is straightforward to show that
	\[(1-\epsilon)\sum\nolimits_{i \in I} \theta(A_i)\,\leq \, \theta\!\left(\bigcup\nolimits_{i \in I} A_i\right).\]
	The lemmas in this subsection are standard and can for example be found in \cite{ornstein1987entropy, krieger2007lemme}. We include the short proofs for the convenience of the reader. 
	
	\begin{lemma}\label{lem:constructionepsilondisjointfamilies}
	Let $(A_i)_{i\in F}$ be a finite and $\epsilon$-disjoint family of compact subsets of $G$. Then, for every compact $A\subseteq G$ with $\theta(A\cap (\bigcup_{i\in F} A_i))<\epsilon\theta(A)$, the extended finite family $(A_i)_{i \in I} \sqcup (A)$ is $\epsilon$-disjoint. 
\end{lemma}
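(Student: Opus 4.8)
The plan is to verify the $\epsilon$-disjointness of $(A_i)_{i \in I} \sqcup (A)$ directly from the definition by exhibiting the required compact witness sets. Since $(A_i)_{i\in F}$ is $\epsilon$-disjoint, there exist pairwise disjoint compact sets $B_i \subseteq A_i$ with $\theta(B_i) > (1-\epsilon)\theta(A_i)$ for every $i \in F$. The natural candidate for the witness attached to the new member $A$ is $B \defeq \overline{A \setminus \bigcup_{i\in F} A_i}$, or — to keep things cleanly compact and disjoint from the $B_i$ — simply $B \defeq A \setminus \bigcup_{i\in F} A_i$ intersected with a suitable compact set; I will argue that this set has the right measure and can be taken compact.

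First I would record that $B \defeq A \setminus \bigcup_{i \in F} A_i$ satisfies $B \cap B_i \subseteq B \cap A_i = \emptyset$ for each $i \in F$, so the enlarged family $(B_i)_{i\in F} \sqcup (B)$ is pairwise disjoint. Next I would estimate its measure: by hypothesis $\theta\bigl(A \cap (\bigcup_{i\in F} A_i)\bigr) < \epsilon\,\theta(A)$, hence
\begin{displaymath}
	\theta(B) \, = \, \theta(A) - \theta\!\left(A \cap \bigcup\nolimits_{i\in F} A_i\right) \, > \, \theta(A) - \epsilon\,\theta(A) \, = \, (1-\epsilon)\,\theta(A).
\end{displaymath}
The only subtlety is that the definition of $\epsilon$-disjointness asks the witnesses to be \emph{compact}, whereas $B = A \setminus \bigcup_{i\in F} A_i$ is merely the difference of a compact set and finitely many compact sets, so it is open in $A$ but not closed in general. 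This is the one genuine point to handle: by inner regularity of the Haar measure $\theta$ (Haar measure on a locally compact group is a regular Borel measure, as recalled in the preliminaries), the Borel set $B$ contains a compact subset $B' \subseteq B$ with $\theta(B') > (1-\epsilon)\,\theta(A)$, using that $\theta(B) > (1-\epsilon)\theta(A)$ leaves room to approximate from inside. Then $B' \subseteq B \subseteq A$, the family $(B_i)_{i\in F} \sqcup (B')$ consists of pairwise disjoint compact sets, and $\theta(B') > (1-\epsilon)\,\theta(A)$ together with $\theta(B_i) > (1-\epsilon)\,\theta(A_i)$ for $i \in F$ exhibits $(A_i)_{i\in I} \sqcup (A)$ as $\epsilon$-disjoint, completing the argument.

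I do not anticipate a serious obstacle here; the main (minor) care is the compactness of the witness for the new set, which is dispatched by inner regularity of Haar measure. Everything else is bookkeeping: disjointness of $B'$ from the $B_i$ is immediate from $B' \subseteq A \setminus \bigcup_{i\in F} A_i$, and the measure bound for $A$ is exactly the hypothesis $\theta(A \cap \bigcup_{i\in F} A_i) < \epsilon\,\theta(A)$ rearranged. If one prefers to avoid invoking regularity, an alternative is to note that in the quasi-tiling applications the sets $A_i$ and $A$ may be taken with the relevant boundary properties, but the regularity route is cleaner and fully general.
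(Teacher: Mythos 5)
Your proof is correct and follows essentially the same route as the paper: take the witnesses $B_i$ from the $\epsilon$-disjointness of $(A_i)_{i\in F}$, observe that $\theta(A\setminus \bigcup_{i\in F}A_i)>(1-\epsilon)\theta(A)$ by the hypothesis, and use inner regularity of the Haar measure to extract a compact $B'\subseteq A\setminus\bigcup_{i\in F}A_i$ with $\theta(B')>(1-\epsilon)\theta(A)$, which is automatically disjoint from every $B_i$. The paper only makes the regularity step slightly more explicit by fixing a margin $\rho>0$ before approximating from inside.
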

\begin{proof}
	As $(A_i)_{i\in F}$ is $\epsilon$-disjoint there exist disjoint compact subsets $B_i\subseteq A_i$ $(i \in F)$ such that $\theta(B_i)> (1-\epsilon)\theta(A_i)$ for each $i \in F$. 
	By our assumptions, 
	\[\theta\!\left(A\setminus \left(\bigcup\nolimits_{i\in F} A_i\right)\right) \! \, \geq \, \theta(A)-\theta\!\left(\bigcup\nolimits_{i\in F}A_i\right) \! \, > \, (1-\epsilon)\theta(A) . \] Thus, there is $\rho>0$ such that
	$(1-\rho)\theta(A\setminus \bigcup_{i\in F} A_i)>(1-\epsilon)\theta(A)$.
	As $\theta$ is regular, there exists a compact subset $B\subseteq A\setminus \bigcup_{i\in F}A_i$ such that $\theta(B)\geq (1-\rho)\theta(A\setminus \bigcup_{i\in F}A_i)\geq (1-\epsilon)\theta(A)$. For each $i \in I$, the set $B$ is disjoint from $A_i$, thus disjoint from $B_{i}$. Hence, $(A_i)_{i \in I} \sqcup (A)$ is $\epsilon$-disjoint.
\end{proof}

\begin{lemma}\label{lem:alphaandepsilondisjointfamilis}
	Let $K\in \mathcal{K}(G)$ and $\epsilon\in(0,1)$. If $(A_i)_{i\in F}$ is an $\epsilon$-disjoint finite family of non-empty, compact subsets of $G$, then 
	\[\alpha\!\left(\bigcup\nolimits_{i\in F} A_i,K\right)\! \, \leq \, \frac{\max_{i\in F} \alpha(A_i,K)}{1-\epsilon}.\]
\end{lemma}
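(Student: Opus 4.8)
The statement to prove is Lemma~\ref{lem:alphaandepsilondisjointfamilis}: for $K\in\mathcal{K}(G)$, $\epsilon\in(0,1)$, and an $\epsilon$-disjoint finite family $(A_i)_{i\in F}$ of non-empty compact subsets of $G$, one has $\alpha\!\left(\bigcup_{i\in F}A_i,K\right)\le\frac{\max_{i\in F}\alpha(A_i,K)}{1-\epsilon}$. Writing $A\defeq\bigcup_{i\in F}A_i$, the quantity $\alpha(A,K)=\theta(\partial_K A)/\theta(A)$, so the plan is to bound the numerator $\theta(\partial_K A)$ from above and the denominator $\theta(A)$ from below, then combine.

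**The denominator.** The lower bound on $\theta(A)$ is immediate from the observation recorded at the start of Subsection~\ref{sec:OWlemma}: for an $\epsilon$-disjoint finite family, $(1-\epsilon)\sum_{i\in F}\theta(A_i)\le\theta(A)$. So it remains to estimate $\theta(\partial_K A)$.

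**The numerator — the main point.** Here I would use the elementary set-theoretic fact that the $K$-boundary operator is subadditive over unions: $\partial_K\!\left(\bigcup_{i\in F}A_i\right)\subseteq\bigcup_{i\in F}\partial_K A_i$. To see this, note $K\overline{A}=\bigcup_i K\overline{A_i}$, while $\overline{G\setminus A}\subseteq\overline{G\setminus A_i}$ for each $i$ (since $A_i\subseteq A$ implies $G\setminus A\subseteq G\setminus A_i$), so $K\overline{G\setminus A}\subseteq K\overline{G\setminus A_i}$ for every $i$; hence any point of $K\overline{A}\cap K\overline{G\setminus A}$ lies in $K\overline{A_i}$ for some $i$ and in $K\overline{G\setminus A_i}$ for that same $i$, i.e.\ in $\partial_K A_i$. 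Applying monotonicity and subadditivity of $\theta$ gives $\theta(\partial_K A)\le\sum_{i\in F}\theta(\partial_K A_i)=\sum_{i\in F}\alpha(A_i,K)\theta(A_i)\le\left(\max_{i\in F}\alpha(A_i,K)\right)\sum_{i\in F}\theta(A_i)$.

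**Combining.** Putting the two bounds together,
\[
\alpha(A,K)\;=\;\frac{\theta(\partial_K A)}{\theta(A)}\;\le\;\frac{\left(\max_{i\in F}\alpha(A_i,K)\right)\sum_{i\in F}\theta(A_i)}{(1-\epsilon)\sum_{i\in F}\theta(A_i)}\;=\;\frac{\max_{i\in F}\alpha(A_i,K)}{1-\epsilon},
\]
where the cancellation is legitimate because the $A_i$ are non-empty compact sets in a group, hence (having non-empty interior is \emph{not} guaranteed, but) — one must be slightly careful: $\theta$ of a non-empty compact set could in principle be zero. However, $\alpha(A_i,K)$ is only defined (in the paper's convention) for sets of positive Haar measure, and the $\epsilon$-disjointness hypothesis with $\theta(B_i)>(1-\epsilon)\theta(A_i)$ together with $B_i$ compact forces the relevant sets into the positive-measure regime; alternatively discard any $A_i$ with $\theta(A_i)=0$ at the outset, which changes neither side. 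So $\sum_{i\in F}\theta(A_i)>0$ and the division is valid. The only step requiring a moment's thought is the boundary subadditivity inclusion $\partial_K\!\left(\bigcup_i A_i\right)\subseteq\bigcup_i\partial_K A_i$; everything else is bookkeeping with monotonicity of $\theta$ and the $\epsilon$-disjoint mass estimate. I would therefore present the proof in three short lines: the inclusion, the numerator bound, the denominator bound, then the displayed combination above.
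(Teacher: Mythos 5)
Your proof is correct and follows essentially the same route as the paper: the inclusion $\partial_K\bigl(\bigcup_i A_i\bigr)\subseteq\bigcup_i\partial_K A_i$ bounds the numerator by $\bigl(\max_i\alpha(A_i,K)\bigr)\sum_i\theta(A_i)$, and the $\epsilon$-disjointness estimate $(1-\epsilon)\sum_i\theta(A_i)\le\theta\bigl(\bigcup_i A_i\bigr)$ bounds the denominator, after which the sums cancel. The only differences are that you spell out the inclusion the paper labels ``straightforward'' and you flag the positive-measure caveat for the division, both of which are fine.
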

\begin{proof}
	Let us abbreviate $M\defeq\max_{i} \alpha(A_i,K)=\max_i \theta(\partial_K A_i)/\theta(A_i)$. 
	A straightforward argument shows that $\partial_K (\bigcup_{i\in F}A_i)\subseteq \bigcup_{i\in F}\partial_K A_i$, and hence \begin{align*}
		\theta \! \left(\partial_K \! \left(\bigcup\nolimits_{i\in F}A_i\right)\right)
	\! \, &\leq \, \theta\!\left( \bigcup\nolimits_{i\in F}\partial_K A_i\right) \\
	& \leq \, \sum\nolimits_{i\in F}\theta(\partial_K A_i)
	\, = \, \sum\nolimits_{i\in F}\theta(A_i)\frac{\theta(\partial_K A_i)}{\theta(A_i)}
	\, \leq \, M \sum\nolimits_{i\in F}\theta(A_i).
	\end{align*}
	Since the considered family is $\epsilon$-disjoint, we obtain that $(1-\epsilon) \sum_{i\in F}\theta(A_i)\leq \theta\left(\bigcup_{i\in F} A_i\right)$ and thus conclude that
	\[\alpha\!\left(\bigcup\nolimits_{i\in F} A_i,K\right)\! \, = \, \frac{\theta\!\left(\partial_K \bigcup\nolimits_{i\in F} A_i \right)}{\theta\!\left(\bigcup\nolimits_{i\in F} A_i \right)}
	\, \leq \, \frac{M \sum_{i\in F}\theta(A_i)}{(1-\epsilon)\sum_{i\in F}\theta(A_i)}
	\, = \, \frac{M}{1-\epsilon}. \qedhere\]
\end{proof}

\begin{lemma}\label{lem:alphaandsetminus}
	Let $K\in \mathcal{K}(G)$ and $\epsilon>0$. 
	Any precompact and measurable subsets $R,B\subseteq G$ with $0<\theta(B)\leq \theta(R)$ and $\theta(R\setminus B)\geq \epsilon\theta(R)$ satisfy
	\[\alpha(R\setminus B,K)\leq \frac{\alpha(R,K)+\alpha(B,K)}{\epsilon}.\]
\end{lemma}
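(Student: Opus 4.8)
The plan is to reduce the estimate to the purely set-theoretic inclusion
\[
	\partial_K(R\setminus B) \,\subseteq\, \partial_K R \,\cup\, \partial_K B ,
\]
and then divide by $\theta(R\setminus B)$, using the two size hypotheses to control the denominator. Note first that $R\setminus B$ is precompact and measurable with $\theta(R\setminus B)\ge\epsilon\,\theta(R)>0$ (the strict inequality because $\theta(R)\ge\theta(B)>0$), so that $\alpha(R\setminus B,K)$ is indeed defined, and $\partial_K(R\setminus B)$, $\partial_K R$, $\partial_K B$ are all compact, hence measurable.

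To prove the inclusion, I would write $G\setminus(R\setminus B)=(G\setminus R)\cup B$, so that, since closure commutes with finite unions, $\overline{G\setminus(R\setminus B)}=\overline{G\setminus R}\cup\overline{B}$ and therefore $K\overline{G\setminus(R\setminus B)}=K\overline{G\setminus R}\cup K\overline{B}$. On the other hand $R\setminus B\subseteq R$ gives $K\overline{R\setminus B}\subseteq K\overline{R}$, and $R\setminus B\subseteq G\setminus B$ gives $K\overline{R\setminus B}\subseteq K\overline{G\setminus B}$. Now let $x\in\partial_K(R\setminus B)=K\overline{R\setminus B}\cap K\overline{G\setminus(R\setminus B)}$. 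If $x\in K\overline{G\setminus R}$, then $x\in K\overline{R}\cap K\overline{G\setminus R}=\partial_K R$; otherwise $x\in K\overline{B}$, and together with $x\in K\overline{R\setminus B}\subseteq K\overline{G\setminus B}$ this gives $x\in K\overline{B}\cap K\overline{G\setminus B}=\partial_K B$. Hence the inclusion holds, and subadditivity and monotonicity of $\theta$ yield $\theta(\partial_K(R\setminus B))\le\theta(\partial_K R)+\theta(\partial_K B)$.

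It then remains to divide: using $\theta(R\setminus B)\ge\epsilon\,\theta(R)$ and then $\theta(R)\ge\theta(B)$,
\begin{align*}
	\alpha(R\setminus B,K) &\,=\, \frac{\theta(\partial_K(R\setminus B))}{\theta(R\setminus B)} \,\le\, \frac{\theta(\partial_K R)+\theta(\partial_K B)}{\epsilon\,\theta(R)}\\
	&\,\le\, \frac{1}{\epsilon}\left(\frac{\theta(\partial_K R)}{\theta(R)}+\frac{\theta(\partial_K B)}{\theta(B)}\right) \,=\, \frac{\alpha(R,K)+\alpha(B,K)}{\epsilon} .
\end{align*}
The only genuine content is the inclusion $\partial_K(R\setminus B)\subseteq\partial_K R\cup\partial_K B$, and there the point to be careful about is the identity $\overline{G\setminus(R\setminus B)}=\overline{G\setminus R}\cup\overline{B}$ together with the two elementary inclusions $\overline{R\setminus B}\subseteq\overline{R}$ and $\overline{R\setminus B}\subseteq\overline{G\setminus B}$; everything else is routine bookkeeping with Haar measure.
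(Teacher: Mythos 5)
Your proof is correct and follows essentially the same route as the paper: the key inclusion $\partial_K(R\setminus B)\subseteq \partial_K R\cup \partial_K B$ followed by dividing through using $\theta(R\setminus B)\geq\epsilon\,\theta(R)$ and $\theta(B)\leq\theta(R)$. The only difference is that the paper dismisses the inclusion as "a straightforward argument," whereas you spell it out (correctly) via $G\setminus(R\setminus B)=(G\setminus R)\cup B$ and the two elementary closure inclusions.
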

\begin{proof}
	A straightforward argument shows that $\partial_K\left(R\setminus B\right)\subseteq \partial_K R\cup \partial_K B$. Thus, if $\epsilon>0$ and $\theta(R\setminus B)\geq \epsilon\theta(R)$, then we conclude that
	\[\frac{\theta\left(\partial_K \left(R\setminus B\right)\right)}{\theta(R\setminus B)}
	\, \leq \, \frac{1}{\epsilon}\frac{\theta(\partial_K R)+\theta(\partial_K B)}{\theta(R)}
	\, \leq \, \frac{1}{\epsilon}\left(\frac{\theta(\partial_K R)}{\theta(R)}+\frac{\theta(\partial_K B)}{\theta(B)}\right). \qedhere \]
\end{proof}

	\subsection{On $(\epsilon,A)$-fillings}

	Let $A,R\subseteq G$ be measurable subset with positive Haar measure and let $\epsilon>0$. Assume $A$ to be compact. 
	We call $C\subseteq G$ an \emph{$(\epsilon,A)$-filling of $R$} if $AC\subseteq R$ and the family $(Ag)_{g \in C}$ is $\epsilon$-disjoint. 
	If $G$ is discrete, then every precompact subset $R$ of $G$ is finite, and the cardinality of every $(\epsilon,A)$-filling $C$ of $R$ is bounded by $|R|$. 
	The next lemma shows that we can bound the cardinality of such $C$ even without the discreteness assumption.

\begin{lemma}\label{lem:cardinalitybound}
	Let $\epsilon>0$, $A\subseteq G$ be a compact subset of positive Haar measure and $R\subseteq G$ be a precompact, measurable subset. Then every $(\epsilon,A)$-filling $C$ of $R$ satisfies 
	\[|C|\leq \frac{\theta(R)}{(1-\epsilon)\theta(A)}.\] 
	In particular there are $(\epsilon,A)$-fillings of $R$ of finite maximal cardinality. 
\end{lemma}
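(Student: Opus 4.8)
The key point is that $\epsilon$-disjointness of the family $(Ag)_{g \in C}$ gives us $(1-\epsilon)\sum_{g \in C}\theta(Ag) \le \theta\!\left(\bigcup_{g \in C} Ag\right)$, as recorded at the start of the subsection. The plan is to combine this with the hypothesis that $AC \subseteq R$, so that $\bigcup_{g \in C} Ag = AC \subseteq R$ and hence $\theta\!\left(\bigcup_{g \in C} Ag\right) \le \theta(R)$ by monotonicity of $\theta$. Since $G$ is unimodular, $\theta(Ag) = \theta(A)$ for every $g \in C$, so the left-hand side equals $(1-\epsilon)|C|\theta(A)$. Chaining these, $(1-\epsilon)|C|\theta(A) \le \theta(R)$, and dividing by $(1-\epsilon)\theta(A) > 0$ (legitimate since $\epsilon < 1$ is implicit, $\theta(A) > 0$ by assumption) yields $|C| \le \theta(R)/((1-\epsilon)\theta(A))$.

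For the "in particular" clause, one first notes that $\theta(R) < \infty$ since $R$ is precompact (its closure is compact, and $\theta$ is locally finite), so the bound $\theta(R)/((1-\epsilon)\theta(A))$ is a finite real number; consequently the cardinalities of all $(\epsilon,A)$-fillings of $R$ form a bounded set of non-negative integers. This set is non-empty, since the empty set $C = \emptyset$ is trivially an $(\epsilon,A)$-filling of $R$ (the conditions $AC \subseteq R$ and $\epsilon$-disjointness of the empty family are vacuous). A bounded non-empty set of integers attains its maximum, which gives an $(\epsilon,A)$-filling of finite maximal cardinality.

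There is essentially no obstacle here; the only thing to be a little careful about is that $\epsilon$-disjointness as defined refers to compact subsets $B_g \subseteq Ag$, and one should make sure the displayed inequality $(1-\epsilon)\sum \theta(A_i) \le \theta(\bigcup A_i)$ is available in the form needed — but this is exactly the "straightforward" estimate stated immediately after the definition of $\epsilon$-disjointness, applied to the family $(Ag)_{g \in C}$ (which is finite since $C$ is finite, as part of the definition of a filling, or if $C$ is allowed to be infinite a priori, the cardinality bound forces finiteness a posteriori and one may apply the estimate to arbitrary finite subfamilies and pass to the supremum). The unimodularity hypothesis is what makes $\theta(Ag)$ independent of $g$, which is the crucial algebraic input.
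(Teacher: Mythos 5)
Your proof is correct and follows essentially the same route as the paper's: the single chain $\theta(R)\geq\theta(AC)=\theta\bigl(\bigcup_{g\in C}Ag\bigr)\geq(1-\epsilon)\sum_{g\in C}\theta(Ag)=(1-\epsilon)|C|\theta(A)$, using $AC\subseteq R$, the $\epsilon$-disjointness estimate, and unimodularity. Your extra remarks on the finiteness of $\theta(R)$, the empty filling, and the implicit $\epsilon<1$ only spell out what the paper leaves tacit for the ``in particular'' clause.
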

\begin{proof}
	As $(Ag)_{g \in C}$ is an $\epsilon$-disjoint family, we obtain 
	\[\theta(R) \, \geq \, \theta(AC) \, = \, \theta\!\left(\bigcup\nolimits_{g\in C}Ag\right) \, \geq \, (1-\epsilon) \sum\nolimits_{g\in C} \theta(Ag) \, = \, (1-\epsilon)|C|\theta(A). \qedhere \]
\end{proof}

	The idea of the proof of the next lemma is sketched in \cite{gromov1999topological} and given in detail for discrete groups in \cite{krieger2007lemme, krieger2010ornstein, ceccherini2014analogue}. We include a full prove for the convenience of the reader. 
	
\begin{lemma} \label{lem:fillinglemma}
Let $A\subseteq G$ be a compact subset, $R\subseteq G$ be a precompact, measurable subset and assume both sets to have positive Haar measure. Let furthermore $\epsilon\in (0,1)$.  Then, for every $(\epsilon,A)$-filling $C$ of $R$ of finite maximal cardinality, we have
	\[\theta(AC)\,\geq \, \epsilon\left(1-\alpha(R,A^{-1})\right) \theta(R).\]
\end{lemma}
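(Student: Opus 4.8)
The plan is to turn the maximality of $C$ into a pointwise overlap estimate for $AC$, and then to integrate that estimate using the unimodularity of $G$.

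\textbf{Step 1: maximality gives a pointwise bound.} First I would show that $\theta(Ag\cap AC)\geq\epsilon\,\theta(A)$ for every $g\in G$ with $Ag\subseteq R$. If $g\in C$ this is clear, since then $Ag\subseteq AC$ and $\theta(Ag)=\theta(A)\geq\epsilon\,\theta(A)$ as $\epsilon<1$. If $g\notin C$ and we had $\theta(Ag\cap AC)<\epsilon\,\theta(A)=\epsilon\,\theta(Ag)$, then Lemma~\ref{lem:constructionepsilondisjointfamilies} (applied with the family $(Ah)_{h\in C}$ and the compact set ``$A$'' of that lemma taken to be $Ag$) would show that $(Ah)_{h\in C}\sqcup(Ag)$ is $\epsilon$-disjoint; combined with $A(C\cup\{g\})=AC\cup Ag\subseteq R$, this makes $C\cup\{g\}$ an $(\epsilon,A)$-filling of $R$ of strictly larger cardinality, contradicting maximality (recall that fillings are finite by Lemma~\ref{lem:cardinalitybound}).

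\textbf{Step 2: integration.} Put $R_{0}\defeq\{g\in G\mid Ag\subseteq R\}$ (a Haar-measurable set; if one prefers to sidestep any measurability issue, one may work throughout with the open subset $\{g\in G\mid Ag\subseteq\operatorname{int}(R)\}\subseteq R_{0}$, for which every estimate below is unchanged). Note $AC$ is compact, being a finite union of translates of $A$. By Step~1 and Tonelli's theorem,
\begin{displaymath}
	\epsilon\,\theta(A)\,\theta(R_{0}) \, \leq \, \int_{R_{0}}\theta(Ag\cap AC)\,d\theta(g) \, \leq \, \int_{G}\theta(Ag\cap AC)\,d\theta(g) \, = \, \int_{G}\chi_{AC}(x)\!\left(\int_{G}\chi_{A}(xg^{-1})\,d\theta(g)\right)\!d\theta(x).
\end{displaymath}
Since $G$ is unimodular, the substitution $h=xg^{-1}$ (composition of inversion and a right translation, both $\theta$-preserving) shows the inner integral equals $\theta(A)$ for every $x$, so $\int_{G}\theta(Ag\cap AC)\,d\theta(g)=\theta(A)\,\theta(AC)$; dividing by $\theta(A)>0$ yields $\theta(AC)\geq\epsilon\,\theta(R_{0})$.

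\textbf{Step 3: lower bound for $\theta(R_{0})$ and conclusion.} It remains to prove $\theta(R_{0})\geq\theta(R)-\theta(\partial_{A^{-1}}R)$, for then $\theta(AC)\geq\epsilon(\theta(R)-\theta(\partial_{A^{-1}}R))=\epsilon(1-\alpha(R,A^{-1}))\theta(R)$. I would use three elementary facts: (a) $R_{0}\subseteq A^{-1}\overline{R}$, since if $Ag\subseteq R$ and $a\in A$ then $g=a^{-1}(ag)\in A^{-1}\overline{R}$; (b) $\theta(A^{-1}\overline{R})\geq\theta(R)$, since $a^{-1}\overline{R}\subseteq A^{-1}\overline{R}$ for any $a\in A$ and $\theta(a^{-1}\overline{R})=\theta(\overline R)\geq\theta(R)$; and (c) $A^{-1}\overline{R}\setminus R_{0}\subseteq\partial_{A^{-1}}R$, since if $g\in A^{-1}\overline{R}$ but $Ag\not\subseteq R$, then $ag\in G\setminus R$ for some $a\in A$, so $g\in A^{-1}\overline{G\setminus R}$, whence $g\in A^{-1}\overline{R}\cap A^{-1}\overline{G\setminus R}=\partial_{A^{-1}}R$. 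Combining, $\theta(R_{0})=\theta(A^{-1}\overline{R})-\theta(A^{-1}\overline{R}\setminus R_{0})\geq\theta(A^{-1}\overline R)-\theta(\partial_{A^{-1}}R)\geq\theta(R)-\theta(\partial_{A^{-1}}R)$, which finishes the proof.

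\textbf{Main obstacle.} The only genuinely non-formal point is the identity $\int_{G}\theta(Ag\cap AC)\,d\theta(g)=\theta(A)\,\theta(AC)$ in Step~2: this is precisely where unimodularity is indispensable (inversion must preserve Haar measure), and one must be slightly careful with the Tonelli bookkeeping — though all sets involved are precompact, so only finite measures occur. The measurability of $R_{0}$ is a minor nuisance that, as indicated, disappears upon replacing it by the open set $\{g\mid Ag\subseteq\operatorname{int}(R)\}$.
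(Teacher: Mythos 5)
Your proof is correct and follows essentially the same route as the paper: maximality of $C$ together with Lemma~\ref{lem:constructionepsilondisjointfamilies} yields the pointwise bound $\theta(Ag\cap AC)\geq\epsilon\theta(A)$ on a suitable set of translates contained in $R$, and this is integrated via Tonelli and unimodularity through the identity $\theta(A)\theta(AC)=\int_G\theta(Ag\cap AC)\,d\theta(g)$, with the boundary term $\theta(\partial_{A^{-1}}R)$ controlling the loss. The only cosmetic difference is the choice of integration domain: the paper first translates so that $e_G\in A$ and integrates over $R\setminus\partial_{A^{-1}}R$, which is automatically measurable and contained in $\{g\mid Ag\subseteq R\}$, thereby sidestepping the measurability issue you handle by passing to the open set $\{g\mid Ag\subseteq\operatorname{int}(R)\}$.
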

\begin{proof}
	Since $\theta(A)>0$, there exists $a\in G$ such that $Aa$ contains the identity $e_G$. Then $\theta(\partial_{(Aa)^{-1}}R)=\theta(a^{-1}\partial_{A^{-1}} R)=\theta(\partial_{A^{-1}} R)$, and upon translating $C$ we may and will assume without lost of generality that $A$ contains $e_G$. 
	For $g\in R\setminus \partial_{A^{-1}}R$, we have $g\in R\subseteq A^{-1}\overline{R}$ and thus $g\notin A^{-1}\overline{R^c}$. 
	In particular, $Ag\cap \overline{R^c}$ is empty, and we deduce that $Ag\subseteq R$. 
	If $g\notin C$, then necessarily 
	$\theta(Ag\cap AC)\geq \epsilon\theta(Ag)$, as otherwise, by Lemma \ref{lem:constructionepsilondisjointfamilies}, $(Ag')_{g'\in C\cup\{g\}}$ would be an $\epsilon$-disjoint family, a contradiction to the maximal cardinality of $C$. 
	For $g\in C$ we furthermore obtain $Ag\subseteq AC$ and thus $\theta(Ag\cap AC)=\theta(Ag)\geq \epsilon\theta(A)$ from $\epsilon\in(0,1)$. 
	This shows that, for every $g\in R\setminus \partial_{A^{-1}}R$, 
	\[
	\theta(Ag\cap AC) \, \geq \, \epsilon\theta(A). 
	\]
	Let now $\chi_M$ denote the characteristic function of a subset $M\subseteq G$. For any $g'\in G$, we compute that
	\[\theta(A)
	\, = \, \theta(A^{-1})
	\, = \, \int_G  \chi_{A}(g^{-1})d\theta(g)
	\, = \, \int_G  \chi_{A}(g'g^{-1})d\theta(g).\]
Thus, Tonelli's theorem implies
	\begin{align*}
	\theta(A)\theta(AC)
	&=\int_G \theta(A)\chi_{AC}(g') d\theta(g')
	=\int_G  \int_G \chi_{A}(g'g^{-1})d\theta(g) \chi_{AC}(g') d\theta(g')\\
	&=\int_G  \int_G \chi_{A}(g'g^{-1}) \chi_{AC}(g') d\theta(g') d\theta(g)
	=\int_G \int_G \chi_{Ag\cap AC}(g') d\theta(g')d\theta(g)\\
	&=\int_G\theta(Ag\cap AC)d\theta(g)
	\geq \int_{R\setminus \partial_{A^{-1}}R} \epsilon\theta(A)d\theta(g)=\epsilon\theta(A)\theta(R\setminus \partial_{A^{-1}}R).
	\end{align*}
	\[\]
	Consequently, we arrive at
	\[\theta(AC) \, \geq \, \epsilon \theta(R\setminus \partial_{A^{-1}}R) \, \geq \, \epsilon(\theta(R)-\theta(\partial_{A^{-1}}R)) \, = \, \epsilon\left(1-\frac{\theta(\partial_{A^{-1}} R)}{\theta(R)}\right) \theta(R). \qedhere \]
\end{proof}

\subsection{On $\epsilon$-quasi-tiling}
	Let $A\in \mathcal{K}(G)$ and let $\epsilon>0$. 
	A finite family $(A_i)_{i\in F}$ of compact subsets of $G$ is an \emph{$\epsilon$-quasi-tiling} of $A$ if there exists a family of finite sets $(C_i)_{i\in F}$ such that 
	\begin{itemize}
		\item[(a)] for every $i \in F$, the family $(A_ig)_{g\in C_i}$ is $\epsilon$-disjoint;
		\item[(b)] $(A_iC_i)_{i \in F}$ is a disjoint family; and
		\item[(c)] $\theta(A\cap \bigcup_{i\in F} A_iC_i)\geq (1-\epsilon) \theta(A)$.		
	\end{itemize}	
	A family $(C_i)_{i\in F}$ satisfying these conditions is referred to as a family of \emph{$\epsilon$-quasi-tiling centres} of $(A_i)_{i\in F}$ (with respect to $A$). 
		In \cite{ornstein1983shannon, ornstein1987entropy, lindenstrauss2000mean} it is shown that, for any van Hove sequence $(A_n)_{n\in \mathbb{N}}$ and any $\epsilon>0$, one can find a finite subset $F\subseteq \mathbb{N}$ such that, for every sufficiently invariant $A$, the family $(A_i)_{i\in F}$ $\epsilon$-quasi-tiles $A$. We will follow the ideas that lead to this concept and show that, with some modifications of these methods, one can also construct $\epsilon$-quasi-tiling centres $C_i$ such that $R \defeq A\setminus \bigcup_{i\in F} A_iC_i$ is 'relatively invariant', i.e.\ that allows to control $\theta(\partial_K R)/\theta(A)$ for some given compact subset $K\subseteq G$.

\begin{proposition}\label{pro:quasitiling}
	For any van Hove net $(A_i)_{i\in I}$, any cofinal subset $J\subseteq I$, any $\epsilon\in (0,1/2)$ and any non-empty and compact subset $K\subseteq G$, there exist a finite subset $F\subseteq J$, $\delta>0$ and a compact subset $D\subseteq G$ with the following property. 
	For any $(\delta,D)$-invariant and compact subset $A\subseteq G$ the finite family $(A_i)_{i\in F}$ is an $\epsilon$-quasi-tiling of $A$ and the $\epsilon$-quasi-tiling centres $(C_i)_{i\in F}$ can be chosen such that $\bigcup_{i\in F} A_iC_i\subseteq A$ and such that $R \defeq A\setminus \bigcup_{i\in F} A_i C_i$ satisfies $\theta(R)+\theta(\partial_K R)\leq \epsilon\theta(A).$
\end{proposition}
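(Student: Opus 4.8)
The plan is to run the classical Ornstein--Weiss quasi-tiling construction by greedy maximal $\epsilon$-fillings (as in \cite{ornstein1987entropy, krieger2007lemme, lindenstrauss2000mean}) while carrying along a bound on the $K$-boundary of the residual set. First I would fix the parameters: choose $n\in\mathbb{N}_{>0}$ with $(1-\epsilon/2)^n<\epsilon/2$, set $\eta\defeq\epsilon/(16n)$ and $\delta\defeq\eta$. Since $(A_i)_{i\in J}$ is again a van Hove net, I would then recursively pick indices $i_1,\dots,i_n\in J$ (which may be taken distinct) so that the compact set $T_k\defeq A_{i_k}$ has positive Haar measure and is both $(\eta,K)$-invariant and $(\eta,T_\ell^{-1})$-invariant for every $\ell<k$; put $F\defeq\{i_1,\dots,i_n\}$ and $D\defeq K\cup T_1^{-1}\cup\dots\cup T_n^{-1}\in\mathcal{K}(G)$. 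The idea behind this bookkeeping is that each later tile is made far more invariant than the inverses of all earlier ones --- which is what the filling argument needs --- while at the same time every tile is almost $K$-invariant.

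Given a $(\delta,D)$-invariant compact set $A$ (so $\theta(A)>0$, and $\alpha(A,K)<\delta$, $\alpha(A,T_k^{-1})<\delta$ since $K,T_k^{-1}\subseteq D$), I would set $R_n\defeq A$ and, running $k=n,n-1,\dots,1$, let $C_k$ be an $(\epsilon,T_k)$-filling of $R_k$ of maximal cardinality --- finite by Lemma~\ref{lem:cardinalitybound}, and taken to be $\emptyset$ if $\theta(R_k)=0$ --- and put $R_{k-1}\defeq R_k\setminus T_kC_k$. Then $T_kC_k\subseteq R_k\subseteq A$ and $R_k$ is disjoint from $\bigcup_{j>k}T_jC_j$, so $\bigcup_kT_kC_k\subseteq A$, the sets $(T_kC_k)_k$ are pairwise disjoint, and each $(T_kg)_{g\in C_k}$ is $\epsilon$-disjoint; hence conditions (a) and (b) of an $\epsilon$-quasi-tiling hold, with $R\defeq R_0=A\setminus\bigcup_kT_kC_k$, and it remains to establish the two quantitative bounds.

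For the residual mass I would first show $\alpha(R_k,T_k^{-1})\le 1/2$ whenever $\theta(R_k)\ge(\epsilon/2)\theta(A)$: from $\partial_{T_k^{-1}}R_k\subseteq\partial_{T_k^{-1}}A\cup\bigcup_{j>k}\partial_{T_k^{-1}}(T_jC_j)$ (the inclusions $\partial_L(X\setminus Y)\subseteq\partial_LX\cup\partial_LY$ and $\partial_L(\bigcup X_i)\subseteq\bigcup\partial_LX_i$ being as in the proofs of Lemmas~\ref{lem:alphaandsetminus} and \ref{lem:alphaandepsilondisjointfamilis}), together with Lemma~\ref{lem:alphaandepsilondisjointfamilis} and the translation invariance of $\alpha(\cdot,L)$, one gets $\theta(\partial_{T_k^{-1}}(T_jC_j))\le\tfrac{\alpha(T_j,T_k^{-1})}{1-\epsilon}\theta(A)\le 2\eta\theta(A)$ for $j>k$, hence $\theta(\partial_{T_k^{-1}}R_k)\le(\delta+2n\eta)\theta(A)<(\epsilon/4)\theta(A)$, and the claim follows. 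Then Lemma~\ref{lem:fillinglemma} gives, at every stage with $\theta(R_k)\ge(\epsilon/2)\theta(A)$, that $\theta(R_{k-1})=\theta(R_k)-\theta(T_kC_k)\le(1-\epsilon/2)\theta(R_k)$; so either $\theta(R_k)<(\epsilon/2)\theta(A)$ for some $k$ and $\theta(R)\le\theta(R_k)<(\epsilon/2)\theta(A)$, or all $n$ steps contract and $\theta(R)\le(1-\epsilon/2)^n\theta(A)<(\epsilon/2)\theta(A)$; in either case $\theta(R)<(\epsilon/2)\theta(A)$, which in particular yields $\theta(\bigcup_kT_kC_k)>(1-\epsilon/2)\theta(A)\ge(1-\epsilon)\theta(A)$, i.e.\ condition (c). For the boundary of $R$ I would argue directly: $\partial_KR\subseteq\partial_KA\cup\bigcup_k\partial_K(T_kC_k)$, and Lemma~\ref{lem:alphaandepsilondisjointfamilis} gives $\theta(\partial_K(T_kC_k))\le\tfrac{\alpha(T_k,K)}{1-\epsilon}\theta(A)\le 2\eta\theta(A)$, so $\theta(\partial_KR)\le(\delta+2n\eta)\theta(A)<(\epsilon/4)\theta(A)$. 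Adding the two bounds gives $\theta(R)+\theta(\partial_KR)<\epsilon\theta(A)$, as desired.

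The main obstacle will be precisely this simultaneous control: a maximal $\epsilon$-filling is chosen only to shrink the residual mass, and one must check that iterating it inflates neither $\theta(\partial_KR)$ nor the auxiliary ratios $\alpha(R_k,T_k^{-1})$ that feed back into Lemma~\ref{lem:fillinglemma}. This is what dictates the order of the choices ($n$, then $\eta=\delta$, then the tiles with their nested invariance, then $D$), and it is also why I would bound $\theta(\partial_KR)$ through the set inclusions above rather than via $\alpha(R,K)$: the latter is meaningless when $\theta(R)=0$, a case that genuinely occurs. The remaining verifications --- the elementary inclusions for $\partial_L$, the identity $\partial_L(Sg)=(\partial_LS)g$, translation invariance of $\alpha(\cdot,L)$, and that $\alpha(A,L)\le\alpha(A,D)$ for $L\subseteq D$ --- are routine.
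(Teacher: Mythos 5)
Your construction is correct, and while it follows the same overall greedy scheme as the paper's proof of Proposition~\ref{pro:quasitiling} (tiles picked from the van Hove net with nested invariance constraints, iterated maximal $\epsilon$-fillings, Lemma~\ref{lem:cardinalitybound} for finiteness and Lemma~\ref{lem:fillinglemma} for the mass decay), the quantitative bookkeeping is genuinely different and simpler. The paper tracks the invariance of the successive residues themselves, proving by induction that $R_n$ is $(\epsilon^{2(N-n)+1},K_n)$-invariant; this forces geometrically graded invariance requirements $\epsilon^{2(N-n)+4}$ on the tiles, relies on Lemma~\ref{lem:alphaandsetminus} (whose hypothesis $\theta(R_{n+1})>\epsilon\theta(R_n)$ necessitates the stopping time $M$), and bounds $\partial_K R$ using only the last removal step. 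You never estimate $\alpha(R_k,\cdot)$ recursively: writing $R_k=A\setminus\bigcup_{j>k}T_jC_j$ and using the elementary inclusions $\partial_L(X\setminus Y)\subseteq\partial_L X\cup\partial_L Y$ and $\partial_L\bigl(\bigcup_j Y_j\bigr)\subseteq\bigcup_j\partial_L Y_j$ (both already appear in the proofs of Lemma~\ref{lem:alphaandsetminus} and Lemma~\ref{lem:alphaandepsilondisjointfamilis}), together with Lemma~\ref{lem:alphaandepsilondisjointfamilis}, right-invariance of $\alpha(\cdot,L)$ and $T_jC_j\subseteq A$, you bound $\theta(\partial_{T_k^{-1}}R_k)$ and $\theta(\partial_K R)$ directly against $\theta(A)$ by a single union bound over all removal stages. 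This permits one uniform invariance level $\eta=\delta=\epsilon/(16n)$ for all tiles, dispenses with Lemma~\ref{lem:alphaandsetminus} and with the stopping time (replaced by the clean dichotomy of whether $\theta(R_k)\geq(\epsilon/2)\theta(A)$, which is precisely when Lemma~\ref{lem:fillinglemma} is invoked and when $\alpha(R_k,T_k^{-1})\leq 1/2$ follows), and still gives $\theta(R)+\theta(\partial_K R)<\epsilon\theta(A)$ with room to spare. The only points to spell out in a full write-up are the routine ones you already list (monotonicity of $\partial_L A$ in $L$, the identity $\partial_L(Sg)=(\partial_L S)g$ plus unimodularity, the degenerate cases $C_k=\emptyset$ or $\theta(R_k)=0$) and, exactly as in the paper's own proof, the availability of $n$ pairwise distinct, sufficiently invariant indices in the cofinal set $J$.
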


\begin{proof}
	As $\epsilon\in (0,1/2)$, there exists $N\in \mathbb{N}$ such that $\left(1-{\epsilon}/2\right)^{N}\leq \epsilon/2$.
	As $(A_i)_{i\in J}$ is a van Hove net, we can choose inductively $i_n\in J$ for $n={{N}},\ldots,1$ such that $A_{i_n}$ has positive Haar measure, such that the $i_n$ are pairwise distinct and such that
	\begin{align*}
	\alpha\left(A_{i_n},K\cup \left(\bigcup\nolimits_{m=n+1}^{{{N}}}A_{i_m}^{-1}\right)\right)
	\leq \epsilon^{2(N-n)+4}. 
	\end{align*}
	We abbreviate $K_n\defeq K\cup \left(\bigcup_{m=n+1}^{{{N}}}A_{i_m}^{-1}\right)$
	and observe that $A_{i_n}$ is $(\epsilon^{2(N-n)+4}, K_n)$-invariant for $n=0,\ldots,{{N}}$. Furthermore we have $K_0\supseteq K_1\supseteq \ldots \supseteq K_N=K$ and $A_{i_n}^{-1}\subseteq K_{n-1}$ for $n=1,\ldots,N$.

We set $F\defeq \{i_1,\ldots,i_N\}$, $\delta \defeq \epsilon^{2{N}+1}$ and $D\defeq K_0$ and consider a compact and $(\delta,D)$-invariant subset $A$ of $G$. 
	Set $R_0\defeq A$. 
	Using Lemma \ref{lem:cardinalitybound} we now choose inductively for $n=1,\ldots,M$ finite $(\epsilon,A_{i_n})$-fillings $C_n$ of $R_{n-1}$ of maximal cardinality, where we abbreviate 
	$R_n\defeq R_{n-1}\setminus A_{i_n}C_n$ and $M\leq N$ is the smallest integer where our choices lead to  
	$\theta(R_M)=\theta(R_{M-1}\setminus A_{i_M}C_M)\leq\epsilon\theta(R_{M-1})$ and $N$ if we never encounter this situation. 
	Thus, in particular, for $n=1,\ldots,M-1$ there holds $\theta(R_n)>\epsilon\theta(R_{n-1})$. For $n\in \{M+1,\ldots,N\}$ we set $C_n\defeq \emptyset$. 
	Note that $R_n$ is precompact and measurable for $n=0,\ldots,M$ and that there holds
	\[R \, = \, R_M \, \subseteq \, R_{M-1} \, \subseteq \, R_0 \, = \, A.\]
	We will now show that defining $C_{i_n}\defeq C_n$ we get that $(C_i)_{i\in F}=(C_n)_{n=1}^N$ is a family of $\epsilon$-quasi-tiling centres that fulfils the required properties. 
	
	We first show inductively for $n=0,\ldots,M-1$ that $R_n$ is $(\epsilon^{2(N-n)+1},K_n)$-invariant, i.e.\
	\begin{align}\label{form:Rninvariant}
		\alpha(R_n,K_n) \, \leq \, \epsilon^{2(N-n)+1}.
	\end{align}
	This is clearly satisfied for $n=0$, as $R_0=A$, $K_0=D$ and $\epsilon^{2(N-0)+1}=\delta$. 
	To proceed inductively we assume $R_n$ to be $(\epsilon^{2(N-n)+1},K_n)$-invariant for some $n<M-1$ and as $K_{n+1}\subseteq K_n$ we obtain 
	\[\alpha(R_n,K_{n+1})\leq \epsilon^{2(N-n)+1}.\]
	Now recall that $A_{i_{n+1}}$ is $(\epsilon^{2(N-n)+4},K_{n+1})$-invariant. 
	As $C_{n+1}$ is an $(\epsilon,A_{i_{n+1}})$-filling we obtain that $(A_{i_{n+1}}g)_{g \in C_{n+1}}$ is $\epsilon$-disjoint and apply Lemma \ref{lem:alphaandepsilondisjointfamilis} to see 
	\begin{align*}
\alpha(A_{i_{n+1}}C_{n+1},K_{n+1}) \,
	&\leq \, \frac{1}{1-\epsilon}\max_{g\in C_{n+1}}\alpha(A_{i_{n+1}} g,K_{n+1})\\
	&= \, \frac{\alpha(A_{i_{n+1}},K_{n+1})}{1-\epsilon}\\
	&\leq \, \frac{\epsilon^{2(N-n)+4}}{1-\epsilon}
	\, \leq \, \epsilon^{2(N-n)+1}.
	\end{align*}
	For this we have used that $\epsilon<1/2$ yields that $\epsilon/(1-\epsilon)<1$.
	 As we assume $n<M-1$ we obtain $\theta(R_{n}\setminus A_{i_{n+1}}C_{n+1})=\theta(R_{n+1})> \epsilon\theta(R_{n})$. Thus, Lemma~\ref{lem:alphaandsetminus} yields that
	\begin{align*}
	\alpha(R_{n+1},K_{n+1}) \,
	&= \, \alpha\left(R_{n}\setminus A_{i_{n+1}}C_{n+1},K_{n+1}\right)\\
	&\leq \, \frac{\alpha(R_{n},K_{n+1})+\alpha(A_{i_{n+1}}C_{n+1},K_{n+1})}{\epsilon}\\
	&\leq \, 2\epsilon^{2(N-n)}
	\, \leq \, \epsilon^{2(N-n)-1}
	\, = \, \epsilon^{2(N-(n+1))+1} ,
	\end{align*}
	and we have completed the induction to show (\ref{form:Rninvariant}). 
	
	We next show that
	\begin{align}\label{form:bla}
		\theta(R) \, \leq \, \frac{\epsilon}{2}\theta(A). 
	\end{align}
	This statement is satisfied whenever $M<N$, as then we obtain that
	\[\theta(R)=\theta(R_{M})\leq \epsilon \theta(R_{M-1})\leq \epsilon\theta (A).\]
	In order to show (\ref{form:bla}) we thus assume without lost of generality that $M=N$ and that $\theta(R)>0$. Then $\theta(R_N)=\theta(R)>0$ and, for $n\in \{1,\ldots,N-1\}$, we obtain
	\[ \theta(R_n)
	\, \geq \, \epsilon\theta(R_{n-1})
	\, \geq \, \epsilon^n \theta(R_0)
	\, = \, \epsilon^n\theta(A)
	\, > \, 0.\] 
	For $n\leq M=N$ we have chosen $C_n$ to be an $(\epsilon,A_{i_n})$-filling of $R_{n-1}$ of maximal cardinality. Thus we obtain from Lemma \ref{lem:fillinglemma}, (\ref{form:Rninvariant}) and $A_{i_n}^{-1}\subseteq K_{n-1}$ that 
\[\frac{\theta(A_{i_n} C_n)}{\theta(R_{n-1})}
	\, \geq \, \epsilon\left(1-\alpha(R_{n-1},A_{i_n}^{-1})\right)
	\, \geq \, \epsilon\left(1-\alpha(R_{n-1},K_{n-1})\right) \, \geq \, \epsilon\left(1-\epsilon^{2(N-(n-1))+1}\right) \, \geq \, \frac{\epsilon}{2},\]
Thus,
	\[\theta(R_n) \, = \, \theta(R_{n-1})-\theta(A_{i_n}C_n) \, < \, \left(1-\frac{\epsilon}{2}\right)\theta(R_{n-1})\]
	and we obtain from our choice of $N$ that
	\[\theta(R) \, = \, \theta(R_{N}) \, < \, \left(1-\frac{\epsilon}{2}\right)^{N}\theta(R_0) \, \leq \, \frac{\epsilon}{2}\theta(A).\]
	This shows the claimed statement (\ref{form:bla}). 
	
	As $C_n$ is an $(\epsilon,A_{i_n})$-filling of $R_{n-1}$ we obtain from the construction $R_n\defeq R_{n-1}\setminus A_{i_n}C_n$ that $(A_{i_n}g)_{g\in C_n}$ is $\epsilon$-disjoint for all $n\leq M$ and that $(A_{i}C_i)_{i\in F} = (A_{i_n}C_n)_{n=1}^{M} \sqcup (\emptyset)_{n=M+1}^{N}$ is a disjoint family. In particular, we observe that $(A_ig)_{g\in C_i}$ is $\epsilon$-disjoint for all $i\in F$. Furthermore, one obtains 
	\[\bigcup_{i\in F} A_iC_i=\bigcup_{n=1}^M A_{i_n} C_n\subseteq \bigcup_{n=0}^M R_n= R_0=A.\]
	 Thus (\ref{form:bla}) allows to compute 
	\[\theta\left(A\cap \bigcup\nolimits_{i\in F}A_iC_i\right)=\theta(A)-\theta(R)\geq (1-\epsilon)\theta(A).\]
	This shows that $(A_i)_{i\in F}$ is an $\epsilon$-quasi-tiling of $A$ and that $\bigcup_{i\in F} A_iC_i\subseteq A$. 
	
	By (\ref{form:bla}) it remains to show that $\theta(\partial_K R)\leq (\epsilon/2) \theta(A).$
	Recall that $A_{i_M}$ is $(\epsilon^{2(N-M)+4},K_{M})$-invariant. As $K\subseteq K_M$ we obtain 
	\[\alpha(A_{i_M},K)\leq \epsilon^4.\]
	Since $C_{M}$ is an $(\epsilon,A_{i_M})$-filling we observe that $(A_{i_M}g)_{g\in C_{M}}$ is $\epsilon$-disjoint and apply Lemma \ref{lem:alphaandepsilondisjointfamilis} to see 
	\begin{align*}
\alpha(A_{i_M}C_{M},K)
	&\leq \frac{1}{1-\epsilon}\max_{g\in C_{M}}\alpha(A_{i_M} g,K)\\
	&=\frac{\alpha(A_{i_M},K)}{1-\epsilon}\\
	&\leq \frac{\epsilon^{4}}{1-\epsilon}\\
	&\leq \epsilon^3.
	\end{align*}
	Furthermore a straightforward argument shows that $\partial_K R_M=\partial_K (R_{M-1}\setminus A_{i_M}C_M)\subseteq \partial_K R_{M-1} \cup \partial_K (A_{i_M}C_M)$. As $K\subseteq D$ we obtain that $A$ is $(K,\delta)$-invariant and as $\delta\in (0,1)$ we have $\theta(K)\leq \theta(A)$.
	Thus (\ref{form:Rninvariant}) and $K\subseteq K_M \subseteq K_{M-1}$ allow to compute 
	\begin{align*}
	\frac{\theta(\partial_K R)}{\theta(A)}
	&\leq \frac{\theta(\partial_K R_{M-1})}{\theta(A)}+ \frac{\theta(\partial_K A_{i_M}C_M)}{\theta(A)}\\
	&\leq \alpha(R_{M-1},K)+\alpha(A_{i_M}C_M,K)\\
	&\leq \alpha(R_{M-1},K_{M-1})+\epsilon^3\\
	&\leq \epsilon^{3} +\epsilon^3
	\leq \frac{\epsilon}{2}. \qedhere
	\end{align*}
\end{proof}

\subsection{A proof of the Ornstein-Weiss lemma for non-discrete groups}

	If $G$ is a discrete group, then subadditivity and right invariance can be used to show that $f(F)/|F|\leq f(\{e_G\})$ for every finite, non-empty subset $F\subseteq G$. This is not any longer the case in the non-discrete setting as presented in \cite[Remark 3.1 and Remark 4.6]{hauser2020Anote}. We next present a result that can serve as a replacement of the mentioned boundedness. 
	
\begin{lemma}\label{lem:controlf}
	Let $f\colon \mathcal{K}(G)\to[0,\infty)$ be a monotone, right invariant and subadditive mapping, let $K$ be a compact neighbourhood of $e_G$. Then there exists a constant $c_K>0$ such that, for every non-empty, precompact, measurable subsets $R\subseteq G$, \begin{displaymath}
			f\left(\overline{R}\right) \, \leq \, c_K \theta\left(K\overline{R}\right).
	\end{displaymath}
\end{lemma}
\begin{proof}
	Note first that the subadditivity and the monotonicity of $f$ imply that $f$ only takes values in $[0,\infty)$. 
	Let $V$ be a compact and symmetric neighbourhood of $e_G$ that satisfies $V\subseteq K$ and set $c_K\defeq f(VV)/\theta(V)$. 
	For a non-empty and precompact subset $R\subseteq G$ we let $F\subseteq \overline{R}$ be a \emph{$V$-separated} subset, i.e.\ such that $(Vg)_{g\in F}$ is a disjoint family. Then $VF=\bigcup_{g\in F}Vg$ is a disjoint union and thus 
	\[\theta\left(V\overline{R}\right) \, \geq \, \theta(VF) \, = \, \sum_{g\in F}\theta(Vg) \, = \, |F|\theta(V).\]
	In particular we obtain the cardinality of $F$ to be bounded by $\theta(V\overline{R})/\theta(V)< \infty$. We thus assume without lost of generality that $F$ is a $V$-separated subset of $\overline{R}$ of maximal cardinality. Then for any $g\in \overline{R}$ there is $g'\in F$ such that $Vg$ and $Vg'$ intersect. Thus, $g\in VVg'\subseteq VVF$ and we observe that $\overline{R}\subseteq VVF$. 
	We compute that \begin{align*}
		f\left(\overline{R}\right) \, \leq \, f(VVF) \, &\leq \, \sum\nolimits_{g\in F}f(VVg) \, = \, |F|f(VV) \\
		& \leq \, \frac{\theta(V\overline{R})}{\theta(V)}f(VV) \, = \, c_K\theta(V\overline{R}) \, \leq \, c_K\theta(K\overline{R}).\qedhere
	\end{align*}
\end{proof}

	With the slight improvement of the Ornstein-Weiss quasi-tiling machinery at hand, we are now ready to prove the Ornstein-Weiss lemma for arbitrary amenable unimodular locally compact groups. 

\begin{theorem}[Ornstein-Weiss lemma]
\label{the:Ornstein-Weisslemma}
	Let $G$ be an amenable unimodular locally compact group and let
	$f\colon \mathcal{K}(G)\to \mathbb{R}$ be a monotone, subadditive and right-invariant function. Then for any van Hove net $(A_i)_{i\in I}$ in $G$ the limit 
	\[\lim_{i\in I}\frac{f(A_i)}{\theta(A_i)}\]
	exists, is finite and independent of the choice of a van Hove net. 
\end{theorem}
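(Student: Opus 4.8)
The plan is to reduce the statement to a single asymmetric comparison between two van Hove nets and then feed the quasi-tiling machinery of Proposition~\ref{pro:quasitiling} into it. First I would record two preliminary facts. Monotonicity together with $f(\emptyset)=0$ forces $f\ge 0$. Fixing a compact neighbourhood $K_0$ of $e_G$, Lemma~\ref{lem:controlf} supplies a constant $c_{K_0}>0$ with $f(A)\le c_{K_0}\theta(K_0A)$ for every non-empty $A\in\mathcal{K}(G)$; since $\theta(K_0A_i)/\theta(A_i)\to 1$ along any van Hove net by Lemma~\ref{lem:pre:vanHovenets}(i), we get $\limsup_{i}f(A_i)/\theta(A_i)\le c_{K_0}<\infty$. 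Hence it suffices to prove that for any two van Hove nets $(A_i)_{i\in I}$ and $(B_j)_{j\in J}$ one has
\[
\limsup_{j\in J}\frac{f(B_j)}{\theta(B_j)}\ \le\ \liminf_{i\in I}\frac{f(A_i)}{\theta(A_i)}\ \eqdef\ a .
\]
Granting this, the choice $(B_j)=(A_i)$ shows that the limit exists along every van Hove net, interchanging the two nets shows independence, and finiteness has just been observed.

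To prove the comparison, fix $\epsilon\in(0,1/2)$ and $\eta>0$. The set $J_\eta\defeq\{i\in I\mid f(A_i)/\theta(A_i)<a+\eta\}$ is cofinal in $I$, so Proposition~\ref{pro:quasitiling}, applied to the van Hove net $(A_i)_{i\in I}$, the cofinal subset $J_\eta$, the number $\epsilon$ and the compact set $K_0$, yields a finite set $F=\{i_1,\dots,i_N\}\subseteq J_\eta$, a number $\delta>0$ and a compact set $D\subseteq G$ such that every $(\delta,D)$-invariant compact $A$ is $\epsilon$-quasi-tiled by $(A_{i_n})_{n=1}^{N}$ with centres $(C_n)_{n=1}^{N}$ for which $\bigcup_{n}A_{i_n}C_n\subseteq A$ and $\theta(R)+\theta(\partial_{K_0}R)\le\epsilon\theta(A)$, where $R\defeq A\setminus\bigcup_{n}A_{i_n}C_n$. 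As $(B_j)_{j\in J}$ is van Hove, $B_j$ is $(\delta,D)$-invariant for all large $j$; fix such a $j$ and set $A=B_j$. Writing out the tiles, $B_j=\bigl(\bigcup_{n}\bigcup_{g\in C_n}A_{i_n}g\bigr)\cup R\subseteq\bigl(\bigcup_{n}\bigcup_{g\in C_n}A_{i_n}g\bigr)\cup\overline{R}$, a finite cover of $B_j$ by compact sets, so monotonicity, subadditivity and right invariance give
\[
f(B_j)\ \le\ \sum_{n=1}^{N}\sum_{g\in C_n}f(A_{i_n}g)+f\bigl(\overline{R}\bigr)\ =\ \sum_{n=1}^{N}|C_n|\,f(A_{i_n})+f\bigl(\overline{R}\bigr).
\]

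It remains to bound the two terms. Since $e_G\in K_0$, one checks $\overline{R}\subseteq R\cup\partial_{K_0}R$ and $K_0\overline{R}\subseteq\overline{R}\cup\partial_{K_0}R$, so $\theta(K_0\overline{R})\le\theta(R)+2\theta(\partial_{K_0}R)\le 2\epsilon\theta(B_j)$, and Lemma~\ref{lem:controlf} gives $f(\overline{R})\le 2c_{K_0}\epsilon\,\theta(B_j)$ (trivially if $R=\emptyset$). For the main sum, $\epsilon$-disjointness of $(A_{i_n}g)_{g\in C_n}$ together with unimodularity gives $(1-\epsilon)|C_n|\theta(A_{i_n})\le\theta(A_{i_n}C_n)$, while disjointness of the family $(A_{i_n}C_n)_n$ inside $B_j$ gives $\sum_n\theta(A_{i_n}C_n)\le\theta(B_j)$; combined with $f(A_{i_n})<(a+\eta)\theta(A_{i_n})$ (valid because $i_n\in J_\eta$), this yields $\sum_n|C_n|f(A_{i_n})\le\tfrac{a+\eta}{1-\epsilon}\theta(B_j)$. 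Therefore $f(B_j)/\theta(B_j)\le\tfrac{a+\eta}{1-\epsilon}+2c_{K_0}\epsilon$ for all large $j$, hence $\limsup_j f(B_j)/\theta(B_j)\le\tfrac{a+\eta}{1-\epsilon}+2c_{K_0}\epsilon$; letting $\eta\to 0$ and then $\epsilon\to 0$ completes the comparison and thus the proof.

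I expect the delicate point to be the control of the leftover term $f(\overline{R})$, and this is exactly where the argument relies on the \emph{boundary-aware} quasi-tiling of Proposition~\ref{pro:quasitiling} rather than on the classical one: because $f$ is only defined on compact sets, the (generally non-closed) remainder $R$ must be replaced by $\overline{R}$, and without the extra estimate $\theta(\partial_{K_0}R)\le\epsilon\theta(A)$ there is no way to keep $\theta(K_0\overline{R})$, and hence $f(\overline{R})$, of order $\epsilon\theta(B_j)$. I also anticipate monotonicity being used essentially twice — inside Lemma~\ref{lem:controlf} and when enlarging $B_j$ to the compact cover above — in line with the remark in the introduction that monotonicity could not be removed from the proof.
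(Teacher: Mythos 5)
Your proposal is correct and follows essentially the same route as the paper's proof: bound the ratio via Lemma~\ref{lem:controlf} and Lemma~\ref{lem:pre:vanHovenets}, pass to a cofinal subset where $f(A_i)/\theta(A_i)$ is within $\eta$ of the liminf, apply the boundary-aware quasi-tiling of Proposition~\ref{pro:quasitiling} to a sufficiently invariant member of the second net, estimate the tile sum by $\epsilon$-disjointness and right invariance, and control the leftover $f\left(\overline{R}\right)$ through $\theta(R)+\theta(\partial_{K}R)\leq\epsilon\theta(A)$ and Lemma~\ref{lem:controlf}. The only deviations are cosmetic (a separate $\eta$ in place of reusing $\epsilon$, and a factor $2$ in the remainder estimate), so no further comment is needed.
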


\begin{proof}
	Let $\epsilon\in (0,1/2)$ and choose an arbitrary compact neighbourhood $K$ of $e_G$. Then, by Lemma \ref{lem:controlf}, there exists a constant $c$ such that $f(\overline{A})\leq c \theta(K\overline{A})$ for every non-empty, precompact, measurable subset $A\subseteq G$. Consider $\lambda\defeq \liminf_{i\in I}{f(A_i)}/{\theta(A_i)}$. As $(KA_i)_{i\in I}$ is a van Hove net in $G$ with $\lim_{i\in I}\theta(KA_i)/\theta(A_i)=1$, it follows that \begin{displaymath}
		\lambda \, = \, \liminf_{i\in I}{f(A_i)}/{\theta(KA_i)} \, \leq \, c \, < \, \infty .
	\end{displaymath} 
	
	In particular, there is a cofinal subset $J\subseteq  I$  such that $(f(A_i))/\theta(A_{i}))_{i\in J}$ converges to $\lambda$. Thus, there exists $j\in J$ such that for all $i\in J$ with $i\geq j$ we have
	\begin{align}\label{form:invariancebound}
	\frac{f(A_i)}{\theta(A_i)} \, \leq \, \lambda+\epsilon.
	\end{align}
	Since also $\{i\in J \mid i\geq j\}$ is cofinal in $I$ we assume without lost of generality that (\ref{form:invariancebound}) holds for all $i\in J$.
	From Proposition \ref{pro:quasitiling} we obtain the existence of a
	finite subset $F\subseteq J$, $\delta>0$ as well as a compact, non-empty subset $D\subseteq G$ such that any $(\delta,D)$-invariant, compact subset $A\subseteq G$ can be $\epsilon$-quasi-tiled and the $\epsilon$-quasi-tiling centres can be chosen with the additional properties as in Proposition \ref{pro:quasitiling}.

	We now consider a compact and $(\delta,D)$-invariant subset $A\subseteq G$ and choose $\epsilon$-quasi-tiling centres $C_i$ such that these additional properties are satisfied, i.e.\ such that $\bigcup_{i\in F}A_iC_i\subseteq A$ and such that $R\defeq A\setminus \bigcup_{i\in F}A_iC_i$ satisfies $\theta(R)+\theta(\partial_KR)\leq \epsilon \theta(A)$. 
	From $F\subseteq J$ and (\ref{form:invariancebound}) we obtain $f(A_i)/\theta(A_i)\leq \lambda+\epsilon$ for every $i\in F$, and we compute
	\begin{align*}
	\frac{f\left(\bigcup_{i\in F} A_{i}C_i\right)}{\theta(A)}
	&\leq \sum_{i\in F}\sum_{g\in C_i}\frac{f(A_ig)}{\theta(A)}\\
	&=\sum_{i\in F}\sum_{g\in C_i}\frac{f(A_{i})}{\theta(A_{i})}\frac{\theta(A_{i})}{\theta(A)}\\
	&\leq (\lambda+\epsilon)\sum_{i\in F}\sum_{g\in C_i}\frac{\theta(A_{i})}{\theta(A)}.
	\end{align*}
	By the properties of $\epsilon$-quasi-tiling centres, $((A_{i}g)_{g \in C_{i}})_{i \in F}$ is an $\epsilon$-disjoint family. Therefore, $\bigcup_{i\in F}A_iC_i\subseteq A$ implies that
	\[\sum_{i\in F}\sum_{g\in C_i}\frac{\theta(A_{i})}{\theta(A)} \, = \, \sum_{i\in F}\sum_{g\in C_i}\frac{\theta(A_{i}g)}{\theta(A)}
	\, \leq \, \frac{1}{1-\epsilon}\frac{\theta(\bigcup_{i\in J} A_{i}C_i)}{\theta(A)} \, \leq \, \frac{1}{1-\epsilon}.\]
	We have shown that
	\begin{align}\label{form:fatset}
	\frac{f\left(\bigcup_{i\in F} A_{i}C_i\right)}{\theta(A)} \,
	&\leq \, \frac{\lambda+\epsilon}{1-\epsilon}.
	\end{align}
	Note that $e_G\in K$ implies 
	$K\overline{R}\subseteq R \cup \partial_KR. $
	As we require the $\epsilon$-quasi-tiling centres to satisfy $\theta(R)+\theta(\partial_KR)\leq \epsilon\theta(A)$, we obtain from the choice of the constant $c$ at the beginning of the proof that  
	\[f\left(\overline{R}\right) \, \leq \, c \theta(K\overline{R}) \, \leq \,
	c(\theta(R)+\theta(\partial_KR))
	\, \leq \, \epsilon c  \theta(A).\]
	Thus (\ref{form:fatset}) yields that
	\[\frac{f(A)}{\theta(A)} \, \leq \, \frac{f(\bigcup_{i\in F}A_iC_i)}{\theta(A)}+\frac{f\left(\overline{R}\right)}{\theta(A)} \, \leq \, \frac{\lambda+\epsilon}{1-\epsilon}+\epsilon c\]
	for every $(\delta,K)$-invariant, compact subset $A\subseteq G$. Hence, considering another van Hove net $\left(B_\iota\right)_{\iota\in \tilde{I}}$, we see that
	\[\limsup_{\iota \in \tilde{I}}\frac{f(B_\iota)}{\theta(B_\iota)} \, \leq \, \frac{\lambda+\epsilon}{1-\epsilon}+\epsilon c.\]  
	Since $\epsilon>0$ was arbitrary, this shows that, for any two van Hove nets $(A_i)_{i\in I}$ and $(B_\iota)_{\iota \in \tilde{I}}$ in $G$,
	\[\limsup_{\iota \in \tilde{I}}\frac{f(B_\iota)}{\theta(B_\iota)} \, \leq \, \lambda=\liminf_{i\in I}\frac{f(A_i)}{\theta(A_i)} \, < \, c,\]
	which clearly implies the statement of the theorem. 
\end{proof}

\section{Algebra isomorphism and topological models}
	
	As explained in the introduction we will need to find appropriate topological models for our measure preserving actions.
	
\subsection{Algebra isomorphism between measure preserving actions}
Let $(X,\mu)$ be a probability space and note that the relation given by $\mu(A\triangle B)=0$ is an equivalence relation on all subsets $A,B\subseteq X$. Let $[A]$ denote the equivalence class of $A\subseteq G$. The set  $\Sigma(X)$ of all equivalence classes is called the \emph{measure algebra} of $X$. Note that $[A\cap B]$ and $[A\cup B]$ are independent of the choice of representatives from $[A]$ and $[B]$ respectively. This allows to write $[A]\cap [B]$ and $[A]\cup [B]$ respectively for these sets. 
Furthermore, note that $\mu$ is constant on each equivalence class $[A]\in \Sigma(X)$, which allows to write $\mu[A]$. 
Consider now probability spaces $(X,\mu)$ and $(Y,\nu)$. 
We call a bijective mapping $\iota\colon \Sigma(X)\to \Sigma(Y)$, that satisfies $\iota([A]\cap [B])=\iota([A])\cap \iota([B])$, $\iota([A]\cup [B])=\iota([A])\cup \iota([B])$ 
and $\mu[A]=\nu(\iota[A])$ for all $[A],[B]\in \Sigma(X)$ an \emph{algebra isomorphism} between $(X,\mu)$ and $(Y,\nu)$.

If $\pi$ is a measure preserving action on $(X,\mu)$, then we may define $g.[A] \defeq [g.A]$ for all $[A]\in \Sigma(X)$ and $g\in G$. 
Given another measure preserving action $\phi$ of $G$ on a probability space $(Y,\nu)$, by an \emph{algebra isomorphism} between $\pi$ and $\phi$ we will mean an algebra isomorphism $\iota$ between $(X,\mu)$ and $(Y,\nu)$ such that $g.(\iota[A])=\iota(g.[A])$ for all $g\in G$ and $[A]\in \Sigma(A)$. Measure preserving actions $\pi$ and $\phi$ are called \emph{algebra isomorphic}, if there exists an algebra isomorphism between $\pi$ and $\phi$.

\subsection{About topological models}
For regular Borel probability measure $\nu$ on a compact Hausdorff space $K$ we define the \emph{support} $\operatorname{supp}(\nu)$ as the set of all $x\in K$ such that for any an open neighbourhood $U$ of $x$ we have $\nu(U)>0$. 
$\nu$ is said to have \emph{full support} if $K=\operatorname{supp}(\nu)$. 
Let $\pi$ be an measure preserving action of a topological group $G$ on a probability space $(X,\mu)$. A \emph{topological model} of $\pi$ is a continuous action $\phi$ of $G$ on a compact Hausdorff space $K$ together with an invariant regular Borel probability measure $\nu$ that has full support and such that $\phi$ considered as a measure preserving action on $(K,\nu)$ is algebra isomorphic to $\pi$.
\begin{remark}
	Note that we do not assume $(X,\mu)$ to be a Lebesgue space and that $K$ is not assumed to be separable or to be metrizable. Note furthermore that in contrast to the usual Jewett-Krieger theorem we do not require a topological model to be uniquely ergodic (see \cite{jewett1970prevalence, krieger1972unique} for the original work of R. Jewett and W. Krieger, 
	and \cite{weiss1985strictly, rosenthal1988strictly, eisner2015operator} for generalizations to actions of amenable discrete groups). 
	In the usual Jewett-Krieger theorem one also uses a stronger notion of isomorphism than algebra isomorphism. Nevertheless, for our purposes (entropy theory) algebra isomorphisms are sufficient and allow us to state our results beyond Lebesgue spaces. 
\end{remark}	

With similar arguments presented in \cite[Section 12.3]{eisner2015operator} one can show that any measure preserving action of a discrete group has a topological model. Since we are not aware of a reference beyond the context of discrete groups we include a full proof of the following. 

\begin{proposition}\label{pro:JKtype}
	Every measure preserving action $\pi$ of a topological group $G$ on a probability space $(X,\mu)$ admits a topological model. 
\end{proposition}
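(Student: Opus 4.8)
The plan is to realize the abstract measure-preserving system through a commutative $C^{*}$-algebra whose Gelfand spectrum will serve as the topological model, closely following the Gelfand-theoretic proof of the Jewett--Krieger construction in \cite[Section~12.3]{eisner2015operator} but taking care to (a)~work only with the measure algebra $\Sigma(X)$ rather than with representatives, so that no separability or Lebesgueness of $(X,\mu)$ is needed, and (b)~use continuity condition~(3) of measure-preservation to obtain continuity of the $G$-action on the spectrum.

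First I would fix a suitable $G$-invariant, norm-closed, $\|\cdot\|_\infty$-unital $*$-subalgebra $\mathcal{A}\subseteq L^{\infty}(X,\mu)$ that separates points of $\Sigma(X)$; the natural choice is the closure of the linear span of $\{\chi_A \mid A\subseteq X \text{ measurable}\}$ inside $L^{\infty}(X,\mu)$, i.e.\ all ($\mu$-a.e.\ classes of) bounded measurable functions, together with the $G$-action $g.f \defeq f\circ \pi^{g^{-1}}$. Since $\pi$ is measure preserving, each $g.(\cdot)$ is an isometric $*$-automorphism of $\mathcal{A}$, and integration against $\mu$ gives a faithful (on positive elements of $\Sigma(X)$) $G$-invariant state $\tau$. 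Let $K \defeq \widehat{\mathcal A}$ be the Gelfand spectrum of $\mathcal A$, a compact Hausdorff space, with Gelfand transform $\Gamma\colon \mathcal A \to C(K)$ a $*$-isomorphism; push $\tau$ forward via $\Gamma$ and the Riesz representation theorem to obtain a regular Borel probability measure $\nu$ on $K$ with $\int_K \Gamma(f)\,d\nu = \tau(f)$ for all $f\in\mathcal A$. The automorphisms $g.(\cdot)$ transport through $\Gamma$ to $*$-automorphisms of $C(K)$, hence to homeomorphisms $\phi^{g}$ of $K$, and this defines an action $\phi$ of $G$ on $K$ which is $\nu$-invariant since $\tau$ is $G$-invariant.

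Next I would verify the three required properties of a topological model. \emph{Full support:} if $U\subseteq K$ is open, nonempty, pick $\Gamma(f)\in C(K)$ with $0\le \Gamma(f)\le 1$, $\Gamma(f)\not\equiv 0$, supported in $U$; then $\tau(f)=\int \Gamma(f)\,d\nu>0$ forces $\nu(U)>0$ --- here one uses that $\tau$ is faithful on nonzero positive elements, which holds because $\mathcal A = L^\infty$ and $\tau=\int\cdot\,d\mu$. \emph{Algebra isomorphism:} the map $[A]\mapsto [\,\{\Gamma(\chi_A)=1\}\,]$ (the $\nu$-class of the clopen-up-to-null set where the idempotent $\Gamma(\chi_A)$ equals $1$) is the candidate $\iota\colon\Sigma(X)\to\Sigma(K)$; one checks it is well defined, respects $\cap,\cup$ and measure (all of which reduce to $\Gamma$ being a $*$-homomorphism intertwining $\tau$ and $\int\cdot\,d\nu$), is bijective with inverse built from pullback along $\Gamma^{-1}$, and intertwines the two $G$-actions because $\Gamma$ does. \emph{Continuity of $\phi$:} this is the only place the hypothesis really bites. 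One must show $G\times K\to K$ is (jointly) continuous; equivalently, by the standard dualization, that for each $f\in\mathcal A$ the orbit map $G\to C(K)$, $g\mapsto \Gamma(g.f)$ is continuous for the sup-norm --- it suffices to prove this on the dense set of simple functions, where $\|g.\chi_A-\chi_A\|$ controls things via $\mu(A\triangle g.A)\to 0$ from condition~(3), plus an approximation argument to pass from $\|\cdot\|_1$-smallness to genuine sup-norm continuity on $C(K)$ (this last passage is exactly where one replaces an $L^1$-type estimate by a $C(K)$-estimate, using that on the \emph{compact} space $K$ the functions in $\Gamma(\mathcal A)$ are genuinely continuous, so an $\varepsilon/3$ argument as in Lemma~\ref{lem:continuoslymeasurepres} / \cite[Theorem~4.17]{eisner2015operator} applies).

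I expect the main obstacle to be precisely this continuity step: $L^\infty(X,\mu)$ is generally \emph{not} a separable $C^*$-algebra, so the orbit maps $g\mapsto g.f$ need not be norm-continuous on all of $\mathcal A$, and hence $\phi$ need not be continuous if one takes $\mathcal{A}=L^\infty(X,\mu)$ wholesale. The fix is to \emph{not} take all of $L^\infty$ but rather the smallest $G$-invariant, norm-closed, unital $*$-subalgebra $\mathcal A$ of $L^\infty(X,\mu)$ that is generated by a family of idempotents $\{\chi_A : A\in\mathcal{S}\}$ large enough to separate all of $\Sigma(X)$ yet chosen so that each orbit map $g\mapsto g.\chi_A$ is norm-continuous --- and condition~(3) of measure-preservation is exactly the statement that $g\mapsto \mu(A\triangle g.A)=\|g.\chi_A-\chi_A\|_{L^1}$ is continuous, which (since $\chi_A$ is an idempotent, so $\|g.\chi_A-\chi_A\|_{L^2}^2=\mu(A\triangle g.A)$) upgrades to $L^2$-continuity of the orbit map for \emph{every} measurable $A$. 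One then builds $\mathcal A$ as the closure in $L^\infty$ of the $*$-algebra generated by \emph{all} $\chi_A$, and checks that although norm-continuity of orbit maps is not automatic on this algebra, joint continuity of $\phi\colon G\times K\to K$ can nonetheless be deduced from the $L^2$-continuity of the unitary Koopman representation on $L^2(X,\mu)\supseteq \mathcal{A}$ together with the fact that the weak and norm topologies agree on the unit group of automorphisms when tested against the (dense, point-separating) idempotents --- working this out carefully, and confirming that the resulting $K$ is Hausdorff with $\nu$ of full support, is the technical heart of the proof; the remaining bookkeeping for $\iota$ being an algebra isomorphism is routine.
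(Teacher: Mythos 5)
Your overall construction is the one the paper uses: view $L^\infty(X,\mu)$ as a commutative unital $C^*$-algebra, let $K$ be its Gelfand spectrum with isomorphism $\Phi\colon C(K)\to L^\infty(X,\mu)$, obtain $\nu$ from the Riesz--Markov--Kakutani theorem, get full support from faithfulness of the state, transport each $\pi^g_*$ through $\Phi$ to a homeomorphism $\phi^g$ of $K$, and let the restriction of $\Phi$ to idempotents induce the measure-algebra isomorphism. Where you diverge is the continuity of $\phi$. The paper does not shrink the algebra: it proves continuity of the Koopman representation $\pi_*\colon G\to \operatorname{MAut}^1(X,\mu)$ from condition (3) and a density argument, then passes to $\operatorname{MAut}^\infty(X,\mu)$ via the quoted isomorphism of topological groups \cite[Proposition~13.6]{eisner2015operator} and concludes continuity of $\phi_*\colon G\to\mathcal{L}(C(K))$, hence of $\phi$, via \cite[Theorem~4.17]{eisner2015operator}. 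Your worry that sup-norm continuity of orbit maps on all of $L^\infty(X,\mu)$ is delicate is not frivolous (for the rotation flow on the circle and $h=\chi_A$ one has $\|\pi^g_*h-h\|_\infty=1$ for small $g\neq 0$), and it targets exactly the step the paper settles by citation rather than by a subalgebra reduction; but your text must then actually resolve it, and it does not.

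That is the genuine gap: the repair you sketch cannot work as stated. First, the $\|\cdot\|_\infty$-closure of the $*$-algebra generated by \emph{all} characteristic functions is all of $L^\infty(X,\mu)$ (simple functions are uniformly dense), so your ``smaller'' algebra is not smaller, and the continuity problem you identified reappears unchanged. Second, the proposed upgrade from $L^2$- (or $L^1$-) continuity of the Koopman representation to joint continuity of $\phi$ cannot succeed on any algebra containing all idempotents: since $\Phi$ is an isometric $*$-isomorphism, strong continuity of $\phi_*$ on $C(K)$ is equivalent to $\|\pi^g_*h-h\|_{L^\infty}\to 0$ for every $h$ in the algebra, and for $h=\chi_A$ this norm takes only the values $0$ and $1$ (being $1$ unless $\mu(A\triangle g.A)=0$); no ``weak and norm topologies agree on the automorphism group tested against idempotents'' principle is available, and you offer no argument for one. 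So the step you yourself call the technical heart is left unproved, and the mechanism you propose for it is unsound. If you want to pursue the subalgebra route (which is genuinely different from the paper's), the natural object is $\mathcal{A}_c=\{h\in L^\infty(X,\mu) \mid g\mapsto \pi^g_*h \text{ is } \|\cdot\|_\infty\text{-continuous}\}$; the point you would then have to prove is that $\mathcal{A}_c$ is $L^1$-dense, so that the extension of $\Phi$ is a Markov isomorphism and the induced map really is onto $\Sigma(X)$ --- this density is the nontrivial issue (for locally compact $G$ it follows by convolving with an approximate identity, for general topological groups it is not addressed by anything in your plan), and without it neither the algebra-isomorphism property nor full generality of the statement is obtained.
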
	 
For the proof of Proposition \ref{pro:JKtype} we introduce the following notation from \cite{eisner2015operator}. 
Let $(X,\mu)$ and $(Y,\nu)$ be probability spaces and $p\in \{1,\infty\}$. 
A bijective positive operator $T\colon L^p(X,\mu)\to L^p(Y,\nu)$ is called a \emph{Markov isomorphism} if $T$ has a positive inverse and satisfies $T\chi_X=\chi_Y$ and $\int_Y Tfd\nu =\int_X f d\mu$ for all $f\in L^p(X,\mu)$. 
A Markov isomorphism $T\colon L^p(X,\mu)\to L^p(X,\mu)$ is called a \emph{Markov automorphism}. Let $\operatorname{MAut}^p(X,\mu)$ denote the set of all Markov automorphisms on $L^p(X,\mu)$ and equip $\operatorname{MAut}^p(X,\mu)$ with the strong operator topology. 
From \cite[Theorem 13.15]{eisner2015operator} we quote that $\operatorname{MAut}^p(X,\mu)$ is a topological group under composition.
From \cite[Proposition 13.6]{eisner2015operator} we furthermore know that the restriction mapping induces an isomorphism of topological groups (a  homeomorphism that is also a group homomorphism) between $\operatorname{MAut}^\infty(X,\mu)$ and $\operatorname{MAut}^1(X,\mu)$.

For a measure preserving action $\pi$ we denote $\pi_*\colon G\to \operatorname{MAut}^\infty(X,\mu)$ for the \emph{Koopman representation} defined by $\pi_*(g)(f) \defeq f\circ \pi^g$ for all $f\in L^\infty(X,\mu)$. Similarly we define the Koopman representation 
$\pi_*\colon G\to \operatorname{MAut}^1(X,\mu)$. 
For an action $\phi$ on a compact Hausdorff space $X$ for which each $\phi^g$ is continuous we furthermore define analogously the Koopman representation $\phi_*\colon G\to \mathcal{L}(C(X))$, where $\mathcal{L}(C(K))$ denotes the set of all bounded linear operators $C(K)\to C(K)$ equipped with the strong operator topology. From \cite[Theorem 4.17]{eisner2015operator} we quote that $\phi$ is a continuous action, if and only if the Koopman representation $\phi_*\colon G\to \mathcal{L}(C(X))$ is continuous.

\begin{proof}[Proof of Proposition \ref{pro:JKtype}]
	To construct the topological model $\phi$ and the algebra isomorphism $\iota$ for $\pi$ we essentially follow the arguments from \cite[Section 12.3]{eisner2015operator}. We will then add to these arguments and argue that $\phi$ is indeed a continuous action. (This is trivially satisfied if $G$ is discrete.) For the convenience of the reader we present the full construction.
	
	Note that $L^\infty(X,\mu)$ is a commutative $C^*$-algebra with unit $\chi_X$. Thus, by the Gelfand-Naimark theorem, there exists a compact Hausdorff space $K$ and a unital $C^*$-algebra isomorphism $\Phi\colon C(K)\to L^\infty(X,\mu)$. 
	By the Riesz--Markov--Kakutani representation theorem there exists a unique regular Borel probability measure $\nu$ on $K$ such that 
	$\int_K f d\nu=\int_X\Phi (f) d\mu$ for all $f\in C(K)$.
	It is straightforward to observe that the canonical operator $C(K)\to L^\infty(K,\nu)$ is a homeomorphism and that $\Phi$ preserves the respective $L^1$-norms. In particular, $\nu$ has full support. 
	With standard arguments one then obtains that $\Phi$ extends uniquely to a Markov isomorphism $L^1(K,\nu)\to L^1(X,\mu)$, which we also denote by $\Phi$. The restriction of $\Phi$ to the set of all characteristic functions in the corresponding $L^1$-spaces then induces an algebra isomorphism $\iota\colon \Sigma(K)\to \Sigma(X)$. 
	
	For $g\in G$ we consider the Markov automorphism $\Phi^{-1}\circ \pi^g_*\circ \Phi\colon C(K)\to C(K)$. From \cite[Theorem 7.23]{eisner2015operator} we know that there exists a unique homeomorphism $\phi^g$ such that 
	$\Phi^{-1}\circ \pi^g_*\circ \Phi=\phi^g_*$. It is straightforward to check that this defines an action $\phi\colon G\times K\to K$, that $\nu$ is invariant and that $\iota$ establishes an algebra isomorphism between the measure preserving action $\phi$ of $G$ (equipped with the discrete topology) on $(K,\nu)$ and $\pi$.
	
	It remains to show that $\phi$ is continuous with respect to the original topology of $G$. 
	Since $\pi$ is measure preserving, any characteristic function $f\in L^1(X,\mu)$ satisfies \begin{displaymath}
		\|\pi^g_*f-f\|_1 \, \longrightarrow \, 0 \, \text{ as } \, g\to e_G.
	\end{displaymath} By a straightforward density argument, this readily entails the continuity of the Koopman representation $\pi_*\colon G\to \operatorname{MAut}^1(X,\mu)$. 
	Since restriction induces a homeomorphism between 
	$\operatorname{MAut}^1(X,\mu)$ and $\operatorname{MAut}^\infty(X,\mu)$, the Koopman representation $\pi_*\colon G\to \operatorname{MAut}^\infty(X,\mu)$ is continuous, too. 
	For every $f\in C(K)$, we know that $\Phi(f)\in L^\infty(X,\nu)$ and hence \begin{displaymath}
		(\pi^g_*\circ \Phi)(f) \, = \, (\pi_*^g)(\Phi(f)) \, \longrightarrow \, \Phi(f) \, \text{ as } \, g\to e_G .
	\end{displaymath} Thus, $\phi_*^g(f)=(\Phi^{-1}\circ \pi^g_*\circ \Phi)(f)\rightarrow f$ as $g\to e_G$. This proves the continuity of the Koopman representation $\phi_*\colon G\to \mathcal{L}(C(K))$, which implies the continuity of $\phi$. 
\end{proof}

\section{Entropy and topological pressure}
\label{sec:entropyandpressure}

Let $\pi$ be an action of a group $G$ on a set $X$, and let $\mathcal{U}$ be any finite set of subsets of $X$. For $g\in G$, we define $\mathcal{U}_g\defeq\{g^{-1}.U \mid U\in \mathcal{U}\}$. For a finite set $F\subseteq G$, we furthermore define 
\[\left.\mathcal{U}_F \, \defeq \, \left\{\bigcap\nolimits_{g\in F} g^{-1}.U(g) \, \right\vert U\in \mathcal{U}^F\right\}
.\]
\subsection{Naive entropy and topological pressure}
\label{sub:naiveentropy}

	We next study naive entropy and naive topological pressure as a tool in order to gain insights on the Ollagnier entropy. 
	Let $G$ be a topological group.  
	For a continously measure preserving action $\pi$ of $G$ on a probability space $(X,\mu)$ we use the notation defined in the introduction to define the \emph{naive entropy}
	\[\operatorname{E}_\mu^{(nv)}(\pi) \, \defeq \, \sup_{\alpha}\inf_{F\in \mathcal{F}_+(G)}\frac{H_\mu(\alpha_F)}{|F|},\]
	where the supremum is taken over all finite measurable partitions $\alpha$ of $X$. For any continuous action $\pi$ of $G$ on a compact Hausdorff space $X$, any $f\in C(X)$ and any $F\in \mathcal{F}(G)$, we define $\Sigma_F f\defeq \sum_{g\in F}f\circ \pi^g$. 	
	The \emph{naive topological pressure} of $\pi$ at $f \in C(X)$ is defined as
	\[\operatorname{p}_f^{(nv)}(\pi) \, \defeq \, \sup_{\mathcal{U}}\inf_{F\in \mathcal{F}_+(G)}\frac{\operatorname{P}_{\Sigma_F f}(\mathcal{U}_F)}{|F|},\]
	where the supremum is taken over all finite open covers $\mathcal{U}$ of $X$. 	
	The \emph{naive topological entropy} of $\pi$ is $\operatorname{p}_0^{(nv)}(\pi)$.

	\subsubsection{A naive version of Goodwyn's theorem}

	The following version of Goodwyn's theorem generalizes \cite[Theorem 1.3]{burton2017naivenetropy}, where it is shown for actions of discrete groups and only naive topological entropy is considered. In order to prove this result we use a different technique following ideas of \cite{ollagnier1985ergodic}. 	
	We note that naive entropy does not depend on the choice of the group topology. Thus, in the case of naive topological entropy ($f=0$), the following can also directly be obtained from \cite[Theorem 1.3]{burton2017naivenetropy}. 
	
	\begin{theorem}[Goodwyn's theorem for naive entropy and topological pressure]
	\label{the:Goodwynnaivenetropy}
		Let $\pi$ be a continuous action of a topological group $G$ on a compact Hausdorff space $X$. If $\mu$ is a $\pi$-invariant regular Borel probability measure on $X$ and $f\in C(X)$, then
	\[\operatorname{E}_\mu^{(nv)}(\pi)+\mu(f)\, \leq \, \operatorname{p}_f^{(nv)}(\pi).\]
	\end{theorem}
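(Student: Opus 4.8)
The plan is to reduce the theorem to a single uniform-in-$F$ comparison and then play the combinatorics of the join $\alpha_F$ against a convexity estimate for the pressure. Concretely, it suffices to produce, for every finite measurable partition $\alpha=\{A_1,\dots,A_k\}$ of $X$ and every $\epsilon>0$, a finite open cover $\mathcal{U}$ of $X$ with
\[ \operatorname{P}_{\Sigma_F f}(\mathcal{U}_F) \,\geq\, H_\mu(\alpha_F) + |F|\,\mu(f) - \epsilon\,|F| \qquad \text{for all } F\in\mathcal{F}_{+}(G). \]
Dividing by $|F|$ and taking $\inf_F$ gives $\inf_{F}\tfrac{H_\mu(\alpha_F)}{|F|}+\mu(f)-\epsilon\leq\operatorname{p}_f^{(nv)}(\pi)$, and then a supremum over $\alpha$ followed by $\epsilon\to0$ yields $\operatorname{E}_\mu^{(nv)}(\pi)+\mu(f)\leq\operatorname{p}_f^{(nv)}(\pi)$. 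Note that $\pi$ is measure preserving by Lemma~\ref{lem:continuoslymeasurepres}, so $\alpha_F$ is a finite measurable partition, $\Sigma_F f\in C(X)$, and $\int_X\Sigma_F f\,d\mu=|F|\,\mu(f)$ by invariance of $\mu$.

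For the cover I would use inner regularity of $\mu$: choose compact sets $B_i\subseteq A_i$ with $\mu(A_i\setminus B_i)$ small, set $B_0\defeq X\setminus\bigcup_{i\geq1}B_i$ and $V_i\defeq X\setminus\bigcup_{j\neq i}B_j$, and put $\mathcal{U}\defeq\{V_1,\dots,V_k\}$. This is a finite open cover enjoying the two key properties $A_i\subseteq V_i$ and $B_i\cap V_j=\emptyset$ for $i\neq j$. Writing the atoms of $\alpha_F$ as $A_\sigma\defeq\bigcap_{g\in F}g^{-1}.A_{\sigma(g)}$ for $\sigma\in\{1,\dots,k\}^F$, the first property gives $A_\sigma\subseteq V_\sigma\defeq\bigcap_{g\in F}g^{-1}.V_{\sigma(g)}\in\mathcal{U}_F$; collecting the atoms with a common image $V_\sigma$ produces a Borel partition $(C_V)_{V\in\mathcal{U}_F}$ with $C_V\subseteq V$ which is refined by $\alpha_F$.

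The combinatorial core is the uniform estimate $H_\mu\big((C_V)_{V\in\mathcal{U}_F}\big)\geq H_\mu(\alpha_F)-\epsilon'|F|$ with $\epsilon'\to0$ as $\mu(B_0)\to0$. I would prove it by conditioning on $\gamma_F$, where $\gamma\defeq\{B_0,X\setminus B_0\}$: the chain rule and the fact that conditioning does not increase entropy give $H_\mu(\alpha_F\mid (C_V)_V)\leq H_\mu(\gamma_F)+H_\mu(\alpha_F\mid(C_V)_V\vee\gamma_F)$, where $H_\mu(\gamma_F)\leq|F|\,H_\mu(\gamma)$ by subadditivity and invariance (and $H_\mu(\gamma)\to0$ as $\mu(B_0)\to0$), while $H_\mu(\alpha_F\mid(C_V)_V\vee\gamma_F)\leq|F|\,\mu(B_0)\log k$ because on each atom $C_V\cap C^\gamma_{\mathbf e}$ of $(C_V)_V\vee\gamma_F$ the partition $\alpha_F$ has at most $k^{|\mathbf e|}$ atoms, where $\mathbf e\in\{0,1\}^F$ records the coordinates $g$ with $g.x\in B_0$, together with $\sum_{\mathbf e}|\mathbf e|\,\mu(C^\gamma_{\mathbf e})=|F|\,\mu(B_0)$ (invariance again). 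The $k^{|\mathbf e|}$-claim is the delicate point: once the $\mathcal{U}_F$-name $V$ and the $\gamma_F$-name $\mathbf e$ of a point are fixed, its $\alpha_F$-name is determined by the restriction to $\{g:g.x\in B_0\}$, since at every other coordinate $g$ one has $g.x\in A_{\sigma(g)}\setminus B_0=B_{\sigma(g)}$ and $B_i\cap V_j=\emptyset$ $(i\ne j)$ forces $\sigma(g)$ to be recoverable from $V$. This conditional-entropy packaging — which replaces the binomial ``bad-coordinate'' counting of the classical Goodwyn argument — is where the techniques of \cite{ollagnier1985ergodic} enter, and is the step I expect to be the main obstacle.

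For the pressure side one only needs an $F$-independent convexity estimate: for each $V$ with $\mu(C_V)>0$, $\sup_{x\in V}e^{\Sigma_F f(x)}\geq\frac{1}{\mu(C_V)}\int_{C_V}e^{\Sigma_F f}\,d\mu\geq\exp\!\big(\frac1{\mu(C_V)}\int_{C_V}\Sigma_F f\,d\mu\big)$ by Jensen's inequality, so that summing over $V$ and invoking the Gibbs variational inequality $\log\sum_j e^{a_j}\geq\sum_j p_j a_j-\sum_j p_j\log p_j$ with $p_V=\mu(C_V)$ and $a_V=\frac1{\mu(C_V)}\int_{C_V}\Sigma_F f\,d\mu$ gives $\operatorname{P}_{\Sigma_F f}(\mathcal{U}_F)\geq\int_X\Sigma_F f\,d\mu+H_\mu\big((C_V)_V\big)=|F|\,\mu(f)+H_\mu\big((C_V)_V\big)$. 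Combining this with the combinatorial estimate and choosing the $B_i$ so that $\epsilon'<\epsilon$ yields the displayed inequality from the first paragraph, completing the proof.
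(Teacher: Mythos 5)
Your proposal is correct, and it reaches the paper's key uniform estimate by a genuinely different route. The paper isolates exactly your displayed inequality as Lemma~\ref{lem:Goodwynsmainlemma} (applied with $\Sigma_F f$ in place of $f$, using $\mu(\Sigma_F f)=|F|\mu(f)$, then dividing by $|F|$, taking infima and suprema -- the same reduction you describe), and its pressure-side step is the same Gibbs inequality (Lemma~\ref{lem:exprefpimadness}); where you diverge is the entropy-comparison step. The paper controls $H_\mu(\alpha_F\mid\beta)$ for an arbitrary partition $\beta$ adapted to $\mathcal{U}_F$ via Ollagnier's overlap ratio: it invokes Lemma~\ref{lem:overlapratiobound} (quoted from \cite{ollagnier1985ergodic}) to get a cover with $\operatorname{R}_\mu(\mathcal{U})\leq\epsilon$, coarsens $\beta$ to partitions $\gamma^{(g)}$ adapted to $\mathcal{U}_g$, and uses the invariance $\operatorname{R}_\mu(\mathcal{U}_g)=\operatorname{R}_\mu(\mathcal{U})$ to get the per-coordinate bound $H_\mu(\alpha_F\mid\beta)\leq|F|\epsilon$. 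You instead build the cover explicitly from compact sets $B_i\subseteq A_i$ (essentially the construction in Lemma~\ref{lem:OW:entropylemma1}), coarsen $\alpha_F$ to a specific subordinate partition $(C_V)_{V\in\mathcal{U}_F}$, and bound $H_\mu(\alpha_F\mid(C_V)_V)$ by conditioning on $\gamma_F$ with $\gamma=\{B_0,X\setminus B_0\}$ and the name-counting bound $k^{|\mathbf e|}$; your "delicate point" does go through, since for any representation $V=\bigcap_{g\in F}g^{-1}.V_{\tau(g)}$ and any $x\in C_V$ with $g.x\notin B_0$ the disjointness $B_i\cap V_j=\emptyset$ $(i\neq j)$ forces $\tau(g)=\sigma_x(g)$, so all points of an atom of $(C_V)_V\vee\gamma_F$ share their $\alpha_F$-name off the $B_0$-coordinates, and the invariance computation $\sum_{\mathbf e}|\mathbf e|\mu(C^\gamma_{\mathbf e})=|F|\mu(B_0)$ is exactly right. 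What each approach buys: the paper's overlap-ratio packaging gives a reusable lemma (it is applied again for the Ornstein--Weiss Goodwyn theorem, Theorem~\ref{the:OW:Goodwyns}) at the cost of importing the arguments behind Lemma~\ref{lem:overlapratiobound}; your Misiurewicz/Goodwyn-style counting is self-contained and, like the paper's corrected argument, sidesteps the false claim that $\alpha_F$ is adapted to $\mathcal{U}_F$ discussed in Remark~\ref{rem:Ollagnierserror}. Two cosmetic remarks: your Jensen step is not needed (the cruder bound $\mu(\Sigma_F f)\leq\sum_V\mu(C_V)\sup_{x\in V}\Sigma_F f(x)$ feeds directly into the Gibbs inequality, as in the paper), and you should note that distinct indices $V$ automatically give distinct cover elements, which is what lets you dominate $\sum_{V:\mu(C_V)>0}e^{a_V}$ by $\sum_{V\in\mathcal{U}_F}\sup_{x\in V}e^{\Sigma_F f(x)}$.
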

	
	We will need the following notions for the proof of Theorem \ref{the:Goodwynnaivenetropy}.
	For finite measurable partitions $\alpha$ and $\beta$ of a probability space $(X,\mu)$ we define the \emph{entropy of $\alpha$ given $\beta$} as
	\[H_\mu(\alpha|\beta):=-\sum_{A\in \alpha,B\in \beta} \mu(A\cap B)\log\left(\frac{\mu(A\cap B)}{\mu(B)}\right).\]
	For further properties of this concept the reader is referred to 
	\cite[Section 4.3]{walters1982introduction}.
	Now let $X$ be a compact Hausdorff space.  
	  A finite measurable partition $\alpha$ of $X$ is said to be \emph{adapted} to an open cover $\mathcal{U}$ of $X$ if there exists an injective mapping $\mathfrak{U}\colon \alpha\to \mathcal{U}$ such that $A\subseteq \mathfrak{U}(A)$ for each $A\in \alpha$. For a regular Borel probability measure $\mu$ on $X$ and an open cover $\mathcal{U}$ on $X$, we define the \emph{overlap ratio} as\label{sym:overlapratio}
	$\operatorname{R}_\mu(\mathcal{U}) \defeq \sup_{\alpha,\beta}H_\mu(\alpha|\beta),$
	where the supremum is taken over all finite measurable partitions $\alpha$ and $\beta$ of $X$, which are adapted to $\mathcal{U}$. The overlap ratio satisfies the following simple invariance property. 
	If $G$ is a group acting by homeomorphisms on $X$, then it is straightforward to show that $\operatorname{R}_\mu(\mathcal{U})=\operatorname{R}_\mu(\mathcal{U}_g)$ for every $g\in G$. 
	For a more detailed account on these notions see \cite{ollagnier1985ergodic}.
	From the arguments presented in \cite[5.2.12]{ollagnier1985ergodic} we obtain furthermore the following.
	
\begin{lemma}\label{lem:overlapratiobound}
	Let $\mu$ be a regular Borel probability measure on a compact Hausdorff space $X$. 
	For every finite measurable partition $\alpha$ of $X$ and every $\epsilon>0$, there exists an open cover $\mathcal{U}$ of $X$ such that $\alpha$ is adapted to $\mathcal{U}$ and such that $\operatorname{R}_\mu(\mathcal{U})\leq \epsilon$. 
\end{lemma}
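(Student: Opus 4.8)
The plan is to fix a finite measurable partition $\alpha = \{A_1,\dots,A_n\}$ of the compact Hausdorff space $X$ and an $\epsilon > 0$, and to construct an open cover $\mathcal{U}$ that is finely tuned to $\alpha$ in the measure-theoretic sense. By regularity of $\mu$, for each $j$ and each $\delta > 0$ I can choose an open set $O_j \supseteq A_j$ with $\mu(O_j \setminus A_j) < \delta$; then $\mathcal{U} \defeq \{O_1,\dots,O_n\}$ is an open cover of $X$ (since the $A_j$ already cover $X$), and $\alpha$ is adapted to $\mathcal{U}$ via the injection $A_j \mapsto O_j$. The point is that $\mathcal{U}$ has essentially the same ``combinatorial shape'' as $\alpha$ up to a set of small measure, namely $N \defeq \bigcup_j (O_j \setminus A_j)$ has $\mu(N) < n\delta$, and outside $N$ every point lies in at most one member of $\mathcal{U}$ (indeed in exactly the $O_j$ with $x \in A_j$). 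So $\mathcal{U}$ is a partition away from a set of measure $< n\delta$.

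The key step is then to bound $\operatorname{R}_\mu(\mathcal{U}) = \sup_{\gamma,\beta} H_\mu(\gamma \mid \beta)$ over all partitions $\gamma, \beta$ adapted to $\mathcal{U}$. If $\gamma$ is adapted to $\mathcal{U}$, then each block $C \in \gamma$ is contained in some $O_{j(C)}$, and different blocks go to different $O_j$'s; hence on $X \setminus N$ the block $C$ is contained in $A_{j(C)}$, which means $\gamma$ refines the partition $\alpha' \defeq \alpha \vee \{N, X\setminus N\}$ after restriction to $X \setminus N$. More precisely, $\gamma \vee \{N, X \setminus N\}$ refines $\alpha \vee \{N, X\setminus N\}$, and likewise for $\beta$. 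Using the standard inequalities for conditional entropy (monotonicity under refinement of the second argument, and $H_\mu(\gamma \mid \beta) \le H_\mu(\gamma \mid \beta') $ when $\beta$ refines $\beta'$, together with $H_\mu(\gamma \mid \beta) \le H_\mu(\gamma \vee \{N,X\setminus N\} \mid \beta)$ and splitting off the coordinate $\{N, X\setminus N\}$), I can reduce the estimate to two contributions: the entropy contributed on the block $N$, which is at most $\mu(N)\log(\text{something bounded in terms of }n)$ plus $H_\mu(\{N,X\setminus N\})$, both of which go to $0$ as $\delta \to 0$; and the entropy contributed on $X \setminus N$, where both $\gamma$ and $\beta$ essentially coincide with refinements of $\alpha$ that are themselves compatible, forcing $H_\mu(\gamma \mid \beta)$ to vanish there. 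Concretely, one shows $H_\mu(\gamma \mid \beta) \le 2H_\mu(\{N, X\setminus N\}) + \mu(N)\log n + (\text{error})$, uniformly in $\gamma,\beta$, and each term is $\le \epsilon$ once $\delta$ is small enough; this is exactly the computation indicated in \cite[5.2.12]{ollagnier1985ergodic}.

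The main obstacle I expect is making the ``on $X \setminus N$ the two adapted partitions are compatible'' step fully rigorous: adaptedness only gives containment of blocks in cover members up to the injection, so one has to carefully track that, modulo $N$, any adapted partition is a refinement of $\alpha$ restricted to $X\setminus N$, and that two refinements of a common partition $\alpha|_{X\setminus N}$ contribute zero conditional entropy to each other only after one further refines, which is not quite true — so the honest bound is that $H_\mu(\gamma\mid\beta)$ restricted to $X\setminus N$ is at most $H_\mu(\gamma|_{X\setminus N} \mid \alpha|_{X\setminus N})$ which can still be positive. The correct move, as in Ollagnier–Pinchon, is instead to bound $H_\mu(\gamma\mid\beta)$ by a quantity depending only on how much $\gamma$ ``sees beyond'' $\beta$, and to observe that on $X\setminus N$ both are functions of the same finite coordinate (which $A_j$ one is in), so the genuine dependence is carried entirely by $N$; packaging this cleanly, with the right appeal to concavity/continuity of $t \mapsto -t\log t$, is the delicate part, but it is routine once the bookkeeping is set up.
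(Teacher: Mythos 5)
Your construction itself is fine, and it is essentially the outer-regularity dual of the argument the paper relies on: the paper gives no proof of Lemma~\ref{lem:overlapratiobound} at all, deferring to \cite[5.2.12]{ollagnier1985ergodic}, and its closest in-house analogue is the proof of Lemma~\ref{lem:OW:entropylemma1}, which uses compact sets $D_i\subseteq A_i$ (inner regularity) where you use open sets $O_j\supseteq A_j$ (outer regularity). The genuine gap is in the second half: you never actually prove $\operatorname{R}_\mu(\mathcal{U})\leq\epsilon$. The bound you display, $H_\mu(\gamma\mid\beta)\leq 2H_\mu(\{N,X\setminus N\})+\mu(N)\log n+(\text{error})$, is asserted rather than derived, and your closing paragraph concedes that your reduction ``is not quite true'' and defers to an unspecified ``correct move''; since controlling the supremum over \emph{all pairs} of adapted partitions is the entire content of the lemma, this leaves the proof incomplete. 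The point you are missing is that adaptedness gives much more than ``$\gamma$ refines $\alpha$ off $N$'': if $\gamma$ is adapted via an injection $C\mapsto O_{j(C)}$, then $C\setminus N\subseteq O_{j(C)}\setminus N\subseteq A_{j(C)}$, at most one block of $\gamma$ is assigned to each index $j$, the blocks of $\gamma$ cover $X\setminus N$, and the sets $A_j\setminus N$ are pairwise disjoint; hence, blockwise, $C_j\setminus N=A_j\setminus N$, i.e.\ (after padding $\gamma$ with empty blocks and indexing along the injection) $\mu(C_j\triangle A_j)\leq\mu(N)<n\delta$ for every $j$. In particular your worry that the conditional entropy ``on $X\setminus N$ \dots can still be positive'' is unfounded: adapted partitions do not merely refine $\alpha$ off $N$, they coincide with it there.

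With that observation the lemma is immediate and needs none of the bookkeeping you describe: by the standard continuity of conditional entropy in the partition (quoted in the paper as \cite[Lemma 4.15]{walters1982introduction} and used in the proof of Lemma~\ref{lem:OW:entropylemma1}), choose $\delta$ so small that any two $n$-block partitions $\xi,\eta$ with $\sum_j\mu(\xi_j\triangle\eta_j)<n^2\delta$ satisfy $H_\mu(\xi\mid\eta)+H_\mu(\eta\mid\xi)\leq\epsilon/2$; then for any two partitions $\gamma,\beta$ adapted to $\mathcal{U}$ one gets $H_\mu(\gamma\mid\beta)\leq H_\mu(\gamma\mid\alpha)+H_\mu(\alpha\mid\beta)\leq\epsilon$, so $\operatorname{R}_\mu(\mathcal{U})\leq\epsilon$. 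One further technicality you should address: the paper's notion of ``adapted'' requires an \emph{injective} map $\alpha\to\mathcal{U}$, so you must ensure the sets $O_1,\dots,O_n$ are pairwise distinct (they need not be if some block has small measure); this is easily repaired, e.g.\ by picking points $x_k\in A_k$ and replacing $O_j$ by $O_j\setminus\{x_k\mid k\neq j\}$, which stays open, still contains $A_j$, and makes the cover members distinct without affecting the measure estimates.
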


	We will furthermore need the following combinatorial lemma from  \cite{walters1982introduction}. 

\begin{lemma}\label{lem:exprefpimadness}
	Let $k\in \mathbb{N}$ and $a_1,\cdots, a_k,p_1,\cdots,p_k$ be given real numbers such that $p_i\geq 0$ and $\sum_{i=1}^k p_i=1$. Then 
	$\sum_{i=1}^k p_i(a_i-\log(p_i))\leq \log\left(\sum_{i=1}^ke^{a_i}\right)$.
\end{lemma}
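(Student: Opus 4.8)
The statement is the classical Gibbs variational inequality for finite probability vectors, so the plan is to reduce it to concavity of the logarithm. First I would dispose of the degenerate situation: if $p_i=0$ for some $i$, the convention $0\log 0=0$ makes the corresponding summand $0$, so after discarding the indices with $p_i=0$ we may assume $p_i>0$ for every $i$ and $\sum_{i=1}^k p_i=1$. In that case I rewrite the left-hand side as
\[
\sum_{i=1}^k p_i\bigl(a_i-\log p_i\bigr) \, = \, \sum_{i=1}^k p_i\log\!\left(\frac{e^{a_i}}{p_i}\right).
\]

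Next I would apply Jensen's inequality to the strictly concave function $\log$ with the weights $p_i$ and the points $x_i \defeq e^{a_i}/p_i$:
\[
\sum_{i=1}^k p_i\log\!\left(\frac{e^{a_i}}{p_i}\right) \, \leq \, \log\!\left(\sum_{i=1}^k p_i\cdot\frac{e^{a_i}}{p_i}\right) \, = \, \log\!\left(\sum_{i=1}^k e^{a_i}\right),
\]
which is exactly the asserted inequality. If one prefers not to invoke Jensen's inequality as a black box, the same estimate follows from the elementary bound $\log t\leq t-1$ for $t>0$: setting $S\defeq\sum_{i=1}^k e^{a_i}$ and $t_i\defeq e^{a_i}/(p_i S)$, one gets $\log t_i\le t_i-1$, and multiplying by $p_i$ and summing yields $\sum_i p_i\log t_i\le \sum_i (e^{a_i}/S) - \sum_i p_i = 0$, i.e. $\sum_i p_i\bigl(a_i-\log p_i\bigr)\le \log S$.

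There is essentially no obstacle here; the only point requiring a word of care is the bookkeeping with the convention $0\log 0=0$ (equivalently, the treatment of vanishing $p_i$), and the observation that $\sum_{i:p_i>0}p_i$ is still $1$ after discarding the null indices, so that the reduced vector is again a probability vector and Jensen's inequality applies verbatim.
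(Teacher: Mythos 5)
Your argument is correct: after discarding the indices with $p_i=0$ (harmless by the convention $0\log 0=0$ and since this only decreases the right-hand side), the inequality is exactly Jensen's inequality for the concave function $\log$ applied to the points $e^{a_i}/p_i$ with weights $p_i$, and your elementary fallback via $\log t\leq t-1$ is also sound (the final identity should read $\leq 0$ rather than $=0$ if indices were discarded, which does not affect the conclusion). The paper itself gives no proof but quotes the lemma from Walters' book, where the standard proof is precisely this concavity/Gibbs-inequality argument, so your approach coincides with the intended one.
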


	The following lemma will be useful for the proofs of the Theorems \ref{the:Goodwynnaivenetropy} and \ref{the:OW:Goodwyns} below. 

\begin{lemma}\label{lem:Goodwynsmainlemma}
	Let $\pi$ be a continuous action of a topological group $G$ on a compact Hausdorff space $X$. Let $\mu$ be a $\pi$-invariant, regular Borel probability measure on $X$. 
	For every finite measurable partition $\alpha$ of $X$ and every $\epsilon>0$, there exists a finite open cover $\mathcal{U}$ such that, for any $f\in C(X)$ and any finite set $F\subseteq G$, \begin{displaymath}
		H_\mu(\alpha_F)+\mu(f) \, \leq \, \operatorname{P}_f(\mathcal{U}_F)+|F|\epsilon.
	\end{displaymath}
\end{lemma}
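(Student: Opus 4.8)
The plan is to reduce the statement to the combinatorial inequality of Lemma~\ref{lem:exprefpimadness} by exhibiting, for a given finite measurable partition $\alpha$ and $\epsilon>0$, a finite open cover $\mathcal{U}$ that simultaneously (a) is adapted to $\alpha$, so that the atoms of $\alpha_F$ are (up to reindexing) contained in the members of $\mathcal{U}_F$, and (b) has small overlap ratio $\operatorname{R}_\mu(\mathcal{U})\le\epsilon$, so that the error incurred when passing from $\alpha_F$ to $\mathcal{U}_F$ over an $F$-fold refinement is at most $|F|\epsilon$. The existence of such a $\mathcal{U}$ is exactly Lemma~\ref{lem:overlapratiobound}.

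First I would fix $\alpha$ and $\epsilon>0$ and invoke Lemma~\ref{lem:overlapratiobound} to obtain a finite open cover $\mathcal{U}$ with $\alpha$ adapted to $\mathcal{U}$ via an injection $\mathfrak{U}\colon\alpha\to\mathcal{U}$ and with $\operatorname{R}_\mu(\mathcal{U})\le\epsilon$. Next, fix $f\in C(X)$ and a finite set $F\subseteq G$. For each $g\in F$, the partition $\alpha$ is invariant under $\pi^g$ only as a partition up to the $G$-action, but $g^{-1}.\alpha \defeq \{g^{-1}.A\mid A\in\alpha\}$ is again a finite measurable partition adapted to $\mathcal{U}_g$ (with injection $A'\mapsto g^{-1}.\mathfrak{U}(g.A')$), and by the invariance property $\operatorname{R}_\mu(\mathcal{U})=\operatorname{R}_\mu(\mathcal{U}_g)$ noted in the text. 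The key point is that the common refinement $\alpha_F=\bigvee_{g\in F}g^{-1}.\alpha$ is adapted to $\mathcal{U}_F$: an atom of $\alpha_F$ has the form $\bigcap_{g\in F}g^{-1}.A(g)$ for some choice function $A\in\alpha^F$, and it sits inside $\bigcap_{g\in F}g^{-1}.\mathfrak{U}(A(g))\in\mathcal{U}_F$; one checks this assignment is injective on the atoms of positive measure. I would then telescope the overlap-ratio bound along $F$: writing $\alpha_F$ as successive refinements by the $g^{-1}.\alpha$ and using subadditivity of conditional entropy together with $H_\mu(g^{-1}.\alpha \mid \cdot)\le H_\mu(\mathcal{U}_g\text{-adapted partition}\mid\cdot)+\operatorname{R}_\mu(\mathcal{U}_g)$, one obtains that the difference between $H_\mu(\alpha_F)$ and the Shannon entropy of the (measurable) partition $\widetilde{\mathcal{U}}_F$ of $X$ generated by the adapted injection into $\mathcal{U}_F$ is at most $\sum_{g\in F}\operatorname{R}_\mu(\mathcal{U}_g)\le|F|\epsilon$.

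Finally I would apply Lemma~\ref{lem:exprefpimadness}. Index the relevant members of $\mathcal{U}_F$ as $U_1,\dots,U_k$, let $p_i\defeq\mu(\widetilde U_i)$ be the masses of the corresponding atoms of $\widetilde{\mathcal{U}}_F$ (so $\sum p_i=1$), and set $a_i\defeq\sup_{x\in U_i}f(x)$. Since $\widetilde U_i\subseteq U_i$, the invariance of $\mu$ gives $\mu(f)=\int f\,d\mu\le\sum_i\int_{\widetilde U_i}\sup_{x\in U_i}f(x)\,d\mu=\sum_i p_i a_i$ (choosing $\mathcal{U}$ fine enough, or simply using this crude bound, which suffices). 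Then
\begin{align*}
	H_\mu(\alpha_F)+\mu(f) \,&\le\, \sum\nolimits_i p_i(a_i-\log p_i) + |F|\epsilon \\
	&\le\, \log\!\Big(\sum\nolimits_i e^{a_i}\Big) + |F|\epsilon \,=\, \operatorname{P}_f(\mathcal{U}_F) + |F|\epsilon,
\end{align*}
using Lemma~\ref{lem:exprefpimadness} for the middle inequality and the definition $\operatorname{P}_f(\mathcal{V})=\log\big(\sum_{V\in\mathcal{V}}\sup_{x\in V}e^{f(x)}\big)$ for the last equality. I expect the main obstacle to be the bookkeeping in the telescoping step: one must carefully track that refining by each $g^{-1}.\alpha$ and then coarsening to an adapted partition of $\mathcal{U}_g$ costs only $\operatorname{R}_\mu(\mathcal{U}_g)$ in conditional entropy, and that these conditional-entropy increments add up correctly to $|F|\epsilon$ — this is where the precise definition of $\operatorname{R}_\mu$ via adapted partitions and the chain rule for conditional entropy (from \cite[Section 4.3]{walters1982introduction}) have to be combined with care, especially since $\alpha$ itself need not be adapted-robust under the individual $\pi^g$.
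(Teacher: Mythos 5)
Your overall architecture (Lemma~\ref{lem:overlapratiobound} to get a cover with small overlap ratio, a telescoping conditional-entropy estimate costing $|F|\epsilon$, and Lemma~\ref{lem:exprefpimadness} to pass to $\operatorname{P}_f(\mathcal{U}_F)$) matches the paper's proof, but your designated ``key point'' is false: the refinement $\alpha_F$ need \emph{not} be adapted to $\mathcal{U}_F$. The assignment $\bigcap_{g\in F}g^{-1}.A(g)\mapsto\bigcap_{g\in F}g^{-1}.\mathfrak{U}(A(g))$ gives containment but not injectivity, and injectivity is exactly what the final step requires, since the sum over the atoms fed into Lemma~\ref{lem:exprefpimadness} must be dominated by the sum over \emph{distinct} members of $\mathcal{U}_F$ appearing in $\operatorname{P}_f(\mathcal{U}_F)$. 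This is precisely the error in Ollagnier's original argument that Remark~\ref{rem:Ollagnierserror} isolates: for $X=\{1,2,3\}$ with $\mathbb{Z}$ acting by $\pi^1(1)=3$, $\pi^1(2)=2$, $\pi^1(3)=1$, with $\alpha=\{\{1,2\},\{3\}\}$, $\mathcal{U}=\{X,\{1,3\}\}$ and $F=\{0,1\}$, one has $\alpha_F=\{\{1\},\{2\},\{3\},\emptyset\}$ while $\mathcal{U}_F=\mathcal{U}$ has only two members. Your hedge ``injective on the atoms of positive measure'' does not repair this: take the uniform measure on $\{1,2,3\}$ in the same example (it is invariant under the transposition), so all three atoms $\{1\},\{2\},\{3\}$ have positive measure, yet $\mathcal{U}_F$ still has only two members, so no injective adapted assignment exists, and your telescoping step, as phrased, presupposes ``the partition generated by the adapted injection into $\mathcal{U}_F$'', which does not exist.

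The repair is to decouple the two roles of the partition: choose an auxiliary finite measurable partition $\beta$ adapted to $\mathcal{U}_F$ (such a $\beta$ always exists, e.g.\ by disjointifying the cover, with no reference to $\alpha$), apply Lemma~\ref{lem:exprefpimadness} to $\beta$ to obtain $H_\mu(\beta)+\mu(f)\leq\operatorname{P}_f(\mathcal{U}_F)$, and then bound $H_\mu(\alpha_F\mid\beta)\leq\sum_{g\in F}H_\mu(\alpha_g\mid\beta)\leq\sum_{g\in F}H_\mu(\alpha_g\mid\gamma^{(g)})\leq|F|\operatorname{R}_\mu(\mathcal{U})\leq|F|\epsilon$, where $\gamma^{(g)}$ is the coarsening of $\beta$ adapted to $\mathcal{U}_g$ (available because $\mathcal{U}_g\preceq\mathcal{U}_F\preceq\beta$), $\alpha_g$ is adapted to $\mathcal{U}_g$, and one uses the invariance $\operatorname{R}_\mu(\mathcal{U}_g)=\operatorname{R}_\mu(\mathcal{U})$. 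Combining with $H_\mu(\alpha_F)\leq H_\mu(\beta)+H_\mu(\alpha_F\mid\beta)$ yields the lemma. Your per-$g$ telescoping and your final application of Lemma~\ref{lem:exprefpimadness} are essentially these two estimates; the fix is only to replace the (nonexistent) adapted injection for $\alpha_F$ by an arbitrary partition adapted to $\mathcal{U}_F$, after which your argument becomes the paper's proof.
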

\begin{proof}
	From Lemma \ref{lem:overlapratiobound} we obtain the existence of an open cover $\mathcal{U}$ of $X$ such that $\alpha$ is adapted to $\mathcal{U}$ and such that $\operatorname{R}_\mu(\mathcal{U})\leq \epsilon$.	
	Consider now $f\in C(X)$ and $F\in \mathcal{F}_+(G)$. Consider furthermore a finite measurable partition $\beta$ of $X$ that is adapted to $\mathcal{U}_{F}$. Using Lemma \ref{lem:exprefpimadness} we compute
\begin{align*}
	H_\mu(\beta)+\mu(f)&\leq \sum_{B\in \beta}\mu(B)\left(\sup_{x\in B}f(x)-\log(\mu(B))\right) 
		\leq \log\left(\sum_{B\in \beta} e^{\sup_{x\in B}f(x)}\right)\\
		&=\log\sum_{B\in \beta} \sup_{x\in B}e^{f(x)}
		\leq \log\sum_{U\in \mathcal{U}_{F}} \sup_{x\in U}e^{f(x)}
		=\Pcov_{f}(\mathcal{U}_{F}).
\end{align*} 
	Now consider $g\in F$ and recall that $\mathcal{U}_g\preceq \mathcal{U}_F\preceq \beta$. Thus for each $B\in \beta$ there exists $U_B\in \mathcal{U}_g$ such that $B\subseteq U_B$. Considering \[ \left. \gamma^{(g)} \defeq \left\{\bigcup \{ B \in \beta \mid U_B=U \} \, \right\vert U\in \mathcal{U}_g\right\}\setminus \{\emptyset\}\] we obtain a finite measurable partition of $X$ that is adapted to $\mathcal{U}_g$ and satisifes $\gamma^{(g)}\preceq \beta$. The finite measurable partition $\alpha_g$ is also adapted to $\mathcal{U}_g$. Thus the basic properties of the static entropy and the invariance of the overlap ratio allow to estimate	\begin{align*}
H_{\mu}(\alpha_{F}|\beta)
	\leq \sum_{g\in F}H_{\mu}(\alpha_{g}|\beta)
	\leq \sum_{g\in F}H_{\mu}(\alpha_{g}|\gamma^{(g)})
	\leq \sum_{g\in F}\operatorname{R}_\mu(\mathcal{U}_{g})
	\leq |F|\operatorname{R}_\mu(\mathcal{U})
	\leq |F|\epsilon.
	\end{align*}
Combining our observations we thus obtain
	\begin{displaymath}
H_\mu(\alpha_{F})+\mu(f)
	\leq H_\mu(\beta)+\mu(f)+H_{\mu}(\alpha_{F}|\beta)
	\leq \Pcov_{f}(\mathcal{U}_{F})+|F|\epsilon. \qedhere
	\end{displaymath}
\end{proof}

	\begin{proof}[Proof of Theorem \ref{the:Goodwynnaivenetropy}]
	Of course, it suffices to verify that $\operatorname{E}_\mu^{(nv)}(\pi)+\mu(f)\leq \operatorname{p}_f^{(nv)}(\pi)+\epsilon$ for every $\epsilon> 0$. For this purpose, let $\epsilon>0$. Consider any finite measurable partition $\alpha$ of $X$. By Lemma \ref{lem:Goodwynsmainlemma}, there exists a finite open cover $\mathcal{U}$ such that, for every $F\in \mathcal{F}(G)$, 
	\[H_\mu(\alpha_{F})+\mu\left(\sum\nolimits_{F}f\right) \, \leq \, \operatorname{P}_{\sum_{F}f}(\mathcal{U}_{F})+|F|\epsilon \]
	where we let $\Sigma_F f\defeq \sum_{g\in F}f\circ \pi^g$.
	In particular,
	\begin{align*}
	\inf_{F\in \mathcal{F}_+(G)}\frac{H_\mu(\alpha_{F})}{|F|}+\mu(f) \,
	&= \, \inf_{F\in \mathcal{F}_+(G)}\frac{H_\mu(\alpha_{F})+\mu(\sum_{F}{f})}{|F|}\\
	&\leq \, \inf_{F\in \mathcal{F}_+(G)} \frac{\operatorname{P}_{\sum_{F}{f}}(\mathcal{U}_{F})}{|F|}+\epsilon\\
	&\leq \, \operatorname{p}_f^{(nv)}(\pi)+\epsilon.
	\end{align*}
	Taking the supremum over all finite measurable partitions of $X$, we conclude that 
	$\operatorname{E}_\mu^{(nv)}(\pi)+\mu(f)\leq \operatorname{p}_f^{(nv)}(\pi)+\epsilon$, as desired.
\end{proof}
\begin{remark}
	It remains open whether the variational principle holds for naive entropy and naive topological pressure (in case there exist invariant and regular Borel probability measures $\mu$ on $X$).	
	 However, if such measures exist and $G$ is metrizable or locally compact, then Theorem \ref{the:naivetopologicalpressureiszero} below readily entails that
	$\sup_{\mu}\operatorname{E}_\mu^{(nv)}(\pi)= \operatorname{E}^{(nv)}(\pi) \, (=0)$ as soon as $G$ is non-discrete. 
\end{remark}

\begin{remark}\label{rem:Ollagnierserror}
	In \cite{ollagnier1982variational} and in \cite[5.2.12]{ollagnier1985ergodic} the proof of Goodwyn's half of the variational principle makes heavy use of subadditivity properties of the overlap ratio \cite[Proposition 5.2.11]{ollagnier1985ergodic} and the following claim. For $\mu\in \mathcal{M}_G(X)$ and all finite open covers $\mathcal{U}$ it is claimed that whenever $\alpha$ is a finite measurable partition of $X$ that is adapted to $\mathcal{U}$ via an injective map $\mathfrak{U}\colon \alpha\to \mathcal{U}$ such that $\mu(\mathfrak{U}(A)\setminus A)$ is small, then $\alpha_F$ is adapted to $\mathcal{U}_F$ for any finite set $F\subseteq G$. Considering $X=\{1,2,3\}$ and the action of $\mathbb{Z}$ on $X$ introduced by $\pi^1(1)=3$, $\pi^1(2)=2$ and $\pi^1(3)=1$ we can choose the partition 
	$\alpha=\{\{1,2\},\{3\}\}$ that is adapted to 
	$\mathcal{U}=\{X,\{1,3\}\}$ via $\mathfrak{U}\colon \alpha\to \mathcal{U}$ that sends $\{1,2\}\mapsto X$ and $\{3\}\mapsto \{1,3\}$. The Dirac measure $\delta_2$ is then an invariant and regular Borel probability measure that satisfies $\delta_2(\mathfrak{U}(A)\setminus A)=0$ for all $A\in \alpha$. 
	Nevertheless, $\alpha_{\{0,1\}}=\{\{1\},\{2\},\{3\},\emptyset\}$ and $\mathcal{U}_{\{0,1\}}=\mathcal{U}$ and thus clearly $\alpha_{\{0,1\}}$ is not adapted to $\mathcal{U}_{\{0,1\}}$. 	
	In a correspondence with J.\ M.\ Ollagnier it was discussed how to repair the proof of the statement in the context of actions of discrete amenable groups and the proof above contains ideas from this discussion.
\end{remark}

	\subsubsection{Naive topological entropy for actions of non-discrete groups}
	\label{subsub:naivenon-discrete}
	It is well known that naive (measure-theoretic) and topological entropy can only take the values $0$ and $\infty$ for actions of non-amenable discrete groups \cite{burton2017naivenetropy}. Our next objective is to show that naive entropy is $0$ whenever the acting topological group is non-discrete and, moreover, metrizable or locally compact.

\begin{theorem}\label{the:naivetopologicalpressureiszero}
	For any continuous action $\pi$ of a topological group that contains an infinite precompact subset we have $\operatorname{p}_0^{(nv)}(\pi)=0$.
\end{theorem}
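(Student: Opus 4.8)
The plan is to prove $\operatorname{p}_0^{(nv)}(\pi)=0$ directly, without any amenability. Recall that $\operatorname{p}_0^{(nv)}(\pi)=\sup_{\mathcal{U}}\inf_{F\in\mathcal{F}_+(G)}\frac{P_0(\mathcal{U}_F)}{|F|}$ over finite open covers $\mathcal{U}$ of $X$, and that each inner expression is non-negative. Hence it suffices to fix one finite open cover $\mathcal{U}$ and exhibit a constant $M=M_{\mathcal{U}}$ together with finite sets $F\subseteq G$ of arbitrarily large cardinality for which $\mathcal{U}_F$ has a subcover of cardinality at most $M$; passing to such a subcover gives $P_0(\mathcal{U}_F)\leq\log M$, and letting $|F|\to\infty$ forces $\inf_{F\in\mathcal{F}_+(G)}\frac{P_0(\mathcal{U}_F)}{|F|}=0$.

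Fix $\mathcal{U}$ and a symmetric open entourage $\kappa\in\mathbb{U}_X$ that is Lebesgue with respect to $\mathcal{U}$, and let $S\subseteq G$ be an infinite precompact subset. The crucial point is that the family $\{\pi^g\mid g\in S\}$ is uniformly equicontinuous. To see this, pick a symmetric entourage $\kappa'$ with $\kappa'\kappa'\kappa'\subseteq\kappa$, an identity neighbourhood $W$ such that $(g.x,x)\in\kappa'$ for all $g\in W$ and $x\in X$ (available by continuity of $\pi$ and compactness of $X$, as in the proof of Lemma~\ref{lem:continuoslymeasurepres}), and a finite $E\subseteq G$ with $S\subseteq WE$; since each $\pi^e$ with $e\in E$ is uniformly continuous, there is $\delta\in\mathbb{U}_X$ with $(x,y)\in\delta\Rightarrow(e.x,e.y)\in\kappa'$ for all $e\in E$, and then for $g=we\in S$ with $w\in W$, $e\in E$ one obtains $(x,y)\in\delta\Rightarrow(g.x,g.y)\in\kappa'\kappa'\kappa'\subseteq\kappa$. (When $\overline{S}$ is compact, which covers the cases relevant to the introduction, this is simply uniform continuity of the restricted action $\overline{S}\times X\to X$.) Using compactness of $X$, fix finitely many non-empty subsets $D_1,\dots,D_M\subseteq X$ with $X=\bigcup_{i=1}^{M}D_i$ and $D_i\times D_i\subseteq\delta$ for each $i$.

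The remainder is then immediate. Let $F\subseteq S$ be finite and non-empty. For each $i\in\{1,\dots,M\}$ and $g\in F$, uniform equicontinuity gives $g.D_i\times g.D_i\subseteq\kappa$; choosing $z\in g.D_i$ we get $g.D_i\subseteq\kappa[z]$, and since $\kappa$ is Lebesgue there is $U_i(g)\in\mathcal{U}$ with $\kappa[z]\subseteq U_i(g)$, hence $D_i\subseteq g^{-1}.U_i(g)$. Consequently $D_i\subseteq\bigcap_{g\in F}g^{-1}.U_i(g)\in\mathcal{U}_F$, so these $M$ sets form a subcover of $\mathcal{U}_F$ and $P_0(\mathcal{U}_F)\leq\log M$. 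As $S$ is infinite, $|F|$ may be taken arbitrarily large, whence $\inf_{F\in\mathcal{F}_+(G)}\frac{P_0(\mathcal{U}_F)}{|F|}\leq\inf_{n\geq 1}\frac{\log M}{n}=0$; since $\mathcal{U}$ was arbitrary, $\operatorname{p}_0^{(nv)}(\pi)=0$.

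The one genuinely non-routine step — and thus what I expect to be the main obstacle — is recognizing that precompactness of $S$ must be used through uniform equicontinuity of $\{\pi^g\mid g\in S\}$: once this is combined with the Lebesgue property of $\kappa$, a single fixed finite decomposition of $X$ into small cells is simultaneously subordinate to all the translated covers $g^{-1}.\mathcal{U}$ with $g\in F\subseteq S$, which yields a subcover of $\mathcal{U}_F$ of size independent of $F$. Everything else is bookkeeping with entourages, and notably no hypothesis on $G$ beyond the existence of an infinite precompact subset enters.
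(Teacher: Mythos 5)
Your argument is correct and is essentially the paper's own proof with its auxiliary lemmas inlined: the uniform-equicontinuity step for $\{\pi^g\mid g\in S\}$ is exactly the paper's lemma that $\eta_A\in\mathbb{U}_X$ for precompact $A$ (proved there with the same symmetric $\kappa'$ with $\kappa'\kappa'\kappa'\subseteq\kappa$, identity neighbourhood, and decomposition $S\subseteq WE$), while your finite cover $\{D_1,\dots,D_M\}$ by $\delta$-small cells plays the role of the cover $\mathcal{V}$ that $A$-refines $\mathcal{U}$ in Lemma~\ref{lem:nv:Arefining}. After that, both proofs bound the covering quantity $P_0(\mathcal{U}_F)$ by a constant independent of the finite set $F\subseteq S$ and let $|F|\to\infty$, so no further comparison is needed.
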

	Since any measure preserving action has a topological model by Proposition~\ref{pro:JKtype}, the combination of Theorem \ref{the:Goodwynnaivenetropy} and Theorem~\ref{the:naivetopologicalpressureiszero} readily entails the following. 
\begin{corollary}\label{cor:naiveentropyiszero}
	For any measure preserving action $\pi$ of a topological group that contains an infinite precompact subset we have $\operatorname{E}_\mu^{(nv)}(\pi)=0$.
\end{corollary}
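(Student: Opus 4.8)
The plan is to fix a continuous action $\pi$ of a topological group $G$ on a compact Hausdorff space $X$, assume $G$ contains an infinite precompact subset $S$, and show that for every finite open cover $\mathcal{U}$ of $X$ one has $\inf_{F\in\mathcal{F}_+(G)}\frac{1}{|F|}\operatorname{P}_0(\mathcal{U}_F)=0$; since $\operatorname{p}_0^{(nv)}(\pi)$ is the supremum of these infima over all $\mathcal{U}$, this gives the claim (note $\operatorname{P}_0(\mathcal{U}_F)=\log N(\mathcal{U}_F)$, where $N(\mathcal{U}_F)$ is the number of non-empty members of $\mathcal{U}_F$, so it suffices to bound $\frac{1}{|F|}\log N(\mathcal{U}_F)$ by an arbitrarily small quantity for a suitable $F$). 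First I would fix $\mathcal{U}=\{U_1,\dots,U_r\}$ and $\epsilon>0$, and pick $n$ large enough that $\frac{\log r}{n}<\epsilon$. The key idea is that continuity of $\pi$ and precompactness of $S$ let us choose, for a fixed finite open subcover structure, a large subset $T\subseteq S$ on which the action "looks locally constant" at the resolution of $\mathcal{U}$: more precisely, using uniform continuity arguments on the compact space $X$ together with precompactness of $S$, I would produce a finite set $F=\{g_1,\dots,g_n\}\subseteq S$ such that the maps $\pi^{g_k}$ are "$\mathcal{U}$-indistinguishable" in the sense that refining along all of $F$ produces essentially the same cover as refining along a single element.

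Concretely, here is the mechanism. Since $X$ is compact Hausdorff and $\mathcal{U}$ is a finite open cover, choose an open symmetric entourage $\kappa\in\mathbb{U}_X$ that is Lebesgue for $\mathcal{U}$. By continuity of $\pi\colon G\times X\to X$ and compactness of $X$, for each $x\in X$ there is an identity neighbourhood $V_x\in\mathcal{U}(G)$ and an open $O_x\ni x$ such that $\pi(V_x\times O_x)\subseteq \kappa[x']$ for all $x'\in O_x$ — in other words, on $O_x$ the maps $\pi^h$ for $h\in V_x$ all agree with $\pi^{e_G}=\operatorname{id}$ up to $\kappa$. By compactness pick a finite subcover $O_{x_1},\dots,O_{x_m}$ and set $V\defeq\bigcap_j V_{x_j}\in\mathcal{U}(G)$; then for every $h\in V$ and every $x\in X$ we have $(\pi^h(x),x)\in\kappa\kappa$ (enlarging $\kappa$ once more if needed so that $\kappa\kappa$ is still Lebesgue for $\mathcal{U}$ after one composition, or rather starting from a $\kappa$ with $\kappa\kappa\kappa$ Lebesgue). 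Now, since $S$ is an infinite precompact subset of $G$, there is a finite set $E\subseteq G$ with $S\subseteq VE$, so by the pigeonhole principle some coset $Vg_0$ contains infinitely many points of $S$; pick $n$ distinct elements $s_1,\dots,s_n\in S\cap Vg_0$ and put $F\defeq\{s_1g_0^{-1}s_1,\dots\}$ — more cleanly, since all $s_k s_l^{-1}\in VV$, replace $\mathcal{U}$ by $\mathcal{U}_{g_0^{-1}s_1}$ (same cover cardinality, same value of $\operatorname{P}_0$ by the invariance of covering multiplicity under homeomorphisms) and assume the $s_k$ lie in $V$ themselves. Then for each $k$, $\pi^{s_k}$ moves points by at most $\kappa\kappa$, hence $s_k^{-1}.U\subseteq \kappa\kappa\kappa[\,\cdot\,]$-refinements of $U$; the upshot is that $\mathcal{U}_F=\bigvee_{k}\{s_k^{-1}.U\mid U\in\mathcal{U}\}$ is refined by $\mathcal{U}$ itself composed with $\kappa$-data, so $N(\mathcal{U}_F)\le N(\mathcal{U}')$ for a fixed cover $\mathcal{U}'$ of bounded cardinality $C$ independent of $n$. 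Consequently $\frac{1}{|F|}\log N(\mathcal{U}_F)\le \frac{\log C}{n}$, which is $<\epsilon$ for $n$ large.

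The main obstacle I anticipate is making the "$\mathcal{U}_F$ is refined by a cover of bounded cardinality" step precise and honest: one must be careful that refining along $n$ near-identity homeomorphisms genuinely does not multiply the number of cells, and the right way to see this is to note that if every $\pi^{s_k}$ satisfies $(\pi^{s_k}(x),x)\in\eta$ for a fixed small symmetric entourage $\eta$ with $\eta$ Lebesgue for $\mathcal{U}$, then for any $x$ there is a single $U\in\mathcal{U}$ with $\eta[x]\subseteq U$, hence $x\in s_k^{-1}.U'$ for the $U'\in\mathcal{U}$ with $s_k(x)\in U'$ and $\eta[s_k(x)]\subseteq U'$... the cleanest route is: the cover $\{\eta[x]\mid x\in X\}$ admits a finite subcover $\mathcal{W}$ with $|\mathcal{W}|=C$, each $W\in\mathcal{W}$ sits inside some member of every $s_k^{-1}.\mathcal{U}$ simultaneously (because $s_k$ barely moves $W$), hence $\mathcal{W}$ refines $\mathcal{U}_F$, giving $N(\mathcal{U}_F)\le C$. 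I would state this as a short lemma and then the theorem follows as above; the precompactness hypothesis is used exactly once, in the pigeonhole step producing arbitrarily many near-identity translates. Finally, Corollary \ref{cor:naiveentropyiszero} is immediate: any measure preserving $\pi$ has a topological model $\phi$ by Proposition \ref{pro:JKtype}, $\operatorname{E}_\mu^{(nv)}(\pi)=\operatorname{E}_\nu^{(nv)}(\phi)$ since naive entropy is an algebra-isomorphism invariant, and $\operatorname{E}_\nu^{(nv)}(\phi)\le \operatorname{p}_0^{(nv)}(\phi)$ by Theorem \ref{the:Goodwynnaivenetropy} (with $f=0$, $\mu=\nu$), which is $0$ by Theorem \ref{the:naivetopologicalpressureiszero}.
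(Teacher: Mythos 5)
Your deduction of Corollary~\ref{cor:naiveentropyiszero} is exactly the paper's: pass to a topological model via Proposition~\ref{pro:JKtype}, use that naive entropy is an invariant of algebra isomorphism, and combine Theorem~\ref{the:Goodwynnaivenetropy} (with $f=0$) with Theorem~\ref{the:naivetopologicalpressureiszero}; on this point there is nothing to add. The bulk of your text re-derives Theorem~\ref{the:naivetopologicalpressureiszero}, which the paper obtains from Lemma~\ref{lem:nv:Arefining}: there, precompactness of $A$ makes $\eta_A$ an entourage, yielding one finite cover $\mathcal{V}$ that $A$-refines $\mathcal{U}$ and hence refines $\mathcal{U}_F$ for \emph{every} finite $F\subseteq A$ at once, whereas you first normalize by a pigeonhole argument in $S\subseteq VE$ to near-identity elements (using right-invariance of $\operatorname{P}_0$ under a fixed translation) and then let a single cover $\mathcal{W}$ at the scale of a Lebesgue entourage refine the join; both routes use precompactness for the same purpose and are close in spirit. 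One step in your sketch needs repair (and the paper's own wording shares the same imprecision): if $N(\mathcal{U}_F)$ denotes the number of non-empty members of $\mathcal{U}_F$, then ``$\mathcal{W}$ refines $\mathcal{U}_F$'' does not give $N(\mathcal{U}_F)\leq |\mathcal{W}|$; refinement only produces, by choosing for each $W\in\mathcal{W}$ a member of $\mathcal{U}_F$ containing it, a subcover of $\mathcal{U}_F$ of cardinality at most $|\mathcal{W}|$, while the total number of non-empty intersection cells can grow with $|F|$. So the quantity you actually control is the minimal subcover cardinality, and the estimate $\inf_{F}\operatorname{P}_0(\mathcal{U}_F)/|F|=0$ should be run with $\operatorname{P}_0$ interpreted through such subcovers (the Goodwyn-type inequality is compatible with this reading, since adapted partitions can be taken subordinate to a subcover). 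With that adjustment your argument for the auxiliary theorem goes through, and the corollary itself is proved in the same way as in the paper.
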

	
\begin{remark}
	Let $G$ be a non-discrete topological group. If $G$ is metrizable or locally compact, then $G$ has an infinite compact subset. 
	Indeed, if $G$ is metrizable, then $G$ admits a non-stationary convergent sequence $(g_n)_{n\in \mathbb{N}}$, and we observe that $\{g_n \mid n\in \mathbb{N}\}$ constitutes an infinite compact subset of $G$ \cite{levine1968equivalence}.  	
	If $G$ is locally compact, then $G$ contains a compact subset with non-empty interior, which is necessarily infinite by non-discreteness of $G$. 
\end{remark}

	In order to prove Theorem \ref{the:naivetopologicalpressureiszero} we need the following notation from \cite{ollagnier1982variational}. Consider a continuous action of a topological group $G$ on a compact Hausdorff space $X$. 
	For $\eta\in \mathbb{U}_X$ and $A\subseteq G$, we define $\eta_A \defeq \cap_{g\in A}\eta_g$, where \begin{displaymath}
		\eta_g \, \defeq \, \left. \! \left\{(x,y)\in X^2 \, \right\vert (g.x,g.y)\in \eta \right\} \qquad (g \in G) .
	\end{displaymath} 
	Conveniently modifying an argument from \cite[Lemma 4.2]{hauser2020relative}, we present a short proof of the following statement. 
	
	\begin{lemma}
	Let $G$ be a topological group acting continuously on a compact Hausdorff space $X$. For $\eta\in \mathbb{U}_X$ and $A\subseteq G$ precompact we have $\eta_A\in \mathbb{U}_X$. 
	\end{lemma}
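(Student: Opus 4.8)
The plan is to produce an entourage of $X$ contained in $\eta_A$; since $\eta_A$ plainly contains the diagonal $\Delta\defeq\{(x,x)\mid x\in X\}$ (each $\eta_g$ does), this will prove $\eta_A\in\mathbb{U}_X$. First I fix a symmetric $\zeta\in\mathbb{U}_X$ with $\zeta\zeta\zeta\subseteq\eta$, which exists by the standard properties of uniformities. The point of passing to $\zeta$ is that it allows us to absorb a small perturbation of the acting group element on the left.

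The key input is a near-identity uniform continuity property of the action: there is $V\in\mathcal{U}(G)$ with $(v.z,z)\in\zeta$ for all $v\in V$ and all $z\in X$. Indeed, the map $\Phi\colon G\times X\to X\times X$, $(g,z)\mapsto(g.z,z)$, is continuous, and $\Phi^{-1}(\zeta)$ is an open subset of $G\times X$ containing $\{e_G\}\times X$; since $X$ is compact, the tube lemma produces an open identity neighbourhood $V\subseteq G$ with $V\times X\subseteq\Phi^{-1}(\zeta)$. (Compactness of $X$ is essential here; it cannot be replaced by compactness of $\overline{A}$, which need not hold for a precompact subset of a general topological group.) Using precompactness of $A$, choose a finite $F\subseteq G$ with $A\subseteq VF=\bigcup_{f\in F}Vf$. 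If $f\in F$, $g\in A\cap Vf$, say $g=vf$ with $v\in V$, and $(x,y)\in\zeta_f$, then
\[ (v.(f.x),\,f.x)\in\zeta, \qquad (f.x,\,f.y)\in\zeta, \qquad (f.y,\,v.(f.y))\in\zeta, \]
the outer two by the defining property of $V$ together with symmetry of $\zeta$, and the middle one because $(x,y)\in\zeta_f$. Hence $(g.x,g.y)=(v.(f.x),v.(f.y))\in\zeta\zeta\zeta\subseteq\eta$, i.e.\ $(x,y)\in\eta_g$. This shows $\zeta_f\subseteq\eta_g$ for every $g\in A\cap Vf$.

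Combining these inclusions yields
\[ \eta_A \, = \, \bigcap_{g\in A}\eta_g \, = \, \bigcap_{f\in F}\ \bigcap_{g\in A\cap Vf}\eta_g \, \supseteq \, \bigcap_{f\in F}\zeta_f . \]
Since $\pi^f$ is a homeomorphism of $X$, the map $\pi^f\times\pi^f$ is a homeomorphism of the compact Hausdorff space $X\times X$, hence a uniform isomorphism, so each $\zeta_f=(\pi^f\times\pi^f)^{-1}(\zeta)$ is an entourage. Therefore $\bigcap_{f\in F}\zeta_f$, a finite intersection of entourages, is an entourage, and $\eta_A$ — which contains it and contains $\Delta$ — belongs to $\mathbb{U}_X$. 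The only genuinely delicate step is the near-identity uniform continuity of the action, which rests on compactness of $X$; the rest is bookkeeping with the relation $\zeta\zeta\zeta\subseteq\eta$ and the cover $A\subseteq VF$.
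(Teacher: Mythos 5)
Your proof is correct and follows essentially the same route as the paper's: a symmetric entourage $\kappa$ with $\kappa\kappa\kappa\subseteq\eta$, an identity neighbourhood $V$ with $(v.z,z)\in\kappa$ obtained from continuity of the action and compactness of $X$, the cover $A\subseteq VF$ from precompactness, and the inclusion $\eta_A\supseteq\kappa_F\in\mathbb{U}_X$. You have merely written out the ``straightforward computation'' the paper leaves implicit, so there is nothing to add.
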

	\begin{proof}
		Consider a symmetric $\kappa\in \mathbb{U}_X$ such that $\kappa\kappa\kappa\subseteq \eta$.
		Since $\pi$ is continuous and $X$ is compact, a standard argument 
		shows that there exists an identity neighbourhood $V \in \mathcal{U}(G)$ such that $\{ (g.x,x) \mid g \in G, \, x \in X \} \subseteq \kappa$.
By $A$ being precompact, there is a finite subset $F\subseteq G$ with $A\subseteq VF$. A straightforward computation now shows that $\eta_A\supseteq \kappa_F\in \mathbb{U}_X$.
	\end{proof}
	
	Consider a continuous action of a topological group $G$ on a compact Hausdorff space $X$. 
	For open covers $\mathcal{U}$ and $\mathcal{V}$ of $X$ and a subset $A\subseteq G$, we say that $\mathcal{V}$ $A$-\emph{refines} $\mathcal{U}$ if, for each $g\in A$, the cover $\mathcal{V}$ is finer than $\mathcal{U}_g$. 
	From \cite[Lemma 3.2]{schneider2015topological} we quote the following for which we  include a short (alternative) proof. 
\begin{lemma}\label{lem:nv:Arefining}
Let $G$ be a topological group acting continuously on a compact Hausdorff space $X$. If $\mathcal{U}$ is a finite open cover of $X$ and $A\subseteq G$ is precompact, then there exists a finite open cover $\mathcal{V}$ of $X$ which $A$-refines $\mathcal{U}$.
\end{lemma}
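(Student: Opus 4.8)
The plan is to reduce the statement to the lemma just proved, namely that $\eta_A \in \mathbb{U}_X$ whenever $\eta \in \mathbb{U}_X$ and $A \subseteq G$ is precompact, combined with the standard facts recalled earlier about Lebesgue entourages and about covers at a prescribed scale.

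First I would fix a symmetric open entourage $\kappa \in \mathbb{U}_X$ that is Lebesgue with respect to $\mathcal{U}$; such a $\kappa$ exists by the standard fact, recalled earlier, that any open cover of a compact Hausdorff space admits an open Lebesgue entourage. Since $A$ is precompact, the preceding lemma applies and gives $\kappa_A \defeq \bigcap_{g \in A}\kappa_g \in \mathbb{U}_X$, where $\kappa_g = \{(x,y) \in X^2 \mid (g.x,g.y) \in \kappa\}$. The point of passing to $\kappa_A$ is that it packages the requirement ``for all $g \in A$ simultaneously'' into a single entourage, and this is precisely the step where precompactness of $A$ is essential.

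Next I would build a finite open cover $\mathcal{V}$ of $X$ at scale $\kappa_A$, i.e.\ with $V^2 \subseteq \kappa_A$ for each $V \in \mathcal{V}$. Choosing a symmetric open $\lambda \in \mathbb{U}_X$ with $\lambda\lambda \subseteq \kappa_A$, for every $x \in X$ the section $\lambda[x]$ is an open neighbourhood of $x$ satisfying $\lambda[x]^2 \subseteq \lambda\lambda \subseteq \kappa_A$ (using symmetry of $\lambda$). By compactness of $X$, finitely many sections $\lambda[x_1],\dots,\lambda[x_n]$ cover $X$, and I take $\mathcal{V} \defeq \{\lambda[x_1],\dots,\lambda[x_n]\}$.

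Finally I would verify that $\mathcal{V}$ $A$-refines $\mathcal{U}$. Fix $V \in \mathcal{V}$ and $g \in A$. Then $V^2 \subseteq \kappa_A \subseteq \kappa_g$, so $(g.V)^2 \subseteq \kappa$; picking any $x \in g.V$ and using that $\kappa$ is symmetric and Lebesgue with respect to $\mathcal{U}$, one gets $g.V \subseteq \kappa[x] \subseteq U$ for some $U \in \mathcal{U}$, hence $V \subseteq g^{-1}.U \in \mathcal{U}_g$. Thus $\mathcal{V}$ is finer than $\mathcal{U}_g$ for every $g \in A$, which is the definition of $A$-refinement. I do not expect a genuine obstacle beyond the bookkeeping around the preceding lemma; the only slightly delicate point is ensuring the refinement holds uniformly in $g \in A$, which is exactly what the intersection $\kappa_A$ and its membership in $\mathbb{U}_X$ take care of.
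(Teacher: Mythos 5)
Your proposal is correct and follows essentially the same route as the paper: apply the preceding lemma to a Lebesgue entourage $\eta$ of $\mathcal{U}$ to get $\eta_A\in\mathbb{U}_X$, extract a finite open cover subordinate to it by compactness, and use $\eta_A\subseteq\eta_g$ together with the Lebesgue property to verify the $A$-refinement. The only difference is cosmetic: the paper covers $X$ directly by neighbourhoods contained in the sections $\eta_A[x]$, whereas you insert the extra (harmless but unnecessary) step of passing to $\lambda$ with $\lambda\lambda\subseteq\eta_A$ to get a cover at scale $\eta_A$.
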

\begin{proof}
	Let $\eta$ be a Lebesgue entourage of $\mathcal{U}$.
	Since $\eta_A\in \mathbb{U}_X$ for $x\in X$ there exists an open neighbourhood $V_x$ of $x$ that is contained in $\eta_A[x]$. 
	For $g\in A$ we observe the existence of $U\in \mathcal{U}_g$ such that $V_x\subseteq \eta_g[x]\subseteq U$ and hence that $\{V_x \mid x\in X\}$ $A$-refines $\mathcal{U}$. 
	Since $X$ is compact we can choose a finite subcover $\mathcal{V}$ of  $\{V_x \mid x\in X\}$. 
\end{proof}
\begin{proof}[Proof of Theorem \ref{the:naivetopologicalpressureiszero}]
	Let $\mathcal{U}$ be a finite open cover of $X$ and consider an infinite and precompact set $A$. Let $\mathcal{V}$ be a finite open cover that $A$-refines $\mathcal{U}$. In particular we obtain that $\mathcal{V}$ is finer than $\mathcal{U}_F$ for all finite subsets $F\subseteq A$ and thus 
	$P_0(\mathcal{U}_F)
	=\log |\mathcal{U}_F|
	\leq |\mathcal{V}|<\infty.$
	Since $A$ is assumed to be infinite, we observe 
	\[\inf_{F\in \mathcal{F}_+(G)} \frac{\operatorname{P}_0(\mathcal{U}_F)}{|F|} \, = \, 0. \]
	Taking the supremum over all finite open covers of $X$, we obtain the statement. 
\end{proof}

We conclude this subsection with an example of a non-precompact topological group that does not contain an infinite precompact subset, which has been communicated to the authors by Maxime Gheysens.

\begin{example}\label{example:maxime} Let $H$ by any non-trivial discrete group (such as, for example, the additive group of integers). Furthermore, consider any
uncountable set $I$. We turn the direct power $G \defeq H^{I}$ into a topological group by endowing it with the \emph{countable-box
topology}, i.e.~the one generated by the basic open subsets of the form
\begin{displaymath}
	\{ g \in G \mid \forall i \in C \colon \, g_{i} \in U_{i} \} ,
\end{displaymath} where $C$ is any countable subset of $I$ and $(U_{i})_{i \in C}$ is any family of subsets of $H$. It is easy to verify that this topology indeed constitutes a group topology on $G$. Moreover, this topology is non-discrete (for cardinality reasons) and Hausdorff (as it contains the usual product topology). Now, the topological group $G$ has the additional property that each of its countable subsets is uniformly discrete. Indeed,
if $A$ is a countable subset of $G$, then for any pair $(g,h) \in A \times A$ with $g \neq h$, we can find an index $i(g, h) \in I$ such that $g_{i(g,h)} \ne h_{i(g,h)}$, so that $A$ will be $U$-discrete for the open identity neighborhood \begin{displaymath}
	U \, \defeq \, \{ g \in G \mid \forall i \in C \colon \, g_{i} \in \{ e_{H} \} \} \, \in \, \mathcal{U}(G) ,
\end{displaymath} where $C \defeq \{ i(g,h) \mid g,h \in A, \, g\ne h \}$.
Consequently, $G$ cannot posses any countably infinite precompact subset, which -- by the axiom of choice -- entails that $G$ does not contain any infinite precompact subset at all. Choosing $H$ to be abelian, the group $G$ will be abelian, too, thus amenable even as a discrete group. \end{example}

\subsection{Ollagnier entropy}
\label{sub:Ollagnierentropy}
	Let $\pi$ be a measure preserving action of an amenable topological group $G$ on a probability space $(X,\mu)$.
	Recall from the introduction that we define \emph{Ollagnier entropy} as	
	\[\operatorname{E}_\mu^{(Oll)}(\pi) \, \defeq \, \sup_{\alpha}\lim_{i\in I}\frac{H_\mu(\alpha_{F_i})}{|F_i|},\]
	where the supremum is taken over all finite measurable partitions of $X$ and where $(F_i)_{i\in I}$ is a thin F\o lner net. 
	Here the limit exists and is independent from the choice of a thin F\o lner net by Ollagnier's Lemma (Theorem \ref{the:ollagnierslemma}), which can be applied by the following lemma. 
	
	\begin{lemma}\label{lem:continuouslympandEntropy}
	Let $\pi$ be a measure preserving action of a topological group $G$ on a probability space $(X,\mu)$. 
		For any finite measurable partition $\alpha$ of $X$, the map \begin{displaymath}
			\mathcal{F}_+(G) \, \longrightarrow \, \mathbb{R}_{\geq 0}, \quad F \, \longmapsto \, H_\mu(\alpha_F)
		\end{displaymath} is right-invariant, continuous and satisfies Shearer's inequality.  
	\end{lemma}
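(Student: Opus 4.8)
The plan is to establish the three asserted properties separately. Throughout I would use that $\alpha_F$ equals the common refinement $\bigvee_{g\in F}g^{-1}.\alpha$, where $g^{-1}.\alpha \defeq \{g^{-1}.A \mid A\in\alpha\}$, and that this is a genuine finite measurable partition indexed by $\alpha^F$ (possibly with empty atoms), since $\alpha$ is a partition and each $\pi^g$ is a bijection. \emph{Right-invariance} is the easiest: for $x\in G$, writing each $h\in Fx$ as $h=gx$ with $g\in F$ and using $(gx)^{-1}=x^{-1}g^{-1}$, one sees that $\alpha_{Fx}=\{x^{-1}.P \mid P\in\alpha_F\}$, and since $\pi$ is measure preserving $\mu$ is $\pi^{x^{-1}}$-invariant, so $H_\mu(\alpha_{Fx})=H_\mu(\alpha_F)$.

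For \emph{continuity} I would fix $F\in\mathcal{F}_+(G)$ and $\epsilon>0$ and, using that $\alpha$ is finite together with property~(3) of measure-preserving actions from Subsection~\ref{sub:actionsoftopologicalgroups}, choose a symmetric $U\in\mathcal{U}(G)$ with $\mu(A\triangle g.A)<\delta$ for all $A\in\alpha$ and $g\in U$, for a $\delta>0$ to be fixed last. Given $F'\in U[F]$ with witnessing bijection $\phi\colon F'\to F$, write $\phi^{-1}(y)=v_yy$ with $v_y\in U$ for $y\in F$; transporting the labelling set $\alpha^{F'}$ onto $\alpha^F$ along $\phi$, a short computation identifies the atom of $\alpha_{F'}$ labelled by $B\in\alpha^F$ as $Q_B\defeq\bigcap_{y\in F}y^{-1}.(v_y^{-1}.B(y))$, to be compared with the atom $P_B\defeq\bigcap_{y\in F}y^{-1}.B(y)$ of $\alpha_F$. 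From $P_B\triangle Q_B\subseteq\bigcup_{y\in F}y^{-1}.(B(y)\triangle v_y^{-1}.B(y))$ and $v_y^{-1}\in U$ one gets $\mu(P_B\triangle Q_B)\le|F|\delta$, hence $\sum_{B\in\alpha^F}\mu(P_B\triangle Q_B)\le|\alpha|^{|F|}|F|\delta$. I would then invoke the classical modulus of continuity of Shannon entropy: two finite partitions with a common index set of size $n$ whose atoms differ by $t$ in the pseudometric $(\beta,\beta')\mapsto\sum\mu(B\triangle B')$ have entropies differing by a quantity that tends to $0$ as $t\to0$ for fixed $n$; concretely, $|H_\mu(\alpha_F)-H_\mu(\alpha_{F'})|\le H_\mu(\alpha_F\mid\alpha_{F'})+H_\mu(\alpha_{F'}\mid\alpha_F)$, and each conditional entropy is at most $H_\mu(\{D,X\setminus D\})+\mu(D)\log|\alpha|^{|F|}$, where $D$ denotes the small-measure set on which $\alpha_F$ and $\alpha_{F'}$ disagree (cf.~\cite[Section~4.3]{walters1982introduction}). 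Taking $\delta$ small enough makes this at most $\epsilon$ for every $F'\in U[F]$, which is continuity at $F$.

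\emph{Shearer's inequality} is the classical Shearer inequality for Shannon entropy. With the convention $H_\mu(\alpha_\emptyset)=H_\mu(\{X\})=0$, I would take $k\in\mathbb{N}_{>0}$, enumerate $F=\{g_1,\dots,g_n\}$, and let $(C_i)_{i\in I}$ be a $k$-cover of $F$ by finite subsets of $G$. Ordering each $C_i$ compatibly with $g_1,\dots,g_n$, the chain rule gives $H_\mu(\alpha_F)=\sum_{j=1}^nH_\mu(g_j^{-1}.\alpha\mid\bigvee_{l<j}g_l^{-1}.\alpha)$ and $H_\mu(\alpha_{C_i})=\sum_{g_j\in C_i}H_\mu(g_j^{-1}.\alpha\mid\bigvee_{l<j,\,g_l\in C_i}g_l^{-1}.\alpha)$, and since conditional entropy does not increase when the conditioning partition is refined, $H_\mu(\alpha_{C_i})\ge\sum_{g_j\in C_i}H_\mu(g_j^{-1}.\alpha\mid\bigvee_{l<j}g_l^{-1}.\alpha)$; summing over $i\in I$ and using that every $g_j$ lies in at least $k$ of the $C_i$ yields $\sum_{i\in I}H_\mu(\alpha_{C_i})\ge k\,H_\mu(\alpha_F)$. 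I expect the continuity part to be the only real obstacle: the combinatorial size of $\alpha_F$ forces one through the explicit relabelling by $\phi$ and the atomwise control of the perturbations $v_y^{-1}.B(y)$ via hypothesis~(3), after which the standard continuity of $H_\mu$ finishes it; right-invariance and Shearer's inequality are routine.
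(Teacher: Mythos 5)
Your proposal is correct and follows essentially the same route as the paper: the continuity part is exactly the paper's argument (use property~(3) of a measure preserving action to make $\mu(A\triangle g.A)$ small for $g$ in a suitable identity neighbourhood, transport the labelling of $\alpha_{F'}$ to $\alpha^F$ along the witnessing bijection from $U[F]$, bound the summed symmetric differences of corresponding atoms by $|F|\,|\alpha|^{|F|}\delta$, and finish with the standard continuity of Shannon entropy in the partition pseudometric, cf.\ \cite[Lemma~4.15]{walters1982introduction}), while right-invariance and Shearer's inequality, which the paper dispatches as standard with a reference to \cite{downarowicz2015shearer}, are proved by you via the usual invariance computation and the classical chain-rule argument.
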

	\begin{proof} Let $\alpha$ be a finite measurable partition of $X$. It is standard to show that the considered map is right-invariant and satisfies Shearer's inequality (see, e.g.,~\cite{downarowicz2015shearer}). 
	To show the continuity, consider any $F\in \mathcal{F}_+(G)$. 
	Since $\pi$ is measure preserving, there exists 
	an open neighbourhood $V$ of $e_G$ such that 
	$\mu(A\triangle g.A)<\epsilon/(|F||\alpha^F|)$ for each $A\in \alpha$. 
	Now, let $E\in V[F]$ and choose a bijection $b\colon F\to E$ such that $b(g) \in Vg$ for each $g\in F$. 
	For any map $A^{(\cdot)}\colon F\to \alpha, \, g\mapsto A^{(g)}$, we observe that
	\begin{align*}
	\mu\!\left( \! \left(\bigcap\nolimits_{g\in F}g^{-1}.A^{(g)}\right) \triangle \left(\bigcap\nolimits_{g\in F}(b(g))^{-1}.A^{(g)}\right) \! \right) \,
	&\leq \, \mu \! \left(\bigcup\nolimits_{g\in F}g^{-1}.A^{(g)}\triangle (b(g))^{-1}.A^{(g)} \right)\\
	&\leq \, \sum\nolimits_{g\in F} \mu \! \left(g^{-1}.A^{(g)}\triangle (b(g))^{-1}.A^{(g)} \right)\\ 
	&= \, \sum\nolimits_{g\in F} \mu\!\left((b(g)g^{-1}).A^{(g)}\triangle A^{(g)} \right)\\
	&\leq \, \frac{|F|\epsilon}{|F||\alpha^F|} \, = \, \frac{\epsilon}{|\alpha^F|}.
	\end{align*}
	This shows that  
	\[\mathcal{F}_+(G) \, \longrightarrow \, \mathbb{R}_{\geq 0}, \quad F \, \longmapsto \, \sum\nolimits_{A^{(\cdot)}\in \alpha^F}\mu\!\left(\!\left(\bigcap\nolimits_{g\in F}g^{-1}.A^{(g)}\right) \triangle \left(\bigcap\nolimits_{g\in F}(b(g))^{-1}.A^{(g)}\right)\!\right)\]
	is continuous. 
	By a standard argument (as for example contained in \cite[Lemma 4.15]{walters1982introduction}), we thus obtain the continuity of $\mathcal{F}_+(G) \to \mathbb{R}_{\geq 0} , \, F\mapsto H_\mu(\alpha_F)$. 
	\end{proof}
	As another application of Ollagnier's Lemma we obtain that the limit in the definition of the Ollagnier entropy is actually an infimum, i.e.\ the following. 	
	
	\begin{theorem}
		Any measure preserving action $\pi$ of an amenable topological group on a probability space $(X,\mu)$ satisfies 
	$\operatorname{E}_\mu^{(Oll)}(\pi)=\operatorname{E}_\mu^{(nv)}(\pi).$
	\end{theorem}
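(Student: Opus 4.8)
The identity is an immediate consequence of the topological Ollagnier lemma (Theorem~\ref{the:ollagnierslemma}), applied separately to each finite measurable partition $\alpha$. The point is that all of the analytic content has already been packaged into Lemma~\ref{lem:continuouslympandEntropy} and Theorem~\ref{the:ollagnierslemma}: once we know that the set function $F \mapsto H_\mu(\alpha_F)$ fits the hypotheses of Ollagnier's lemma, the limit defining the $\alpha$-contribution to $\operatorname{E}_\mu^{(Oll)}$ coincides with the infimum defining the $\alpha$-contribution to $\operatorname{E}_\mu^{(nv)}$, and we need only take the supremum over $\alpha$ on both sides.

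\textbf{Steps.} First, since $G$ is amenable, Theorem~\ref{theorem:thin.folner} guarantees a thin F\o lner net $(F_i)_{i\in I}$ in $G$; fix such a net. Next, let $\alpha$ be an arbitrary finite measurable partition of $X$ and consider $f_\alpha \colon \mathcal{F}(G) \to \mathbb{R}_{\geq 0}$, $F \mapsto H_\mu(\alpha_F)$, extended to all of $\mathcal{F}(G)$ by $f_\alpha(\emptyset) = H_\mu(\alpha_\emptyset) = H_\mu(\{X\}) = 0$. By Lemma~\ref{lem:continuouslympandEntropy} this function is right-invariant, continuous, and satisfies Shearer's inequality (the behaviour at $\emptyset$ is irrelevant to these properties, and in particular Shearer's inequality is unaffected). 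Hence Theorem~\ref{the:ollagnierslemma} applies and yields
\begin{displaymath}
	\lim_{i\in I} \frac{H_\mu(\alpha_{F_i})}{|F_i|} \, = \, \inf_{F \in \mathcal{F}_+(G)} \frac{H_\mu(\alpha_F)}{|F|} .
\end{displaymath}
Finally, taking the supremum over all finite measurable partitions $\alpha$ of $X$ on both sides gives
\begin{displaymath}
	\operatorname{E}_\mu^{(Oll)}(\pi) \, = \, \sup_\alpha \lim_{i\in I} \frac{H_\mu(\alpha_{F_i})}{|F_i|} \, = \, \sup_\alpha \inf_{F \in \mathcal{F}_+(G)} \frac{H_\mu(\alpha_F)}{|F|} \, = \, \operatorname{E}_\mu^{(nv)}(\pi) .
\end{displaymath}

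\textbf{Main obstacle.} There is essentially no obstacle at this stage: the theorem is a formal corollary of the two results already established. The only thing to watch is that Ollagnier's lemma is stated for functions on $\mathcal{F}(G)$ while $H_\mu(\alpha_{\,\cdot\,})$ is most naturally viewed on $\mathcal{F}_+(G)$; this is handled by the trivial observation that the empty refinement is the point partition of measure-one mass, so $H_\mu(\alpha_\emptyset)=0$, and the infimum over $\mathcal{F}_+(G)$ in the conclusion of Theorem~\ref{the:ollagnierslemma} matches the definition of $\operatorname{E}_\mu^{(nv)}$ verbatim. It is worth noting that combining this with Corollary~\ref{cor:naiveentropyiszero} shows $\operatorname{E}_\mu^{(Oll)}(\pi) = 0$ for all measure preserving actions of non-discrete amenable groups containing an infinite precompact subset.
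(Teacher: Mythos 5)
Your proof is correct and is exactly the paper's intended argument: the paper derives this theorem directly by applying Theorem~\ref{the:ollagnierslemma} to the set function $F \mapsto H_\mu(\alpha_F)$, whose hypotheses are supplied by Lemma~\ref{lem:continuouslympandEntropy}, and then taking the supremum over finite measurable partitions. Your handling of the empty set and the restriction to $\mathcal{F}_+(G)$ is a harmless technicality that does not change the argument.
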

	
	From Corollary \ref{cor:naiveentropyiszero} we conclude  the following. 
	
	\begin{corollary}
		Let $\pi$ be a measure preserving action of an amenable topological group $G$ on a probability space $(X,\mu)$.  If $G$ contains an infinite precompact subset, then 
	$\operatorname{E}_\mu^{(Oll)}(\pi)=0.$
	\end{corollary}

\subsection{Ornstein-Weiss entropy}
\label{sub:OWentropy}

	Let $G$ be an amenable unimodular locally compact group and consider a van Hove net $\mathcal{A}=(A_i)_{i\in I}$ and a Delone set $\omega$ in $G$. 
	For a measure preserving action $\pi$ of $G$ on a probability space $(X,\mu)$ we define the \emph{Ornstein-Weiss entropy} as
	\[\operatorname{E}^{(OW)}_{\mu,\mathcal{A},\omega}(\pi)\defeq \sup_{\alpha}\limsup_{i\in I}\frac{H_{\mu}(\alpha_{A_i\cap \omega})}{\theta(A_i)},\]
	where the supremum is taken over all finite measurable partitions of $X$.  
	For a continuous action $\pi$ of $G$ on a compact Hausdorff space $X$ and $f\in C(X)$ we define the \emph{Ornstein-Weiss topological pressure} as
	\[\operatorname{p}_{f,\mathcal{A},\omega}^{(OW)}(\pi)\defeq \sup_{\mathcal{U}}\limsup_{i\in I}\frac{P_{f}(\mathcal{U}_{A_i\cap \omega})}{\theta(A_i)},\]
	where the supremum is taken over all finite open covers $\mathcal{U}$ of $X$. 
	We will next show that these notions are independent of the choice of a van Hove net $\mathcal{A}$ and a Delone set $\omega$. 
	
	\subsubsection{Ornstein-Weiss entropy}
	Consider a measure preserving action $\pi$ of an amenable unimodular locally compact group $G$ on a probability space $(X,\mu)$. 
	Note that Ornstein-Weiss entropy is an invariant under algebra isomorphism of measure preserving actions (for a fixed van Hove net and a fixed Delone set). Thus by considering a topological model of $\pi$ (see Proposition \ref{pro:JKtype}) we thus assume without lost of generality that $X$ is a compact Hausdorff space, that $\mu$ is a regular Borel probability measure on $X$ and that $\pi$ is a continuous action on $X$. 
	This topological setup will allow us to present an alternative approach to Ornstein-Weiss entropy that facilitates an application of the Ornstein-Weiss lemma. 
	
	For $\eta\in \mathbb{U}_X$ we define $H_\mu[\eta]\defeq \inf_{\alpha}H_\mu(\alpha)$, where the infimum is taken over all finite measurable partitions $\alpha$ of $X$ at scale $\eta$. 
		Now recall from Subsection \ref{subsub:naivenon-discrete} that we define $\eta_A\defeq \cap_{g\in A}\{(x,y)\in X^2 \mid (g,x,g.y)\in \eta\}$ 
		and that $\eta_A\in \mathbb{U}_X$ for any compact subset $A\subseteq G$. 		
		It is straightforward to show that $\mathcal{K}(G)\ni A\mapsto H_\mu[\eta_A]$ is monotone, right-invariant and subadditive. The Ornstein-Weiss lemma thus allows to define 
		\[\operatorname{E}_\mu^{(OW)}[\pi,\eta] \, \defeq \, \lim_{i\in I}\frac{H_\mu[\eta_{A_i}]}{\theta(A_i)}\]
		independently from the choice of the van Hove net.
		We will next show the following.
		\begin{theorem}\label{the:OW:entropyindependenceandrelation}
			Let $\pi$ be a continuous action of an amenable unimodular locally compact group on a compact Hausdorff space $X$ and $\mu$ be an invariant regular Borel probability measure on $X$.
	For any van Hove net $\mathcal{A}$ and any Delone set $\omega$ in $G$, we have
			\[\operatorname{E}_{\mu,\mathcal{A},\omega}^{(OW)}(\pi) \, = \, \sup_{\eta\in \mathbb{U}_X}\operatorname{E}_\mu^{(OW)}[\pi,\eta].\] 
	\end{theorem}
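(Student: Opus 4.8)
The plan is to establish the two inequalities separately, in both cases exploiting that every entourage-based quantity can be approximated by partition-based quantities and that $A_i \cap \omega$ is a relatively dense, uniformly discrete ``skeleton'' of $A_i$. Throughout I fix a compact symmetric identity neighborhood $V$ witnessing that $\omega$ is $V$-discrete, and a compact $K$ with $K\omega = G$; note $|A_i \cap \omega| \le \theta(V)^{-1}\theta(VA_i)$, so by Lemma~\ref{lem:pre:vanHovenets}(i) we have $\limsup_i |A_i\cap\omega|/\theta(A_i) < \infty$. This uniform linear bound on the density of the sampling set is what makes the two notions comparable at all, and it will be used on both sides.

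For the inequality $\operatorname{E}_{\mu,\mathcal{A},\omega}^{(OW)}(\pi) \le \sup_{\eta}\operatorname{E}_\mu^{(OW)}[\pi,\eta]$, fix a finite measurable partition $\alpha$ of $X$. Since $X$ is compact Hausdorff and $\mu$ is regular, one first replaces $\alpha$, up to an arbitrarily small change in $H_\mu(\alpha_{A_i\cap\omega})$ that is controlled uniformly in $i$, by a partition into sets that are ``almost closed'' so that it becomes adapted to some finite open cover; then one chooses an entourage $\eta$ so small that $\alpha$ is finer than $\{\eta[x] \mid x\in X\}$, i.e. $H_\mu[\eta] \le H_\mu(\alpha) + \epsilon$, and more importantly so that for every $g$ the partition $\alpha_g$ refines $\{(\eta_g)[x] \mid x\}$. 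A short computation with the subadditivity of $H_\mu(\,\cdot\mid\,\cdot)$ (as in Lemma~\ref{lem:Goodwynsmainlemma}) gives $H_\mu(\alpha_F) \le H_\mu[\eta_F] + |F|\cdot\operatorname{R}_\mu(\mathcal{U})$ for a suitable cover $\mathcal{U}$ with $\operatorname{R}_\mu(\mathcal{U})$ small; applying this with $F = A_i \cap \omega$, dividing by $\theta(A_i)$, and using $|A_i\cap\omega| \le c\,\theta(A_i)$ for large $i$ gives
\begin{displaymath}
	\limsup_i \frac{H_\mu(\alpha_{A_i\cap\omega})}{\theta(A_i)} \;\le\; \limsup_i\frac{H_\mu[\eta_{A_i\cap\omega}]}{\theta(A_i)} + c\,\operatorname{R}_\mu(\mathcal{U}).
\end{displaymath}
Finally one passes from the sampled boundary set $A_i\cap\omega$ back to $A_i$: since $\eta_{A_i\cap\omega} \supseteq \eta_{A_i}$ is the wrong direction, one instead bounds $\eta_{A_i\cap\omega}$ below by $\eta_{KA_i\cap\omega}$-type arguments — more precisely, one uses that every $g\in A_i$ lies in $K(A_i\cap\omega)$ up to a van Hove error, so $\eta_{A_i\cap\omega}$ refines $\kappa_{A_i}$ for a slightly coarser $\kappa$ with $\kappa\kappa\kappa\subseteq\eta$ (using uniform continuity of the action to absorb $K$), whence $H_\mu[\eta_{A_i\cap\omega}] \le H_\mu[\kappa_{A_i}]$. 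Dividing by $\theta(A_i)$ and invoking the Ornstein-Weiss lemma (Theorem~\ref{the:Ornstein-Weisslemma}) for the subadditive monotone right-invariant function $A\mapsto H_\mu[\kappa_A]$ yields $\operatorname{E}_\mu^{(OW)}[\pi,\kappa]$, and taking suprema over $\alpha$ and then over entourages finishes this direction.

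For the reverse inequality $\sup_\eta \operatorname{E}_\mu^{(OW)}[\pi,\eta] \le \operatorname{E}_{\mu,\mathcal{A},\omega}^{(OW)}(\pi)$, fix $\eta\in\mathbb{U}_X$. For each compact $A$ there is, essentially by definition of $H_\mu[\cdot]$, a finite measurable partition $\alpha^{(A)}$ at scale $\eta$ with $H_\mu(\alpha^{(A)})$ close to $H_\mu[\eta]$; the subtlety is that we need one single partition $\alpha$ (independent of $i$) to land inside the supremum defining $\operatorname{E}_{\mu,\mathcal{A},\omega}^{(OW)}$. The standard trick is to take $\alpha$ to be a fixed finite partition at scale $\eta$ realizing $H_\mu[\eta]$ within $\epsilon$, and then observe that $\alpha_{A_i\cap\omega}$ is a partition at scale $\eta_{A_i\cap\omega}$, but to relate $H_\mu[\eta_{A_i}]$ to $H_\mu(\alpha_{A_i\cap\omega})$ one again needs the sampled set $A_i\cap\omega$ to be ``as good as'' $A_i$ for the entourage $\eta$: here one uses relative density in the opposite way, namely $\eta_{A_i}$ refines $\eta_{A_i\cap\omega}$ trivially (since $A_i\cap\omega\subseteq A_i$), giving $H_\mu[\eta_{A_i\cap\omega}] \le H_\mu[\eta_{A_i}]$ — the wrong direction again — so one must instead sandwich: choose a symmetric $\kappa$ with $\kappa\subseteq\eta$, and use uniform discreteness of $\omega$ together with a van Hove argument to produce, for each large $i$, a partition adapted to $\kappa_{A_i\cap\omega}$ whose entropy is at least $H_\mu[\kappa_{A_i}] - o(\theta(A_i))$. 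Dividing by $\theta(A_i)$ and applying the Ornstein-Weiss lemma to $A\mapsto H_\mu[\kappa_A]$ gives $\operatorname{E}_\mu^{(OW)}[\pi,\kappa] \le \limsup_i H_\mu(\alpha_{A_i\cap\omega})/\theta(A_i) \le \operatorname{E}_{\mu,\mathcal{A},\omega}^{(OW)}(\pi)$, and since $\kappa$ can be taken inside $\eta$ with $\operatorname{E}_\mu^{(OW)}[\pi,\eta]\le\operatorname{E}_\mu^{(OW)}[\pi,\kappa]$ (monotonicity of $H_\mu[\cdot]$ in the entourage), taking the supremum over $\eta$ closes the argument.

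The main obstacle I anticipate is precisely the passage between $A_i$ and its sample $A_i\cap\omega$ in both directions simultaneously: entourage-refinement is monotone in the wrong direction in each case, so one cannot simply compare $H_\mu[\eta_{A_i}]$ with $H_\mu[\eta_{A_i\cap\omega}]$ directly. The fix is to absorb the compact ``filling set'' $K$ (from $K\omega = G$) into a controlled coarsening of the entourage, using uniform continuity of the action on the compact space $X$ (the lemma just before Lemma~\ref{lem:nv:Arefining}) so that $\eta_{KA_i\cap\omega}$ lies between $\eta_{A_i}$-type and $\kappa_{A_i}$-type sets up to a van Hove error in the Haar measure; then Theorem~\ref{the:Ornstein-Weisslemma} guarantees these comparable limits agree, and the arbitrariness of the coarsening lets the final suprema coincide. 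The right-invariance, monotonicity, and subadditivity of $A\mapsto H_\mu[\kappa_A]$ — which are each elementary — are what make the Ornstein-Weiss lemma applicable at every step.
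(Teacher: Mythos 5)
Your overall strategy is the same as the paper's: prove the two inequalities separately, use $V$-discreteness of $\omega$ to get $\limsup_i|A_i\cap\omega|/\theta(A_i)\le\theta(V)^{-1}$, bound $H_\mu(\alpha_F)\le H_\mu[\eta_F]+|F|\epsilon$ by a conditional-entropy estimate (the paper uses Lemma~\ref{lem:OW:entropylemma1} where you use the overlap ratio; both are viable), and, for the hard direction, absorb the compact set $K$ with $K\omega=G$ into the entourage and invoke the Ornstein--Weiss lemma. However, two of your intermediate claims are wrong as stated, and one of them sits at the crux. In the direction $\operatorname{E}^{(OW)}_{\mu,\mathcal{A},\omega}(\pi)\le\sup_\eta\operatorname{E}^{(OW)}_\mu[\pi,\eta]$ the obstacle you describe does not exist: from $A_i\cap\omega\subseteq A_i$ we get $\eta_{A_i}\subseteq\eta_{A_i\cap\omega}$, and a larger entourage imposes a weaker ``at scale'' condition, so $H_\mu[\eta_{A_i\cap\omega}]\le H_\mu[\eta_{A_i}]$ is immediate and is exactly what is needed; no absorption of $K$ is required there. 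Your substitute argument is also internally inverted: if $\eta_{A_i\cap\omega}$ refined $\kappa_{A_i}$, i.e.\ $\eta_{A_i\cap\omega}\subseteq\kappa_{A_i}$, that would yield $H_\mu[\kappa_{A_i}]\le H_\mu[\eta_{A_i\cap\omega}]$, the opposite of what you assert (your asserted conclusion is true, but only for the trivial reason just given).

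In the direction $\sup_\eta\operatorname{E}^{(OW)}_\mu[\pi,\eta]\le\operatorname{E}^{(OW)}_{\mu,\mathcal{A},\omega}(\pi)$, the concrete step you propose --- ``use uniform discreteness of $\omega$ together with a van Hove argument to produce, for each large $i$, a partition adapted to $\kappa_{A_i\cap\omega}$ whose entropy is at least $H_\mu[\kappa_{A_i}]-o(\theta(A_i))$'' --- is neither the statement you need (producing some high-entropy partition at a given scale is vacuous; what is needed is a lower bound on $H_\mu(\alpha_{A_i\cap\omega})$ for a single fixed $\alpha$, equivalently on $H_\mu[\epsilon_{A_i\cap\omega}]$ for a suitable entourage $\epsilon$, in terms of $H_\mu[\eta_{A_i}]$), nor is uniform discreteness the relevant property here; relative density is. The correct mechanism is the one you only gesture at in your closing paragraph and which the paper carries out precisely: $\eta_{K(A_i\cap\omega)}=(\eta_K)_{A_i\cap\omega}$ with $\eta_K\in\mathbb{U}_X$ (the uniform-continuity lemma preceding Lemma~\ref{lem:nv:Arefining}), Lemma~\ref{lem:vanHovenets} showing that $(K(A_i\cap\omega))_{i\in I}$ is itself a van Hove net with $\theta(K(A_i\cap\omega))/\theta(A_i)\to1$, and the net-independence in Theorem~\ref{the:Ornstein-Weisslemma}, which together give $\limsup_i H_\mu[(\eta_K)_{A_i\cap\omega}]/\theta(A_i)=\operatorname{E}^{(OW)}_\mu[\pi,\eta]$; since $\eta_K$ is again an entourage and any partition $\alpha$ at scale $\eta_K$ satisfies $H_\mu[(\eta_K)_{A_i\cap\omega}]\le H_\mu(\alpha_{A_i\cap\omega})$, the bound follows. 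Note that no inclusion between $A_i$ and $K(A_i\cap\omega)$ holds, so an ``up to van Hove error'' sandwich inside $H_\mu[\cdot]$ cannot work directly; the comparison has to go through the van Hove property of the net $(K(A_i\cap\omega))_{i\in I}$ and the Ornstein--Weiss lemma. With that replacement your sketch becomes the paper's proof; as written, the hard direction has a genuine gap.
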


\begin{corollary}
	For any measure preserving action $\pi$ of an amenable unimodular locally compact group on a probability space $(X,\mu)$, the Ornstein-Weiss entropy \begin{displaymath}
		\operatorname{E}_\mu^{(OW)}(\pi)\, \defeq \, \operatorname{E}_{\mu,\mathcal{A},\omega}^{(OW)}(\pi)
	\end{displaymath} is independent of both the choice of a van Hove net and the choice of a Delone set.
\end{corollary}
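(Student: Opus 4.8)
The plan is to read the statement off from Theorem~\ref{the:OW:entropyindependenceandrelation} once we have reduced to the topological setting. First I would note that, for a fixed van Hove net $\mathcal{A}=(A_i)_{i\in I}$ and a fixed Delone set $\omega$, the quantity $\operatorname{E}_{\mu,\mathcal{A},\omega}^{(OW)}(\pi)$ is an invariant of the algebra isomorphism class of the measure preserving action $\pi$. Indeed, each $A_i\cap\omega$ is finite (a Delone set in a locally compact group is locally finite and $A_i$ is compact), so $H_\mu(\alpha_{A_i\cap\omega})$ is computed entirely inside the measure algebra $\Sigma(X)$; an algebra isomorphism intertwining two measure preserving actions sends finite measurable partitions to finite measurable partitions and preserves every Shannon entropy appearing in the formula, hence also the limits superior and the supremum over partitions. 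Therefore, invoking Proposition~\ref{pro:JKtype}, we may replace $\pi$ by a topological model and assume that $X$ is compact Hausdorff, that $\mu$ is a regular Borel probability measure with full support, and that $\pi$ is continuous.

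In this situation Theorem~\ref{the:OW:entropyindependenceandrelation} yields
\[
	\operatorname{E}_{\mu,\mathcal{A},\omega}^{(OW)}(\pi)\,=\,\sup_{\eta\in\mathbb{U}_X}\operatorname{E}_\mu^{(OW)}[\pi,\eta],
\]
and I would now simply observe that the right-hand side depends on neither $\mathcal{A}$ nor $\omega$. The dependence on $\omega$ disappears because $\mathbb{U}_X$, the entourages $\eta_A$, and the numbers $H_\mu[\eta_A]$ are built only from the action of $G$ on $X$, with no reference to any Delone set. The dependence on $\mathcal{A}$ disappears because, for each fixed $\eta\in\mathbb{U}_X$, the map $\mathcal{K}(G)\to\mathbb{R},\ A\mapsto H_\mu[\eta_A]$ is monotone, right-invariant and subadditive, so the Ornstein-Weiss lemma (Theorem~\ref{the:Ornstein-Weisslemma}) guarantees that $\operatorname{E}_\mu^{(OW)}[\pi,\eta]=\lim_{i\in I}H_\mu[\eta_{A_i}]/\theta(A_i)$ exists and is the same for every van Hove net; taking the supremum over $\eta$ preserves this independence.

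Putting the two steps together, $\operatorname{E}_{\mu,\mathcal{A},\omega}^{(OW)}(\pi)$ equals a quantity that depends neither on $\mathcal{A}$ nor on $\omega$, which is the claim; it then makes sense to write $\operatorname{E}_\mu^{(OW)}(\pi)$ for this common value. The proof carries no real obstacle of its own: the only point requiring a little care is the legitimacy of the reduction to a topological model, namely that Ornstein-Weiss entropy is an algebra isomorphism invariant and that Proposition~\ref{pro:JKtype} supplies such an isomorphism — both routine. All of the genuine difficulty has already been absorbed into Theorem~\ref{the:OW:entropyindependenceandrelation} and, through it, into the Ornstein-Weiss lemma and the quasi-tiling machinery of Section~\ref{sec:OWlemma}.
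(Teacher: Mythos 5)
Your proposal is correct and follows essentially the same route as the paper: the paper likewise reduces to a topological model via Proposition~\ref{pro:JKtype} (using that $\operatorname{E}_{\mu,\mathcal{A},\omega}^{(OW)}$ is an algebra-isomorphism invariant for fixed $\mathcal{A}$ and $\omega$), and then reads the corollary off from Theorem~\ref{the:OW:entropyindependenceandrelation}, since $\sup_{\eta\in\mathbb{U}_X}\operatorname{E}_\mu^{(OW)}[\pi,\eta]$ involves no Delone set and is van Hove net independent by the Ornstein--Weiss lemma applied to the monotone, right-invariant, subadditive map $A\mapsto H_\mu[\eta_A]$. Your only additions are explicit justifications (finiteness of $A_i\cap\omega$, preservation of Shannon entropies under algebra isomorphism) of steps the paper treats as routine.
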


\begin{remark}
	It would be interesting to clarify the relationship between our notion of Ornstein-Weiss entropy and the notions of \emph{spatial entropy} and \emph{orbital entropy} considered in \cite{ornstein1987entropy}. 
\end{remark}
				
		We begin the proof of Theorem \ref{the:OW:entropyindependenceandrelation} with the following lemma.
	
	\begin{lemma}\label{lem:OW:entropylemma1}
	Let $\mu$ be a regular Borel probability measure on a compact Hausdorff space~$X$. For $\epsilon>0$ and a finite measurable partition $\alpha$ of $X$ there exists $\eta\in \mathbb{U}_X$ such that $\alpha$ is at scale $\eta$ and such that $H_\mu(\alpha|\beta)<\epsilon$ for every finite measurable partition $\beta$ of $X$ at scale $\eta$. 
	\end{lemma}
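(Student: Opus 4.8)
The plan is to take $\eta$ of the shape $\eta_{0}\cup\bigcup_{A\in\alpha}(A\times A)$, where $\eta_{0}$ is an entourage that keeps suitable compact cores of the atoms of $\alpha$ apart. Write $\alpha=\{A_{1},\dots,A_{n}\}$; if $n\le 1$ the assertion is trivial (take $\eta=X\times X$, since then $H_{\mu}(\alpha\mid\beta)=0$ always), so assume $n\ge 2$. Fix $\delta>0$, to be pinned down at the end in terms of $\epsilon$ and $n$. By regularity of $\mu$ pick compact sets $K_{i}\subseteq A_{i}$ with $\mu(A_{i}\setminus K_{i})<\delta/n$, and set $K_{0}\defeq X\setminus\bigcup_{i=1}^{n}K_{i}$, so that $\mu(K_{0})<\delta$. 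Since $K_{1},\dots,K_{n}$ are pairwise disjoint and compact, $\bigcup_{i\ne j}(K_{i}\times K_{j})$ is a compact subset of $X\times X$ disjoint from the diagonal, and therefore
\[
\eta_{0} \,\defeq\, (X\times X)\setminus\bigcup\nolimits_{i\ne j}(K_{i}\times K_{j})
\]
is a symmetric open entourage of $X$. Put $\eta\defeq\eta_{0}\cup\bigcup_{i=1}^{n}(A_{i}\times A_{i})$; this is again an entourage (it contains $\eta_{0}$, a neighbourhood of the diagonal), and $A_{i}\times A_{i}\subseteq\eta$ for each $i$, i.e.\ $\alpha$ is at scale $\eta$.

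Next I would analyse an arbitrary finite measurable partition $\beta=\{B_{1},\dots,B_{m}\}$ at scale $\eta$. The main geometric point is that each $B_{j}$ meets at most one of the cores $K_{i}$: if $x\in B_{j}\cap K_{i}$ and $y\in B_{j}\cap K_{i'}$ with $i\ne i'$ then $(x,y)\in B_{j}\times B_{j}\subseteq\eta$, yet $(x,y)\notin\eta_{0}$ by construction and $(x,y)\notin A_{l}\times A_{l}$ for every $l$ since $x\in A_{i}$, $y\in A_{i'}$ --- a contradiction. Hence we may define $\iota\colon\{1,\dots,m\}\to\{1,\dots,n\}$ by letting $\iota(j)$ be the unique index with $B_{j}\cap K_{\iota(j)}\ne\emptyset$ when one exists, and $\iota(j)\defeq 1$ otherwise. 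A second use of the explicit form of $\eta$ shows that if $B_{j}\cap K_{i_{0}}\ne\emptyset$ then $B_{j}\subseteq A_{i_{0}}\cup\eta_{0}[K_{i_{0}}]$; and since $\eta_{0}[K_{i_{0}}]\cap K_{i}=\emptyset$ for $i\ne i_{0}$, this forces $B_{j}\setminus A_{i_{0}}\subseteq\bigcup_{i\ne i_{0}}(A_{i}\setminus K_{i})$. Using pairwise disjointness of the $B_{j}$, together with $\mu(A_{i}\setminus K_{i})<\delta/n$, and $\mu(K_{0})<\delta$ to absorb the atoms possessing no core, one estimates the total misplaced mass
\[
\tau \,\defeq\, \sum\nolimits_{j=1}^{m}\mu\bigl(B_{j}\setminus A_{\iota(j)}\bigr) \,<\, n\delta .
\]

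Finally, I would convert $\tau$ into the desired bound via Fano's inequality. For each $j$ with $\mu(B_{j})>0$ put $s_{j}\defeq\mu(B_{j}\setminus A_{\iota(j)})/\mu(B_{j})$; the grouping identity for Shannon entropy, combined with the trivial bound $\log(n-1)$ for the entropy of the renormalized ``misplaced'' part, gives
\[
H\bigl((\mu(A_{i}\cap B_{j})/\mu(B_{j}))_{i=1}^{n}\bigr) \,\le\, h(s_{j}) + s_{j}\log(n-1),
\]
where $h(t)=-t\log t-(1-t)\log(1-t)$ is the binary entropy function. Multiplying by $\mu(B_{j})$, summing over $j$, and applying Jensen's inequality to the concave function $h$ (using $\sum_{j}\mu(B_{j})=1$) yields
\[
H_{\mu}(\alpha\mid\beta) \,\le\, h(\tau)+\tau\log(n-1) \,\le\, h(n\delta)+n\delta\log(n-1).
\]
Since $h$ is continuous with $h(0)=0$ and $n$ is fixed, choosing $\delta$ small enough at the outset makes the right-hand side $<\epsilon$ for every $\beta$ at scale $\eta$, which finishes the proof. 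The step requiring genuine care --- and the reason a bare ``compact core'' estimate is not enough --- is this last one: $\beta$ may have arbitrarily many atoms, each only slightly impure, so any bound on $H_{\mu}(\alpha\mid\beta)$ that grows with $|\beta|$ is useless; Fano's inequality (equivalently, the grouping identity followed by Jensen) is exactly what turns the single global estimate $\tau<n\delta$ into a bound on the conditional entropy independent of $m$.
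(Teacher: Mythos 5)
Your proof is correct, and while its geometric half matches the paper's, the way you close the argument is genuinely different. Both proofs use regularity of $\mu$ to choose compact cores inside the atoms of $\alpha$ and then build an entourage $\eta$ so small that any set $B$ with $B^2\subseteq\eta$ can meet at most one core (the paper takes $\eta=\bigcup_i (D_0\cup D_i)^2$ with $D_i\subseteq A_i$ compact and $D_0$ the residual set; you take the complement of $\bigcup_{i\ne j}K_i\times K_j$ enlarged by $\bigcup_i A_i\times A_i$ --- both work). The difference lies in extracting the bound on $H_\mu(\alpha|\beta)$. The paper coarsens $\beta$: it shows that $\beta$ refines the $n$-element cover $\{D_0\cup D_i\}_i$, groups the atoms of $\beta$ accordingly into a partition $\gamma=\{C_1,\dots,C_n\}$ with $D_i\subseteq C_i$ and $\gamma\preceq\beta$, checks $\sum_i\mu(C_i\triangle A_i)<\delta$, and then invokes the standard continuity lemma for partitions with a fixed number of atoms (Walters, Lemma 4.15) together with the monotonicity $H_\mu(\alpha|\beta)\le H_\mu(\alpha|\gamma)$; the size of $\beta$ never enters because one only estimates against the $n$-atom partition $\gamma$. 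You instead keep $\beta$ itself and control $H_\mu(\alpha|\beta)$ directly via Fano's inequality on each atom followed by Jensen's inequality for the binary entropy $h$, which converts the single global misplaced-mass estimate $\tau$ into a bound independent of $|\beta|$. Your route is self-contained (no appeal to the cited lemma) at the cost of the Fano/Jensen bookkeeping; the paper's route is shorter once the standard lemma is granted. Two small points worth spelling out: passing from $h(\tau)+\tau\log(n-1)$ to $h(n\delta)+n\delta\log(n-1)$ uses monotonicity of $t\mapsto h(t)+t\log(n-1)$ near $0$, so fix $\delta$ with, say, $n\delta\le 1/2$; and in fact your misplaced sets $B_j\setminus A_{\iota(j)}$ are pairwise disjoint subsets of $K_0=\bigcup_i(A_i\setminus K_i)$, so you even get the sharper estimate $\tau\le\mu(K_0)<\delta$.
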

	\begin{proof}
	We represent $\alpha \defeq \{A_1,\dots, A_r\}$. 
	It is well known that there exists $\delta>0$ such that whenever 
	$\gamma=\{C_1,\dots,C_r\}$ 
	is a partition that satisfies $\sum_{i=1}^r \mu(A_i\triangle C_i)<\delta$, then $H_\mu(\alpha|\gamma)<\epsilon$ \cite[Lemma 4.15]{walters1982introduction}). 
	As $\mu$ is regular there exist compact subsets $D_i\subseteq A_i$ with 
	$\mu(A_i\setminus D_i)\leq \delta/(2r^2)$ for $i\in \{1,\dots,r\}$. Let 
	$D_0\defeq X\setminus \bigcup_{i=1}^r D_i$, $U_i\defeq D_0\cup D_i$, $\eta\defeq \bigcup_{i=1}^r (U_i\times U_i)$ and $\mathcal{U}\defeq \{U_1,\dots,U_r\}$. 
	Clearly $\alpha$ is at scale $\eta$. 
	
	Consider a finite measurable partition $\beta$ at scale $\eta$.
	Let $B\in \beta$. Whenever $B\subseteq D_0$, then $B\subseteq U_i$ for any $i\in \{1,\dots, r\}$. Otherwise, there exists $i\in \{1,\dots,r\}$ and $d\in B$ with $d\in D_i$. 
	In particular, we observe $d\notin U_j$ for $j\in\{1,\dots, r\}$ with $j\neq i$. 
	For $b\in B$ we have 
	$(b,d)\in B^2\subseteq \eta=\bigcup_{i=1}^rU_i$ and hence $b\in U_i$. This shows 
	$B\subseteq U_i$ and we have shown that $\beta$ is finer than $\mathcal{U}$. 	
	In particular, there exists a finite measurable partition $\gamma=\{C_1,\dots,C_r\}$ of $X$ with $C_i\subseteq U_i$ and $\mathcal{U}\preceq \gamma \preceq \beta$.
	Since $\gamma$ is a partition we obtain $D_i\subseteq C_i$ for all $i\in \{1,\dots, r\}$ and compute $C_i\triangle A_i\subseteq (U_i\setminus D_i)\cup (A_i\setminus D_i)=D_0\cup(A_i\setminus D_i)$ and hence 
	\begin{displaymath}
		\mu(C_i\triangle A_i) \, \leq \, \mu(D_0)+\mu(A_i\setminus D_i) \, \leq \, \delta/(2r)+\delta/(2r^2) \, \leq \, \delta/r .
	\end{displaymath} Our choice of $\delta$ thus implies $H_\mu(\alpha|\beta)\leq H_\mu(\alpha|\gamma)\leq \epsilon$. 	
	\end{proof}		
	\begin{remark}
	Consider $\mathcal{U}$ as constructed in the previous proof. 
	It is standard to show that any finite measurable partition $\beta$ of $X$ that is finer than 
	$\mathcal{U}$ satisfies $H_\mu(\alpha|\beta)\leq \epsilon$. For example, this argument is carried out in the proof of \cite[Theorem 3.5]{huang2011local}. Nevertheless, we decided to include the short proof for the reader's convenience. 
	\end{remark}

	\begin{lemma}\label{lem:vanHovenets} Let $G$ be an amenable unimodular locally compact group. 
	Let $\omega \subseteq G$ be closed and let $K\subseteq G$ be compact such that $K\omega=G$.
	Let $(A_i)_{i\in I}$ be a van Hove net in $G$ and define
	$F_i \defeq \omega\cap A_i$ for each $i\in I$. 
	Then, the net $(KF_i)_{i\in I}$ is van Hove and satisfies $\lim_{i\in I}\theta(KF_i)/\theta(A_i)=1$ and \begin{displaymath}
		\lim_{i\in I}{\theta(KF_i \triangle A_i)}/{\theta(A_i)} \, = \, \lim_{i\in I}{\theta(KF_i \triangle A_i)}/{\theta(KF_i)} \, = \, 0 .
	\end{displaymath}	   
\end{lemma}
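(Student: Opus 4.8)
The plan is to sandwich $KF_i$ between two van Hove nets whose Haar measures are asymptotic to $\theta(A_i)$, and then read off all three assertions. First, since $\omega$ is closed and each $A_i$ is compact, the set $F_i = \omega\cap A_i$ is a closed subset of a compact set, hence compact, so $KF_i \in \mathcal{K}(G)$. Set $K' \defeq K\cup\{e_G\}$; by Lemma~\ref{lem:pre:vanHovenets}(i), $(K'A_i)_{i\in I}$ and $(KA_i)_{i\in I}$ are van Hove with $\theta(K'A_i)/\theta(A_i)\to 1$ and $\theta(KA_i)/\theta(A_i)\to 1$. Apply Lemma~\ref{lem:pre:vanHovenets}(ii) to the non-empty compact set $K^{-1}$ in place of $K$ to obtain a van Hove net $(B_i)_{i\in I}$ with $\theta(B_i)/\theta(A_i)\to 1$ and $K^{-1}B_i\subseteq A_i$. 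The crucial inclusion is $B_i\subseteq KF_i$: for $g\in B_i$, the hypothesis $K\omega = G$ gives $k\in K$, $v\in\omega$ with $g = kv$, so $v = k^{-1}g\in K^{-1}g\subseteq A_i$, whence $v\in\omega\cap A_i = F_i$ and $g = kv\in KF_i$. Together with the trivial inclusions $KF_i\subseteq KA_i\subseteq K'A_i$, this gives $B_i\subseteq KF_i\subseteq K'A_i$.

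Next I would extract the measure estimates. From $KF_i\cup A_i\subseteq K'A_i$ one gets $\theta(KF_i\setminus A_i)\leq\theta(K'A_i)-\theta(A_i)$, and from $B_i\subseteq KF_i$ together with $A_i\cup B_i\subseteq K'A_i$ one gets $\theta(A_i\setminus KF_i)\leq\theta(A_i\setminus B_i)\leq\theta(K'A_i)-\theta(B_i)$; dividing by $\theta(A_i)$ and using the two normalizations from the first step, both bounds tend to $0$, so $\theta(KF_i\triangle A_i)/\theta(A_i)\to 0$. The remaining two limits are then formal: $|\theta(KF_i)-\theta(A_i)|\leq\theta(KF_i\triangle A_i)$ gives $\theta(KF_i)/\theta(A_i)\to 1$, and multiplying by $\theta(A_i)/\theta(KF_i)$ gives $\theta(KF_i\triangle A_i)/\theta(KF_i)\to 0$.

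For the van Hove property of $(KF_i)_{i\in I}$, fix a compact $C\subseteq G$; enlarging $C$, assume $e_G\in C$. Since $KF_i$ is compact and $B_i\subseteq KF_i\subseteq KA_i$, monotonicity of $A\mapsto C\overline A$ yields $\partial_C(KF_i) = C(KF_i)\cap C\overline{(KF_i)^c}\subseteq CKA_i\cap C\overline{B_i^c}$. Using $e_G\in C$ one checks $C\overline{B_i^c}\subseteq\partial_C B_i\cup B_i^c$, and since $B_i\subseteq KA_i\subseteq CKA_i$ this gives $\theta(\partial_C(KF_i))\leq\theta(\partial_C B_i)+\theta(CKA_i)-\theta(B_i)$. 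Dividing by $\theta(A_i)$: the term $\theta(\partial_C B_i)/\theta(A_i) = (\theta(\partial_C B_i)/\theta(B_i))(\theta(B_i)/\theta(A_i))\to 0$ because $(B_i)$ is van Hove, while $\theta(CKA_i)/\theta(A_i)-\theta(B_i)/\theta(A_i)\to 1-1 = 0$ by Lemma~\ref{lem:pre:vanHovenets}(i) applied to the compact set $CK$ together with the first step. Hence $\theta(\partial_C(KF_i))/\theta(A_i)\to 0$, and dividing by $\theta(KF_i)/\theta(A_i)\to 1$ finishes it.

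The main obstacle is this last step: $F_i$ need not be finite, since $\omega$ is only assumed closed, not Delone, so $KF_i$ can be a genuinely ``fat'' compact set whose topological boundary resists direct estimation. The device that makes it work is to squeeze $KF_i$ between the van Hove nets $(B_i)$ and $(KA_i)$ -- both of Haar measure asymptotic to $\theta(A_i)$ -- and the harmless reductions $e_G\in C$ and $K' = K\cup\{e_G\}$, which are exactly what is needed for the set-theoretic inclusions involving closures to go through cleanly.
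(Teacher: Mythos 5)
Your proof is correct, and it shares the paper's skeleton: both apply Lemma~\ref{lem:pre:vanHovenets}(ii) with $K^{-1}$ to produce the net $(B_i)_{i\in I}$ and both rest on the inclusion $B_i\subseteq KF_i$ (your verification of which, via $K\omega=G$ and $K^{-1}B_i\subseteq A_i$, is exactly the step the paper leaves as ``straightforward''), giving the sandwich $B_i\subseteq KF_i\subseteq KA_i$. The differences are in how the two remaining estimates are finished. For $\theta(KF_i\triangle A_i)/\theta(A_i)\to 0$, the paper fixes $k\in K$, uses the group structure of $(\mathcal{P}(G),\triangle)$ and bounds the symmetric difference by $\partial_K A_i\cup(kA_i\setminus B_i)\cup\partial_{\{e_G,k\}}A_i\cup\partial_{\{e_G,k^{-1}\}}A_i$; your comparison of both $KF_i\cup A_i$ and $A_i\cup B_i$ against $K'A_i$ with $K'=K\cup\{e_G\}$ is shorter, avoids boundary terms altogether, and also yields $\theta(KF_i)/\theta(A_i)\to 1$ as a byproduct (the paper gets that limit directly from the sandwich). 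For the van Hove property, the paper uses the second output of Lemma~\ref{lem:pre:vanHovenets}(ii), namely $B_i^c\subseteq KA_i^c$, to obtain the one-line inclusion $\partial_C(KF_i)\subseteq\partial_{CK}A_i$ and then invokes the van Hove property of $(A_i)$; you never use $B_i^c\subseteq KA_i^c$ and instead transfer the van Hove property from $(B_i)$ via $C\overline{B_i^c}\subseteq\partial_C B_i\cup B_i^c$ and the measure comparison $\theta(CKA_i)-\theta(B_i)\to 0$ relative to $\theta(A_i)$. Your route costs a slightly longer boundary estimate but needs less from Lemma~\ref{lem:pre:vanHovenets}(ii); the paper's is tighter once that extra property is in hand. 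All your intermediate claims (compactness of $KF_i$, the inclusion $C\overline{B_i^c}\subseteq\partial_C B_i\cup B_i^c$ under $e_G\in C$, and the harmless enlargements $K'=K\cup\{e_G\}$, $e_G\in C$) check out.
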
	
\begin{proof}
	From Lemma \ref{lem:pre:vanHovenets} we obtain the existence of a van Hove net $(B_i)_{i\in I}$ that satisfies $K^{-1}B_i\subseteq A_i$ and $B_i^c\subseteq K A_i^c$ for all $i\in I$ and furthermore $\lim_{i\in I}\theta(B_i)/\theta(A_i)=1$. It is straightforward to show that $B_i\subseteq KF_i$. We compute 
	\[1\leftarrow \frac{\theta(B_i)}{\theta(A_i)}\leq \frac{\theta(KF_i)}{\theta(A_i)}\leq \frac{\theta(KA_i)}{\theta(A_i)}\rightarrow 1\]
	and observe $\lim_{i\in I}\theta(KF_i)/\theta(A_i)=1$.
	We furthermore obtain $(KF_i)^c\subseteq B_i^c\subseteq KA_i^c$. For a compact subset $C\subseteq G$ this allows to compute
	\[\partial_C (KF_i)=CKF_i \cap C\overline{(KF_i)^c}\subseteq CKA_i\cap CK\overline{A_i^c}=\partial_{CK}A_i.\]
	We deduce that $(KF_i)_{i\in I}$ is a van Hove net. 	
	To show $\lim_{i\in I}\theta(KF_i \triangle A_i)/\theta(A_i)=0$ let $k\in K$. As $\mathcal{P}(X)$ is an abelian group under $\triangle$, with the identity as inverse map and neutral element $\emptyset$, we compute 
\begin{align*}
	KF_i\triangle A_i
	&=KF_i \triangle \emptyset \triangle A_i
	=(KF_i \triangle kA_i)\triangle (kA_i \triangle A_i)
	\subseteq (KF_i \triangle kA_i)\cup (kA_i \triangle A_i)\\
	&\subseteq (KA_i\setminus kA_i)\cup (kA_i\setminus KF_i) \cup (kA_i \triangle A_i)\\
	&\subseteq 
	\partial_{K}A_i
	\cup (kA_i\setminus KF_i) \cup \partial_{\{e_G,k\}} A_i\cup \partial_{\{e_G,k^{-1}\}} A_i.
\end{align*}
Now recall that $B_i\subseteq KF_i$ and $K^{-1}B_i\subseteq A_i$. Hence $B_i\subseteq kA_i$ and we obtain
\[0\leq \frac{\theta(kA_i\setminus KF_i)}{\theta(A_i)}\leq \frac{\theta(kA_i\setminus B_i)}{\theta(A_i)}=\frac{\theta(kA_i)}{\theta(A_i)}-\frac{\theta(B_i)}{\theta(A_i)}\rightarrow 1-1=0.\]
Since $(A_i)_{i\in I}$ is van Hove, we conclude
\begin{align*}
	0\leq \frac{\theta(KF_i\triangle A_i)}{\theta(A_i)}
	\leq \frac{\theta(\partial_K A_i)}{\theta(A_i)}+\frac{\theta(kA_i\setminus KF_i)}{\theta(A_i)}+
	\frac{\theta(\partial_{\{e_G,k\}} A_i)}{\theta(A_i)}
	+\frac{\theta(\partial_{\{e_G,k^{-1}\}} A_i)}{\theta(A_i)}
	\rightarrow 0,
\end{align*} as desired and furthermore, $\lim_{i\in I}{\theta(KF_i \triangle A_i)}/{\theta(KF_i)}=0$. 
\end{proof}

	\begin{proof}[Proof of Theorem \ref{the:OW:entropyindependenceandrelation}]
	Let $\mathcal{A}=(A_i)_{i\in I}$ be a van Hove net and $\omega$ be a Delone set in $G$. 
	Let $K\subseteq G$ be a compact subset such that $K\omega=G$ and $e_G\in K$. Let $V$ be a compact neighbourhood of $e_G$ such that $(Vv)_{v\in \omega}$ is a disjoint family.
	We abbreviate $F_i \defeq A_i\cap \omega$ and observe 
	$\eta_{F_i}\supseteq \eta_{KF_i}=(\eta_K)_{F_i}$. 
	Hence $H_\mu[\eta_{F_i}]\leq  H_\mu[\eta_{KF_i}]=H_\mu[(\eta_K)_{F_i}]$. 
	Since $\eta_K\in \mathbb{U}_X$ we observe 
	\[\limsup_{i\in I}\frac{H_\mu[\eta_{F_i}]}{\theta(A_i)}
	\, \leq \, \limsup_{i\in I}\frac{H_\mu[\eta_{KF_i}]}{\theta(A_i)}
	\, \leq \, \sup_{\epsilon\in \mathbb{U}_X} \limsup_{i\in I}\frac{H_\mu[\epsilon_{F_i}]}{\theta(A_i)}.\]
	Taking the supremum over all $\eta\in \mathbb{U}_X$ we obtain from Lemma \ref{lem:vanHovenets} that
	\begin{align*}
	\sup_{\eta\in \mathbb{U}_X} \operatorname{E}_\mu^{(OW)}[\pi,\eta]
	\, &= \, \sup_{\eta\in \mathbb{U}_X} \limsup_{i\in I}\frac{H_\mu[\eta_{A_i}]}{\theta(A_i)} \\
	& = \, \sup_{\eta\in \mathbb{U}_X} \limsup_{i\in I}\frac{H_\mu[\eta_{KF_i}]}{\theta(A_i)}
	\, = \, \sup_{\eta\in \mathbb{U}_X} \limsup_{i\in I}\frac{H_\mu[\eta_{F_i}]}{\theta(A_i)} .
	\end{align*}
	Now consider $\eta\in \mathbb{U}_X$ and a finite measurable partition $\alpha$ of $X$ at scale $\eta$. 
	Then $\alpha_{F_i}$ is at scale $\eta_{F_i}$ and hence $H_\mu[\eta_{F_i}]\leq H_\mu(\alpha_{F_i})$. This shows 
	\[\sup_{\eta\in \mathbb{U}_X} \operatorname{E}_\mu^{(OW)}[\pi,\eta]
	\, = \, \sup_{\eta\in \mathbb{U}_X} \limsup_{i\in I}\frac{H_\mu[\eta_{F_i}]}{\theta(A_i)}
	\, \leq \, \sup_{\alpha} \limsup_{i\in I}\frac{H_\mu(\alpha_{F_i})}{\theta(A_i)}
	\, = \, \operatorname{E}_{\mu,\mathcal{A},\omega}^{(OW)}(\pi),\]
	where the last supremum is taken over all finite measurable partitions $\alpha$ of $X$. 
	
	To show the reverse inequality, let $\epsilon>0$ and $\alpha$ be a finite measurable partition of $X$. By Lemma \ref{lem:OW:entropylemma1}, there exists an entourage $\eta\in \mathbb{U}_X$ such that, for any finite measurable partition $\gamma$ of $X$ at scale $\eta$, we have $H_\mu(\alpha|\gamma)<\epsilon$. 
	For $i\in I$ we consider a finite measurable partition $\beta$ of $X$ at scale $\eta_{F_i}$. 
	Clearly $\beta_{g^{-1}}:=\{g(B);\, B\in \beta\}$ is at scale $\eta$ for any $g\in F_i$ and hence 
	$H_\mu(\alpha|\beta_{g^{-1}})<\epsilon$. This observation allows to compute
	\begin{align*}
		H_\mu(\alpha_{F_i}) \,
		&\leq \, H_\mu(\beta)+H_\mu(\alpha_{F_i}|\beta)
		\, \leq \, H_\mu(\beta)+H_\mu(\alpha_g|\beta)\\
		&= \, H_\mu(\beta)+H_\mu(\alpha|\beta_{g^{-1}})
		\, \leq \, H_\mu(\beta)+|F_i|\epsilon. 
	\end{align*}
	Taking the infimum over all finite measurable partitions $\beta$ of $X$ at scale $\eta_{F_i}$, it follows that \begin{displaymath}
		H_\mu(\alpha_{F_i}) \, \leq \, H_\mu[\eta_{F_i}]+|F_i|\epsilon \, \leq \, H_\mu[\eta_{A_i}]+|F_i|\epsilon .
	\end{displaymath} Here we used $F_i\subseteq A_i$ for the second inequality. 
	By our choice of $V$ we furthermore know that $(Vg)_{g\in F_i}$ is a disjoint family. 
	Hence
	$\theta({V}A_i)\geq\theta(VF_i) =\sum_{g\in F_i}\theta(Vg)=|F_i|\theta(V)$,	
	and Lemma \ref{lem:vanHovenets} implies that
	\[\limsup_{i\in I}\frac{|F_i|}{\theta(A_i)}=\limsup_{i\in I}\frac{|F_i|}{\theta(VA_i)}\leq \frac{1}{\theta(V)}.\]
	Combining our observations we conclude that
	\begin{align*}
		\limsup_{i\in I}\frac{H_\mu(\alpha_{F_i})}{\theta(A_i)}
		\, \leq \, \limsup_{i\in I}\frac{H_\mu[\eta_{A_i}]}{\theta(A_i)}+\epsilon\limsup_{i\in I}\frac{|F_i|}{\theta(A_i)}
		\, \leq \, \operatorname{E}_\mu^{(OW)}[\pi,\eta] +\frac{\epsilon}{\theta(V)}.		
	\end{align*}
		Since $\epsilon>0$ was arbitrary, this shows that \begin{displaymath}
			\limsup_{i\in I}{H_\mu(\alpha_{F_i})}/{\theta(A_i)} \, \leq \, \operatorname{E}_\mu^{(OW)}[\pi,\eta] \, \leq \, \sup_{\eta\in \mathbb{U}_X} \operatorname{E}_\mu^{(OW)}[\pi,\eta] .
		\end{displaymath} Taking the supremum over all finite measurable partitions $\alpha$ of $X$ now yields the desired statement. 
	\end{proof}
	\begin{remark}
	Note that the previous proof also shows that, for any finite measurable partition $\alpha$ of $X$, there exists a constant $c\in \mathbb{R}$ (independent of the choice of $(A_i)_{i\in I}$ or $\omega$) such that 
	\[\limsup_{i\in I}\frac{H_\mu(\alpha_{A_i\cap\omega})}{\theta(A_i)}\leq c.\] Indeed,  there exists $\eta\in \mathbb{U}_X$ such that $\limsup_{i\in I}{H_\mu(\alpha_{A_i\cap\omega})}/{\theta(A_i)}\leq \operatorname{E}_\mu^{(OW)}[\pi,\eta]$, where the latter is finite as a consequence of the Ornstein-Weiss lemma.  
	
	Furthermore, with similar arguments as above one shows that the limit superior in the formula of Ornstein-Weiss entropy can be replaced by a limit inferior. 
	\end{remark}
		
	Recall that we introduced Delone sets, as we wanted $A_i\cap\omega$ to be finite. 
	It is natural to ask, whether one can replace Delone sets by locally finite and relatively dense sets in our definition of entropy. The next example shows that this is not the case. 
	\begin{example}\label{exa:locallyfinite}
	Let $\mathbb{T} \defeq \mathbb{R}\big/ \mathbb{Z}$ be the circle equipped with the Lebesgue measure $\lambda$. Then $\pi^g(x) \defeq x+g\mod 1$ defines a continuous action of $\mathbb{R}$ on $\mathbb{T}$ with $\operatorname{E}_\lambda^{(OW)}(\pi)=0$.
	Consider the finite measurable partition $\alpha \defeq \{[0,1/2),[1/2,1)\}$ of $\mathbb{T}$. 
	Then  $\omega \defeq \mathbb{Z}\cup\bigcap_{n\in \mathbb{N}}([n,n+1]\cap \{2^{-n}z \mid z\in \mathbb{Z}\})$ is a locally finite and relatively dense set for which
	$\alpha_{[0,n]\cap \omega}$	consists of $2^n$ intervals of equals length. Thus,
	\[\lim_{n\to \infty}\frac{H_\lambda(\alpha_{[0,n]\cap\omega})}{\theta([0,n])} \, = \, \log(2).\]
	\end{example}

\subsection{Ornstein-Weiss topological pressure}
	Let $\pi$ be a continuous action of an amenable unimodular locally compact group $G$ on a compact Hausdorff space $X$. 
	Consider any $f\in C(X)$ and $\eta\in \mathbb{U}_X$. 
	Define $P_f[\eta] \defeq \inf_{\mathcal{U}}P_f(\mathcal{U})$, where the infimum is taken over all finite open covers $\mathcal{U}$ of $X$ at scale $\eta$. 
	We next show that the Ornstein-Weiss lemma can also be applied in this context. 
	
	\begin{proposition}
		Let $\pi$ be a continuous action of an amenable unimodular locally compact group $G$ on a compact Hausdorff space $X$, and let $f\in C(X)$ and $\eta\in \mathbb{U}_X$. For every van Hove net $(A_i)_{i\in I}$ in $G$, the limit
		\[\operatorname{p}_f[\pi,\eta] \, \defeq \, \lim_{i\in I}\frac{P_{f_{A_i}}(\eta_{A_i})}{\theta(A_i)}\]
		exists, is finite, and does not depend on the particular choice of $(A_i)_{i\in I}$.
	\end{proposition}
	\begin{proof}
		It is straightforward to show that, if $f \geq 0$, then $\mathcal{K}(G)\ni A\mapsto P_{f_A}(\eta_A)$ is monotone, right-invariant and subadditive, and then the statement follows from the Ornstein-Weiss lemma. 
	In general, since $X$ is compact, there exists $c\in \mathbb{R}$ with $f+c \geq 0$. Another standard argument shows that
	$P_{(f+c)_A}(\eta_A)=P_{f_A}(\eta_A)+c\theta(A)$ for every compact subset $A\subseteq G$, and hence we obtain the general statement. 		
	\end{proof}
	
	Similar as above we have the following. 
	
	\begin{theorem}\label{the:OW:pressureindependence}
	Let $\pi$ be a continuous action of an amenable unimodular locally compact group $G$ on a compact Hausdorff space $X$ and $f\in C(X)$. For any van Hove net $\mathcal{A}$ and any Delone set $\omega$ in $G$, we have
		\[\operatorname{p}_{f,\mathcal{A},\omega}^{(OW)}(\pi)=\sup_{\eta\in \mathbb{U}_X} \operatorname{p}_f[\pi,\eta].\]
	In particular, $\operatorname{p}_f^{(OW)}(\pi) \defeq \operatorname{p}_{f,\mathcal{A},\omega}^{(OW)}(\pi)$ is independent of the choice of a van Hove net and the choice of a Delone set. 
	\end{theorem}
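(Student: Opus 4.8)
The plan is to mimic the proof of Theorem \ref{the:OW:entropyindependenceandrelation} almost verbatim, replacing Shannon entropy of partitions by the cover functional $P_f$, partitions at scale $\eta$ by open covers that are ''fine'' with respect to $\eta$, and the conditional entropy estimate (Lemma \ref{lem:OW:entropylemma1}) by a corresponding uniform-continuity estimate for $f$. As in the entropy case, it suffices to prove, for a fixed van Hove net $\mathcal{A}=(A_i)_{i\in I}$ and a fixed Delone set $\omega$ with $K\omega=G$, $e_G\in K$, that
\[
\operatorname{p}_{f,\mathcal{A},\omega}^{(OW)}(\pi)\, =\, \sup_{\eta\in \mathbb{U}_X}\operatorname{p}_f[\pi,\eta],
\]
since the right-hand side does not involve $\mathcal{A}$ or $\omega$.

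First I would establish the ''$\leq$'' direction exactly as before. Writing $F_i\defeq A_i\cap\omega$, one uses Lemma \ref{lem:vanHovenets} to see that $(KF_i)_{i\in I}$ is a van Hove net with $\theta(KF_i)/\theta(A_i)\to 1$; since $\eta_{KF_i}=(\eta_K)_{F_i}$ and $\eta_K\in \mathbb{U}_X$ (by the lemma following Subsection \ref{subsub:naivenon-discrete}), the supremum over $\eta$ of $\limsup_i P_{f_{A_i}}(\eta_{A_i})/\theta(A_i)$ equals the supremum over $\eta$ of $\limsup_i P_{f_{A_i}}(\eta_{F_i})/\theta(A_i)$ — here one also needs $\limsup_i \|f_{A_i}-f_{KF_i}\|_\infty/\theta(A_i)\to 0$, which follows from $\|f\|_\infty\,\theta(A_i\triangle KF_i)\geq |f_{A_i}(x)-f_{KF_i}(x)|$ and Lemma \ref{lem:vanHovenets}, so that $P_{f_{A_i}}(\eta_{F_i})$ and $P_{f_{KF_i}}(\eta_{F_i})=P_{f_{KF_i}}((\eta_K)_{F_i})$ differ by at most $\|f\|_\infty\theta(A_i\triangle KF_i)=o(\theta(A_i))$. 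Then, given $\eta\in \mathbb{U}_X$ and any finite open cover $\mathcal{U}$ at scale $\eta$ (so that $\mathcal{U}_{F_i}$ is at scale $\eta_{F_i}$), one has $P_{f_{A_i}}[\eta_{F_i}]\leq P_{f_{A_i}}(\mathcal{U}_{F_i})$, and taking $\limsup_i$, dividing by $\theta(A_i)$, and then suprema yields $\sup_\eta \operatorname{p}_f[\pi,\eta]\leq \operatorname{p}_{f,\mathcal{A},\omega}^{(OW)}(\pi)$ once one notes $f_{A_i}$ on $X$ differs from $\Sigma_{F_i}$-type sums only through integration, which is harmless here because the definition of $\operatorname{p}^{(OW)}_{f,\mathcal{A},\omega}$ already uses $P_f(\mathcal{U}_{A_i\cap\omega})$ with the same $f$; care is needed to match the two definitions and I expect this bookkeeping to be slightly delicate.

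For the reverse inequality, fix $\epsilon>0$ and a finite open cover $\mathcal{U}$ of $X$. By uniform continuity of $f$ (via a Lebesgue entourage argument as in Subsection on compact Hausdorff spaces) one finds $\eta\in \mathbb{U}_X$, Lebesgue for $\mathcal{U}$, such that $\eta$-close points have $f$-values within $\epsilon$; then for any finite open cover $\beta$ at scale $\eta_{F_i}$ one shows, using that each $\beta_{g^{-1}}$ ($g\in F_i$) is at scale $\eta$ and hence refines $\mathcal{U}$, that $P_{f_{A_i}}(\mathcal{U}_{F_i})\leq P_{f_{A_i}}(\beta)+|F_i|\,(\text{const}\cdot\epsilon)$ — the constant absorbing the $|F_i|$ many oscillation contributions of $f$ along $F_i$, just as $H_\mu(\alpha_{F_i}|\beta)\leq |F_i|\epsilon$ in Lemma \ref{lem:OW:entropylemma1}. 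Taking the infimum over such $\beta$ gives $P_{f_{A_i}}(\mathcal{U}_{F_i})\leq P_{f_{A_i}}[\eta_{F_i}]+|F_i|\,\text{const}\cdot\epsilon\leq P_{f_{A_i}}[\eta_{A_i}]+|F_i|\,\text{const}\cdot\epsilon$; dividing by $\theta(A_i)$, using $\limsup_i |F_i|/\theta(A_i)\leq 1/\theta(V)<\infty$ from the disjointness of $(Vg)_{g\in\omega}$ together with Lemma \ref{lem:vanHovenets}, and letting $\epsilon\downarrow 0$ yields $\limsup_i P_{f_{A_i}}(\mathcal{U}_{F_i})/\theta(A_i)\leq \operatorname{p}_f[\pi,\eta]\leq \sup_\eta\operatorname{p}_f[\pi,\eta]$. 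Taking the supremum over $\mathcal{U}$ completes the argument, and the ''in particular'' clause is immediate.

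I expect the main obstacle to be not any single estimate but the reconciliation of the two formulas for $\operatorname{p}_f$: the Ornstein--Weiss functional uses $f_{A}(x)=\int_A f(g.x)\,d\theta(g)$ (an integral over a \emph{continuum}), while the $\eta$-based functional $\operatorname{p}_f[\pi,\eta]$ and the cover refinements $\mathcal{U}_{A_i\cap\omega}$ use the \emph{discrete} set $F_i=A_i\cap\omega$; one must check that replacing $f_{A_i}$ by $f_{KF_i}$ (a genuinely continuous average) costs only $o(\theta(A_i))$, which is where the van Hove property and the identity $P_{g+c}(\mathcal{W})=P_g(\mathcal{W})+c$ (for constants, and more generally $|P_g(\mathcal{W})-P_h(\mathcal{W})|\leq \|g-h\|_\infty$) do the work. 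Everything else is a routine transcription of the entropy proof.
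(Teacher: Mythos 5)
Your overall architecture is the paper's: you first prove what is exactly Proposition \ref{pro:OW:pressureintermideate} (replace $A_i$ by $F_i=A_i\cap\omega$ in the $\eta$-based quantity via the van Hove net $(KF_i)_{i\in I}$, the identity $\eta_{KF_i}=(\eta_K)_{F_i}$, and the estimate $\lvert P_{f_{A_i}}(\cdot)-P_{f_{KF_i}}(\cdot)\rvert\le\|f_{A_i}-f_{KF_i}\|_\infty\le\|f\|_\infty\,\theta(A_i\triangle KF_i)$), and your inequality $\sup_\eta \operatorname{p}_f[\pi,\eta]\le\operatorname{p}^{(OW)}_{f,\mathcal{A},\omega}(\pi)$, obtained from ``$\mathcal{U}$ at scale $\eta$ implies $\mathcal{U}_{F_i}$ at scale $\eta_{F_i}$, hence $P_{f_{A_i}}[\eta_{F_i}]\le P_{f_{A_i}}(\mathcal{U}_{F_i})$'', is the first half of the paper's proof. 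Up to that point the proposal is sound and matches the paper.

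The gap is in your reverse inequality. Your key claim is $P_{f_{A_i}}(\mathcal{U}_{F_i})\le P_{f_{A_i}}(\beta)+|F_i|\,C\epsilon$ for covers $\beta$ at scale $\eta_{F_i}$, justified by ``absorbing the $|F_i|$ many oscillation contributions of $f$ along $F_i$'' and later killed by $\limsup_i|F_i|/\theta(A_i)\le 1/\theta(V)$. This transplants the conditional-entropy mechanism of Lemma \ref{lem:OW:entropylemma1}, but it does not apply here: the potential is $f_{A_i}(x)=\int_{A_i}f(g.x)\,d\theta(g)$, not the finite sum $\Sigma_{F_i}f$, and knowing that two points of a member of $\beta$ stay $\eta$-close under the finitely many translates $g\in F_i$ (a $\theta$-null subset of $A_i$) gives no control whatsoever on the oscillation of $f_{A_i}$; the a priori bound is only $2\|f\|_\infty\,\theta(A_i)$, so there are no ``$|F_i|$ oscillation contributions'' to sum and nothing of size $|F_i|\epsilon$ to absorb. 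If one insists on an oscillation penalty it must be measured in Haar measure, e.g.\ by shrinking $\eta$ so that $\eta\subseteq\{(x,y)\mid \forall k\in K\colon |f(k.x)-f(k.y)|\le\epsilon\}$ for the compact $K$ with $K\omega=G$, which yields an error of order $\epsilon\,\theta(A_i)+2\|f\|_\infty\,\theta(A_i\triangle KF_i)$ and then Lemma \ref{lem:vanHovenets}; but the paper shows no penalty is needed at all. Its argument is: choose $\eta$ a Lebesgue entourage of $\mathcal{U}$; then every finite open cover at scale $\eta_{F_i}$ refines $\mathcal{U}_{F_i}$ (via $\eta_{F_i}[x]=\bigcap_{g\in F_i}g^{-1}\eta[g.x]$), whence $P_{f_{A_i}}(\mathcal{U}_{F_i})\le P_{f_{A_i}}[\eta_{F_i}]$ with the \emph{same} potential $f_{A_i}$ on both sides, and Proposition \ref{pro:OW:pressureintermideate} finishes. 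In other words, the feature that forces an error term in the entropy proof (a partition at scale $\eta_{F_i}$ need not refine $\alpha_{F_i}$) is absent in the cover setting, and your attempt to reintroduce it is both unnecessary and, as justified, incorrect; you should replace that paragraph by the refinement argument just described.
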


	\begin{proposition}\label{pro:OW:pressureintermideate}
		Let $\pi$ be a continuous action of an amenable unimodular locally compact group $G$ on a compact Hausdorff space $X$.  
		For any $f\in C(X)$, any van Hove net $(A_i)_{i\in I}$ and any locally finite and relatively dense set $\omega$ in $G$ we have
		\[\sup_{\eta\in \mathbb{U}_X} \operatorname{p}_f[\pi,\eta]
		=\sup_{\eta\in \mathbb{U}_X} \limsup_{i\in I}\frac{P_{f_{A_i}}(\eta_{(A_i\cap \omega)})}{\theta(A_i)}.\]
	The formula remains valid if the limit superior is replaced by a limit inferior. 
	\end{proposition}
	\begin{proof}
	Let $K \subseteq G$ be a compact subset such that $K\omega=G$ and $e_G\in K$.
	Define $F_i \defeq A_i\cap \omega$. 
	From Lemma \ref{lem:vanHovenets} we know that $(KF_i)_{i\in I}$ is a van Hove net in $G$ that satisfies \begin{displaymath}
		\lim_{i\in I}\theta(KF_i)/\theta(A_i) \, = \, 1
	\end{displaymath} and 
	$\lim_{i\in I}\theta((KF_i)\triangle A_i)/\theta(KF_i)=0$. 
	From 
	\begin{align*}
	\left|\frac{P_{f_{KF_i}}(\eta_{KF_i})}{\theta(KF_i)}-\frac{P_{f_{A_i}}(\eta_{KF_i})}{\theta(KF_i)}\right|
	\leq \frac{\|f_{KF_i}-f_{A_i}\|_\infty}{\theta(KF_i)}
	\leq \frac{\theta(KF_i\triangle A_i)}{\theta(KF_i)}\|f\|_\infty
	\end{align*}
	we thus obtain 
	\[\operatorname{p}_f[\pi,\eta]
	=\lim_{i\in I}\frac{P_{f_{KF_i}}(\eta_{KF_i})}{\theta(KF_i)}
	=\lim_{i\in I}\frac{P_{f_{A_i}}(\eta_{KF_i})}{\theta(KF_i)}.\]
	From $(\eta_K)_{F_i}=\eta_{KF_i}$ and $\eta_K\in \mathbb{U}_X$ we thus observe 
	\begin{align*}
	\operatorname{p}_f[\pi,\eta]
	\leq \sup_{\epsilon\in \mathbb{U}_X}\limsup_{i\in I}\frac{P_{f_{A_i}}(\epsilon_{F_i})}{\theta(KF_i)}
	\leq \sup_{\epsilon\in \mathbb{U}_X}\lim_{i\in I}\frac{P_{f_{A_i}}(\epsilon_{KF_i})}{\theta(KF_i)}
	= \sup_{\epsilon\in \mathbb{U}_X}\operatorname{p}_f[\pi,\epsilon].
	\end{align*}
	Taking the supremum over all $\eta\in \mathbb{U}_X$ yields the statement. 
\end{proof}
\begin{proof}[Proof of Theorem \ref{the:OW:pressureindependence}]
	Again we denote $F_i \defeq A_i\cap \omega$. 
	Consider $\eta\in \mathbb{U}_X$ and a finite open cover 
	$\mathcal{U}$ at scale $\eta$. For $i\in I$ we observe that $\mathcal{U}_{F_i}$ is at scale $\eta_{F_i}$ and hence
	\begin{displaymath}	P_{f_{A_i}}[\eta_{F_i}] \, \leq \, P_{f_{A_i}}(\mathcal{U}_{F_i}) . \end{displaymath} 
	From Proposition \ref{pro:OW:pressureintermideate} we thus obtain 
	\begin{align*}
	\operatorname{p}_f^{(OW)}[\pi,\eta]
	=\limsup_{i\in I}\frac{P_{f_{A_i}}[\eta_{F_i}]}{\theta(A_i)}
	\leq \limsup_{i\in I}\frac{P_{f_{A_i}}(\mathcal{U}_{F_i})}{\theta(A_i)}
	\leq \operatorname{p}_f^{(OW)}(\pi). 
	\end{align*}
	As $\eta$ was arbitrary we observe 
	$\sup_{\eta\in \mathbb{U}_X}\operatorname{p}_f^{(OW)}[\pi,\eta]
	\leq  \operatorname{p}_f^{(OW)}(\pi)$. 
	To show the reverse direction let $\mathcal{U}$ be a finite open cover of $X$. 
	Consider a Lebesgue entourage $\eta$ of $\mathcal{U}$. 
	It is straightforward to show that any finite open cover $\mathcal{V}$ at scale $\eta_{F_i}$ is finer than $\mathcal{U}_{F_i}$ and we observe $P_{f_{A_i}}(\mathcal{U}_{F_i})\leq P_{f_{A_i}}[\eta_{F_i}]$ and we conclude the statement of Theorem \ref{the:OW:pressureindependence} from Proposition \ref{pro:OW:pressureintermideate}.  
\end{proof}

\begin{remark} 
	The arguments above allow to conclude that for any locally finite and relatively dense subsets $\omega$ (and any van Hove net $(A_i)_{i\in I}$) we have 
	\[\operatorname{p}_f^{(OW)}(\pi)= \sup_{\mathcal{U}}\limsup_{i\in I}\frac{P_{f_{A_i}}(\mathcal{U}_{A_i\cap \omega})}{\theta(A_i)}.\]
	See Example \ref{exa:locallyfinite} for a locally finite and relatively dense subset that is not a Delone set. 
	\end{remark}
	
	\begin{remark}
	Lemma \ref{lem:nv:Arefining} facilitates a third approach to Ornstein-Weiss topological pressure. 
	Indeed, recall from this lemma that for any finite open cover $\mathcal{U}$ and any compact set $A$ of $G$ there exists a finite open cover $\mathcal{V}$ that $A$-refines $\mathcal{U}$. 
	For $f\in C(X)$ we define 
	$P_f(\mathcal{U},A) \defeq \inf_{\mathcal{V}}P_f(\mathcal{V})$, 
	where the infimum is taken over all such $\mathcal{V}$. 
	A straightforward argument shows that, if $f$ is positive, then $\mathcal{K}(G) \to \mathbb{R}_{\geq 0}, \, A\mapsto P_{f_A}(\mathcal{U},A)$ is monotone, right invariant and subadditive. For general $f\in C(X)$, we apply the Ornstein-Weiss lemma as above to obtain that the following limit exists, is finite and does not depend on the choice of a van Hove net $(A_i)_{i\in I}$ in $G$. We define 
	\[\operatorname{p}^{(OW)}_f(\pi,\mathcal{U}) \, \defeq \, \lim_{i\in I}\frac{P_{f_{A_i}}(\mathcal{U},A_i)}{\theta(A_i)}.\]
	Note that, for every finite set $F\subseteq G$, we have $P_f(\mathcal{U},F)=P_f(\mathcal{U}_F)$. 
	With a similar argument as in the proof of Proposition \ref{pro:OW:pressureintermideate} we observe 
	\[\operatorname{p}^{(OW)}_f(\pi)= \sup_{\mathcal{U}}\operatorname{p}^{(OW)}_f(\pi,\mathcal{U}),\]
	where the supremum is taken over all finite open covers $\mathcal{U}$ of $X$.
	\end{remark}

\subsection{Restricting to uniform lattices}

	With the following lemma we introduce the notion of a \emph{density} $\operatorname{dens}(\omega)$ of a uniform lattice $\omega$ in an amenable unimodular amenable group. This allows to relate averaging along van Hove nets in $G$ to averaging along van Hove nets in the discrete subgroup $\omega$. 

\begin{lemma}\label{lem:vanHovenetinlattice}
		Let $\omega$ be a uniform lattice in an amenable unimodular locally compact group $G$ and $(A_i)_{i\in I}$ be a van Hove net in $G$. Then $(A_i\cap \omega)_{i\in I}$ is a van Hove net in $\omega$ for which 
	\begin{displaymath}
			\operatorname{dens}(\omega) \, \defeq \, \lim_{i\in I} \frac{|A_i\cap \omega|}{\theta(A_i)}
	\end{displaymath} exists. This limit is independent of the choice of the van Hove net $(A_i)_{i\in I}$. 
	\end{lemma}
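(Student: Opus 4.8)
The plan is to reduce everything to a single fundamental domain computation. First I would invoke the preceding lemma to fix a regular, precompact fundamental domain $C$ for $\omega$ with non-empty interior; write $m \defeq \theta(C) \in (0,\infty)$. Since $C$ is regular and precompact, $\theta(\partial C) = 0$, and since $C$ has non-empty interior, $\theta(\mathring{C}) = m > 0$. The key observation is that $\{Cv \mid v \in \omega\}$ tiles $G$: every $g \in G$ lies in exactly one $Cv$. Hence, for a suitable compact "padding" set, the cardinality $|A_i \cap \omega|$ is comparable to $\theta(A_i)/m$ up to a boundary error controlled by the van Hove property.

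The main steps would be: (1) Show $(A_i \cap \omega)_{i \in I}$ is a van Hove (equivalently, F\o lner) net in the discrete group $\omega$. For $v \in \omega$ and a finite $F \subseteq \omega$, the symmetric difference $(A_i \cap \omega) \triangle (v^{-1}(A_i \cap \omega))$ is contained in $\{w \in \omega \mid Cw \subseteq K\overline{A_i} \cap K\overline{G \setminus A_i}\}$ for a compact $K$ depending only on $C$ and the finite set $F$ (using that $C$ is precompact and $C\omega = G$); each such $w$ contributes a translate $Cw$ of measure $m$ inside $\partial_K A_i$, and these translates are essentially disjoint (overlaps have measure zero since $\partial C$ is null), so the count is at most $\theta(\partial_K A_i)/m$, which is $o(\theta(A_i))$. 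Combined with the lower bound $\theta(A_i) \lesssim m \cdot |KA_i \cap \omega|$ and $\theta(KA_i)/\theta(A_i) \to 1$ from Lemma \ref{lem:pre:vanHovenets}(i), this forces the ratio $|A_i \triangle v^{-1}A_i|_\omega / |A_i \cap \omega| \to 0$. (2) For the density limit, use Lemma \ref{lem:pre:vanHovenets}(ii) to find $B_i$ with $\theta(B_i)/\theta(A_i) \to 1$, $DB_i \subseteq A_i$ and $B_i^c \subseteq D^{-1}A_i^c$, where $D$ is a compact set with $C \subseteq \mathring{D}$ (or with $C^{-1}C \subseteq D$). A standard sandwiching argument then gives
\begin{displaymath}
	\theta(B_i) \, \leq \, \sum_{v \in A_i \cap \omega} \theta(C) \, + \, (\text{boundary error}) \, \leq \, \theta(D A_i) ,
\end{displaymath}
so that $\theta(B_i)/m - o(\theta(A_i)) \leq |A_i \cap \omega| \leq \theta(DA_i)/m + o(\theta(A_i))$; dividing by $\theta(A_i)$ and using parts (i) and (ii) of Lemma \ref{lem:pre:vanHovenets} squeezes $|A_i \cap \omega|/\theta(A_i) \to 1/m$. (3) Independence of the van Hove net: the limit equals $1/\theta(C)$, and $\theta(C)$ depends only on $\omega$ (any two fundamental domains of $\omega$ have equal Haar measure, by unimodularity and the tiling property), so it is the same for every van Hove net.

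The main obstacle I expect is the bookkeeping in step (1)–(2): carefully choosing the compact sets so that $\{w \in \omega \mid Cw \cap A_i \neq \emptyset, \, Cw \cap A_i^c \neq \emptyset\}$ is genuinely sandwiched between translates of $\partial_K A_i$ for an explicit $K$, and making sure the "essentially disjoint" translates $Cw$ really do pack into the boundary set with total measure $\leq \theta(\partial_K A_i)$ rather than merely $\lesssim$ it — this needs the regularity (nullity of $\partial C$) of the fundamental domain, which is exactly why the previous lemma was proved. Everything else is the routine van Hove manipulation already encapsulated in Lemma \ref{lem:pre:vanHovenets}.
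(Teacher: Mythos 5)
Your plan is correct and essentially the paper's own argument: both fix the regular, precompact fundamental domain $C$ supplied by the preceding lemma, use the exact disjointness of the translates $Cv$ ($v\in\omega$) to convert cardinalities into Haar measure, sandwich $\theta(C(A_i\cap\omega))$ between $\theta(B_i)$ and $\theta(\overline{C}A_i)$ via Lemma~\ref{lem:pre:vanHovenets} to identify the density as $1/\theta(C)$ (whence independence of the net), and pack $C$-translates of the relevant boundary points of $A_i\cap\omega$ into a $G$-boundary $\partial_{K}A_i$. The only cosmetic difference is that you verify the F\o lner property of $(A_i\cap\omega)_{i\in I}$ in $\omega$ and appeal to the discrete F\o lner/van Hove equivalence, while the paper checks the van Hove condition in $\omega$ directly through the containment $C\,\partial^{\omega}_{F}(A_i\cap\omega)\subseteq\partial^{G}_{\overline{C}F}A_i$ (and routes the ratio $\theta(\overline{C}(A_i\cap\omega))/\theta(A_i)\to 1$ through Lemma~\ref{lem:vanHovenets}, whose proof is exactly your sandwich).
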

	\begin{proof}
	Let $C$ be a regular and precompact fundamental domain of $\omega$ with non-empty interior.
	Let $K$ denote the closure of $C$ and $F_i \defeq A_i\cap \omega$. 
	Then $\theta(KF_i)=\theta(K)|F_i|$ and we obtain 
	$\lim_{i\in I}|F_i|/\theta(A_i)=\theta(K)^{-1}\lim_{i\in I}\theta(KF_i)/\theta(A_i)=\theta(K)^{-1}$ from Lemma \ref{lem:vanHovenets}. 
	With a standard argument one obtains that the existence of this limit (for all van Hove nets) implies the independence of the choice of a van Hove net \cite{krieger2010ornstein}. 
	
	In order to show that $(F_i)_{i\in I}$ constitutes a van Hove net in $\omega$, let $F\subseteq G$ be a compact subset and denote by $\partial_F^\omega F_i$ the $F$-boundary of $F_i$ in $\omega$. A straightforward computation shows that
	$C\partial_F^\omega F_i \subseteq \partial_{KF}^G A_i$ and hence
	\begin{align*}
	0 \, &\leq \, \limsup_{i\in I}\frac{|\partial_F^\omega F_i|}{|F_i|} \, = \, \limsup_{i\in I}\frac{\theta(C\partial_F^\omega F_i)}{\theta(CF_i)}\, \leq \, \limsup_{i\in I}\frac{\theta(\partial_{KF}^G A_i)}{\theta(B_i)} \, = \, \lim_{i\in I}\frac{\theta(\partial_{KF}^G A_i)}{\theta(A_i)} \, = \, 0.\qedhere
	\end{align*}
	\end{proof}

	From the following we obtain that Ornstein-Weiss entropy and topological pressure restrict to the classical definitions in the context of actions of discrete amenable groups. 	
	In particular, we obtain that our notion agrees with the notion considered in  \cite{feldman1980rentropy} in the context of actions of $\mathbb{R}^d$.

	\begin{theorem}\label{the:latticerestriction}
	Let $\pi$ be an action of an amenable unimodular locally compact group $G$ and $\phi$ be the restriction of $\pi$ to a uniform lattice $\omega\subseteq G$. Let $K$ be the closure of a regular and precompact fundamental domain of $\omega$
\begin{itemize}
\item[(i)] If $\pi$ is a measure preserving action on a probability space $(X,\mu)$, so is $\phi$ and we have	
		\[\operatorname{E}^{(OW)}_\mu(\pi)=\operatorname{dens}(\omega)\operatorname{E}^{(OW)}_\mu(\phi).\]
		\item[(ii)] If $\pi$ is a continuous action on a compact Hausdorff space $X$, so is $\phi$ and for any $f\in C(X)$ we have		
		\[\operatorname{p}^{(OW)}_f(\pi)
	=\operatorname{dens}(\omega)\operatorname{p}^{(OW)}_{f_K}(\phi).\]
\end{itemize} 
	\end{theorem}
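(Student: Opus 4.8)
The plan is to prove both parts by relating the averaging sets of the ambient group to those of the lattice. Since the existence of a regular and precompact fundamental domain with non-empty interior was established in the excerpt, fix such a domain with closure $K$; then $\theta(K)=\operatorname{dens}(\omega)^{-1}$ by Lemma~\ref{lem:vanHovenetinlattice}, and $(A_i\cap\omega)_{i\in I}$ is a F\o lner (hence van Hove) net in $\omega$ for any van Hove net $(A_i)_{i\in I}$ in $G$. I would begin with the elementary observations that $\phi$ inherits measure preservation (resp.\ continuity) from $\pi$: for a discrete group condition (3) in the definition of measure preserving is vacuous, so only (1) and (2) need checking, and these are immediate; continuity of $\phi$ is trivial since $\omega$ is discrete. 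Having fixed a Delone set in $G$ for the ambient Ornstein--Weiss entropy, the natural choice is $\omega$ itself, which is a Delone subgroup; by Theorem~\ref{the:OW:entropyindependenceandrelation} (resp.\ Theorem~\ref{the:OW:pressureindependence}) the value does not depend on this choice.

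For part (i), with $\omega$ chosen as the Delone set and $\mathcal{A}=(A_i)_{i\in I}$ an arbitrary van Hove net in $G$, I compute directly
\begin{displaymath}
	\operatorname{E}^{(OW)}_\mu(\pi) \, = \, \sup_{\alpha}\limsup_{i\in I}\frac{H_\mu(\alpha_{A_i\cap\omega})}{\theta(A_i)} \, = \, \sup_{\alpha}\limsup_{i\in I}\frac{H_\mu(\alpha_{A_i\cap\omega})}{|A_i\cap\omega|}\cdot\frac{|A_i\cap\omega|}{\theta(A_i)} .
\end{displaymath}
Since $(A_i\cap\omega)_{i\in I}$ is a F\o lner net in $\omega$, by the classical (discrete) Ornstein--Weiss lemma the quotient $H_\mu(\alpha_{A_i\cap\omega})/|A_i\cap\omega|$ converges to the classical entropy $h_\mu(\phi,\alpha)$, and $|A_i\cap\omega|/\theta(A_i)\to\operatorname{dens}(\omega)$ by Lemma~\ref{lem:vanHovenetinlattice}; hence the $\limsup$ is a genuine limit equal to $\operatorname{dens}(\omega)\,h_\mu(\phi,\alpha)$. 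Taking the supremum over $\alpha$ and recalling that $\operatorname{E}^{(OW)}_\mu(\phi)=\sup_\alpha h_\mu(\phi,\alpha)$ in the discrete case yields the claim. (One should note that $\operatorname{dens}(\omega)$ is a positive constant, so pulling it out of the supremum is legitimate.)

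For part (ii) the argument is parallel but must account for the Birkhoff-type average $f_{A_i}(x)=\int_{A_i}f(g.x)\,d\theta(g)$ appearing in the definition of $\operatorname{p}_f^{(OW)}$, which has no counterpart in the discrete pressure $\operatorname{p}_{f_K}^{(OW)}(\phi)=\sup_{\mathcal{U}}\lim_i P_{\Sigma_{A_i\cap\omega}f_K}(\mathcal{U}_{A_i\cap\omega})/|A_i\cap\omega|$. The key bookkeeping step is to show that, up to an error that is $o(\theta(A_i))$ uniformly on $X$,
\begin{displaymath}
	f_{A_i}(x) \, = \, \int_{A_i}f(g.x)\,d\theta(g) \, \approx \, \sum_{v\in A_i\cap\omega}\int_{K}f(kv.x)\,d\theta(k) \, = \, \bigl(\Sigma_{A_i\cap\omega}f_K\bigr)(x) ,
\end{displaymath}
where $f_K(y)=\int_K f(k.y)\,d\theta(k)$. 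This comparison rests on tiling $A_i$ (up to a van Hove-negligible boundary layer of measure $o(\theta(A_i))$) by the translates $Kv$, $v\in A_i\cap\omega$, using that $K$ is the closure of a fundamental domain and that the translates $K(A_i\cap\omega)$ cover $A_i$ except on a set controlled by $\partial_{K}A_i$ together with $\partial_{K^{-1}}A_i$. Since $\|f_{A_i}-\Sigma_{A_i\cap\omega}f_K\|_\infty = o(\theta(A_i))$ and $|P_g(\mathcal{U})-P_h(\mathcal{U})|\le\|g-h\|_\infty$ for the $P$-functional, this error is absorbed after dividing by $\theta(A_i)$; combined with $|A_i\cap\omega|/\theta(A_i)\to\operatorname{dens}(\omega)$ and the convergence $P_{\Sigma_{A_i\cap\omega}f_K}(\mathcal{U}_{A_i\cap\omega})/|A_i\cap\omega|\to$ the classical pressure of $(\phi,\mathcal{U},f_K)$ (classical discrete Ornstein--Weiss lemma), the same sup-of-limits manipulation as in (i) gives $\operatorname{p}_f^{(OW)}(\pi)=\operatorname{dens}(\omega)\operatorname{p}_{f_K}^{(OW)}(\phi)$.

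The main obstacle is this last comparison of the continuous Birkhoff average $f_{A_i}$ with the lattice sum $\Sigma_{A_i\cap\omega}f_K$: one must control the discrepancy between the region $A_i$ and the $\omega$-tiling region $K(A_i\cap\omega)$ uniformly in the supremum over covers, and verify that the resulting $\sup_x$-error is truly van Hove-negligible (not merely pointwise small). Here I would invoke Lemma~\ref{lem:vanHovenets} (with $\omega$ as the relatively dense set), which already supplies $\theta(K(A_i\cap\omega)\triangle A_i)/\theta(A_i)\to 0$, so that $\|f_{A_i}-\Sigma_{A_i\cap\omega}f_K\|_\infty\le\theta(K(A_i\cap\omega)\triangle A_i)\|f\|_\infty$ is the desired estimate; once this is in place the remaining steps are the routine sup/limit juggling indicated above. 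A minor point to handle carefully is that in part (ii) one uses the freedom, granted by Theorem~\ref{the:OW:pressureindependence}, to compute $\operatorname{p}_f^{(OW)}(\pi)$ with the Delone set taken to be $\omega$ itself and with the same van Hove net on both sides.
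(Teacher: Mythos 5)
Your proposal is correct, and part (i) coincides with the paper's argument (the paper simply declares it a direct consequence of the definition and Lemma~\ref{lem:vanHovenetinlattice}; your explicit use of the discrete Ornstein--Weiss lemma and the density limit is exactly what that entails). For part (ii) you take a genuinely different route. The paper channels everything through the entourage-based functional $\operatorname{p}_f[\pi,\eta]$: by the net-independence coming from the Ornstein--Weiss lemma it evaluates $\operatorname{p}_f[\pi,\eta]$ along the van Hove net $(KF_i)_{i\in I}$ with $F_i=A_i\cap\omega$, uses the exact identities $f_{KF_i}=\Sigma_{F_i}f_K$, $\eta_{KF_i}=(\eta_K)_{F_i}$ and $\theta(KF_i)=\theta(K)\vert F_i\vert$ to get $\operatorname{p}_f[\pi,\eta]=\operatorname{dens}(\omega)\,\operatorname{p}_{f_K}[\phi,\eta_K]$, and then finishes with a sup-over-entourages sandwich via Theorem~\ref{the:OW:pressureindependence}. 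You instead stay with the cover-based definition along the original net $(A_i)_{i\in I}$ (Delone set $\omega$), replace $f_{A_i}$ by $\Sigma_{F_i}f_K$ using the bound $\Vert f_{A_i}-\Sigma_{F_i}f_K\Vert_\infty\leq\theta(KF_i\triangle A_i)\Vert f\Vert_\infty$ together with Lemma~\ref{lem:vanHovenets} and the Lipschitz estimate $\vert P_g(\mathcal{U})-P_h(\mathcal{U})\vert\leq\Vert g-h\Vert_\infty$, and then apply the discrete Ornstein--Weiss lemma cover-by-cover in $\omega$ (after the usual reduction to $f_K\geq 0$ by adding a constant, since $F\mapsto P_{\Sigma_F f_K}(\mathcal{U}_F)$ is then monotone, right-invariant and subadditive). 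Your approach avoids the entourage formalism on the $G$-side except for invoking Theorem~\ref{the:OW:pressureindependence} to justify the choice of Delone set and net, at the cost of redoing, at the cover level, the comparison that the paper has already packaged into Proposition~\ref{pro:OW:pressureintermideate}; the paper's route is shorter given its machinery. One point you should make explicit: the displayed ``$\approx$'' hides the exact identity $\Sigma_{F_i}f_K=f_{KF_i}$, which requires unimodularity (to shift $\int_K f(kv.x)\,d\theta(k)=\int_{Kv}f(g.x)\,d\theta(g)$) and the regularity of the fundamental domain (so that the translates $Kv$, $v\in F_i$, overlap only in $\theta$-null sets); this is precisely where the hypothesis that the fundamental domain be regular enters, as in the paper's remark ``from the regularity of $C$ we observe $f_{KF_i}=\sum_{F_i}f_K$''. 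With that spelled out, your argument is complete.
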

		
	\begin{proof}[Proof of Theorem \ref{the:latticerestriction}]
	We obtain the first formula as a direct consequence of the definition of the Ornstein-Weiss entropy and Lemma \ref{lem:vanHovenetinlattice}. 
	In order to show the second formula we consider the closure $K$ of a regular and precompact fundamental domain $C$ of $\omega$ and a van Hove net $(A_i)_{i\in I}$ in $G$. 
		Let $K$ denote the closure of $C$ and $F_i \defeq A_i\cap \omega$. 
		Now recall from Lemma \ref{lem:vanHovenets} that $(KF_i)_{i\in I}$ is a van Hove net in $G$. From the proof of Lemma \ref{lem:vanHovenetinlattice} we know that $\operatorname{dens}(\omega)=\theta(K)^{-1}$ and from the regularity of $C$ we observe $f_{KF_i}=\sum_{F_i}f_K$. 
		For $\eta\in \mathbb{U}_X$ we compute
	\begin{align*}
	\operatorname{p}^{(OW)}_f[\pi,\eta]
	=\lim_{i\in I}\frac{P_{f_{KF_i}}(\eta_{KF_i})}{\theta(KF_i)}
	=\lim_{i\in I}\frac{P_{\sum_{F_i}f_{K}}((\eta_K)_{F_i})}{\theta(K)|F_i|}
	=\operatorname{dens}(\omega)\operatorname{p}^{(OW)}_{f_K}[\phi,\eta_K]. 
	\end{align*}
	Consider $k\in K$. 
	Since $\eta_k\supseteq \eta_K\in \mathbb{U}_X$ for $\eta\in \mathbb{U}_X$ we observe
	\[
	\operatorname{p}^{(OW)}_{f_K}(\phi)
	=\sup_{\eta\in \mathbb{U}_X}\operatorname{p}^{(OW)}_{f_K}[\phi,\eta_k]
	\leq\sup_{\eta\in \mathbb{U}_X}\operatorname{p}^{(OW)}_{f_K}[\phi,\eta_K]
	\leq \sup_{\epsilon\in \mathbb{U}_X}\operatorname{p}^{(OW)}_{f_K}[\phi,\epsilon]
	=\operatorname{p}^{(OW)}_{f_K}(\phi)
	\]
	The combination of these observations yields
	\[	
	\operatorname{p}^{(OW)}_f(\pi)
	=\sup_{\eta\in \mathbb{U}_X}\operatorname{p}^{(OW)}_f[\pi,\eta]
	=\operatorname{dens}(\omega)\sup_{\eta\in \mathbb{U}_X}\operatorname{p}^{(OW)}_{f_K}[\phi,\eta_K]
	=\operatorname{dens}(\omega)\operatorname{p}^{(OW)}_{f_K}(\phi). \qedhere
	\]
	\end{proof}

\subsection{Goodwyn's theorem for Ornstein-Weiss entropy}

	Our next objective is to show that Ornstein-Weiss entropy satisfies Goodwyn's theorem. 
\begin{theorem}[Goodwyn's theorem for Ornstein-Weiss entropy]
	\label{the:OW:Goodwyns}
		Let $\pi$ be a continuous action of an amenable unimodular locally compact group $G$ on a compact Hausdorff space $X$. 
		For every invariant, regular Borel probability measure $\mu$ on $X$ and every $f\in C(X)$,
	\[\operatorname{E}_\mu^{(OW)}(\pi)+\mu(f) \, \leq \, \operatorname{p}_f^{(OW)}(\pi).\]
	\end{theorem}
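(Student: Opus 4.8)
The plan is to follow the proof of Theorem~\ref{the:Goodwynnaivenetropy}, replacing the infimum over $\mathcal{F}_+(G)$ by the limit superior along the net $(A_i\cap\omega)_{i\in I}$ and feeding the averaged potentials $f_{A_i}$ into Lemma~\ref{lem:Goodwynsmainlemma}. By Theorem~\ref{the:OW:pressureindependence} and the corollary following Theorem~\ref{the:OW:entropyindependenceandrelation}, both $\operatorname{E}_\mu^{(OW)}(\pi)$ and $\operatorname{p}_f^{(OW)}(\pi)$ may be computed with respect to one fixed van Hove net $\mathcal{A}=(A_i)_{i\in I}$ and one fixed Delone set $\omega$ in $G$; put $F_i\defeq A_i\cap\omega$, which is finite since $\omega$ is locally finite. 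By Lemma~\ref{lem:continuoslymeasurepres}, $\pi$ is measure preserving with respect to $\mu$, so $\operatorname{E}_\mu^{(OW)}(\pi)$ is defined. It then suffices to show that for every finite measurable partition $\alpha$ of $X$ and every $\epsilon>0$,
\[\limsup_{i\in I}\frac{H_\mu(\alpha_{F_i})}{\theta(A_i)}+\mu(f) \, \leq \, \operatorname{p}_f^{(OW)}(\pi)+c\,\epsilon ,\]
where $c>0$ depends only on $\omega$ and the chosen van Hove net; letting $\epsilon\to 0$ and taking the supremum over all $\alpha$ then yields the theorem.

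Fix such $\alpha$ and $\epsilon$, and let $\mathcal{U}$ be the finite open cover provided by Lemma~\ref{lem:Goodwynsmainlemma}. For each $i\in I$ the function $f_{A_i}(x)=\int_{A_i}f(g.x)\,d\theta(g)$ lies in $C(X)$ (indeed $g\mapsto f\circ\pi^g$ is continuous from $G$ into $C(X)$, by continuity of $\pi$ and compactness of $X$), so applying Lemma~\ref{lem:Goodwynsmainlemma} with $F=F_i$ and potential $f_{A_i}$ gives
\[H_\mu(\alpha_{F_i})+\mu(f_{A_i}) \, \leq \, \operatorname{P}_{f_{A_i}}(\mathcal{U}_{F_i})+|F_i|\,\epsilon .\]
The crucial point is that the single cover $\mathcal{U}$, which depends only on $\alpha$ and $\epsilon$, works against all of the varying potentials $f_{A_i}$, because Lemma~\ref{lem:Goodwynsmainlemma} is uniform in the potential.

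Next I would record the identity $\mu(f_{A_i})=\theta(A_i)\,\mu(f)$: writing $f_{A_i}=\int_{A_i}(f\circ\pi^g)\,d\theta(g)$ as a vector-valued integral in $C(X)$ and applying the continuous linear functional $h\mapsto\mu(h)$ gives $\mu(f_{A_i})=\int_{A_i}\mu(f\circ\pi^g)\,d\theta(g)$, and $\mu(f\circ\pi^g)=\mu(f)$ for every $g$ by $\pi$-invariance of $\mu$ (alternatively one invokes Tonelli's theorem for the jointly measurable map $(g,x)\mapsto f(g.x)$). Dividing the preceding inequality by $\theta(A_i)$ and passing to the limit superior,
\[\limsup_{i\in I}\frac{H_\mu(\alpha_{F_i})}{\theta(A_i)}+\mu(f) \, \leq \, \limsup_{i\in I}\frac{\operatorname{P}_{f_{A_i}}(\mathcal{U}_{F_i})}{\theta(A_i)}+\epsilon\,\limsup_{i\in I}\frac{|F_i|}{\theta(A_i)} .\]
The first summand on the right is at most $\operatorname{p}_{f,\mathcal{A},\omega}^{(OW)}(\pi)=\operatorname{p}_f^{(OW)}(\pi)$ by the definition of the Ornstein--Weiss topological pressure. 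For the second, choose a compact neighbourhood $V$ of $e_G$ with $(Vv)_{v\in\omega}$ disjoint (possible since $\omega$ is uniformly discrete); then $|F_i|\,\theta(V)=\theta(VF_i)\leq\theta(VA_i)$, so Lemma~\ref{lem:pre:vanHovenets}(i) gives $\limsup_{i\in I}|F_i|/\theta(A_i)\leq 1/\theta(V)$, and one sets $c\defeq 1/\theta(V)<\infty$. Combining these estimates establishes the displayed bound, and letting $\epsilon\to0$ and taking the supremum over all finite measurable partitions $\alpha$ of $X$ finishes the proof.

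I do not expect a serious obstacle here: the argument is essentially the one used for Theorem~\ref{the:Goodwynnaivenetropy}, and the only genuinely new ingredients are the elementary identity $\mu(f_{A_i})=\theta(A_i)\,\mu(f)$ and the geometric bound $\limsup_i|A_i\cap\omega|/\theta(A_i)<\infty$, the latter being precisely where uniform discreteness of the Delone set is needed. The point requiring the most care is that a single cover $\mathcal{U}$ (chosen from $\alpha$ and $\epsilon$ alone) serves simultaneously for all the potentials $f_{A_i}$, which is guaranteed by the uniform-in-$f$ formulation of Lemma~\ref{lem:Goodwynsmainlemma}.
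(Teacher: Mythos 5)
Your proposal is correct and follows essentially the same route as the paper's own proof: apply Lemma~\ref{lem:Goodwynsmainlemma} (uniformly in the potential) with $F=A_i\cap\omega$ and potential $f_{A_i}$, use invariance to get $\mu(f_{A_i})=\theta(A_i)\mu(f)$, bound $\limsup_i|A_i\cap\omega|/\theta(A_i)$ by $1/\theta(V)$ via uniform discreteness, and pass to the limit superior. The only cosmetic difference is bookkeeping of the error term (the paper feeds $\theta(V)\epsilon$ into the lemma, you absorb $1/\theta(V)$ into a constant $c$ at the end), which does not affect the argument.
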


\begin{proof}[Proof of Theorem~\ref{the:OW:Goodwyns}]
	Let $(A_i)_{i\in I}$ be a van Hove net and $\omega$ be a Delone set in $G$. Let $V$ be a compact neighbourhood of the identity element in $G$ and consider a $V$-discrete Delone set $\omega$ in $G$. Let $F_i \defeq \omega\cap A_i$ for all $i\in I$. 
		We obtain
	$|F_i|\theta(V)=\theta(VF_i)\leq \theta(VA_i)$ and thus
	\[\limsup_{i\in I}\frac{|F_i|}{\theta(A_i)} \, =\limsup_{i\in I}\frac{|F_i|}{\theta(VA_i)} \, \leq \, \frac{1}{\theta(V)}.\]
	By Lemma \ref{lem:Goodwynsmainlemma}, for every finite measurable partition $\alpha$ of $X$, there exists a finite open cover $\mathcal{U}$ of $X$ such that, for every $i\in I$, 
	\begin{displaymath} H_\mu(\alpha_{F_i})+\mu(f_{A_i}) \, \leq \, \operatorname{P}_{f_{A_i}}(\mathcal{U}_{F_i})+|F_i| \theta(V) \epsilon .\end{displaymath} 
	Since $\mu$ is invariant we know $\mu(f_{A_i})=\mu(f)\theta(A_i)$. We thus obtain 
	\begin{align*}
	\limsup_{i\in I}\frac{H_\mu(\alpha_{F_i})}{\theta(A_i)}+\mu(f)
	&=\lim_{i\in I}\frac{H_\mu(\alpha_{F_i})+\mu(f_{A_i})}{\theta(A_i)}\\
	&\leq \limsup_{i\in I} \frac{\operatorname{P}_{f_{A_i}}(\mathcal{U}_{F_i})}{\theta(A_i)}+\limsup_{i\in I}\frac{|F_i|}{\theta(A_i)}\theta(V)\epsilon\\
	&\leq \operatorname{p}_f^{(OW)}(\pi)+\epsilon.
	\end{align*}
	Taking the supremum over all finite measurable partitions $\alpha$ of $X$, we conclude that
	\begin{displaymath} \operatorname{E}_\mu^{(OW)}(\pi)+\mu(f) \, \leq \, \operatorname{p}_f^{(OW)}(\pi)+\epsilon . \end{displaymath}
	The desired statement now follows, as $\epsilon>0$ was arbitrary. 
	\end{proof}


\section*{Acknowledgments}

We wish to thank Jean Moulin Ollagnier for an extremely helpful and inspiring discussion concerning the proof of the variational principle. Furthermore, the authors wish to express their sincere gratitude towards Maxime Gheysens for providing Example~\ref{example:maxime} and allowing them to include it in the present work. The first author is grateful to Max Planck Institute for Mathematics in Bonn for its hospitality and financial support.

\footnotesize
\bibliographystyle{alpha}
\bibliography{referencesEntropy}


\end{document}